\documentclass{article}
\usepackage{amsmath}      % Advanced math typesetting
\usepackage{amssymb}      % More symbols
\usepackage{amsthm}       % Theorem environments
\usepackage{mathtools}    % Extensions to amsmath
\usepackage{reptheorem}
\usepackage{aliascnt}
\usepackage{bm}           % Bold math symbols
\usepackage{mathrsfs}     % Special math fonts (like script)
\usepackage{esint}        % Additional integral symbols
\usepackage{cancel}       % Cancel terms in equations
\usepackage{enumitem}     % Customizable lists
\usepackage{todonotes}    % To-do notes for collaborators
\usepackage{graphicx} % Required for inserting images
\usepackage{hyperref}
\usepackage{xcolor}
\usepackage[normalem]{ulem}
\usepackage{verbatim}
\usepackage[T1]{fontenc}
\usepackage[style=alphabetic,maxbibnames=99,maxcitenames=2]{biblatex}
\AtEveryBibitem{\clearfield{urldate}}  % suppress "(Visited on...)" for submission
\usepackage[margin=1in]{geometry}
\DeclareMathOperator{\Dim}{Dim}
\DeclareMathOperator{\Var}{Var}
\newtheorem{theorem}{Theorem}[section]
\newaliascnt{lemma}{theorem}
\newtheorem{lemma}[lemma]{Lemma}
\newaliascnt{corollary}{theorem}
\newtheorem{corollary}[corollary]{Corollary}
\aliascntresetthe{lemma}
\aliascntresetthe{corollary}

\newtheorem{conjecture}[theorem]{Conjecture}
\newtheorem{thmalpha}{Theorem}

\theoremstyle{definition}
\newtheorem{definition}[theorem]{Definition}

\newtheorem{example}[theorem]{Example}

\theoremstyle{remark}

\newtheorem{remark}[theorem]{Remark}

\title{Point-to-set principles in dynamical systems}
\author{Emma Dinowitz}
\date{July 2026}

\begin{document}

\maketitle
\begin{abstract}
   In analogy to the point-to-set principle for Hausdorff and packing
   dimension of Lutz, Lutz, and Mayordomo \cite{lutz-2023}, we develop a
   Kolmogorov complexity point-to-set principle framework for upper and
   lower topological pressure and BS dimension in dynamical systems (and
   by extension for upper and lower entropy). As a demonstration of this
   framework we expand upon the Bowen pressure equation results of
      Climenhaga in \cite{climenhaga-2010} and \cite{CLIMENHAGA2014The}, as well as prove a pointwise version of the Bowen pressure equation.
\end{abstract}
\tableofcontents

\section{Introduction}

A recurring goal of researchers in the dimension theory of dynamical systems is to
characterize set-level dimension quantities pointwise, that is, to
attach to each point a quantity whose supremum over a set recovers the dimension of the
set. Classically, this is partially achieved with local dimensions relative
to a measure, but the measures that effectively compute dimension are
Gibbs measures, highly structured objects which in many systems either do
not exist or are very hard to construct. This paper offers an alternative
framework for pointwise dimension quantities via the point-to-set
principle. Where local dimension depends on a choice of measure, the
pointwise quantities here (the $A$-effective dimensions) depend on a
choice of oracle $A \subset \mathbb{N}$. Each oracle induces a
Kolmogorov complexity $K^A$ on finite binary strings, and the oracles are
naturally preordered by Turing reducibility $\leq_T$ (see
Section~\ref{sec:cones}). The point-to-set principle states that the
supremum of the $A$-effective dimensions over a set recovers its
dimension for every sufficiently strong oracle. Since the pointwise
dimensions are nonincreasing under $\leq_T$ and an achieving oracle
always exists, this is phrased as an attained minimum over oracles. The
point-to-set principles in the literature are for Hausdorff and packing
dimension \cite{lutz-2023}:
\[
\dim_H(Z)=\min\limits_{A\subset \mathbb{N}}\sup_{x\in Z}\dim^A(x),
\qquad
\dim_P(Z)=\min\limits_{A\subset \mathbb{N}}\sup_{x\in Z}\Dim^A(x),
\]
where $\dim^A$ and $\Dim^A$ denote the effective and strong effective
dimensions (Section~\ref{sec:intro-effdim}). The minimum is attained on a
set of oracles upward-closed under $\leq_T$. The main thesis of this
paper is that classical relationships between dimension quantities in
dynamical systems can be studied by passing to pointwise algorithmic
counterparts and proving the relationships at every point, for all
oracles in an upper set. Since the minimum in the point-to-set principle
may be computed within any upper cone (Lemma~\ref{lem:cone-min}), the
set-level statements then follow by oracle minimization alone.

Towards these goals we prove point-to-set principles for the dynamical
dimension quantities of lower and upper pressure (known in the
literature as Pesin--Pitskel and packing pressure respectively) and
lower and upper BS dimension. These are Theorems~\ref{thm:A}
and~\ref{thm:B}. As special cases, we obtain point-to-set principles for
lower and upper entropy (Bowen and packing entropy respectively). Our
main demonstration of the pointwise conversion framework follows. In
Theorem~\ref{thm:C}, we prove pointwise versions of the Bowen pressure
equation and upgrade them to their set-level versions, Theorem~\ref{thm:D}. In
Theorem~\ref{thm:E}, we use a pointwise conversion argument to prove
generalizations of the classical relationship between Hausdorff and
packing dimension, BS dimension, and the Bowen pressure equation,
recovering and extending the results of \cite{climenhaga-2010} to the
packing and noncompact settings. Beyond these immediate demonstrations,
this is intended as a framework paper enabling future work in the
dimension theory of nonuniformly hyperbolic systems.

Point-to-set principles admit natural strategies for proving dimension bounds:
\begin{itemize}
    \item To prove a lower bound $s$: for every oracle $A$ and every
    $\varepsilon > 0$, exhibit a point of the set whose $A$-pointwise
    dimension is at least $s - \varepsilon$.
    \item To prove an upper bound $s$: exhibit a single oracle $B$ such
    that for every $A \geq_T B$, the $A$-pointwise dimension of every
    point of the set is at most $s$.
\end{itemize}
Neither strategy requires the subset to be compact, invariant, or to
support any invariant measure, so both remain available where
the measure-theoretic machinery breaks down. The impetus for this
research program is a problem of exactly this kind, namely computing the
Hausdorff and packing dimensions of the level sets $L(\alpha)$,
$\alpha \geq 0$, of Lyapunov exponents for geodesic flows on surfaces
of nonpositive curvature, resolving the conjectures of
\cite{burns-2013}. For the level set of Lyapunov exponents $L(\alpha)$, letting $h(Z,T)=\underline{h}(Z,T)=\overline{h}(Z,T)$ when the two are equal, the conjectures are
\begin{conjecture}
For $(T^1M,f^t)$ geodesic flow on a compact rank 1 surface of nonpositive curvature
\begin{enumerate}
    \item  when $\alpha>0$
    \[\dim_H(L(\alpha))=\dim_P(L(\alpha))= 1+2\frac{h(L(\alpha),T)}{\alpha}\]
    \item when $\alpha=0$
    \[\dim_H(L(0))=\dim_P(L(0))=3\]
\end{enumerate}
\end{conjecture}

This setting is nonuniformly hyperbolic and resists
the standard methods. For the first conjecture, the measures constructed in \cite{burns-2018}
concentrating on the level sets for $\alpha > 0$ are not known to
have the Gibbs-type properties that would yield upper bounds using local dimension. For the second the set
$L(0)$ has entropy 0 and so supports no invariant measure of positive entropy at all. In addition,
the local product structure in this setting has only leafwise
regularity rather than uniform Lipschitz regularity, making a splitting into positive and negative time complicated. While the present
paper does not carry out the strategies above, the forthcoming companion paper
\cite{dinowitz2026dimension}
applies them to this problem. Our other planned future work intends to apply these methods to study the Ledrappier-Young dimension formulas \cite{Ledrappier-young} from the algorithmic information theory perspective. We also intend to study the non conformal uniformly hyperbolic setting, as well as the problem of variational principles for dimension quantities on noncompact settings under weak specification type properties (see \cite{pfister_2007_on} for an example of this and \cite{kwietniak2015} for an overview of specification type properties).

\subsection{Effective dimension and oracles}
\label{sec:intro-effdim}

An oracle $A \subset \mathbb{N}$ encodes a countable list of auxiliary information made available to an algorithm. Oracles serve two purposes in this paper: they record the data needed to do computable analysis in general settings (a dense sequence in a complete separable metric space, the distances between its points, and a description of the transformation $T$), and they are chosen to realize point-to-set principles. Any
countable collection of such data can be encoded into a single oracle,
so one may always imagine working in the computable setting relative to
a list of everything one needs. Oracles also interact well with measures: every measure admits an oracle relative to which it is computable, a fact used extensively in the companion paper \cite{dinowitz2026dimension}. Two operations on oracles are used
throughout: the joins $A \oplus B$ and $\bigoplus\limits_{i\in \mathbb{N}}A_i$, which are encodings of the concatenations of
a list of lists of information, and the upper cone above $B$, the collection
of oracles that compute $B$. Formal
definitions appear in Section~\ref{sec:cones}.

Each oracle induces a prefix-free Kolmogorov complexity $K^A(\sigma)$
on finite binary strings: the minimum length of a program which, with
access to $A$, outputs $\sigma$. The choice of universal machine
changes $K^A$ only by an additive constant. The effective dimensions
measure the asymptotic cost of locating a point to a given precision.

\begin{definition}[Effective dimension]
\label{def:intro-effective-dim}
Let $(X,d)$ be an $A$-computable metric space with dense sequence
$(x_i)_{i\in\mathbb{N}}$, and for $x \in X$, $\delta > 0$ set
\[
K^A_\delta(x) = \min\{\, K^A(i) \mid d(x_i, x) < \delta \,\}.
\]
The \emph{$A$-effective dimension} and \emph{strong $A$-effective
dimension} of $x$ are
\[
\dim^A(x) = \liminf\limits_{\delta \to 0} \frac{K^A_\delta(x)}{-\log \delta},
\qquad
\Dim^A(x) = \limsup\limits_{\delta \to 0} \frac{K^A_\delta(x)}{-\log \delta}.
\]
\end{definition}

\begin{example}[The middle-thirds Cantor set]
\label{ex:intro-cantor}
Identify $C$ with the ternary strings on $\{0,2\}$. At scale
$\delta = 3^{-k}$, specifying $x \in C$ to precision $\delta$ amounts
to specifying its first $k$ digits, viewed as a binary string, so
$K^A_{3^{-k}}(x) = K^A(x{\restriction}k) + O(\log k)$. Since every
binary string satisfies $K^A(x{\restriction}k) \leq k + O(\log k)$, every
$x \in C$ has $\Dim^A(x) \leq \log 2/\log 3$ for every $A$; and for $x$
with $A$-Martin-L\"of random digit sequence,
$K^A(x{\restriction}k) \geq k - O(1)$ gives
$\dim^A(x) \geq \log 2/\log 3$. The point-to-set principle then yields
$\dim_H(C) = \dim_P(C) = \log 2/\log 3$. The upper bound holds for
every oracle, and the lower bound is witnessed, for each oracle $A$, by
any point with $A$-random digits. These are precisely the two strategies described in the introduction.
\end{example}

We will use one structural fact about oracles throughout: any two oracles
have a common upper bound (their join), so ``sufficiently strong'' is a
coherent notion, and the minimum in a point-to-set principle may be
sought within the cone above any fixed base oracle - for instance, one
encoding the system $(X,d,T)$ and the potentials under study. The achieving oracle itself is produced by a countable join of almost-achieving oracles.

\subsection{Main results}
\label{sec:intro-main-results}

Throughout the paper $(X,d)$ is a complete separable metric space,
$T \colon X \to X$ is a continuous map, and the potentials
$\varphi, u \colon X \to \mathbb{R}$ are uniformly continuous. We fix a
dense sequence $(x_i)_{i \in \mathbb{N}}$ in $X$ and write
$B_n(x,T,\varepsilon)$ for the Bowen ball of order $n$ and radius
$\varepsilon$. The pointwise quantities at the center
of the paper are the lower and upper $A$-pointwise pressures
\[
\underline{P}^A(x,T,\varphi,\varepsilon)
= \liminf\limits_{n\to\infty}
\min\limits_{x_i \in B_n(x,T,\varepsilon)}
\frac{K^A(i) + S_n\varphi(x_i)}{n},\ 
\underline{P}^A(x,T,\varphi)=\lim\limits_{\varepsilon\to 0} \underline{P}^A(x,T,\varphi,\varepsilon)
\]
\[\overline{P}^A(x,T,\varphi,\varepsilon)
= \limsup\limits_{n\to\infty}
\min\limits_{x_i \in B_n(x,T,\varepsilon)}
\frac{K^A(i) + S_n\varphi(x_i)}{n},\ \overline{P}^A(x,T,\varphi)=\lim\limits_{\varepsilon\to 0}\overline{P}^A(x,T,\varphi,\varepsilon)\]
 and the lower/upper pointwise BS dimensions
\[
\dim^A_{\mathrm{BS}}(x,T,u,\varepsilon)
= \liminf\limits_{n\to\infty}\,
\min\limits_{x_i \in B_n(x,T,\varepsilon)}
\frac{K^A(i)}{S_n u(x_i)},\ \dim^A_{\mathrm{BS}}(x,T,u)=\lim\limits_{\varepsilon\to 0}\dim^A_{\mathrm{BS}}(x,T,u,\varepsilon)
\]
\[
\Dim^A_{\mathrm{BS}}(x,T,u,\varepsilon)
= \limsup\limits_{n\to\infty}\,
\min\limits_{x_i \in B_n(x,T,\varepsilon)}
\frac{K^A(i)}{S_n u(x_i)},\ \Dim^A_{\mathrm{BS}}(x,T,u)=\lim\limits_{\varepsilon\to 0}\Dim^A_{\mathrm{BS}}(x,T,u,\varepsilon)
\]
Each pointwise quantity measures the exponential cost of
describing the orbit of $x$ to precision $\varepsilon$ for $n$ units of
time. The pressure quantities weight this cost additively by the
potential, while the BS quantities reparametrize time through $u$.
Precise definitions appear in Section~\ref{sec_point_to_set}.

Our first pair of results are the point-to-set principles for
topological pressure, in direct analogy with the principles for
Hausdorff and packing dimension. Here $\underline{P}(Z,T,\varphi)$ denotes the
Pesin--Pitskel pressure of the (arbitrary, not necessarily compact or
invariant) set $Z$, and $\overline{P}(Z,T,\varphi)$ the packing
pressure.

\begin{thmalpha}[Point-to-set principle for topological pressure]
\label{thm:A}
Let $Z \subset X$. Then
\[
\underline{P}(Z,T,\varphi)
= \min\limits_{A \subset \mathbb{N}} \sup_{x \in Z}
\underline{P}^A(x,T,\varphi),
\qquad
\overline{P}(Z,T,\varphi)
= \min\limits_{A \subset \mathbb{N}} \sup_{x \in Z}
\overline{P}^A(x,T,\varphi).
\]
\end{thmalpha}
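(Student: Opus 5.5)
I will follow the Lutz--Lutz proof of the Hausdorff/packing point-to-set principle, replacing $\delta$-balls with Bowen balls $B_n(x,T,\varepsilon)$ and the weight $\delta^s$ with $\exp(S_n\varphi - sn)$. I treat the lower (Pesin--Pitskel) pressure in detail; the packing case is parallel. Throughout I work in the cone above a base oracle $B_0$ encoding the dense sequence, the distances $d(x_i,x_j)$, the map $T$ and $\varphi$ (via a modulus of uniform continuity and rational approximations of $S_n\varphi(x_i)$). This is legitimate because the pointwise quantities are nonincreasing under $\le_T$, and Lemma~\ref{lem:cone-min} lets the minimum be computed inside any cone.

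\textbf{Upper bound.} Fix $s>\underline{P}(Z,T,\varphi)$ and $\varepsilon>0$. By the Carathéodory definition, for every $N$ there is a countable cover of $Z$ by Bowen balls $B_{n_j}(y_j,T,\varepsilon/2)$ with $n_j\ge N$ and $\sum_j \exp(S_{n_j}\varphi(y_j)-sn_j)<2^{-N}$. I replace each center by a dense-sequence point at distance below $\varepsilon/2$ along the first $n_j$ iterates, which only costs $n_j\cdot\mathrm{var}_\varepsilon\varphi$ in the Birkhoff sum. I then encode all these covers, for rational $s,\varepsilon$ and all $N$, into one oracle $A_{s,\varepsilon}$. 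Relative to $A_{s,\varepsilon}$, the weights $2^{-(sn_j-S_{n_j}\varphi(x_{i_j}))}$ give an $A$-computable sub-probability measure on indices. The Kraft--Chaitin / coding theorem then gives $K^A(i_j)\le sn_j - S_{n_j}\varphi(x_{i_j})+O(\log n_j)$. Each $x\in Z$ lies in covering balls of arbitrarily large order $n_j$ (one per $N$), so
\[
\underline{P}^A(x,T,\varphi,\varepsilon)\le s+\mathrm{var}_\varepsilon\varphi .
\]
I then take the join $A=\bigoplus A_{s,\varepsilon}$ over rationals. Letting $\varepsilon\to0$ makes the variation vanish, so $\sup_{x\in Z}\underline{P}^A(x)\le\underline{P}(Z)$.

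For packing pressure the same step uses the identity $\overline{P}(Z)=\inf\{\sup_k \overline{P}(Z_k): Z\subset\bigcup Z_k\}$. Encoding the pieces and, for each piece and scale $n$, a maximal $(n,\varepsilon)$-separated set (equivalently a small spanning set) of controlled weighted size, bounds $K^A$ at every large $n$. This gives a $\limsup$ bound.

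\textbf{Lower bound.} Take any oracle $A$ and $s<\underline{P}(Z)$; I need a point $x\in Z$ with $\underline{P}^A(x)\ge s-o(1)$. Suppose instead every $x\in Z$ has $\underline{P}^A(x,T,\varphi,\varepsilon)<s$ for some small $\varepsilon$. Then for each $N$, $Z$ is covered by the balls $B_n(x_i,T,\varepsilon)$ with $n\ge N$ and $K^A(i)<sn-S_n\varphi(x_i)$. Since any point of $B_n(x,T,\varepsilon)$ is within $\varepsilon$ of the center along the orbit, the radius doubles, which the limit $\varepsilon\to0$ absorbs. The cover cost at exponent $s'>s$ is
\[
\sum_{n\ge N}\sum_i \exp(S_n\varphi(x_i)-s'n)\le \sum_{n\ge N} e^{-(s'-s)n}\sum_i 2^{-K^A(i)}\to 0
\]
after aligning bases by measuring $K$ in nats. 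So $m(Z,s',\varepsilon)=0$, and hence $\underline{P}(Z)\le s'$, a contradiction. Thus $\sup_{x\in Z}\underline{P}^A(x)\ge\underline{P}(Z)$ for every $A$.

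For packing, I pick pieces $Z_k=\{x\in Z: \overline{P}^A(x,\varepsilon)<s\}$. These cover $Z$ as $\varepsilon$ ranges over rationals and the threshold $n$ grows. Each piece, at each large $n$, has a spanning set of weighted cost at most $e^{sn}$, bounded by the number of short programs. This bounds the upper capacity pressure, and hence the packing pressure via the countable-stability identity. Attainment of the minimum comes from the join in the upper bound.

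The main obstacle is the lower bound for packing pressure and, more generally, the continuity bookkeeping between the $\varepsilon$-limits and $\varphi$. I need to know that upper capacity pressure bounds are controlled by the number of short programs uniformly in $n$, and that the two limits $\varepsilon\to0$ commute correctly. I would first try to prove an explicit lemma: $\overline{P}(Z)=\inf\sup_k\overline{CP}(Z_k)$ with uniform $\mathrm{var}_\varepsilon\varphi$ error terms, valid on noncompact $Z$.
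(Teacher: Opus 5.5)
There is a genuine gap in the packing half. Both of its directions rest on a modified upper capacity characterization of $\overline{P}$ on a noncompact space, which you leave as a lemma to be proved. Neither formulation you write down is the one you need.

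In the oracle construction you quote $\overline{P}(Z)=\inf\{\sup_k\overline{P}(Z_k)\}$. That is only countable stability, and it gives no spanning sets at every large $n$. For example, a countable dense subset of a compact system with positive entropy has zero packing entropy but full upper capacity entropy. What you actually need are pieces of small \emph{upper capacity} pressure.

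Your closing lemma leaves the order of $\inf_{\{Z_k\}}$ and $\lim_{\varepsilon\to0}$ unspecified. The version your arguments produce and consume is the per-scale one, $\overline{P}(Z)=\lim_{\varepsilon\to0}\inf_{Z\subset\bigcup_k Z_k}\sup_k P_{\mathrm{UC}}(Z_k,T,\varphi,\varepsilon)$, with the decomposition chosen afresh at each scale. The remark after Lemma~\ref{lem:pressure-sep-pack-eq} notes that exactly this point caused a statement/proof mismatch in the compact-case source.

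The paper adopts this $P_{\mathrm{mUC}}$ as its working definition of $\overline{P}$ (Definition~\ref{def:mUC1}). It proves agreement with classical packing pressure in Appendix~\ref{app:packing-pressure} via separated sets, in three steps.
\begin{itemize}
\item Lemma~\ref{lem:pack-geq-sepmUC}: a decomposition with $\sum_k m^*(Z_k,a,\varphi,\varepsilon)<1$ bounds the $(N,2\varepsilon)$-separated sums, because the $\varepsilon$-balls around separated points are disjoint.
\item Lemma~\ref{lem:pack-leq-sepmUC}: pigeonholing over orders extracts a heavy $(k,\varepsilon)$-separated set from a packing.
\item Lemma~\ref{lem:span-sep} with Corollary~\ref{cor:sep-cov-pressure}: separated and spanning sets are compared at the cost of halving the scale and a factor $2^{n\Var(\varphi,2\varepsilon)}$.
\end{itemize}
That is the missing content. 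Once it is in place, your packing sketch becomes Lemmas~\ref{lem:s-net}, \ref{lem:eff_upper} and~\ref{lem:eff_upper_pressure_geq}. There the coding direction needs an exponent $t>s$ so the sum over $n$ converges, with a constant depending on the piece.

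Your lower (Pesin--Pitskel) half is correct. It is the paper's mechanism with the effective layer collapsed into a direct Lutz--Lutz argument; that layer consists of the $A$-c.e.\ null $s$-covers and $\underline{P}^A_{\mathrm{eff}}$ (Lemma~\ref{lem:null_cover}, Theorem~\ref{thm:sup_eq_pressure_lower}). Your Kraft direction even holds for every oracle without admissibility.

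Your remaining worries about commuting $\varepsilon$-limits dissolve once you use one observation explicitly. The minimum over $x_i\in B_n(x,T,\varepsilon)$ is taken over a shrinking set as $\varepsilon\downarrow0$. So $\underline{P}^A(x)<s$ (resp.\ $\overline{P}^A(x)<s$) gives the strict bound at \emph{every} scale, not ``for some small $\varepsilon$''. Three consequences follow:
\begin{itemize}
\item At each fixed $\varepsilon$, all of $Z$ is covered.
\item The packing pieces $\{x\in Z : \text{for all } n\ge N \text{ some } x_i\in B_n(x,T,\varepsilon) \text{ has } K^A(i)<sn-S_n\varphi(x_i)\}$, $N\in\mathbb{N}$, should be taken at the current scale rather than mixed ``as $\varepsilon$ ranges over rationals''.
\item Kraft at each fixed $n$ bounds their weighted spanning cost by $2^{(s+\Var(\varphi,\varepsilon))n}$, uniformly in $n$.
\end{itemize}
No radius doubling is needed either: $x_i\in B_n(x,T,\varepsilon)$ iff $x\in B_n(x_i,T,\varepsilon)$. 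The only loss is the $n\Var(\varphi,\varepsilon)$ gap between center weights and sup weights.
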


Our second pair of results treats the BS (Barreira--Schmeling) dimension, the
quantity that solves Bowen's equation. We work with a generalization of
the classical notion: rather than requiring $u$ to be positive on a
compact space, we allow $u$ to be an arbitrary uniformly continuous
real-valued function and require positivity only asymptotically on
average, pointwise on the set in question. Writing
$\underline{u}(x) = \liminf\limits_{n\to\infty} \frac{S_n u(x)}{n}$ we assume
$\underline{u}(x) > 0$ for all $x \in Z$.

\begin{thmalpha}[Point-to-set principle for BS dimension]
\label{thm:B}
Let $Z \subset X$ with $\underline{u}(x) > 0$ for all $x \in Z$. Then
\[
\dim_{\mathrm{BS}}(Z,T,u)
= \min\limits_{A \subset \mathbb{N}} \sup_{x \in Z}
\dim^A_{\mathrm{BS}}(x,T,u),
\qquad
\Dim_{\mathrm{BS}}(Z,T,u)
= \min\limits_{A \subset \mathbb{N}} \sup_{x \in Z}
\Dim^A_{\mathrm{BS}}(x,T,u).
\]
\end{thmalpha}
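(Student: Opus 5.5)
I would prove Theorem~\ref{thm:B} by the same template as Theorem~\ref{thm:A} and the Lutz--Lutz point-to-set principle: an oracle-free upper bound (every oracle gives pointwise dimensions at least the set dimension somewhere) and an oracle-achieving lower bound (a single oracle whose pointwise dimensions are at most the set dimension), done at a fixed $\varepsilon$ first and then passed to $\varepsilon\to 0$. Recall $\dim_{\mathrm{BS}}(Z,T,u)=\lim_{\varepsilon\to0}\dim_{\mathrm{BS}}(Z,T,u,\varepsilon)$, where $\dim_{\mathrm{BS}}(Z,T,u,\varepsilon)$ is the critical $s$ for the Carath\'eodory sums $\sum_j e^{-sS_{n_j}u(y_j)}$ over covers of $Z$ by Bowen balls $B_{n_j}(y_j,\varepsilon)$ with $n_j\ge N$. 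The packing version uses packings by disjoint Bowen balls and then the usual countable-decomposition infimum. A preliminary step is needed because $u$ may take negative values. Since $\underline u(x)>0$ on $Z$, I decompose $Z=\bigcup_{k,m} Z_{k,m}$ with $Z_{k,m}=\{x: S_nu(x)\ge n/k \text{ for all } n\ge m\}$. Uniform continuity of $u$ gives $S_nu(y)\ge n/(2k)$ for $y\in B_n(x,\varepsilon)$ with $\varepsilon$ small and $n\ge m$. On each $Z_{k,m}$ the weights $e^{-sS_nu}$ therefore decay exponentially in $n$, and $S_nu(x_i)$ differs from $S_nu(x)$ by $o(n)$ for $x_i$ in the Bowen ball. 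Countable stability of $\dim_{\mathrm{BS}}$ and of the pointwise supremum then reduces everything to a single $Z_{k,m}$.

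\emph{Lower bound on $\sup_x\dim^A_{\mathrm{BS}}$.} I fix any oracle $A$ (coding the dense sequence, $d$, $T$, $u$ up to joins) and $s<\sup_x\dim^A_{\mathrm{BS}}(x,T,u)$ is not assumed; instead, I show that if $\dim^A_{\mathrm{BS}}(x,T,u,\varepsilon)<s$ for all $x\in Z$ then $\dim_{\mathrm{BS}}(Z,T,u,\varepsilon')\le s$ for a comparable $\varepsilon'$. For each $x$ there are infinitely many $n$ with some $x_i\in B_n(x,\varepsilon)$ and $K^A(i)<sS_nu(x_i)$. I then cover $Z$ by the balls $B_n(x_i,2\varepsilon)$ over all pairs $(i,n)$ with $n\ge N$ and $K^A(i)<sS_nu(x_i)$. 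The resulting sum is at most $\sum_{(i,n)}2^{-K^A(i)}e^{-(s'-s)S_nu(x_i)}$ for $s'>s$ (using base-$2$ versus $e$ conventions consistently). Kraft's inequality and the exponential growth $S_nu\ge n/(2k)$ make this tail tend to $0$. This direction holds for every $A$, so $\dim_{\mathrm{BS}}(Z)\le \inf_A\sup_x\dim^A_{\mathrm{BS}}(x)$. For $\Dim$ I use the same argument applied pointwise to the sets $\{x:\Dim^A_{\mathrm{BS}}<s\}$, where the condition holds for all large $n$. Here I would bound packing sums by counting, at each level $n$, the centers $x_i$ with $K^A(i)<sS_nu$, which number at most $2^{sS_nu}$ roughly.

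\emph{Achieving oracle.} For $s>\dim_{\mathrm{BS}}(Z,T,u,\varepsilon)$ I take, for each $r\in\mathbb N$, a cover with sum $<2^{-r}$. I approximate each center by a dense-sequence point with Bowen radius doubled, and encode the countable list of pairs $(i_j,n_j)$ into an oracle $B_{s,\varepsilon}$. Relative to $B_{s,\varepsilon}$, the Kraft--Chaitin theorem assigns the ball $(i_j,n_j)$ a code of length $sS_{n_j}u(x_{i_j})+r+O(1)$, and summing over $r$ converges. Each $x\in Z$ lies in balls from infinitely many covers with $n_j\to\infty$ (after forcing $n_j\ge r$). This gives $\dim^{B}_{\mathrm{BS}}(x,T,u,2\varepsilon)\le s$, with the $r$ absorbed since $S_nu\gtrsim n\ge r$. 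For $\Dim$ I use a packing-dimension decomposition $Z\subset\bigcup_\ell Z_\ell$ with small upper-BS-capacity-like quantities. At every large $n$, a maximal $(n,\varepsilon)$-separated subset of $Z_\ell$ gives centers coded at cost $\approx sS_nu$. Then I join over rational $s$, $\varepsilon=1/q$, and $\ell,k,m$ into one oracle $A^*$; monotonicity under $\le_T$ and Lemma~\ref{lem:cone-min} make the minimum attained.

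\emph{Main obstacle.} The main difficulty is the packing/upper case. Relating the upper BS dimension, which is defined via packings and a countable decomposition, to per-level separated-set counts requires an analogue of ``packing dimension equals modified upper box dimension'' for BS dimension, with the weight $S_nu$ varying along the cover. The plan is to partition centers by the value of $S_nu$ into bins of width $1$ and apply the counting bin by bin. The non-uniform positivity of $u$, handled through the $Z_{k,m}$ decomposition, must be kept compatible with this countable decomposition; I expect that bookkeeping is where most of the care lies.
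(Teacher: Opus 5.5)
Your route is correct in outline, but it is not the paper's. The paper never passes directly between covers and complexities. It discretizes $\dim_{BS}$ and $\Dim_{BS}$ as null $s$-covers and uniform $s$-nets indexed by ideal points, and defines $\dim^A_{BS,\mathrm{eff}}$ and $\Dim^A_{BS,\mathrm{eff}}$ through $A$-c.e.\ discretizations. It then proves these equal $\sup_{x\in Z}\dim^A_{BS}(x,T,u)$ and $\sup_{x\in Z}\Dim^A_{BS}(x,T,u)$ for every admissible $A$: the coding theorem gives one direction, and the c.e.\ threshold sets $\{K^A(i,n,p)<sS_nu(x_i)\}$ and $\{K^A(i)<sS_nu(x_i)+k\}$ give the other. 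The achieving oracle is a join of near-optimal discretizations. You instead prove the every-oracle inequality classically. You sum over $n$ using the decay $2^{-(s'-s)n/(2k)}$ from your $Z_{k,m}$ localization, rather than coding triples $(i,n,p)$, and you build the achieving oracle by Kraft--Chaitin on encoded covers. This is the original Lutz--Lutz pattern. It is shorter, and in the first direction it needs no admissibility at all. The paper's detour buys a reusable identity at every admissible oracle and a treatment shared verbatim with pressure.

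Your ``main obstacle'' is not actually an obstacle. In the paper, $\Dim_{BS}$ is by definition the modified upper capacity quantity (Definition~\ref{def:mBSC}), so the upper case is again a cover argument; the packing comparison sits in the appendix only to justify that choice. Even with packings, your own two steps suffice.
\begin{itemize}
\item Every-oracle direction: work on pieces where realizers exist at every level $n\geq N$. Disjointness of a packing $\{B_{n_j}(x_j,T,\varepsilon)\}$ makes $j\mapsto(i_j,n_j)$ injective, where $x_{i_j}\in B_{n_j}(x_j,T,\varepsilon)$ is the low-complexity realizer. So the packing sum is at most $\sum_{n\geq N}2^{-(s'-s)n/(2k)}\sum_i 2^{-K^A(i)}$.
\item Achieving oracle: you only need that an $(N,2\delta)$-separated subset of a piece gives a disjoint family of $\delta$-balls. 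That is the easy Lemma~\ref{lem:mUCsep-leq-BSP}.
\end{itemize}
The hard half, Lemma~\ref{lem:BSP-leq-mUCsep}, is never needed; the paper proves it by pigeonholing over levels with the factor $2^{-(s-t)\rho k}$, not by binning $S_nu$. Running the principle for both definitions in fact recovers Theorem~\ref{thm:packing-bs-equiv}.

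Three slips need fixing.
\begin{itemize}
\item Lower achieving oracle: for an arbitrary ideal point $x_{i_j}$ in the ball, $2^{-sS_{n_j}u(x_{i_j})}$ can exceed the sup-weighted term by a factor $2^{sn_j\Var(u,2\varepsilon)}$. Choose $x_{i_j}$ with $S_{n_j}u(x_{i_j})\geq u(y_j,n_j,\varepsilon)-1$, as in Lemma~\ref{lem:bs_null_cover}.
\item Forcing only $n_j\geq r$ leaves $r/S_{n_j}u$ bounded by $2k$ rather than tending to $0$. Drop the additive $r$, since covers of mass below $2^{-r}$ already have summable total mass, or force $n_j\geq r^2$.
\item Upper achieving oracle: the per-level sums are merely bounded, so summing over $n$ diverges. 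Code at length $sS_nu(x_i)+2\log n+O(1)$, or at an exponent $t>s$ as in Lemma~\ref{lem:eff_upper_bs_geq}.
\end{itemize}
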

In both of theorems \ref{thm:A} and \ref{thm:B} the minimum is attained, and attained on an upper
set of oracles, meaning if an oracle A achieves the minimum then any oracle B which computes A also achieves the minimum. The lower and upper statements require different
discretizations. The lower quantities use the discretization of null $s$-covers, which was introduced in \cite{galatolo-2009} for the lower entropy setting. The upper quantities use a new discretization called uniform
$s$-nets, which discretizes the equivalent definition of modified upper capacity pressure/BS dimension instead of the packing pressure/BS dimension definition. This discretization applies in the packing dimension setting as well and is a new addition to the effective dimension literature. The proofs of the point-to-set principles occupy
Section~\ref{sec_point_to_set}.
Since topological entropy is the special case $\varphi \equiv  0$ of pressure
(equivalently $u \equiv 1$ of BS dimension), Theorems~\ref{thm:A}
and~\ref{thm:B} contain point-to-set principles for Bowen and packing
entropy as corollaries. The Bowen entropy case is essentially due to
Galatolo, Hoyrup, and Rojas \cite{galatolo-2009}, whose framework
we recall and relativize in Section~\ref{subsec_orbit_complexity}. The packing entropy result and all results about pressure and BS dimension are new.

As an immediate illustration of the pointwise method we compute the
topological entropy of isometries (Example~\ref{ex:isometry}) as well as recover the classical relation
$\dim_H(Z) = h_{\mathrm{top}}(Z)/(-\log\beta)$ between Hausdorff
dimension and Bowen entropy for arbitrary subsets of symbolic spaces
with the standard metric $d_\beta$ (Example~\ref{ex:symbolic}).

Our third result examines a classical tool of the dimension theory of dynamical systems from the point-to-set perspective: Bowen's equation
as a pointwise phenomenon. 

\begin{thmalpha}[Pointwise Bowen equation]
\label{thm:C}
Let $x \in X$ with $0 < \underline{u}(x) \leq \overline{u}(x) < \infty$
and let $A \subset \mathbb{N}$ be any oracle. If $\underline{K}^A(x,T)<\infty$ then
$\dim^A_{\mathrm{BS}}(x,T,u)$ is the unique solution $s$ of
\[
\underline{P}^A(x,T,-su) = 0,
\]
and if $\overline{K}^A(x,T)<\infty$ then $\Dim^A_{\mathrm{BS}}(x,T,u)$ is the unique solution of
$\overline{P}^A(x,T,-su) = 0$.
\end{thmalpha}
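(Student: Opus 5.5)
Fix $x$ and $A$ as in the statement. Write $\underline{K}^A(x,T)$ for the lower orbit complexity, i.e. $\lim_{\varepsilon\to0}\liminf_n \min_{x_i\in B_n(x,T,\varepsilon)} K^A(i)/n$, which is the value of $\underline{P}^A(x,T,0)$. Analogously $\overline{K}^A(x,T)=\overline{P}^A(x,T,0)$.

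The plan is to study the function $p(s)=\underline{P}^A(x,T,-su)$ and show four things: it is finite, strictly decreasing, continuous in $s$, and changes sign exactly at $s^*=\dim^A_{\mathrm{BS}}(x,T,u)$. The upper case is handled by the same argument with $\limsup$ in place of $\liminf$.

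\textbf{Step 1: replace $S_nu(x_i)$ by $S_nu(x)$.} Since $u$ is uniformly continuous, for $x_i\in B_n(x,T,\varepsilon)$ we have $|S_nu(x_i)-S_nu(x)|\le n\,\omega_u(\varepsilon)$, where $\omega_u$ is the modulus of continuity. The hypothesis $0<\underline u(x)\le\overline u(x)<\infty$ then gives, for large $n$,
\[
n(\underline u(x)-\eta)\le S_nu(x)\le n(\overline u(x)+\eta).
\]
So up to errors $O(n\,\omega_u(\varepsilon))$, which vanish in the limit $\varepsilon\to0$ after dividing by $n$, I may write
\[
\frac{K^A(i)-sS_nu(x_i)}{n}=\frac{S_nu(x)}{n}\Bigl(\frac{K^A(i)}{S_nu(x)}-s\Bigr)+O(\omega_u(\varepsilon)),
\]
and likewise $K^A(i)/S_nu(x_i)=K^A(i)/S_nu(x)+o_\varepsilon(1)$, uniformly in $i$ for large $n$. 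Here the bound $K^A(i)\lesssim n(\underline K^A+1)$ is needed on a subsequence. Because the minimizing index and the factor $S_nu(x)/n$ are now decoupled from $i$, the minimum over $x_i$ factors through.

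\textbf{Step 2: the two inequalities.} Take $s>s^*$ and set $\delta=(s-s^*)/2$. For small $\varepsilon$ there are infinitely many $n$ and indices $x_i\in B_n$ with $K^A(i)/S_nu(x)<s^*+\delta$. Along these $n$, the normalized pressure term is at most $-(\underline u(x)-\eta)\delta+O(\omega_u(\varepsilon))$. Hence $p(s)\le -\underline u(x)\delta<0$.

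Now take $s<s^*$. For all large $n$, every $x_i\in B_n$ satisfies $K^A(i)/S_nu(x)>s+\delta$, so the term is at least $(\underline u(x)-\eta)\delta$. Hence $p(s)>0$.

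This shows $p>0$ on $(-\infty,s^*)$ and $p<0$ on $(s^*,\infty)$. I also need $p(s^*)=0$. For this I will show $p$ is Lipschitz in $s$: for $s<t$,
\[
(t-s)\,\underline u(x)\le p(s)-p(t)\le (t-s)\,\overline u(x),
\]
obtained from the same bounds on $S_nu$ and the elementary inequality $\liminf(a_n+b_n)$ versus $\liminf a_n+\limsup b_n$. Here the finiteness hypothesis $\underline{K}^A(x,T)<\infty$ enters: it makes $p(0)$ finite, and so, by the Lipschitz bounds, $p$ is finite everywhere. Without it, $p\equiv+\infty$ and $s^*=\infty$. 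The same reasoning gives $s^*\le \underline K^A(x,T)/\underline u(x)<\infty$. Continuity together with the sign change forces $p(s^*)=0$, and strict monotonicity gives uniqueness.

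\textbf{Main obstacle.} The delicate point is that $p$ involves two limits: $\liminf_n$ inside and $\lim_{\varepsilon\to0}$ outside, with a minimum over $i$ in between. I must make sure that the error terms and the ratio manipulation commute with these limits, i.e. that the rates $\eta$ and $\omega_u(\varepsilon)$ are uniform over the minimizing indices. In the upper ($\limsup$) case, the subsequence achieving near-optimal complexity can differ from the one controlling $S_nu(x)/n$. I would handle this by proving the sign statements at fixed $\varepsilon$, with $\pm\omega_u(\varepsilon)$ slack, and only then letting $\varepsilon\to0$, using monotonicity of both quantities in $\varepsilon$.
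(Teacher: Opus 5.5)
Your argument is correct, but it closes the proof differently from the paper. The paper never uses continuity of $s\mapsto\underline{P}^A(x,T,-su)$. Instead it proves the quantitative implication ``$\underline{P}^A(x,T,-su,\varepsilon)<0\Rightarrow\dim^A_{\mathrm{BS}}(x,T,u,\varepsilon)\le s+\underline{P}^A(x,T,-su,\varepsilon)/(\overline{u}(x)+2\Var(u,\varepsilon))$'' and its mirror for positive pressure (Lemmas~\ref{lem:neg-pressure-upper} and~\ref{lem:pos-pressure-lower}). Substituting $s=\dim^A_{\mathrm{BS}}$ then rules out both signs at once, and uniqueness comes from the slope bound of Lemma~\ref{lem:monotone_decreasing}.

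You instead obtain strict signs only off the critical point and pin down $p(s^*)=0$ with the two-sided Lipschitz bound. Your upper slope bound $p(s)-p(t)\le(t-s)\overline{u}(x)$ carries the same information as the paper's estimate $\underline{P}^A(x,T,-su)\ge\overline{u}(x)\bigl(\dim^A_{\mathrm{BS}}(x,T,u)-s\bigr)$ above the root, which the paper reuses in Theorem~\ref{thm:BPE_unique}(i). It also has the bonus of making continuity in $s$ explicit.

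The second difference is that you replace $S_nu(x_i)$ by $S_nu(x)$, which lets the minimum pass to $K^A_n(x,T,\varepsilon)$. The paper keeps the exact factorization $\frac{K^A(i)-sS_nu(x_i)}{n}=\bigl(\frac{K^A(i)}{S_nu(x_i)}-s\bigr)\frac{S_nu(x_i)}{n}$ and evaluates it at a minimizing index. That avoids $O(\omega_u(\varepsilon))$ errors, so the paper also gets the equation exactly at each admissible scale, not only at scale $0$; your route gives only scale $0$, which is all Theorem C asks for.

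Two small repairs are needed.
\begin{enumerate}
\item The comparison of $K^A(i)/S_nu(x_i)$ with $K^A(i)/S_nu(x)$ is not additively uniform in $i$; it fails for indices of large complexity. The multiplicative bound $S_nu(x_i)/S_nu(x)\in[1-c\,\omega_u(\varepsilon),\,1+c\,\omega_u(\varepsilon)]$, with $c\approx 1/\underline{u}(x)$, does hold for every $i$. It suffices for both halves of Step~2 and needs no complexity bound.
\item The subsequence worry in the $\limsup$ case is moot. The bounds $\underline{u}(x)-\eta\le S_nu(x)/n\le\overline{u}(x)+\eta$ hold for all large $n$, not just along a subsequence.
\end{enumerate}
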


The proof directly analyzes the defining $\liminf$/$\limsup$
expressions. The map $s \mapsto \underline{P}^A(x,T,-su)$ is strictly
decreasing with slope bounded away from zero by $\underline{u}(x)$, and
its sign at $s$ is controlled by the comparison of $s$ with the
pointwise BS dimension. The proof uses no covering argument, variational principle, or compactness. Applying Theorems~\ref{thm:A} and~\ref{thm:B} to the pointwise
statement then yields the set-level equation.

\begin{thmalpha}[Set-level Bowen pressure equation]
\label{thm:D}
Let $Z \subset X$ with $0 < \underline{u}(x) \leq \overline{u}(x) < \infty$
for all $x \in Z$. Then
\[
\dim_{\mathrm{BS}}(Z,T,u)
= \sup\{\, s \mid \underline{P}(Z,T,-su) > 0 \,\}
= \inf\{\, s \mid \underline{P}(Z,T,-su) \leq 0 \,\},
\]
\[
\Dim_{\mathrm{BS}}(Z,T,u)
= \sup\{\, s \mid \overline{P}(Z,T,-su) > 0 \,\}
= \inf\{\, s \mid \overline{P}(Z,T,-su) \leq 0 \,\}.
\]
Assume moreover that $\underline{h}(Z,T) < \infty$ (respectively
$\overline{h}(Z,T) < \infty$).
\begin{enumerate}
    \item[(i)] \emph{(Existence)} If $\overline{u}(x) \leq \beta < \infty$
    on $Z$, then the lower (respectively upper) Bowen pressure equation
    \[
    \underline{P}(Z,T,-su) = 0,
    \qquad
    \overline{P}(Z,T,-su) = 0,
    \]
    has a solution, given by $s_0 = \dim_{\mathrm{BS}}(Z,T,u)$
    (respectively $s_0 = \Dim_{\mathrm{BS}}(Z,T,u)$).
    \item[(ii)] \emph{(Uniqueness)} If $\underline{u}(x) \geq \alpha > 0$
    on $Z$, then $s \mapsto \underline{P}(Z,T,-su)$ and
    $s \mapsto \overline{P}(Z,T,-su)$ are strictly decreasing; in
    particular each equation has at most one solution.
\end{enumerate}
In particular, under
$0 < \alpha \leq \underline{u}(x) \leq \overline{u}(x) \leq \beta < \infty$
on $Z$, the lower and upper Bowen pressure equations have unique
solutions, given by $\dim_{\mathrm{BS}}(Z,T,u)$ and
$\Dim_{\mathrm{BS}}(Z,T,u)$ respectively.
\end{thmalpha}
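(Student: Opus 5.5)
We derive Theorem~\ref{thm:D} from the pointwise statement (Theorem~\ref{thm:C}) together with the point-to-set principles (Theorems~\ref{thm:A} and~\ref{thm:B}), using the fact (Lemma~\ref{lem:cone-min}) that the minimum over oracles may be computed inside any upper cone. Since any two oracles have a join, we can fix a base oracle $B_0$ that is optimal at once for $\dim_{\mathrm{BS}}(Z,T,u)$ and for $\underline{P}(Z,T,-su)$ at all rational $s$. This is a countable join of optimal oracles, and it is still optimal because optimality is upward closed. The upper case is identical, replacing $\liminf$ by $\limsup$, so we describe only the lower one.

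\textbf{Step 1: the sup/inf characterization, with no finiteness assumption.} We first prove a pointwise sign lemma, extracted from the mechanism behind Theorem~\ref{thm:C} but without its hypothesis $\underline{K}^A(x,T)<\infty$. Fix $x$ with $0<\underline{u}(x)\le\overline{u}(x)<\infty$, and fix $s$ and $\varepsilon$. At scale $n$ the quantity $(K^A(i)-sS_nu(x_i))/n$ compares $K^A(i)/S_nu(x_i)$ with $s$. The factor $S_nu(x_i)/n$ stays in a compact subinterval of $(0,\infty)$, uniformly over $x_i\in B_n(x,T,\varepsilon)$, up to an error from the uniform continuity of $u$ that vanishes as $\varepsilon\to0$. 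From this:
\begin{itemize}
\item if $s<\dim^A_{\mathrm{BS}}(x,T,u)$, then $\underline{P}^A(x,T,-su)>0$;
\item if $s>\dim^A_{\mathrm{BS}}(x,T,u)$, then $\underline{P}^A(x,T,-su)<0$.
\end{itemize}
Both comparisons carry explicit margins, of order $(\dim^A_{\mathrm{BS}}-s)\,\underline{u}(x)$ and $(s-\dim^A_{\mathrm{BS}})\,\underline{u}(x)$ respectively; the second can be $-\infty$.

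Now set $d=\dim_{\mathrm{BS}}(Z,T,u)=\sup_{x\in Z}\dim^A_{\mathrm{BS}}(x)$ for $A\ge_T B_0$.

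\emph{Case $s<d$.} Take a rational $s'$ with $s<s'<d$, choose an optimal $A$, and pick $x\in Z$ with $\dim^A_{\mathrm{BS}}(x)>s'$. Then $\underline{P}^A(x,T,-s'u)>0$. Since $\underline{u}>0$ on $Z$, the map $s\mapsto \underline{P}^A(x,T,-su)$ is nonincreasing, so $\sup_{x\in Z}\underline{P}^A(x,T,-su)>0$ for every oracle $A$ in the cone. Minimizing over $A$ via Theorem~\ref{thm:A} gives $\underline{P}(Z,T,-su)\ge \underline{P}(Z,T,-s'u)>0$. The only delicate point is that a positive supremum for every oracle yields a positive minimum; it does, because the minimum is attained by a single oracle.

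\emph{Case $s>d$.} Every $x\in Z$ has $\underline{P}^A(x,T,-su)<0$, so $\underline{P}(Z,T,-su)\le 0$ by Theorem~\ref{thm:A}.

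Monotonicity of the set-level pressure in $s$, again from $u$ being positive on average, then gives both the sup and the inf formulas.

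\textbf{Step 2: existence (i).} For $\underline{h}(Z,T)<\infty$ and $\overline{u}\le\beta$, we show that $s\mapsto\underline{P}(Z,T,-su)$ is finite and continuous. Pointwise, $|\underline{P}^A(x,T,-su)-\underline{P}^A(x,T,-tu)|\le \beta|s-t|$ holds up to $\varepsilon$-errors. Finiteness comes from $\underline{P}^A(x,T,-su)\le \underline{K}^A(x,T)+|s|\beta$, together with the point-to-set principle for entropy, which gives $\sup_{x\in Z}\underline{K}^A(x,T)<\infty$ for optimal $A$. So the set pressure is $\beta$-Lipschitz.

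By Step 1 the pressure is $>0$ for $s<d$ and $\le0$ for $s>d$. Continuity then forces $\underline{P}(Z,T,-du)=0$. We must also handle $d=0$ or boundary cases, where $d$ could fail to be finite. Finiteness of $d$ follows from $d\le \underline{h}/\inf\underline{u}$ pointwise.

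\textbf{Step 3: uniqueness (ii).} If $\underline{u}\ge\alpha$ on $Z$, then pointwise $\underline{P}^A(x,T,-tu)\le \underline{P}^A(x,T,-su)-\alpha(t-s)$ for $t>s$. Taking the supremum and then the minimum over oracles preserves this, yielding strict decrease wherever the pressure is finite. If the pressure is $\pm\infty$ it is not $0$, so it causes no trouble.

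The combined statement follows from (i) and (ii).

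\textbf{Main obstacle.} The hardest part is Step 1's margin argument, done uniformly without the hypothesis $\underline{K}^A<\infty$: the minimizing $x_i$ at scale $n$ may have unbounded complexity. We handle this by treating the two ratios $K^A(i)/S_nu(x_i)$ and $K^A(i)/n$ as comparable through the bounds on $S_nu/n$, which requires only $0<\underline{u}\le\overline{u}<\infty$. The other delicate point is the strictness in the case $s<d$, which we secure by using a rational $s'$ strictly between $s$ and $d$.
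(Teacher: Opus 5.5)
\textbf{The gap: finiteness of $d$ in Step 2.} Your Steps 1 and 3 are sound. The problem is the finiteness of $d$ in Step 2. The bound $d\le\underline h(Z,T)/\inf_Z\underline u$ needs $\inf_Z\underline u>0$, but (i) assumes only $\underline u(x)>0$ pointwise and $\overline u\le\beta$. Pointwise you do get $\dim^A_{BS}(x,T,u)\le\underline K^A(x,T)/\underline u(x)<\infty$, but the supremum over $Z$ can be infinite. In that case (i) is false, not merely unproved.

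For instance, take the full shift on $\{0,1,2\}$ with the standard metric and $u=\mathbf{1}_{[2]}$. Let $Z=\bigcup_{k\ge 2}Z_k$, where $Z_k$ is the set of points in which the symbol $2$ has frequency $1/k$.
\begin{itemize}
\item The hypotheses of (i) hold: $\underline u=\overline u=1/k$ on $Z_k$, so $0<\underline u\le\overline u\le 1$ on $Z$, and $\underline h(Z,T)\le\log 3$.
\item Each piece has entropy bounded below: $\underline h(Z_k,T)\ge 1/2$, since the points carrying $2$ exactly at the multiples of $k$ already give $1-1/k$.
\item By Corollary~\ref{cor:constant-exponent}, $\dim_{BS}(Z,T,u)\ge k/2$ for every $k$, so $d=\infty$.
\item By \eqref{eq:const-exp-split} and Theorem~\ref{thm:pts_lower_pressure}, $\underline P(Z,T,-su)\ge\underline h(Z_k,T)-s/k>0$ for every $s$ once $k>2s$.
\end{itemize}
So the lower Bowen equation has no solution. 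Hence (i) needs the extra hypothesis $\dim_{BS}(Z,T,u)<\infty$ (resp.\ $\Dim_{BS}(Z,T,u)<\infty$). Under the two-sided bounds of the final clause, your estimate $d\le\underline h(Z,T)/\alpha$ (resp.\ $\overline h(Z,T)/\alpha$) supplies this, and your argument is then complete. The paper's proof makes the same tacit assumption: it forms the potential $-s_0u$ with $s_0=\dim_{BS}(Z,T,u)$ without checking that $s_0$ is finite.

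\textbf{Comparison with the paper.} Otherwise your route is close to the paper's.
\begin{itemize}
\item Sup/inf formulas: your oracle $B_0$, optimal at every rational $s$ at once, is exactly the dense join of Lemma~\ref{lem:crit-param-min}. Your sign lemma is what Lemma~\ref{lem:sup-dim-crit} uses at each oracle. Proving it without finite orbit complexity is more careful than the paper, which cites Theorem~\ref{thm:pointwise-bowen} there. Lemmas~\ref{lem:neg-pressure-upper}, \ref{lem:pos-pressure-lower} and~\ref{lem:monotone_decreasing} already give it without that hypothesis.
\item Uniqueness: this is Lemma~\ref{lem:set-monotone}.
\item Existence: here you genuinely differ. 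The paper takes one oracle achieving both $\dim_{BS}(Z,T,u)$ and $\underline P(Z,T,-s_0u)$. It gets $\sup_{x\in Z}\underline P^A(x,T,-s_0u)\le 0$ from the sign pattern, and $\ge-\eta\beta$ from the quantitative bound of Lemma~\ref{lem:neg-pressure-upper} at a near-maximizer $x_\eta$. So it needs no continuity and, once $s_0<\infty$, no entropy hypothesis. Your intermediate-value argument uses $\underline h(Z,T)<\infty$ essentially, but it yields the extra fact that $s\mapsto\underline P(Z,T,-su)$ is $\beta$-Lipschitz.
\end{itemize}
If you keep your route, spell out how the Lipschitz bound passes to the set level. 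Over the cone above an entropy-achieving oracle, each $F^A(s)=\sup_{x\in Z}\underline P^A(x,T,-su)$ is finite and $\beta$-Lipschitz. Then $\underline P(Z,T,-su)=\min_A F^A(s)$ over that cone (Lemma~\ref{lem:cone-min}) is an infimum of uniformly $\beta$-Lipschitz functions. Hence it is $\beta$-Lipschitz at every real $s$, not only at the rationals where $B_0$ is optimal.
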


Theorem~\ref{thm:D} is proved in Section~\ref{subsec:bowen-pts} as
Theorems~\ref{thm:BPE} and~\ref{thm:BPE_unique}. This derivation (pointwise equation first, set equation as a
corollary via oracle minimization) is our template for converting
between dimension quantities.
Our final result applies this machinery to conformal nonuniformly
expanding systems, recovering and extending the Bowen equation results
of Climenhaga \cite{climenhaga-2010}. Let $T$ be conformal with
derivative $a(x)$ and set $u = \log a$. Let $\mathcal{A}(0,\infty)$ denote the
set of points with $0 < \underline{u}(x) \leq \overline{u}(x) <
\infty$, let $\mathcal{B}$ denote the set of points with tempered contraction,
and let $\mathcal{C}$ denote the set of points with $u(T^n x) = o(n)$; precise
definitions appear in Section~\ref{subsec:conformal}.

\begin{thmalpha}[Dimension via Bowen's equation, nonuniform noncompact
setting]
\label{thm:E}
Let $T$ be conformal as above and let
$Z \subset \mathcal{A}(0,\infty) \cap \mathcal{B} \cap \mathcal{C}$. Then
\[
\dim_H(Z) = \dim_{\mathrm{BS}}(Z,T,u),
\qquad
\dim_P(Z) = \Dim_{\mathrm{BS}}(Z,T,u).
\]
In particular, if additionally $\underline{u}(x) \geq \alpha > 0$ on
$Z$, then $\dim_H(Z)$ and $\dim_P(Z)$ are the unique solutions of the
lower and upper Bowen equations $\underline{P}(Z,T,-su) = 0$ and
$\overline{P}(Z,T,-su) = 0$.
\end{thmalpha}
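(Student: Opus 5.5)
The plan is to prove Theorem~\ref{thm:E} pointwise and then pass to sets by oracle minimization, in line with the template described after Theorem~\ref{thm:D}. Concretely, I aim to show that for every oracle $A$ in a suitable upper cone (one computing $(X,d,T)$, the derivative $a$, and a modulus of uniform continuity for $u$), every $x\in\mathcal{A}(0,\infty)\cap\mathcal{B}\cap\mathcal{C}$ satisfies
\[
\dim^A(x)=\dim^A_{\mathrm{BS}}(x,T,u),\qquad \Dim^A(x)=\Dim^A_{\mathrm{BS}}(x,T,u).
\]
Given this, I take suprema over $x\in Z$ and minima over $A$. Lemma~\ref{lem:cone-min} lets the minima be computed inside the cone, so the point-to-set principle of \cite{lutz-2023} and Theorem~\ref{thm:B} give $\dim_H(Z)=\dim_{\mathrm{BS}}(Z,T,u)$ and $\dim_P(Z)=\Dim_{\mathrm{BS}}(Z,T,u)$. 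The hypothesis of Theorem~\ref{thm:B} holds because $Z\subset\mathcal{A}(0,\infty)$. The final sentence of Theorem~\ref{thm:E} then follows from Theorem~\ref{thm:D}. Under $\underline{u}\ge\alpha>0$ we have uniqueness by part (ii). For existence in part (i), note that on $\mathcal{A}(0,\infty)$ we have $\overline u<\infty$ pointwise. If a uniform bound $\beta$ is not available, I would instead use the characterization $\inf\{s:\underline P(Z,T,-su)\le 0\}$ together with strict monotonicity. At worst, the conclusion should be read as stating that $\dim_H(Z)$ is the unique crossing point of the Bowen equation.

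The pointwise comparison rests on a geometric lemma: Bowen balls are comparable to metric balls along the orbit. For $x\in\mathcal{B}\cap\mathcal{C}$ and small $\varepsilon$, I expect
\[
B\bigl(x, e^{-S_n u(x)-o(n)}\bigr)\subset B_n(x,T,\varepsilon)\subset B\bigl(x,e^{-S_n u(x)+o(n)}\bigr).
\]
The outer inclusion uses conformality. On a Bowen ball the map $T^n$ expands distances by roughly $e^{S_nu}$, and uniform continuity of $u$ controls distortion. Tempered contraction ($\mathcal B$) is what guarantees that the inverse branches along the orbit do not shrink too fast, so the inner inclusion holds with only $o(n)$ loss. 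Condition $\mathcal{C}$, namely $u(T^nx)=o(n)$, ensures that consecutive radii $e^{-S_nu(x)}$ and $e^{-S_{n+1}u(x)}$ differ by a factor $e^{o(n)}$. Hence every $\delta$ lies between two consecutive scales, up to subexponential error. Uniform continuity of $u$ also lets me replace $S_nu(x_i)$ by $S_nu(x)\pm n\cdot\omega(\varepsilon)$ for $x_i\in B_n(x,T,\varepsilon)$.

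With this lemma in hand, both inequalities follow by routine translation. Given $\delta$, choose $n$ with $S_nu(x)\approx-\log\delta$. Since $\underline u(x)>0$ and $\overline{u}(x)<\infty$, $n$ is comparable to $-\log\delta$. Any index witnessing $K^A_\delta(x)$ lies in $B(x,\delta)$. By the inner inclusion (after adjusting $n$ by $o(n)$ using $\mathcal C$), it lies in $B_{n'}(x,T,\varepsilon)$, which gives $K^A(i)/S_{n'}u(x_i)\lesssim K^A_\delta(x)/(-\log\delta)+o(1)$. The reverse direction uses the outer inclusion. In each direction, an error that is $o(n)$ in the numerator is $o(1)$ in the ratio, because $S_nu\ge cn$. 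Taking $\liminf$ or $\limsup$ and then letting $\varepsilon\to0$ handles the $\omega(\varepsilon)$ term, since it appears divided by $\underline{u}(x)>0$.

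I expect the main obstacle to be the inner inclusion $B(x,e^{-S_nu(x)-o(n)})\subset B_n(x,T,\varepsilon)$ in the nonuniform, noncompact setting. Conformality is only infinitesimal, and the local injectivity radius may shrink along the orbit. My first attempt would be to use the precise definition of tempered contraction in Section~\ref{subsec:conformal}, which presumably gives radii $r_k\ge e^{-o(k)}$ on which $T$ is injective with bounded distortion near $T^kx$. I would then induct backward from time $n$, pulling back a ball of radius $\varepsilon e^{-o(n)}$. At each step the radius shrinks by $a(T^kx)e^{\pm\omega}$ and stays within the good radius $r_k$. Checking that the accumulated distortion is only $e^{o(n)}$ is where $\mathcal B$ and $\mathcal C$ must be combined carefully.
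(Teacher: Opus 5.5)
Your route is the paper's. You prove $\dim^A(x)=\dim^A_{\mathrm{BS}}(x,T,u)$ and $\Dim^A(x)=\Dim^A_{\mathrm{BS}}(x,T,u)$ pointwise from a Bowen-ball/metric-ball comparison and the slowly varying scales supplied by $\mathcal{C}$. This is Theorem~\ref{thm:pointwise-conformal}, via Lemmas~\ref{lem:ball-inclusion} and~\ref{lem:scale-seq}; it holds for every oracle, so no cone is needed there. You then minimize over oracles using the Hausdorff/packing principles and Theorem~\ref{thm:B}.

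The comparison lemma you aim for is stronger than you should expect or need. At a fixed Bowen radius $\delta$ the per-step distortion is only bounded by $\varepsilon_\delta=\Var(\zeta,\delta)$. In the nonuniform setting nothing forces these errors to average out to $o(n)$. The paper's inclusions $B(x,\eta_\delta(x)\,\delta\,2^{-(S_nu(x)+n\varepsilon_\delta)})\subset B_n(x,T,\delta)\subset B(x,\delta\,2^{-(S_nu(x)-n\varepsilon_\delta)})$ carry a loss linear in $n$. This is harmless: it yields factors $1\pm 2\varepsilon_\delta/\underline{u}(x)$ that vanish as $\delta\to 0$, just as your $\omega(\varepsilon)$ term does.

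The inner inclusion, which you single out as the main obstacle, needs no inverse branches or injectivity radii. It follows by forward induction: if $d(x,y)<\rho$ and $d(T^jx,T^jy)<\delta<r_0$ for $j<k$, then $d(T^kx,T^ky)<\rho\,2^{S_ku(x)+k\varepsilon_\delta}$. Tempered contraction is used only to bound $S_ku(x)$, for $k\leq n$, by $S_nu(x)+O(n\varepsilon_\delta)$ plus a constant. That constant is the origin of $\eta_\delta(x)$.

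The final clause is where there is a real gap. You are right that Theorem~\ref{thm:D}(i) needs a uniform $\beta$, but your fallback does not produce a solution. Strict monotonicity together with $\dim_H(Z)=\inf\{s\mid\underline{P}(Z,T,-su)\leq 0\}$ only locates a sign change, not a zero. The existence proof in Theorem~\ref{thm:BPE_unique} uses exactly the bound $\overline{u}\leq\beta$ to show the pressure at $s_0$ is not negative. In addition, strictness in Lemma~\ref{lem:set-monotone} and the existence/uniqueness parts (i)--(ii) of Theorem~\ref{thm:D} require finite entropy, which you do not mention.

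The paper proves this clause (Theorem~\ref{thm:conformal-bowen}) only under $Z\subset\mathcal{A}(\alpha,\beta)$ and $\underline{h}(Z,T)<\infty$ (resp.\ $\overline{h}(Z,T)<\infty$). Without these hypotheses, what you obtain is the critical-value characterization of Theorem~\ref{thm:BPE} carried through the dimension identities, not a root of the Bowen equation.
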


Comparing Theorem~\ref{thm:E} with the main theorem of
\cite{climenhaga-2010} and the extension of \cite{CLIMENHAGA2014The}, we find four differences. First,
Hausdorff and packing dimension are treated simultaneously by the same
argument. The packing statement appears to be new even in the compact
setting. Second, the ambient space is complete separable rather than
compact, with the role of compactness replaced by the sublinear growth condition defining $\mathcal{C}$.
Third, our definition of the BS dimension does not assume property (P) as in \cite{CLIMENHAGA2014The} (see section \ref{sec:intro-history}), allowing us to show equality of the Hausdorff/packing and BS dimensions in all cases under consideration. Fourth, the
proof contains no covering argument at the level of sets:
Theorem~\ref{thm:E} is deduced from a pointwise conversion
(Theorem~\ref{thm:pointwise-conformal}) showing that at every point of
$\mathcal{A}(0,\infty) \cap \mathcal{B} \cap \mathcal{C}$ the effective dimension and the pointwise
BS dimension coincide,
$\dim^A(x) = \dim^A_{\mathrm{BS}}(x,T,u)$, for every oracle $A$. The
set-level theorem is then a chain of oracle minimizations.

\subsection{Context and related work}
\label{sec:intro-history}

The connection between algorithmic information and dynamics dates to
Kolmogorov's conception of Kolmogorov complexity as an alternate
formulation of entropy. Following the development of Kolmogorov--Sinai
entropy, Brudno \cite{Brudno1978The}
defined a pointwise entropy via Kolmogorov complexity and proved that
for an ergodic measure $\mu$ it agrees $\mu$-almost everywhere with the
entropy of $\mu$. White generalized this in his thesis
\cite{white1991algorithmicthesis}, 
parts of which appeared in \cite{white_1993_algorithmic}. 
These frameworks encode points symbolically through finite absolutely
continuous open covers, not necessarily computable, and measure the
information content of orbits through the resulting symbolic sequences.
Galatolo \cite{galatolo-earlier}
and then Galatolo, Hoyrup, and Rojas
\cite{galatolo-2009, galatolo_2011_dynamics} merged this line of
work with computable analysis to produce a streamlined notion of orbit
complexity flexible enough to support a point-to-set principle.
Going from \cite{galatolo-2009} to a point-to-set
principle for Bowen entropy consists only of relativization to
arbitrary oracles and the achieving-oracle construction via countable
joins, and we present the entropy principle in
Section~\ref{subsec_orbit_complexity} as an exposition of their results. The
same cannot be said of the rest of the paper. The upper (packing-type)
quantities do not discretize through covers at all. They require the
uniform $s$-net machinery of
Sections~\ref{subsubsec:upper-pressure-disc}--\ref{subsec_pts_BS_upper}, whose
effectivization is where most of the new technical work of the paper
lies. The BS dimension we treat is more general than the classical one,
replacing global positivity of $u$ by asymptotic-average positivity on
the set in question.

Dimension theory of dynamical systems has developed along two broad
directions. The first is measure-theoretic: in the classical work of
Barreira and others (eg \cite{Ruelle1982Repellers},\cite{young_1982_dimension},\cite{Ledrappier-young},\cite{barreira_2002_higherdimensional}),
thermodynamic formalism produces measures that characterize the
dimension of distinguished sets. This is the standard engine of
multifractal analysis. A dynamically meaningful quantity defines a
parametrized family of level sets. The invariant measures supported on
each level set are studied, the dimension of the set is related to the
dimensions of these measures, and those in turn to entropy and Lyapunov
exponents, yielding a dimension formula. The second direction is purely
dimension-theoretic, and originates with Bowen's definition of
topological entropy as a dimension-type quantity defined for arbitrary
subsets. Pesin generalized this idea into the Carath\'eodory dimension
characteristic framework, within which Pesin and Pitskel defined the
dimension quantity associated to topological pressure. In the same
framework Barreira and Schmeling \cite{Barreira2000Sets}
introduced the BS dimension (originally the $u$-dimension), the
quantity characterizing the solution of the Bowen pressure equation.
The notions studied throughout this literature are those analogous to
Hausdorff dimension: in the naming convention of this paper, the
\emph{lower} pressure and \emph{lower} BS dimension.

Climenhaga \cite{climenhaga-2010}, inspired by Rugh \cite{Rugh2008On}, 
departed from the measure-theoretic pattern by working directly with
the dimension quantities, proving that Hausdorff dimension solves the
Bowen pressure equation for nonuniformly expanding subsets of conformal
systems on compact metric spaces. That paper avoided using the BS dimension
because at that point it was defined only for positive potentials
$u \colon X \to (0,\infty)$, far too restrictive for the nonuniform
setting. Later in \cite{CLIMENHAGA2014The}
Climenhaga generalized the BS dimension to continuous
$u \colon X \to \mathbb{R}$ on compact systems satisfying a global
covering condition he called property~(P): for every
$\delta > 0$ there exist covers $E_N \in \mathcal{P}(X, N, \delta)$
such that $\lim_{N \to \infty} \inf_{(x,n) \in E_N} S_n u(x)
= +\infty$, in the notation of Definition~\ref{def:lower-pressure}.
He extended the Bowen-equation characterization to this generalized
definition, and stated the results of \cite{climenhaga-2010} for systems satisfying property~(P). The corresponding packing-type notions were not introduced until after \cite{climenhaga-2010}. Packing entropy was introduced in \cite{Feng2012Variational_entropy}, 
and packing pressure and packing BS dimension in \cite{Wang2012Variational_BS}.
To our knowledge there has been no further work generalizing BS dimension
beyond strictly positive potentials, and the packing analogues of the
Bowen pressure equation do not appear to have been explored. From this
perspective, the present paper fills several gaps. It provides a unified framework encompassing both the lower and upper dimension quantities for pressure and BS dimension, and hence for entropy; it works in the complete separable setting with uniformly continuous potentials in place of the compact setting; and it establishes Bowen pressure equations for the generalized BS dimension (in particular one usable in an extension of Climenhaga's argument which applies fully to the original setting of his paper), together with the extension of \cite{climenhaga-2010} to the packing setting.

As a final motivation for our approach, we note that much of the technical difficulty in dimension theory lies in the combinatorics of covers. By reframing every quantity through algorithmic information theory we trade those difficulties for the technicalities of computability, but from it we gain the ability to talk about points individually and more directly engage with the geometry and algorithmic information theory of the setting. In our experience, the pointwise arguments are the simpler ones, and lead to the ability to reason in a more natural way.

\subsection{Structure of the paper}
\label{sec:intro-structure}

Section~\ref{sec:prelim} collects preliminaries: computability and
computable analysis, prefix-free Kolmogorov complexity and the coding
theorem, orbit complexity, and the join and upper-cone mechanics
underlying oracle minimization (including
Lemma~\ref{lem:cone-min}). Section~\ref{sec_point_to_set} proves the
point-to-set principles: we first present the Bowen entropy case as a
relativization of Galatolo--Hoyrup--Rojas
(Section~\ref{subsec_orbit_complexity}), then develop the lower and upper
pressure principles via null $s$-covers and uniform $s$-nets
respectively, and then the lower and upper BS dimension principles
under the asymptotic positivity hypothesis. Section~\ref{sec:bowen}
develops the Bowen pressure equation: the pointwise equation
(Theorem~\ref{thm:C}), its set-level consequences, existence and
uniqueness of solutions, and the application to conformal nonuniformly
expanding systems (Theorem~\ref{thm:E}). The appendix establishes the
equivalence of the packing-type definitions used in the body with the
classical packing pressure and packing BS dimension.

\subsection{Notation}
\label{sec:intro-notation}

The following tables collect the recurring notation of the paper, with
pointers to the definitions.

\medskip
\noindent\textbf{Computability and algorithmic information.}\par\medskip
\noindent\begin{tabular}{@{}p{0.30\textwidth}p{0.64\textwidth}@{}}
$A, B \subset \mathbb{N}$ & oracles (Section~\ref{sec:prelim-computability}) \\
$\leq_T$, $\equiv_T$ & Turing reducibility and equivalence \\
$\uparrow\! A$ & upper cone $\{B \mid A \leq_T B\}$ (Definition~\ref{def:cones-joins}) \\
$A \oplus B$, $\bigoplus_j A_j$ & joins of oracles (Definition~\ref{def:cones-joins}) \\
$K^A(\sigma)$ & prefix-free Kolmogorov complexity relative to $A$ (Definition~\ref{def:complexity}) \\
$K^A(S)$ & $\min_{\sigma \in S} K^A(\sigma)$ for a set of strings $S$ \\
$x{\restriction}n$ & length-$n$ prefix of $x \in \mathcal{A}^{\mathbb{N}}$ for $\mathcal{A}$ a finite or countable alphabet. \\
$(x_i)_{i\in\mathbb{N}}$ & ideal points of an A-computable metric space structure (Definition~\ref{def:computable-metric}) \\
admissible oracle & Definition~\ref{def:admissible} \\
$K^A_\delta(x)$ & $\min\{K^A(i) \mid d(x_i,x)<\delta\}$ (Definition~\ref{def:intro-effective-dim}) \\
$\dim^A(x)$, $\Dim^A(x)$ & effective and strong effective dimension (Definition~\ref{def:intro-effective-dim}) \\
\end{tabular}

\medskip
\noindent\textbf{Dynamical data.}\par\medskip
\noindent\begin{tabular}{@{}p{0.30\textwidth}p{0.64\textwidth}@{}}
$(X,d)$, $T$, $\varphi$, $u$ & complete separable metric space, continuous map, uniformly continuous potentials \\
$S_n\varphi$ & Birkhoff sum $\sum_{k=0}^{n-1}\varphi\circ T^k$ \\
$B_n(x,T,\varepsilon)$, $d_n$ & Bowen ball and Bowen metric (Section~\ref{sec:prelim-computable-analysis}) \\
$\Var(\varphi,\varepsilon)$ & modulus $\sup\{|\varphi(x)-\varphi(y)| : d(x,y)\leq\varepsilon\}$ \\
$\varphi(x,n,\varepsilon)$, $u(x,n,\varepsilon)$ & sup-weights $\sup_{y\in B_n(x,T,\varepsilon)}S_n\varphi(y)$ (Section~\ref{subsec:lower-pressure}, Definition~\ref{def:lower-bs}) \\
$\underline{u}(x)$, $\overline{u}(x)$ & $\liminf_n S_nu(x)/n$ and $\limsup_n S_nu(x)/n$ \\
\end{tabular}

\medskip
\noindent\textbf{Pointwise quantities.}\par\medskip
\noindent\begin{tabular}{@{}p{0.30\textwidth}p{0.64\textwidth}@{}}
$K^A_n(x,T,\varepsilon)$ & time-$n$ orbit complexity (Section~\ref{subsec_orbit_complexity}) \\
$\underline{K}^A(x,T)$, $\overline{K}^A(x,T)$ & lower and upper orbit complexity (Section~\ref{subsec_orbit_complexity}) \\
$\underline{P}^A(x,T,\varphi)$, $\overline{P}^A(x,T,\varphi)$ & lower and upper pointwise pressure (Definition~\ref{def:pointwise-pressure}) \\
$\dim^A_{\mathrm{BS}}(x,T,u)$, $\Dim^A_{\mathrm{BS}}(x,T,u)$ & lower and upper pointwise BS dimension (Definition~\ref{def:pointwise-bs}) \\
\end{tabular}

\medskip
\noindent\textbf{Set-level quantities.}\par\medskip
\noindent\begin{tabular}{@{}p{0.30\textwidth}p{0.64\textwidth}@{}}
$\dim_H(Z)$, $\dim_P(Z)$ & Hausdorff and packing dimension \\
$\underline{h}(Z,T)$, $\overline{h}(Z,T)$ & Bowen (lower) and packing (upper) entropy (Definition~\ref{def:bowen-entropy}; Definition~\ref{def:packing-pressure} at $\varphi=0$) \\
$\underline{P}(Z,T,\varphi)$ & lower (Pesin--Pitskel) pressure (Definition~\ref{def:lower-pressure}) \\
$\overline{P}(Z,T,\varphi)$ & upper (packing) pressure, realized as $P_{\mathrm{mUC}}$ (Definition~\ref{def:mUC1}, Appendix~\ref{app:packing-pressure}) \\
$\dim_{\mathrm{BS}}(Z,T,u)$, $\Dim_{\mathrm{BS}}(Z,T,u)$ & lower and upper BS dimension (Definitions~\ref{def:lower-bs} and~\ref{def:mBSC}) \\
$P_{\mathrm{UC}}$, $P_{\mathrm{mUC}}$ & (modified) upper capacity pressure (Definition~\ref{def:mUC1}) \\
$\dim_{\mathrm{BSC}}$ & upper capacity BS dimension (Definition~\ref{def:mBSC}) \\
\end{tabular}

\medskip
\noindent\textbf{Discretizations and effective quantities.}\par\medskip
\noindent\begin{tabular}{@{}p{0.30\textwidth}p{0.64\textwidth}@{}}
null $s$-cover & Definitions~\ref{def:null-s-cover} and~\ref{def:null_cover}, and the BS analogue of Section~\ref{subsec:bs-lower} \\
uniform $s$-net & Definitions~\ref{def:s-net} and~\ref{def:bs-s-net} \\
$E_{n,p,k}$, $Z^{(p)}_k$ & slices of a uniform $s$-net and the induced decomposition of $Z$ \\
$\underline{h}^A_{\mathrm{eff}}$, $\underline{P}^A_{\mathrm{eff}}$, $\overline{P}^A_{\mathrm{eff}}$ & effective entropy and pressures, via $A$-c.e.\ discretizations \\
$\dim^A_{BS,\mathrm{eff}}$, $\Dim^A_{BS,\mathrm{eff}}$ & effective BS dimensions (Definitions~\ref{def:eff-lower-bs} and~\ref{def:eff-upper-bs}) \\
\end{tabular}

\medskip
\noindent\textbf{Appendix and conformal systems.}\par\medskip
\noindent\begin{tabular}{@{}p{0.30\textwidth}p{0.64\textwidth}@{}}
$P^{\mathrm{pack}}$, $\dim_{\mathrm{BSP}}$ & classical packing pressure and packing BS dimension (Definition~\ref{def:packing-pressure}; Appendix~\ref{app:packing-bs}) \\
$Q_n$, $P^{\mathrm{sep}}_{\mathrm{UC}}$, $P^{\mathrm{sep}}_{\mathrm{mUC}}$, $\Dim^{\mathrm{sep}}_{BS}$ & separated-set quantities (Definitions~\ref{def:UC-separated} and~\ref{def:mUCsep}) \\
$a(x)$, $\zeta(x,y)$ & conformal derivative and its two-variable extension; $u = \log a$ (Definition~\ref{def:conformal}) \\
$\mathcal{A}(\alpha_1,\alpha_2)$, $\mathcal{B}$, $\mathcal{C}$ & exponent range, tempered contraction, sublinear growth (Definition~\ref{def:conformal-sets}) \\
$\varepsilon_\delta$ & $\Var(\zeta,\delta)$, the modulus of the conformal derivative \\
\end{tabular}

\medskip
\noindent Throughout, $\log$ denotes the base-$2$ logarithm and
$\exp_2(y) = 2^y$.

\section{Preliminaries}
\label{sec:prelim}

Throughout the paper $\log$ denotes the base-$2$ logarithm, and all
complexity, entropy, pressure, and dimension quantities are normalized
accordingly. We write $\exp_2(y)=2^{y}$ where the exponent is large.

This section collects the computability-theoretic preliminaries. The two tools to take away are the Coding
Theorem of Section~\ref{sec:prelim-kolmogorov}
(Lemma~\ref{lem:coding-theorem}), which converts summable weight
estimates into complexity upper bounds and replaces the covering
combinatorics of the classical theory, and the minimization machinery
of Section~\ref{sec:cones}
(Lemmas~\ref{lem:monotone}--\ref{lem:cone-min}), which
produces the achieving oracles in every point-to-set principle. Readers
with a background in algorithmic randomness may skim directly to
Section~\ref{sec:prelim-computable-analysis} for the dynamical setup
and Section~\ref{sec:cones} for the minimization schema. Standard
references for everything else are \cite{downey_2010_algorithmic} and
\cite{robertirvingsoare_2016_turing}. We use the word computable where older sources use
recursive.

\subsection{Computability and oracles}
\label{sec:prelim-computability}

An oracle is a set $A \subset \mathbb{N}$, regarded as a countable
list of information available to a program. There is a standard
enumeration $\Phi^A_e$ of the $A$-partial computable functions, where
$e$ encodes an instruction set for a program with query access to $A$;
we treat the machine model as a black box and refer to
\cite{robertirvingsoare_2016_turing, downey_2010_algorithmic} for its construction. A set
$E \subset \mathbb{N}$ is \emph{$A$-computably enumerable}
($A$-c.e.) if it is the domain of some $\Phi^A_e$, equivalently the
range of an $A$-total computable function, and \emph{$A$-computable}
if both $E$ and its complement are $A$-c.e.

For oracles $A, B$ we write $A \leq_T B$ ($B$ \emph{computes} $A$) if
the characteristic function of $A$ is $B$-computable, and
$A \equiv_T B$ if both $A \leq_T B$ and $B \leq_T A$. The
\emph{Turing degrees} are the equivalence classes of $\equiv_T$, with
the partial order induced by $\leq_T$.

As sketched in Section~\ref{sec:intro-effdim}, the role oracles play
in this paper is to carry auxiliary data. Typical contents:
\begin{enumerate}
    \item a dense sequence of points in a complete separable metric
    space, together with approximations to the distances between them;
    \item an $\varepsilon$--$\delta$ description (a modulus) of a
    continuous function;
    \item a description of a measure $\mu$ through the values
    $\mu(B_i)$ on a countable basis;
    \item a description of a dynamical system $(X,d,T)$, comprising the
    metric data and the transformation;
    \item in general, any countable list of parameters one wishes an
    algorithm to consult.
\end{enumerate}
Any countable collection of such data may be encoded into a single
oracle, so one may always imagine working in the computable setting
relative to a list of everything needed.

\subsection{Computable analysis and computable dynamics}
\label{sec:prelim-computable-analysis}

\begin{definition}[$A$-computable metric space structure]
\label{def:computable-metric}
Let $(X,d)$ be a complete separable metric space. An
\emph{$A$-computable metric space structure} on $(X,d)$ is a dense
sequence $(x_i)_{i \in \mathbb{N}}$ in $X$ such that the distances are
$A$-computable uniformly: the map sending $(i,j,p)$ to a rational
$q$ with $|q - d(x_i,x_j)| < 2^{-p}$ is $A$-computable. We call the
$x_i$ \emph{ideal points}. The \emph{basic open sets} are the balls
$B_q(x_i)$ with $q \in \mathbb{Q}_{>0}$, encoded by the pairs
$\langle i, q \rangle$; we write $\mathcal{U}(X,d,(x_i))$ for this
encoded collection.
\end{definition}

\begin{lemma}
\label{lem:csms-oracle}
Every complete separable metric space $(X,d)$ admits an oracle $A$ and
an $A$-computable metric space structure.
\end{lemma}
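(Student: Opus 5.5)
The plan is to take the dense sequence supplied by separability and let the oracle simply record all the distance data. Since $(X,d)$ is separable, fix a countable dense sequence $(x_i)_{i\in\mathbb{N}}$ in $X$ (if $X$ is finite, repeat points so the sequence is infinite). Every distance $d(x_i,x_j)$ is a real number, so choose for each triple $(i,j,p)$ a rational $q_{i,j,p}$ with $|q_{i,j,p}-d(x_i,x_j)|<2^{-p}$. For concreteness, take $q_{i,j,p}=\lfloor 2^{p+1}d(x_i,x_j)\rfloor/2^{p+1}$, which is a dyadic rational. Fix computable bijections $\langle\cdot,\cdot,\cdot\rangle:\mathbb{N}^3\to\mathbb{N}$ and $\nu:\mathbb{N}\to\mathbb{Q}$, the latter an enumeration of the rationals. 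Then define
\[
A=\{\langle\langle i,j,p\rangle,m\rangle \mid \nu(m)=q_{i,j,p}\}.
\]

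To verify that $A$ works, we describe an $A$-computable procedure. Given $(i,j,p)$, it searches over $m=0,1,2,\dots$ and queries whether $\langle\langle i,j,p\rangle,m\rangle\in A$. Exactly one $m$ satisfies this, so the search halts and the procedure outputs $\nu(m)=q_{i,j,p}$. This map is total and $A$-computable uniformly in $(i,j,p)$, which is precisely the condition in Definition~\ref{def:computable-metric}. Hence $(x_i)$ is an $A$-computable metric space structure. The basic open sets $B_q(x_i)$ are then encoded by the pairs $\langle i,q\rangle$ exactly as in that definition, and no further verification is needed.

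No step here is a real obstacle: the argument is just "any countable data is encodable into an oracle." The only point needing care is to make the approximation map single-valued, so that $A$ is a well-defined set and the search terminates. Fixing one canonical rational per triple, as above, handles this.

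If one also wants the structure to record $T$, $\varphi$, and $u$, the same encoding applies to a modulus description. This would be done by adding the join with further oracles as in Section~\ref{sec:cones}, but it is not required for the statement as given.
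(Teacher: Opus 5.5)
Your proposal is correct and is essentially the paper's argument: fix a dense sequence by separability and let the oracle record the distances $d(x_i,x_j)$. The paper encodes their binary expansions, while you encode the equivalent dyadic truncations as the graph of a function, which makes the search for $q_{i,j,p}$ explicitly total and single-valued.
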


\begin{proof}
Choose any dense sequence $(x_i)$ by separability and let $A$ encode
the binary expansions of the countably many reals $d(x_i, x_j)$.
\end{proof}

\begin{definition}[$A$-computable function]
\label{def:computable-function}
Let $(X,d,(x_i))$ and $(Y,d',(y_j))$ carry $A$-computable metric space
structures. A function $f \colon X \to Y$ is \emph{$A$-computable} if
there is an $A$-c.e.\ set
$W \subset \mathcal{U}(X,d,(x_i)) \times \mathcal{U}(Y,d',(y_j))$
such that:
\begin{enumerate}
    \item if $(\langle i,q \rangle, \langle j,r \rangle) \in W$ then
    $f(B_q(x_i)) \subset B_r(y_j)$;
    \item for every $x \in X$ and every basic ball
    $B_r(y_j) \ni f(x)$ there is a basic ball $B_q(x_i) \ni x$ with
    $(\langle i,q \rangle, \langle j,r \rangle) \in W$.
\end{enumerate}
$A$-computable functions are closed under composition, uniformly.
\end{definition}

\begin{lemma}
\label{lem:cont-oracle}
A function $f \colon (X,d) \to (Y,d')$ between complete separable
metric spaces is continuous if and only if there is an oracle $A$,
together with $A$-computable structures on both spaces, making $f$
$A$-computable.
\end{lemma}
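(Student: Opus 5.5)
The proof has two directions; both reduce to the equivalence between the computable-analysis notion of Definition~\ref{def:computable-function} and the usual $\varepsilon$--$\delta$ characterization of continuity.

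For the ``if'' direction, suppose $f$ is $A$-computable via an $A$-c.e.\ set $W$. Fix $x \in X$ and $\eta > 0$. Because $(y_j)$ is dense, there is a basic ball $B_r(y_j)$ with $r < \eta/2$ containing $f(x)$. Property (2) of the definition supplies a basic ball $B_q(x_i) \ni x$ with $(\langle i,q\rangle,\langle j,r\rangle) \in W$. Property (1) then gives $f(B_q(x_i)) \subset B_r(y_j) \subset B_\eta(f(x))$. Since $B_q(x_i)$ is an open neighbourhood of $x$, $f$ is continuous at $x$. This direction uses nothing about the oracle; we simply ignore the c.e.\ structure.

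For the ``only if'' direction, suppose $f$ is continuous. Choose dense sequences $(x_i)$ in $X$ and $(y_j)$ in $Y$, and define
\[
W=\{(\langle i,q\rangle,\langle j,r\rangle) \mid f(B_q(x_i)) \subset B_r(y_j)\}.
\]
Let $A$ be the join of three oracles:
\begin{itemize}
\item the oracle of Lemma~\ref{lem:csms-oracle} for $(X,d,(x_i))$;
\item the corresponding oracle for $(Y,d',(y_j))$;
\item the characteristic function of $W$, encoded as a subset of $\mathbb{N}$ through a pairing function.
\end{itemize}
Then both metric structures are $A$-computable, and $W$ is $A$-computable, hence $A$-c.e. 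Property (1) holds by the definition of $W$.

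For property (2), let $f(x) \in B_r(y_j)$ and set $\rho = r - d'(f(x),y_j) > 0$. By continuity there is $\delta > 0$ with $f(B_\delta(x)) \subset B_\rho(f(x)) \subset B_r(y_j)$. Pick a rational $q < \delta/2$ and an ideal point $x_i$ with $d(x_i,x) < q$. Then $x \in B_q(x_i) \subset B_{2q}(x) \subset B_\delta(x)$, so $(\langle i,q\rangle,\langle j,r\rangle) \in W$.

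The only point needing care is that $W$ is generally not computable from the metric data alone: deciding $f(B_q(x_i)) \subset B_r(y_j)$ is a quantifier over uncountably many points. This is exactly why we place $W$ itself into the oracle, which is legitimate because $W$ is a countable object. If one prefers a more economical oracle, an alternative is to encode only a countable table of moduli of continuity, namely the values of $f$ at the ideal points together with local moduli. Encoding $W$ directly is the cleanest route.
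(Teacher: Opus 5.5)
Your proof is correct and follows the paper's argument. For ``if'' you read continuity off properties (1) and (2) pointwise, where the paper argues via openness of preimages of basic balls; for ``only if'' you place the full witness relation $W$ into the oracle alongside the metric data, and your margin $\rho = r - d'(f(x),y_j)$ with $q<\delta/2$ spells out the check of (2) that the paper leaves as ``by continuity.''

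The closing remark about a more economical oracle is not needed and would need care. With an arbitrary choice of one local modulus per ideal point, the balls those moduli certify need not cover every $x \in X$, so the naive enumeration from such a table need not satisfy property (2).
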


\begin{proof}
If $f$ is $A$-computable then by (1) and (2) the preimage of every
basic open ball is a union of basic balls, hence open, so $f$ is
continuous. Conversely, if $f$ is continuous, fix structures as in
Lemma~\ref{lem:csms-oracle} and let $A$ additionally encode the full
witness relation
$W = \{(\langle i,q\rangle, \langle j,r\rangle) \mid f(B_q(x_i))
\subset B_r(y_j)\}$, which satisfies (2) by continuity.
\end{proof}

\begin{remark}[Independence of the structure]
\label{rem:structure-independence}
Let $(x_i)$ and $(x'_j)$ be two computable metric space structures on
the same space $(X,d)$, and let $B$ be any oracle computing both
structures together with the mixed distances $d(x_i, x'_j)$ uniformly
(such a $B$ exists above any oracle computing the two structures, by
Lemma~\ref{lem:csms-oracle} applied to the interleaved sequence). Then
each structure interprets the other by search: given $i$ and $p$, a
$B$-search through $j$ locates $x'_j$ with $d(x_i, x'_j) < 2^{-p}$,
which exists by density and is detected by the mixed-distance data.
Consequently, for every oracle $A \geq_T B$, a $2^{-p}$-approximation
of $x$ in one structure converts to a $2^{-(p-1)}$-approximation in
the other at cost $O(1)$ in complexity, so all pointwise quantities in
this paper computed relative to $A$ agree in the two structures. By
Lemma~\ref{lem:cone-min} below, the set-level characterizations are
therefore independent of the choice of structure outright.
\end{remark}

We now fix the standing dynamical data. Throughout the paper $(X,d)$
is a complete separable metric space, $T \colon X \to X$ is
continuous, and the potentials under consideration
($\varphi$ and $u$, as the context requires) are uniformly continuous
real-valued functions on $X$; the real line carries its standard
computable structure.

\begin{definition}[Admissible oracle]
\label{def:admissible}
An \emph{$A$-computable dynamical system structure} for
$(X,d,T;\varphi_1,\dots,\varphi_k)$ is an $A$-computable metric space
structure $(x_i)$ on $(X,d)$ relative to which $T$ and each potential
$\varphi_1,\dots,\varphi_k$ are $A$-computable. We call $A$
\emph{admissible} (for the standing data) if such a structure exists,
and whenever $A$ is admissible we regard a structure as fixed.
\end{definition}

\begin{remark}
\label{rem:admissible-cone}
Admissible oracles form an upper set of Turing degrees containing an
upper cone: if $A$ is admissible and $B \geq_T A$, the same structure
is $B$-computable; and some admissible oracle exists by
Lemmas~\ref{lem:csms-oracle} and~\ref{lem:cont-oracle} together with a
finite join. Combined with Lemma~\ref{lem:cone-min} below, every
point-to-set minimum in this paper may be computed among admissible
oracles.
\end{remark}

For $n \in \mathbb{N}$ and $\varepsilon > 0$ the \emph{Bowen ball} of
order $n$ and radius $\varepsilon$ is
\[
B_n(x, T, \varepsilon)
= \{\, y \in X \mid d(T^i x, T^i y) < \varepsilon
\text{ for } 0 \leq i < n \,\},
\]
the open ball of the Bowen metric
$d_n(x,y) = \max\limits_{0 \leq i < n} d(T^i x, T^i y)$. Since $T$ is
continuous, each $d_n$ induces the topology of $d$; in particular the
ideal points $(x_i)$ are dense in every Bowen metric. We write
$S_n \varphi = \sum_{k=0}^{n-1} \varphi \circ T^k$ for Birkhoff sums.
The following uniformity is used throughout
Section~\ref{sec_point_to_set}.

\begin{lemma}
\label{lem:birkhoff-uniform}
Let $A$ be admissible with structure $(x_i)$, and let $\varphi$ be one
of the standing potentials. Then $S_n\varphi(x_i)$ is $A$-computable
uniformly in $(i,n)$: the map sending $(i,n,p)$ to a rational $q$ with
$|q - S_n\varphi(x_i)| < 2^{-p}$ is $A$-computable.
\end{lemma}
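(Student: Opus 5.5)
The plan is to reduce the statement to two facts: for each fixed $k$, the point $T^k x_i$ is $A$-computable uniformly in $(i,k)$, and the potential $\varphi$ evaluated at an $A$-computable point is $A$-computable uniformly in that point. A finite sum of $n$ approximations then gives the result.

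First I will extract from the witness set $W$ of Definition~\ref{def:computable-function} for $T$ a uniform procedure that, given a name of a point $y$ (a sequence of basic balls shrinking to $y$ with radii $2^{-m}$), produces a name of $T(y)$. Given a target precision $2^{-r}$, enumerate $W$ and the name of $y$ in parallel until finding $(\langle j,q\rangle,\langle l,r'\rangle)\in W$ with $r'<2^{-r}$ and some name ball $B_{2^{-m}}(x_{j_m})$ certified to lie inside $B_q(x_j)$. Certification uses the $A$-computable distances: it suffices that $d(x_{j_m},x_j)+2^{-m}<q$, which is semi-decidable from rational approximations. Such a pair exists by property (2) of Definition~\ref{def:computable-function} applied at $T(y)$, since $B_q(x_j)\ni y$ is open and the name balls shrink to $y$. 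Property (1) then gives $T(y)\in B_{r'}(x_l)$.

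The ideal point $x_i$ has the trivial name $(x_i,2^{-m})_m$. Iterating the procedure $k$ times, uniformly in $k$, yields names of $T^k x_i$ uniformly in $(i,k)$. The same search, applied with $\varphi$'s witness set and the standard structure on $\mathbb{R}$ (rational balls), produces rationals within $2^{-r}$ of $\varphi(T^k x_i)$, uniformly in $(i,k,r)$.

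Finally, on input $(i,n,p)$, compute for each $k<n$ a rational $q_k$ with $|q_k-\varphi(T^kx_i)|<2^{-(p+\lceil\log n\rceil+1)}$ and output $q=\sum_{k<n}q_k$. The total error is below $n\cdot 2^{-p-\log n-1}<2^{-p}$. All searches halt by density and continuity, so the map is $A$-computable.

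I expect the main obstacle to be the first step, the uniform name transformation: checking that the search always terminates (because of property (2) plus openness) and that containment of a name ball in $B_q(x_j)$ can be verified with strict inequalities using only approximate distances. No uniform continuity is needed here, only pointwise continuity encoded in $W$.
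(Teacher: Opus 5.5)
Your proposal is correct and follows essentially the same route as the paper: each ideal point is $A$-computable, composing with $T^k$ and then $\varphi$ gives $\varphi(T^k x_i)$ as an $A$-computable real uniformly in $(i,k)$, and summing $n$ approximations at precision $2^{-(p+\lceil\log n\rceil+1)}$ gives the result. The only difference is that you spell out, via an explicit name transformation built from the witness set $W$, the ``closure under composition'' step that the paper cites without proof; your search argument and its termination via property (2) are sound.
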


\begin{proof}
Each ideal point $x_i$ is an $A$-computable point of $X$, and $T$,
$\varphi$ are $A$-computable, so each $\varphi(T^k x_i)$ is an
$A$-computable real, uniformly in $(i,k)$, by closure of
$A$-computable functions under composition; sum $n$ of them to
precision $2^{-(p + \lceil \log n \rceil + 1)}$ each.
\end{proof}

\subsection{Kolmogorov complexity and algorithmic randomness}
\label{sec:prelim-kolmogorov}

\begin{definition}[Plain and prefix-free complexity]
\label{def:complexity}
Fix a universal oracle Turing machine $M$ and a universal prefix-free
oracle machine $U$ (one whose domain is an antichain under the prefix
relation for every oracle). See \cite{downey_2010_algorithmic} for the
constructions. The \emph{plain $A$-Kolmogorov complexity} of a finite
binary string $\sigma$ is
$C^A(\sigma) = \min\{ |\tau| \mid M^A(\tau) = \sigma \}$, and the
\emph{prefix-free $A$-Kolmogorov complexity} is
$K^A(\sigma) = \min\{ |\tau| \mid U^A(\tau) = \sigma \}$. For a set
$S$ of strings, $K^A(S) = \min\limits_{\sigma \in S} K^A(\sigma)$. Natural
numbers and tuples of natural numbers are identified with binary
strings through fixed computable codings, so $K^A(i)$, $K^A(i,n)$,
etc.\ are defined.
\end{definition}

Different universal machines change these quantities by additive
constants only, since any universal machine simulates any other at
constant cost. Every asymptotic statement in this paper is insensitive
to the choice. The two notions are related by
\[
C^A(\sigma) \leq K^A(\sigma) + O(1)
\leq C^A(\sigma) + 2\log|\sigma| + O(1),
\]
the first inequality because prefix-free machines are machines, the
second because a plain program can be made self-delimiting by
prefixing a self-delimiting description of its length. We use $K^A$
exclusively from here on, relying on two standard consequences of prefix-freeness.

The first is the Kraft--Chaitin theorem: an effectively listed set of requests for short descriptions can be met, provided the total requested weight is bounded.

\begin{lemma}[Kraft--Chaitin; see \cite{downey_2010_algorithmic}]
\label{lem:kraft-chaitin}
Let $R$ be an $A$-c.e.\ set of \emph{requests} $(n_l, \sigma_l)$,
$n_l \in \mathbb{N}$, $\sigma_l \in 2^{<\mathbb{N}}$, with
$\sum_l 2^{-n_l} \leq 1$. Then there is a prefix-free $A$-machine $N$
such that for every request there is $\tau_l$ with $|\tau_l| = n_l$
and $N^A(\tau_l) = \sigma_l$. Consequently
$K^A(\sigma_l) \leq n_l + O(1)$, the constant depending only on $R$.
Conversely (Kraft inequality), $\sum_\sigma 2^{-K^A(\sigma)} \leq 1$.
\end{lemma}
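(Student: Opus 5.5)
The statement just given is the Kraft--Chaitin theorem together with the Kraft inequality. I will prove the converse (Kraft inequality) first, since it is short. The universal prefix-free machine $U^A$ has a prefix-free domain. For each $\sigma$ in the range, choose a shortest program $\tau_\sigma$ with $U^A(\tau_\sigma)=\sigma$. The cylinders $[\tau_\sigma]\subset 2^{\mathbb{N}}$ are then pairwise disjoint, so their uniform measures $2^{-|\tau_\sigma|}=2^{-K^A(\sigma)}$ sum to at most $1$.

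For the main direction, the plan is the standard online allocation argument, relativized to $A$. Fix an $A$-computable enumeration $(n_l,\sigma_l)_{l}$ of $R$. I will maintain, at each stage $l$, a finite set $F_l$ of pairwise incomparable strings such that:
\begin{itemize}
\item the corresponding cylinders exactly cover the unused part of $2^{\mathbb{N}}$;
\item $F_l$ contains at most one string of each length.
\end{itemize}
Equivalently, the free measure $1-\sum_{m<l}2^{-n_m}$ is represented by its binary expansion. At the start $F_0=\{\lambda\}$.

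When request $(n_l,\sigma_l)$ arrives, the free measure is at least $2^{-n_l}$ because the total weight is at most $1$. The binary representation therefore contains some string $\rho\in F_l$ with $|\rho|\le n_l$. Take the longest such $\rho$ and assign $\tau_l=\rho0^{n_l-|\rho|}$. The free part of $[\rho]$ is then $[\rho1],[\rho01],\dots,[\rho0^{n_l-|\rho|-1}1]$.

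These new strings have lengths $|\rho|+1,\dots,n_l$. Because $\rho$ was the longest string of length $\le n_l$, $F_l$ had no strings with those lengths. The invariant of at most one string per length is therefore preserved. This is exactly binary subtraction of $2^{-n_l}$ from the free measure.

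Define $N^A(\tau_l)=\sigma_l$. The procedure is $A$-computable since it only needs the enumeration of $R$, so $N$ is an $A$-partial computable machine. Its domain $\{\tau_l\}$ is prefix-free, because each $\tau_l$ is carved from a free cylinder that is disjoint from all earlier assignments.

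The main obstacle is this bookkeeping invariant: showing a suitable $\rho$ always exists, and that uniqueness of lengths is preserved. Both reduce to the binary-expansion interpretation above, so I would state and verify that invariant explicitly by induction on $l$.

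Finally, $U$ is universal among prefix-free oracle machines. It therefore simulates $N$ through some coding prefix $\pi_N$ with $U^A(\pi_N\tau)=N^A(\tau)$. This gives $K^A(\sigma_l)\le n_l+|\pi_N|$. The constant depends only on the index of $N$, hence only on (an index for the enumeration of) $R$, and not on $A$'s role beyond the relativization.
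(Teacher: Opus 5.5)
Your proposal is correct. The Kraft inequality via disjoint cylinders of shortest programs is standard. So is the main direction: you keep the free measure $1-\sum_{m<l}2^{-n_m}$ as a set of incomparable strings of distinct lengths and carve $\tau_l=\rho0^{n_l-|\rho|}$ from the longest available $\rho$ with $|\rho|\le n_l$. This is the usual proof in Downey--Hirschfeldt, which the paper cites rather than reproving.

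Two small details are worth adding. First, enumerate $R$ without repetitions, so that the running weight is really bounded by $\sum_l 2^{-n_l}\le 1$. Second, since allocations are only ever made inside free cylinders, $N^X$ has prefix-free domain for every oracle $X$; the paper's definition of a prefix-free oracle machine requires this for $U$ to simulate $N$.
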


The form we actually apply is an easy consequence, used extensively in Section~\ref{sec_point_to_set}: weight functions built from Birkhoff sums are shown to have bounded total mass over a c.e.\ index set, and one concludes a complexity bound on the indices.

\begin{lemma}[Coding Theorem]
\label{lem:coding-theorem}
Let $F$ be an $A$-c.e.\ set of tuples and let $f \colon F \to
\mathbb{R}$ be upper semicomputable in $A$ uniformly on $F$ (in
particular, $A$-computable suffices), with
\[
\sum_{w \in F} 2^{-f(w)} \leq 1.
\]
Then $K^A(w) \leq f(w) + O(1)$ for all $w \in F$, the constant
depending only on $(F, f)$.
\end{lemma}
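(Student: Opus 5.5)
The plan is to reduce the Coding Theorem to the Kraft--Chaitin lemma (Lemma~\ref{lem:kraft-chaitin}) by converting the real-valued weights $f(w)$ into an $A$-c.e.\ set of integer-length requests. Upper semicomputability of $f$ in $A$ means there is an $A$-computable approximation $f_s(w)$, nonincreasing in $s$, with $f_s(w)\to f(w)$; if only computability is assumed, I can produce such a sequence by taking rational approximations and adding small slack. I would run an $A$-computable enumeration of $F$ dovetailed with the stages $s$.

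Whenever a new $w$ appears, or $\lceil f_s(w)\rceil$ drops below every integer already requested for $w$, I issue a request $(\lceil f_s(w)\rceil + 1, w)$. The set $R$ of all requests is then $A$-c.e. For a fixed $w$, the requested lengths are distinct integers, all at least $\lceil f(w)\rceil+1 \ge f(w)+1$. Hence the total weight requested for $w$ is at most $\sum_{m \ge \lceil f(w)\rceil +1} 2^{-m} \le 2^{-\lceil f(w)\rceil} \le 2^{-f(w)}$. Summing over $w\in F$ and using the hypothesis bounds the total request weight by $1$. Kraft--Chaitin then yields a prefix-free $A$-machine describing $w$ with length equal to the smallest requested length for $w$, namely $\lceil f(w)\rceil + 1$; this length is eventually attained because the approximations decrease to $f(w)$. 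This gives $K^A(w)\le f(w)+2+O(1)$, with the constant depending only on the machine built from $(F,f)$.

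The main obstacle is the bookkeeping for the total weight. A naive scheme that issues a new request each time $f_s(w)$ decreases even slightly would produce infinitely many requests per $w$ and could blow the budget. The fix is to request only at new integer values, so that the requests form a geometric series dominated by twice the final term; the extra $+1$ in the requested length absorbs that factor of $2$. A secondary point is that $f$ may take values at or below $0$, or even $-\infty$. The hypothesis $\sum 2^{-f(w)}\le 1$ forces $f(w)\ge 0$, so all requested lengths are positive integers and the infinite case cannot occur. I would note this explicitly before issuing requests.
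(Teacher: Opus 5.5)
Your argument is correct and is the standard Kraft--Chaitin reduction the paper has in mind; the paper gives no proof and presents the lemma as an easy consequence of Lemma~\ref{lem:kraft-chaitin}, i.e.\ the minimality of $K^A$ among information content measures. There are two harmless slips. First, the trigger for a new request should be a strict decrease of $\lceil f_s(w)\rceil$, since as worded $\lceil f_s(w)\rceil$ is always already below the earlier requested lengths $\lceil f_{s'}(w)\rceil+1$. Second, when $f(w)$ is an integer approached strictly from above, the least requested length is $f(w)+2$ rather than $\lceil f(w)\rceil+1$, which still yields your final bound $K^A(w)\le f(w)+2+O(1)$.
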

We will need the following extension, which relaxes the bound $1$ on the sum to an arbitrary finite bound.
\begin{lemma}[Extended coding Theorem]\label{lem:extended-coding-theorem}
    Let $F$ be an $A$-c.e.\ set of tuples and let $f \colon F \to
\mathbb{R}$ be upper semicomputable in $A$ uniformly on $F$ (in
particular, $A$-computable suffices), with
\[
\sum_{w \in F} 2^{-f(w)} \leq M<\infty.
\]
Then $K^A(w) \leq f(w) + O(1)$ for all $w \in F$, the constant
depending only on $(F, f)$.
\end{lemma}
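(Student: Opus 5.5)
The plan is to reduce the extended statement directly to the Coding Theorem (Lemma~\ref{lem:coding-theorem}) by shifting the weight function by a constant. Fix an integer $c \geq 0$ with $2^{c} \geq M$, say $c = \lceil \log M \rceil$ when $M \geq 1$ and $c = 0$ otherwise. Define $g \colon F \to \mathbb{R}$ by $g(w) = f(w) + c$. Then
\[
\sum_{w \in F} 2^{-g(w)} = 2^{-c}\sum_{w \in F} 2^{-f(w)} \leq 2^{-c} M \leq 1 .
\]

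Next I check that $g$ still meets the effectivity hypothesis. Suppose $f$ is upper semicomputable in $A$ uniformly on $F$, meaning there is an $A$-computable procedure enumerating rationals $q > f(w)$ that approximate $f(w)$ from above. Then adding the fixed integer $c$ to each enumerated rational gives the same kind of approximation for $g(w)$. The new procedure is uniform in $w$, and the index set $F$ is unchanged, so it remains $A$-c.e.

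Applying Lemma~\ref{lem:coding-theorem} to the pair $(F,g)$ yields $K^A(w) \leq g(w) + O(1) = f(w) + c + O(1)$. Since $c$ depends only on $M$, and $M$ is determined by the pair $(F,f)$ (take $M$ to be the sum itself, or any fixed bound), this gives $K^A(w) \leq f(w) + O(1)$ with a constant depending only on $(F,f)$.

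I do not expect a genuine obstacle. The only point needing care is that $c$ must be a fixed constant: it need not be computable from $A$, since it enters only as a finite piece of nonuniform advice hard-coded into the machine. This is consistent with the constant in Lemma~\ref{lem:coding-theorem} already being allowed to depend on $(F,f)$.

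If one instead wanted to go back to Kraft--Chaitin (Lemma~\ref{lem:kraft-chaitin}), the plan would be the same. One would issue the requests $(\lceil q \rceil + c + 1, w)$ as upper approximations $q$ to $f(w)$ are enumerated, keeping only those requests where the approximation has halved its weight. The total weight is then bounded by $2 \cdot 2^{-c} M \leq 1$ after adjusting $c$ by one.
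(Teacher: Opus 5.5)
Your argument is correct, but it takes a different route from the paper's.

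The paper leaves $f$ unchanged and shrinks the domain instead. It sets $F_{\geq N}=\{w\in F\mid |w|\geq N\}$ and chooses $N_0$ so that $\sum_{w\in F_{\geq N_0}}2^{-f(w)}\leq 1$, which is possible because the full series converges. It then applies Lemma~\ref{lem:coding-theorem} to the still $A$-c.e.\ set $F_{\geq N_0}$. The finitely many short strings in $F\setminus F_{\geq N_0}$ are absorbed into the constant by taking a maximum.

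You keep the domain and shift the weight by $c=\lceil \log M\rceil$. The only thing to check is that upper semicomputability survives adding a fixed integer, which you do. Your remark about $c$ not needing to be computable from $A$ is moot, since any fixed integer can be hard-coded.

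What each route buys:
\begin{itemize}
\item Your version avoids the tail-convergence and finiteness bookkeeping.
\item Your version makes the dependence on $M$ explicit: an additive $\lceil\log M\rceil$ on top of the coding-theorem constant for $(F,f+c)$.
\item The paper's version never alters $f$ and uses only that finitely many strings have length below $N_0$.
\item The paper's constant, however, comes from an uncontrolled maximum over the discarded finite set.
\end{itemize}

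Either version suffices for the applications in the paper, where only the existence of a constant independent of $w$ is used.
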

\begin{proof}
Define $F_{\geq N}=\{w\in F\mid |w|\geq N\}$. Since
\[
\sum_{w \in F_{\geq N}} 2^{-f(w)} \to 0 \quad \text{as } N\to\infty,
\]
there is $N_0$ with $\sum_{w \in F_{\geq N_0}} 2^{-f(w)}\leq 1$, so the coding theorem applies to $F_{\geq N_0}$. Since $F\setminus F_{\geq N_0}$ is finite, enlarging the constant covers the finitely many remaining $w$, and $K^A(w)\leq f(w)+O(1)$ holds on all of $F$.
\end{proof}

In the algorithmic randomness literature, functions $f$ with
$\sum 2^{-f} \leq 1$ and upper semicomputable are called
($A$-)\emph{information content measures}, and
Lemma~\ref{lem:coding-theorem} is the minimality of $K^A$ among them
\cite{downey_2010_algorithmic}. We will not need the terminology again.
\begin{remark}\label{rem:ce-thresholds}
Sets of the form $\{w : K^A(w) < f(w)\}$, where $f$ is
$A$-computable (or lower semicomputable in $A$) uniformly, are
$A$-c.e.: enumerate $w$ upon witnessing
$K^A_\tau(w) < q < f_\tau(w)$ for some stage $\tau$ and rational
$q$, which occurs if and only if the strict inequality holds.
\end{remark}
We also use the fact that complexity is subadditive under
computable composition up to a uniform constant (depending on the number $n$ of compositions happening).

\begin{lemma}[Composition lemma]
\label{lem:composition}
Fix $n$ and let $N$ be an $A$-machine such that
$N^A(\sigma_1, \dots, \sigma_n) \in Q$ whenever each
$\sigma_i \in P_i$, for sets of strings $P_1, \dots, P_n, Q$. Then
\[
K^A(Q) \leq \sum_{i=1}^{n} K^A(P_i) + O(1),
\]
with the constant uniform in the $P_i, Q$ for fixed $N$ and $n$.
\end{lemma}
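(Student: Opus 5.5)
The plan is to build a single prefix-free $A$-machine that simulates $N$ on concatenated shortest programs. Since $K^A$ is prefix-free, a concatenation $\tau_1\tau_2\cdots\tau_n$ of $U^A$-programs can be parsed uniquely. Fix, for each $i$, a string $\sigma_i\in P_i$ attaining $K^A(P_i)$, and let $\tau_i$ be a shortest $U^A$-program for $\sigma_i$, so $|\tau_i|=K^A(P_i)$. We may assume every $P_i$ is nonempty; otherwise $K^A(P_i)=\infty$ and there is nothing to prove.

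Define an $A$-machine $V$ as follows. On input $\rho$, $V$ simulates $U^A$ on successive prefixes of $\rho$, dovetailing over prefix lengths and stages. Prefix-freeness of $\operatorname{dom}U^A$ guarantees that at most one prefix $\rho_1$ of $\rho$ converges; it yields $\sigma_1=U^A(\rho_1)$. The machine then repeats the search on the remainder of $\rho$ to extract $\rho_2,\dots,\rho_n$ with outputs $\sigma_2,\dots,\sigma_n$. It requires that $\rho=\rho_1\cdots\rho_n$ exactly, and finally outputs $N^A(\sigma_1,\dots,\sigma_n)$.

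Two properties of $V$ need to be checked.

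\begin{enumerate}
\item $\operatorname{dom}V^A$ is prefix-free. Suppose $\rho\preceq\rho'$ are both in the domain. The unique-parsing property forces the same $\rho_1,\dots,\rho_n$ for both, so $\rho=\rho'$.
\item $V^A(\tau_1\cdots\tau_n)$ lands in $Q$. The parsing recovers exactly the $\tau_i$, so $V$ outputs $N^A(\sigma_1,\dots,\sigma_n)$, which lies in $Q$ by hypothesis.
\end{enumerate}

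By universality of $U$ there is a constant $c_V$, depending only on $V$ and hence only on $N$ and $n$, with $K^A(\sigma)\le |\rho|+c_V$ whenever $V^A(\rho)=\sigma$. Applying this with $\rho=\tau_1\cdots\tau_n$ gives
\[
K^A(Q)\le K^A\bigl(N^A(\sigma_1,\dots,\sigma_n)\bigr)\le \sum_{i=1}^n |\tau_i|+c_V=\sum_{i=1}^n K^A(P_i)+O(1).
\]

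The constant is uniform in the sets $P_i$ and $Q$ because $V$ was built from $N$ and $n$ alone, and $n$ is fixed. This is also where the dependence on $n$ enters, as the statement flags. The only delicate point is that $N$ must not also need the lengths or a delimiter. Prefix-freeness of $U$ removes that need, which is why no $\log$-length overhead appears, unlike the analogous bound for plain complexity $C^A$. The argument is a standard relativization of the textbook subadditivity $K(\sigma,\tau)\le K(\sigma)+K(\tau)+O(1)$, so I expect no real obstacle.
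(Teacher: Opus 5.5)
Your proposal is correct and follows essentially the same route as the paper: build the machine that parses $n$ concatenated $U^A$-programs, runs each, and applies $N^A$ to the outputs; note that it is prefix-free; then invoke universality on the concatenation of minimizing programs for the $P_i$. The only difference is that you spell out the unique-parsing argument for prefix-freeness of the domain, which the paper compresses into a single parenthetical.
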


\begin{proof}
The machine that reads $n$ prefix-free programs in sequence, runs each
in $U^A$, and applies $N^A$ to the outputs is itself prefix-free
(self-delimiting inputs concatenate to a self-delimiting input), so it
is simulated by $U^A$ at constant cost; run it on minimizing programs
for the $P_i$.
\end{proof}

\begin{remark}
\label{rem:composition-use}
Lemma~\ref{lem:composition} is invoked with the $\sigma_i$ encoding
orbit data (indices of ideal points approximating an orbit segment)
and lengths of time for which the data is valid. See
Section~\ref{sec_point_to_set}. Two specializations recur often enough to
record: $K^A(i) \leq K^A(i,n) + O(1)$ and, in the other direction,
\[
K^A(i,n) \leq K^A(i) + 2\log n + O(1),
\]
since $n$ admits a self-delimiting description of length
$2 \log n + O(1)$.
\end{remark}

Finally, algorithmic randomness is used in this paper only to produce maximally complex points, which witness lower bounds in examples such as
Example~\ref{ex:intro-cantor}. A sequence $x \in 2^{\mathbb{N}}$ is
\emph{$A$-Martin-L\"of random} if it passes every $A$-effective
sequence of measure-vanishing tests. By the Levin--Schnorr theorem
this is equivalent to the complexity characterization we use:
$x$ is $A$-Martin-L\"of random if and only if
\[
K^A(x{\restriction}n) \geq n - O(1),
\]
where $x{\restriction}n$ is the length-$n$ prefix. Almost every
sequence (for the uniform measure) is $A$-Martin-L\"of random, so such
witnesses always exist. See \cite{downey_2010_algorithmic} for the test
definition, the Levin--Schnorr theorem, and history.

Randomness and measure play a substantive role in the companion paper
\cite{dinowitz2026dimension}.

We record one further tool, used in Example~\ref{ex:symbolic} and in
Theorem~\ref{thm:pointwise-conformal}, which lets us compute the
pointwise dimensions along a sufficiently slowly varying sequence of
scales rather than along all scales.

\begin{lemma}[Dimension along a sequence of scales]
\label{lem:scale-seq}
Let $(\delta_m)_{m\in\mathbb{N}}$ be a sequence of positive scales with
$\delta_m \to 0$ and
$\log\delta_{m+1}/\log\delta_m \to 1$. Then for every oracle $A$ and
every $x$,
\[
\dim^A(x)=\liminf\limits_{m\to\infty}
\frac{K^A_{\delta_m}(x)}{-\log\delta_m},
\qquad
\Dim^A(x)=\limsup\limits_{m\to\infty}
\frac{K^A_{\delta_m}(x)}{-\log\delta_m}.
\]
\end{lemma}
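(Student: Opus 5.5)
The plan is to sandwich each scale $\delta$ between two consecutive terms of the sequence and to use the monotonicity of $\delta \mapsto K^A_\delta(x)$. We may assume without loss of generality that $\delta_m<1$ for all large $m$. It is also convenient to assume $\delta_m$ is eventually nonincreasing. If it is not, we replace it by the running minimum $\delta'_m = \min_{k\le m}\delta_k$ for large $m$; this still tends to $0$, and the ratio condition is preserved up to passing to the relevant subsequence. I would first handle the monotone case and then remark on this reduction.

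The first step is monotonicity. If $\delta \le \delta'$, then every ideal point within $\delta$ of $x$ is also within $\delta'$ of $x$, so $K^A_{\delta'}(x) \le K^A_{\delta}(x)$. Hence $K^A_\delta(x)$ is nonincreasing in $\delta$.

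The second step is the comparison. For $\delta$ small, choose $m$ with $\delta_{m+1} \le \delta \le \delta_m$. This is possible once $\delta_m$ is nonincreasing and tends to $0$, since for small $\delta$ the index $m$ tends to infinity. Monotonicity then gives
\[
\frac{K^A_{\delta_m}(x)}{-\log\delta_{m+1}}\le \frac{K^A_\delta(x)}{-\log\delta}\le \frac{K^A_{\delta_{m+1}}(x)}{-\log\delta_m}.
\]
We rewrite the left side as $\frac{K^A_{\delta_m}(x)}{-\log\delta_m}\cdot\frac{\log\delta_m}{\log\delta_{m+1}}$ and the right side as $\frac{K^A_{\delta_{m+1}}(x)}{-\log\delta_{m+1}}\cdot\frac{\log\delta_{m+1}}{\log\delta_m}$. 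In both products the second factor tends to $1$.

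The third step is to take $\liminf$ and $\limsup$. The set of scales $\{\delta_m\}$ is contained in the set of all scales, so the $\liminf$ over all $\delta$ is at most the $\liminf$ along the sequence. Conversely, the displayed inequality shows that every ratio at an arbitrary $\delta$ is at least $(1-o(1))$ times some sequence ratio. This gives $\liminf_\delta \ge \liminf_m$, and the symmetric argument gives $\limsup_\delta \le \limsup_m$. These products remain valid when the ratios are infinite; however, the ratios are in fact bounded, because $K^A_\delta(x)\le K^A(i_\delta)$ grows at most like the index of the nearest ideal point. In any case the multiplicative factor $1+o(1)$ handles every finite or infinite value.

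The main obstacle is the non-monotone case. The clean route is to note that the hypothesis $\log\delta_{m+1}/\log\delta_m\to1$ forces any $\delta\in(0,\delta_m)$ that is not yet passed to be bracketed by some consecutive pair $\delta_{k+1}\le\delta\le\delta_k$ or $\delta_k\le\delta\le\delta_{k+1}$ with $k$ large. Such a pair exists because the sequence tends to $0$ and so must cross every sufficiently small $\delta$ between some consecutive terms. The inequality above then applies with the roles of $k$ and $k+1$ possibly swapped, which is harmless since the ratio condition is symmetric in the limit.
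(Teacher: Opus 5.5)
Your proposal is correct and is essentially the paper's argument: bracket each small scale between consecutive terms, use that $K^A_\delta(x)$ is nonincreasing in $\delta$, and absorb the factor $\log\delta_{m+1}/\log\delta_m\to 1$. The paper handles non-monotone sequences directly by taking $m(r)=\max\{m\mid\delta_m\geq r\}$, which gives $\delta_{m(r)+1}<r\leq\delta_{m(r)}$ with $m(r)\to\infty$, so your running-minimum reduction is unnecessary; also drop the aside that the ratios $K^A_\delta(x)/(-\log\delta)$ are bounded, since in a general complete separable metric space the effective dimension can be infinite (nothing in your argument uses it).
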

\begin{proof}
Since the sequence $(\delta_m)$ runs over a subfamily of scales,
\[
\dim^A(x)\leq\liminf\limits_{m\to\infty}
\frac{K^A_{\delta_m}(x)}{-\log\delta_m},
\qquad
\Dim^A(x)\geq\limsup\limits_{m\to\infty}
\frac{K^A_{\delta_m}(x)}{-\log\delta_m},
\]
so only the reverse inequalities require proof.

Since $\delta_m\to 0$, for every $r\in(0,\delta_1]$ the set
$\{m\mid \delta_m\geq r\}$ is finite and nonempty, so we may define
\[
m(r)=\max\{m\mid \delta_m\geq r\},
\]
which satisfies
\[
\delta_{m(r)+1}<r\leq\delta_{m(r)}.
\]
Moreover $m(r)\to\infty$ as $r\to 0$: for any $M$, once
$r\leq\delta_M$ we have $m(r)\geq M$.

The quantity $K^A_\delta(x)$ is nonincreasing in $\delta$, since
enlarging the ball enlarges the set over which the minimum is taken.
Hence
\[
K^A_{\delta_{m(r)}}(x)\;\leq\;K^A_r(x)\;\leq\;K^A_{\delta_{m(r)+1}}(x),
\]
while
$0<-\log\delta_{m(r)}\leq-\log r<-\log\delta_{m(r)+1}$ once $r<1$.
Combining the two brackets,
\[
\frac{K^A_{\delta_{m(r)}}(x)}{-\log\delta_{m(r)}}
\cdot\frac{\log\delta_{m(r)}}{\log\delta_{m(r)+1}}
\;\leq\;
\frac{K^A_r(x)}{-\log r}
\;\leq\;
\frac{K^A_{\delta_{m(r)+1}}(x)}{-\log\delta_{m(r)+1}}
\cdot\frac{\log\delta_{m(r)+1}}{\log\delta_{m(r)}}.
\]
As $r\to 0$ we have $m(r)\to\infty$, so by hypothesis both ratios of
logarithms tend to $1$. Taking $\liminf_{r\to 0}$ in the left-hand
bound therefore gives
\[
\dim^A(x)
=\liminf\limits_{r\to 0}\frac{K^A_r(x)}{-\log r}
\;\geq\;
\liminf\limits_{r\to 0}\frac{K^A_{\delta_{m(r)}}(x)}{-\log\delta_{m(r)}}
\;\geq\;
\liminf\limits_{m\to\infty}\frac{K^A_{\delta_m}(x)}{-\log\delta_m},
\]
where the last inequality holds because $m(r)\to\infty$, and taking
$\limsup_{r\to 0}$ in the right-hand bound gives
\[
\Dim^A(x)
=\limsup\limits_{r\to 0}\frac{K^A_r(x)}{-\log r}
\;\leq\;
\limsup\limits_{r\to 0}\frac{K^A_{\delta_{m(r)+1}}(x)}{-\log\delta_{m(r)+1}}
\;\leq\;
\limsup\limits_{m\to\infty}\frac{K^A_{\delta_m}(x)}{-\log\delta_m}.
\qedhere
\]
\end{proof}
\subsection{The oracle order: cones, joins, and minimization}
\label{sec:cones}

\begin{definition}[Cones, upper sets, joins]
\label{def:cones-joins}
The \emph{upper cone} above an oracle $A$ is
$\uparrow\! A = \{ B \mid A \leq_T B \}$. A set of Turing degrees is
an \emph{upper set} if it contains the upper cone above each of its
members. The \emph{join} of $A$ and $B$ is the interleaving
$(A \oplus B)_{2n} = A_n$, $(A \oplus B)_{2n+1} = B_n$; the join
$\bigoplus_n A_n$ of countably many oracles is defined through a fixed
computable pairing of $\mathbb{N}^2$ with $\mathbb{N}$.
\end{definition}

\begin{lemma}
\label{lem:join-cone}
$\uparrow\!(A \oplus B) = \uparrow\! A \cap \uparrow\! B$.
\end{lemma}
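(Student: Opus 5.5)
The plan is to prove the two inclusions $\uparrow\!(A \oplus B) \subseteq \uparrow\! A \cap \uparrow\! B$ and $\uparrow\! A \cap \uparrow\! B \subseteq \uparrow\!(A \oplus B)$ directly from Definition~\ref{def:cones-joins}. Both reduce to exhibiting explicit oracle algorithms, and we then use transitivity of $\leq_T$.

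For the first inclusion, we first show $A \leq_T A\oplus B$ and $B \leq_T A \oplus B$. To decide whether $n \in A$, an $(A\oplus B)$-machine queries position $2n$ of its oracle. To decide whether $n \in B$, it queries position $2n+1$. Both are total and computable relative to the join, so both reductions hold. Now if $C \in \uparrow\!(A\oplus B)$, then $A \leq_T A\oplus B \leq_T C$. Since $\leq_T$ is transitive (compose the reductions: run the outer machine and answer its oracle queries by running the inner one), we get $C \in \uparrow\! A$, and likewise $C \in \uparrow\! B$.

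For the reverse inclusion, suppose $A \leq_T C$ via $\Phi^C_e$ and $B \leq_T C$ via $\Phi^C_{e'}$. We define a $C$-machine that, on input $m$, computes the parity and the quotient $n=\lfloor m/2\rfloor$. It then runs $\Phi^C_e(n)$ if $m$ is even and $\Phi^C_{e'}(n)$ if $m$ is odd, and returns the answer. This machine is total because both reductions are total, and it computes the characteristic function of $A \oplus B$. Hence $A\oplus B \leq_T C$, that is, $C \in \uparrow\!(A\oplus B)$.

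There is no real obstacle here. The only point needing care is that transitivity of $\leq_T$ (closure of oracle computations under composition) is used implicitly; it is standard, and we cite \cite{robertirvingsoare_2016_turing}. The countable join $\bigoplus_n A_n$ could be treated the same way for the first inclusion. The analogue of the second inclusion, however, fails for countable joins without uniformity, so the lemma is stated only for binary joins.
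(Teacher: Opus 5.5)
Your proof is correct and follows the same route as the paper. The join computes $A$ and $B$ by projecting onto even and odd positions, which gives $\uparrow\!(A\oplus B)\subseteq\uparrow\! A\cap\uparrow\! B$ by transitivity, and any $C$ computing both computes the interleaving. Your extra detail on transitivity, and your remark that only the first inclusion carries over to countable joins (which is all the paper's achieving-oracle construction uses), are both accurate.
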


\begin{proof}
$A \oplus B$ computes each of $A, B$ by projection, and any $C$
computing both computes the interleaving.
\end{proof}

Throughout this subsection, $\mathrm{d}^A$ stands for any one of the
pointwise quantities of this paper ($\dim^A$, $\Dim^A$, the orbit
complexities $\underline{K}^A, \overline{K}^A$, the pointwise
pressures $\underline{P}^A, \overline{P}^A$, or the pointwise BS
dimensions $\dim^A_{\mathrm{BS}}, \Dim^A_{\mathrm{BS}}$), and
$Q$ stands for the corresponding set-level quantity.

Every point-to-set principle in this paper is proved by the same
four-step pattern, which we outline here once and instantiate
quantity by quantity in Section~\ref{sec_point_to_set}.
\begin{enumerate}
    \item \emph{Dimensional definition.} The set-level quantity $Q$
    is a dimension quantity defined through covers by Bowen balls.
    \item \emph{Discretization.} Covers and packings are replaced
    by subsets of $\mathbb{N}^3$ or $\mathbb{N}^4$ indexing Bowen
    balls centered at ideal points (the null $s$-covers and uniform
    $s$-nets of Section~\ref{sec_point_to_set}), without changing
    the critical value. A level is proven characterizing the dimension as the $\inf$ over the set of all $s$ where either null s-covers or uniform s-nets exists
    \item \emph{Effectivization.} Restricting to $A$-computably
    enumerable discretizations defines an effective quantity
    $Q^A_{\mathrm{eff}}(Z)$. Two facts are proved about it: the
    inequality $Q(Z) \leq Q^A_{\mathrm{eff}}(Z)$, valid for every
    oracle because fewer discretizations are admitted; and, for
    admissible $A$ (Definition~\ref{def:admissible}), the pointwise
    characterization
    $Q^A_{\mathrm{eff}}(Z) = \sup_{x \in Z}\mathrm{d}^A(x)$,
    with the Coding Theorem (Lemma~\ref{lem:coding-theorem})
    giving one inequality and c.e.\ threshold sets
    (Remark~\ref{rem:ce-thresholds}) the other. Together these give
    $Q(Z) \leq \sup_{x\in Z}\mathrm{d}^A(x)$ for every
    admissible $A$.
    \item \emph{Oracle minimization.} Encoding near-optimal
    discretizations into a single oracle produces almost-achieving
    oracles; since the admissible oracles contain an upper cone
    (Remark~\ref{rem:admissible-cone}), the schema below
    (Lemma~\ref{lem:achieving-schema}, applied with $C$ admissible)
    then delivers the point-to-set principle, with the minimum
    attained on an upper set of Turing degrees.
\end{enumerate}
The next lemmas supply the oracle-monotonicity and minimization
mechanics this pattern relies on.

\begin{lemma}[Monotonicity]
\label{lem:monotone}
If $A \leq_T B$ then $K^B(\sigma) \leq K^A(\sigma) + O(1)$ for all
$\sigma$, and consequently $\mathrm{d}^B(x) \leq \mathrm{d}^A(x)$ for
every $x$ and every quantity $\mathrm{d}$ in the family above (for the
BS pair, at points with $\underline{u}(x)>0$, so that the normalizing
denominators $S_nu(x_i)$ tend to infinity along realizers and the
additive constant is absorbed).
\end{lemma}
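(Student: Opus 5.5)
First, the complexity comparison. Since $A \leq_T B$, fix an index $e$ such that $\Phi^B_e$ computes the characteristic function of $A$. We build a prefix-free $B$-machine $V$ that, on input $\tau$, simulates $U^A(\tau)$, answering each oracle query by running $\Phi^B_e$. The domain of $V^B$ equals the domain of $U^A$, so $V$ is prefix-free. By universality, $U^B$ simulates $V^B$ at constant cost $c_e$, which gives
\[
K^B(\sigma)\leq K^A(\sigma)+c,
\]
with $c$ independent of $\sigma$. The same argument applies verbatim to $K^A_\delta(x)$ and to $K^A(i)$ for indices of ideal points. Here we use the standing convention that the ideal points $(x_i)$ are fixed, as in Remark~\ref{rem:admissible-cone}: the structure computed by $A$ is also $B$-computable. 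This ensures the minima in the pointwise quantities range over the same index sets for both oracles.

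Next we pass the constant through each pointwise quantity. Every quantity in the family is a $\liminf$ or $\limsup$ over $n$ (or over $\delta \to 0$), possibly followed by a limit in $\varepsilon$, of an expression of the form $\min_{i\in I_n}\bigl(K^\cdot(i)+g(i,n)\bigr)/D_n$. Here the index set $I_n$ and the weight $g$ do not depend on the oracle. Replacing $K^A$ by $K^B$ lowers each term inside the min by at most $c$, hence lowers the min by at most $c$, and after normalization by at most $c/D_n$. The normalizations are as follows:
\begin{itemize}
\item for $\dim^A$ and $\Dim^A$, $D=-\log\delta\to\infty$;
\item for the orbit complexities and pressures, $D_n=n\to\infty$.
\end{itemize}
In both cases $c/D\to 0$, so the quantities for $B$ are at most those for $A$ at each fixed $\varepsilon$. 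Monotone limits in $\varepsilon$ preserve the inequality.

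The BS pair is the only delicate step, since the denominator $S_nu(x_i)$ depends on the realizing index $i$ and could be small or negative. I would handle it as follows. Suppose $\underline{u}(x)>0$ and fix $\varepsilon$ small. For $x_i\in B_n(x,T,\varepsilon)$, uniform continuity of $u$ gives
\[
S_nu(x_i)\geq S_nu(x)-n\Var(u,\varepsilon)\geq n\bigl(\underline{u}(x)/2-\Var(u,\varepsilon)\bigr)
\]
for large $n$, which is $\geq n\,\underline{u}(x)/4$ once $\varepsilon$ is small. So every candidate denominator exceeds a linear function of $n$. Writing $m_A$ for the min ratio with $K^A$, pick $i$ realizing it. Then
\[
m_B\leq \frac{K^A(i)+c}{S_nu(x_i)}\leq m_A+\frac{4c}{n\,\underline{u}(x)}.
\]
Taking $\liminf$ or $\limsup$ in $n$ and then letting $\varepsilon\to 0$ gives the claim. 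The main obstacle is exactly this uniform lower bound on denominators. For $\varepsilon$ not small the bound may fail, but because the quantities are defined via the limit $\varepsilon\to 0$ we only need sufficiently small $\varepsilon$. The remaining care is to ensure the sign of $S_nu$ is positive eventually, so that dividing preserves the inequality; the estimate above provides this.
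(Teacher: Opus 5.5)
Your proposal is correct and follows the paper's argument. $B$ simulates $U^A$ at constant cost, and the additive constant is then absorbed by the normalizations $-\log\delta$ and $n$; for the BS pair, at small $\varepsilon$, it is absorbed by $S_nu(x_i)\geq n\,\underline{u}(x)/4$, which makes explicit the paper's terse ``denominators tending to infinity along realizers.''

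One small wording slip: replacing $K^A$ by $K^B$ \emph{raises} each term by at most $c$ (that is, $K^B(i)+g\leq K^A(i)+g+c$); it does not ``lower'' it. Your conclusion and your BS computation already use the correct direction.
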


\begin{proof}
A $B$-machine simulates any $A$-machine after computing $A$, at
constant cost. Each quantity in the family is obtained from
$\sigma \mapsto K^A(\sigma)$ by minimization over realizers followed
by limit operations, all monotone, in which the additive constant is
absorbed by normalizing denominators tending to infinity.

\end{proof}

\begin{lemma}[Achieving-oracle schema]
\label{lem:achieving-schema}
Let $(Q, \mathrm{d})$ be a corresponding pair and let $C$ be an
oracle such that $Q(Z) \leq \sup_{x \in Z} \mathrm{d}^{A}(x)$
for every $A \geq_T C$. Suppose there are oracles $A_j$ with
$\sup_{x \in Z} \mathrm{d}^{A_j}(x) \leq Q(Z) + 2^{-j}$ for each $j$.
Then $B = C \oplus \bigoplus_j A_j$ satisfies
$Q(Z) = \sup_{x \in Z} \mathrm{d}^B(x)$, as does every oracle
$\geq_T B$. In particular
\[
Q(Z) = \min\limits_{A \subset \mathbb{N}} \sup_{x \in Z} \mathrm{d}^A(x),
\]
with the minimum attained on an upper set of Turing degrees.
\end{lemma}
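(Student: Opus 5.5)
The proof splits into an upper bound for $B$, a matching lower bound for $B$, and then upward closure together with the minimum statement.

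\textbf{Upper bound for $B$.} We compare $B$ with each near-achieving oracle $A_j$ using Lemma~\ref{lem:monotone}. Since $A_j \leq_T B$ for every $j$ (by projection from the countable join, as in Lemma~\ref{lem:join-cone}), monotonicity gives $\mathrm{d}^B(x) \leq \mathrm{d}^{A_j}(x)$ for every $x\in Z$. In the BS case this uses the standing positivity hypothesis $\underline{u}(x)>0$ on $Z$. Taking the supremum over $x\in Z$ yields
\[
\sup_{x\in Z}\mathrm{d}^B(x)\leq \sup_{x\in Z}\mathrm{d}^{A_j}(x)\leq Q(Z)+2^{-j}
\]
for every $j$, and letting $j\to\infty$ gives $\sup_{x\in Z}\mathrm{d}^B(x)\leq Q(Z)$.

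\textbf{Lower bound for $B$.} Since $C\leq_T B$, the hypothesis applied with $A=B$ gives $Q(Z)\leq \sup_{x\in Z}\mathrm{d}^B(x)$, so equality holds at $B$.

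\textbf{Upward closure.} Let $D\geq_T B$. Then $D\geq_T C$, so the hypothesis gives $Q(Z)\leq\sup_{x\in Z}\mathrm{d}^D(x)$. Monotonicity with $B\leq_T D$ gives $\sup_{x\in Z}\mathrm{d}^D(x)\leq \sup_{x\in Z}\mathrm{d}^B(x)=Q(Z)$. Hence equality holds on all of $\uparrow\! B$. One could equally rerun the first argument with $A_j\leq_T D$.

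\textbf{The minimum.} To identify $Q(Z)$ as $\min_A\sup_{x\in Z}\mathrm{d}^A(x)$ we must show that no oracle $A$ at all gives a value below $Q(Z)$. Given an arbitrary $A$, set $A' = A\oplus C$. Since $A'\geq_T C$, the hypothesis gives $Q(Z)\leq\sup_{x\in Z}\mathrm{d}^{A'}(x)$, and monotonicity (as $A\leq_T A'$) gives $\sup_{x\in Z}\mathrm{d}^{A'}(x)\leq\sup_{x\in Z}\mathrm{d}^A(x)$. Therefore $\sup_{x\in Z}\mathrm{d}^A(x)\geq Q(Z)$ for every $A$, and this value is attained at $B$. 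So the infimum is a minimum, and it is attained on the upper set of oracles achieving it, which contains $\uparrow\! B$.

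\textbf{Main obstacle.} The only delicate point is the applicability of Lemma~\ref{lem:monotone} for the BS quantities, where the additive $O(1)$ constant must be absorbed. This is exactly where the hypothesis $\underline{u}>0$ on $Z$ is needed: it ensures $S_nu\to\infty$ along realizers. We take that hypothesis as part of the corresponding pair $(Q,\mathrm{d})$. All remaining steps are purely order-theoretic.
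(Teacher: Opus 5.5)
Your proof is correct and follows the paper's argument step by step: monotonicity with $A_j \leq_T B$ for the upper bound, the hypothesis via $C \leq_T B$ for the lower bound, and the same two facts on $\uparrow\! B$. The only cosmetic difference is in the final step. For an arbitrary oracle $A$ you pass to $A \oplus C$ and invoke the hypothesis directly, whereas the paper passes to $A \oplus B$ and uses the equality already established on $\uparrow\! B$; both work equally well.
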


\begin{proof}
By Lemma~\ref{lem:monotone},
$\sup_x \mathrm{d}^B(x) \leq \sup_x \mathrm{d}^{A_j}(x)
\leq Q(Z) + 2^{-j}$ for every $j$, so
$\sup_x \mathrm{d}^B(x) \leq Q(Z)$; the reverse inequality holds
since $B \geq_T C$. The same computation applies to any oracle
above $B$. Finally, for an arbitrary oracle $A$, monotonicity gives
$\sup_x \mathrm{d}^A(x) \geq \sup_x \mathrm{d}^{A \oplus B}(x)
= Q(Z)$, so the minimum over all oracles equals $Q(Z)$ and is
attained on the upper cone above $B$.
\end{proof}

\begin{lemma}[Minimization within a cone]
\label{lem:cone-min}
For every oracle $C$,
\[
\min\limits_{A \subset \mathbb{N}} \sup_{x \in Z} \mathrm{d}^A(x)
= \min\limits_{A \geq_T C} \sup_{x \in Z} \mathrm{d}^A(x).
\]
\end{lemma}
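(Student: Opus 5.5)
The statement has two parts: the minimum over all oracles is attained, and it agrees with the minimum over the cone above $C$. The whole argument rests on Lemma~\ref{lem:join-cone} and monotonicity (Lemma~\ref{lem:monotone}). Write $F(A)=\sup_{x\in Z}\mathrm{d}^A(x)$.

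First I would note that $\{A \mid A\geq_T C\}$ is a subset of all oracles. Hence
\[
\inf_{A\subset\mathbb{N}}F(A)\leq \inf_{A\geq_T C}F(A).
\]
For the reverse inequality, let $A$ be arbitrary and consider the join $A\oplus C$. By Lemma~\ref{lem:join-cone} we have $A\oplus C\geq_T A$, so Lemma~\ref{lem:monotone} gives $\mathrm{d}^{A\oplus C}(x)\leq \mathrm{d}^A(x)$ for every $x\in Z$. Taking suprema, $F(A\oplus C)\leq F(A)$. Since $A\oplus C\geq_T C$, this yields
\[
\inf_{A\geq_T C}F(A)\leq F(A)\quad\text{for every } A,
\]
and hence the two infima coincide.

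For attainment, suppose an oracle $B$ realizes the minimum over all oracles, as furnished by Lemma~\ref{lem:achieving-schema} for the pair $(Q,\mathrm{d})$. Then the oracle $B\oplus C$ lies in the cone above $C$, and by monotonicity
\[
F(B\oplus C)\leq F(B)=\min_A F(A).
\]
So $B\oplus C$ attains the infimum over the cone, and both sides are genuine minima with the same value. Equally, if the minimum is achieved on the cone above $C$, it is a fortiori achieved among all oracles. The argument is therefore symmetric: "min" on either side can be read as "inf", and the join argument transfers attainment from one side to the other.

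The only delicate point is the scope of Lemma~\ref{lem:monotone}. For the BS quantities it holds only at points with $\underline{u}(x)>0$, so in that case I would assume, as in Theorem~\ref{thm:B}, that $Z$ satisfies this hypothesis. Everything else is formal and requires no estimates.
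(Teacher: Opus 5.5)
Your proposal is correct and is essentially the paper's argument. You join a minimizer $B$ with $C$, use Lemma~\ref{lem:monotone} to get $\sup_x \mathrm{d}^{B\oplus C}(x)\leq \sup_x \mathrm{d}^{B}(x)$ with $B\oplus C\geq_T C$, and obtain the reverse inequality because the cone is a smaller set of oracles. Your extra observations, that the infima agree without assuming attainment and that attainment transfers in both directions, are a harmless refinement of the same join-plus-monotonicity idea.
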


\begin{proof}
If the left minimum is attained at $B$, then by
Lemma~\ref{lem:monotone} applied to $B \leq_T B \oplus C$,
$\sup_x \mathrm{d}^{B \oplus C}(x) \leq \sup_x \mathrm{d}^B(x)$, and
$B \oplus C \geq_T C$; the reverse inequality holds because the right
side minimizes over fewer oracles.
\end{proof}

\begin{remark}
\label{rem:lower-bound-cone-insensitive}
Lemma~\ref{lem:monotone} makes the lower-bound proof strategy of the
introduction cone-insensitive: to show
$\min\limits_A \sup_x \mathrm{d}^A(x) \geq s$ it suffices to produce, for
every oracle in the cone above some fixed $C$, a point of
$A$-pointwise dimension at least $s - \varepsilon$, since for
arbitrary $B$ the witness for $B \oplus C$ already gives
$\sup_x \mathrm{d}^B(x) \geq \sup_x \mathrm{d}^{B \oplus C}(x)
\geq s - \varepsilon$. Upper bounds, by contrast, are genuinely
cone-relative. A single oracle $B$ bounding every point bounds the
minimum, by Lemma~\ref{lem:cone-min}.
\end{remark}
\section{The point-to-set principle for dynamical dimension quantities}
\label{sec_point_to_set}
In \cite{galatolo-2009} the authors developed a framework for working with a computable
version of lower topological entropy. With minimal work this framework
can be turned into a point-to-set principle for lower entropy, which we
exposit in Section~\ref{subsec_orbit_complexity}. Since the results there
are relativizations of \cite{galatolo-2009}, we give only sketches
indicating the necessary changes. In the remainder of the section we
construct two frameworks directly generalizing this principle for
lower and upper topological pressure, and for lower and upper BS
dimension with full proofs. This provides a complete analogy with the point-to-set
principles for Hausdorff and packing dimension in
\cite{lutz-2023}. Entropy is a special case of both pressure and BS dimension, being
the topological pressure with respect to the potential $\varphi \equiv 0$
and the BS dimension with respect to $u \equiv  1$.
\subsection{Orbit complexity and the point-to-set principle for lower
entropy}\label{subsec_orbit_complexity}
In this subsection we adapt the results in \cite{galatolo-2009}. The
main difference in our formulation of orbit complexity is that we use Bowen balls
$B_n(x,T,\varepsilon)=\{y\mid d(T^ix,T^iy)<\varepsilon \text{ for }
0\leq i< n\}$ in place of pseudo-orbits. As noted in
\cite{galatolo-2009}, the two definitions agree at scale $0$ by a
search argument. Throughout this section we fix an admissible oracle
$A$ for $(X,d)$ with structure $(x_i)_{i\in\mathbb{N}}$
(Definition~\ref{def:admissible}).\\
The proofs in this subsection are sketches. Each is subsumed by the
corresponding pressure and BS dimension arguments proved in full in
Sections~\ref{subsubsec:lower-pressure-pts}, and~\ref{subsubsec:lower-bs-pts}, which
recover the entropy statements as the special cases $\varphi = 0$ and
$u \equiv 1$.\\
Our definitions of the dimension quantities throughout this section
differ from the formulation in \cite{galatolo-2009} and from the
classical formulations in two ways chosen to make them readily
discretizable. We measure covers and packings by Bowen balls rather
than by the $\varepsilon$-size of arbitrary open sets, and we allow
centers to range over all of $X$ rather than over the set $Z$ under
study, so that centers may be moved onto the dense sequence of ideal
points. Both modifications leave the critical values unchanged. The first is
justified by the scale-$0$ comparison of \cite{galatolo-2009}, the
second by a standard doubling argument at cost
$\Var(\varphi,\varepsilon)$. For the upper quantities we
additionally work with the modified upper capacity formulations of
packing pressure and packing BS dimension, which discretize directly
through uniform $s$-nets. The equivalence with the classical packing
definitions is established in the appendix.

\begin{definition}[Orbit complexity]
The time-$n$ orbit complexity of $x$ at scale $\varepsilon$ (with
respect to $T:X\to X$) is
\[K^A_n(x,T,\varepsilon)=\min\{\,K^A(i)\mid x_i\in
B_n(x,T,\varepsilon)\,\}.\]
The $A$-lower and $A$-upper orbit complexities of $x$ are
\[\underline{K}^A(x,T,\varepsilon)=\liminf\limits_{n\to\infty}
\frac{K^A_n(x,T,\varepsilon)}{n},
\qquad
\underline{K}^A(x,T)=\lim\limits_{\varepsilon\to 0}
\underline{K}^A(x,T,\varepsilon),\]
\[\overline{K}^A(x,T,\varepsilon)=\limsup\limits_{n\to\infty}
\frac{K^A_n(x,T,\varepsilon)}{n},
\qquad
\overline{K}^A(x,T)=\lim\limits_{\varepsilon\to 0}
\overline{K}^A(x,T,\varepsilon).\]
\end{definition}

We recall a number of definitions and theorems from
\cite{galatolo-2009}.
%-------------------------------------------------
%  ε–size and 2–entropy dimension (following Galatolo et al.)
%-------------------------------------------------
\begin{definition}[Bowen topological entropy for arbitrary sets]\label{def:bowen-entropy}
    Let \(X\) be a metric space and \(T:X\to X\) a continuous map.  
The \(\varepsilon\)-\emph{size} of a set \(E\subset X\) is the dyadic
value \(2^{-n_\varepsilon(E)}\), where
\[
n_\varepsilon(E) \;=\; \sup\!\bigl\{\, n\ge 0 :
\operatorname{diam}\!\bigl(T^{i}E\bigr)\le\varepsilon
\text{ for } 0\le i<n \bigr\}.
\]
In words, \(n_\varepsilon(E)\) measures how long the orbits starting
from \(E\) stay \(\varepsilon\)-close. As \(\varepsilon\downarrow 0\), the \(\varepsilon\)-size of any set is non-decreasing.\\
For instance, the \(2\varepsilon\)-size of a Bowen ball \(B_{n}(x,T,\varepsilon)\) is at most \(2^{-n}\).

\medskip
Mimicking the construction of Hausdorff measure, define

\[
m_\delta^{s}(Z,\varepsilon)\;=\;
\inf_{\mathcal{G}}
\Bigl\{\;
   \sum_{U\in\mathcal{G}}\bigl(\varepsilon\text{-size}(U)\bigr)^{s}
\Bigr\},
\]

where the infimum ranges over all countable covers
\(\mathcal{G}\) of \(Z\) by open sets of \(\varepsilon\)-size \(<\delta\).
Because \(m_\delta^{s}(Z,\varepsilon)\) is monotonically \emph{increasing} as \(\delta\to 0\),
the limit  

\[
m^{s}(Z,\varepsilon)\;:=\;\lim\limits_{\delta\to 0^{+}} m_\delta^{s}(Z,\varepsilon)
\]

exists and equals the supremum of the values \(m_\delta^{s}\).
There is a critical exponent \(s_{0}\) such that  

\[
m^{s}(Z,\varepsilon)=\infty \quad\text{for } s<s_{0},
\qquad
m^{s}(Z,\varepsilon)=0     \quad\text{for } s>s_{0}.
\]

Set this critical value to be

\[
\underline{h}(Z,T,\varepsilon)
   \;:=\;
   \inf\{s : m^{s}(Z,\varepsilon)=0\}
   \;=\;
   \sup\{s : m^{s}(Z,\varepsilon)=\infty\}=s_0.
\]

Since the class of admissible covers shrinks as \(\varepsilon\to 0\)
(the \(\varepsilon\)-size of any fixed set does not decrease),
the following limit exists:

\[
\underline{h}(Z,T)
   \;:=\;
   \lim\limits_{\varepsilon\to 0^{+}} \underline{h}(Z,T,\varepsilon),
\]
and it is a supremum.\\
\end{definition}
This is an equivalent form of the base-$2$ definition of Bowen
entropy. In \cite{galatolo-2009} it is called $h_2$, to distinguish it
from upper capacity entropy, there called $h_1$. Next we discretize
this notion in terms of subsets of $\mathbb{N}^3$.

\begin{definition}[Null $s$-cover for Bowen entropy]\label{def:null-s-cover}
Let $Z \subset X$. A \emph{null $s$-cover} of $Z$ is a set
$E \subset \mathbb{N}^{3}$ satisfying
\begin{enumerate}
  \item $\displaystyle\sum_{(i,n,p)\in E} 2^{-s n} < \infty$;
  \item for every $k,p \in \mathbb{N}$, the family
        $\bigl\{\, B_{n}(x_{i},T,2^{-p}) : (i,n,p)\in E,\ n \ge k
        \bigr\}$ is a cover of $Z$.
\end{enumerate}
\end{definition}
Condition 1 is the $\varepsilon$-size sum of
Definition~\ref{def:bowen-entropy}, restricted to covers by indexed
Bowen balls.\\
Galatolo, Hoyrup, and Rojas proved the following
\cite[Theorem~2.4]{galatolo-2009}.

\begin{lemma}\label{lem:h2-null-s-cover}
For a continuous map $T:X\to X$ and $Z\subset X$,
\[
\underline{h}(Z,T)\;=\;\inf\bigl\{\,s : Z\text{ admits a null
}s\text{-cover}\bigr\}.
\]
\end{lemma}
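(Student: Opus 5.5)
We prove the two inequalities separately, comparing covers by arbitrary open sets of small $\varepsilon$-size, as in Definition~\ref{def:bowen-entropy}, with covers by indexed Bowen balls centered at ideal points. The only tool needed is the elementary comparison between $\varepsilon$-size and Bowen balls, together with the density of $(x_i)$ in every Bowen metric $d_n$. The inequalities are taken in the limit $p\to\infty$, so that the scale $2^{-p}$ in a null $s$-cover plays the role of $\varepsilon$.

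\emph{Step 1: a null $s$-cover forces $\underline{h}(Z,T)\le s$.} Let $E$ be a null $s$-cover of $Z$ and fix $\varepsilon>0$. Choose $p$ with $2\cdot 2^{-p}\le\varepsilon$. For $(i,n,p)\in E$ the ball $B_n(x_i,T,2^{-p})$ has $\varepsilon$-size at most $2^{-n}$, since every iterate $T^jB_n$ with $j<n$ has diameter at most $2\cdot 2^{-p}\le\varepsilon$. Given $\delta>0$, pick $k$ with $2^{-k}<\delta$. Condition~2 says the balls with index $(i,n,p)\in E$, $n\ge k$, cover $Z$; this cover is admissible for $m^{s}_\delta(Z,\varepsilon)$. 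Its cost is at most $\sum_{(i,n,p)\in E,\,n\ge k}2^{-sn}$, a tail of the convergent series in condition~1, so it tends to $0$ as $k\to\infty$. Monotonicity in $\delta$ then gives $m^{s}(Z,\varepsilon)=0$. Hence $\underline{h}(Z,T,\varepsilon)\le s$ for every $\varepsilon$, and letting $\varepsilon\to0$ gives $\underline{h}(Z,T)\le s$.

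\emph{Step 2: for $s>\underline{h}(Z,T)$, build a null $s'$-cover for every $s'>s$.} For each $p$ we have $\underline{h}(Z,T,2^{-p})\le\underline{h}(Z,T)<s$, so $m^{s}(Z,2^{-p})=0$. For each $k$ we therefore get a countable cover $\mathcal G_{p,k}$ of $Z$ by open sets of $2^{-p}$-size below $2^{-k}$, with $\sum_{U\in\mathcal G_{p,k}}(2^{-p}\text{-size}\,U)^s\le 2^{-p-k}$.

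Each $U$ with $n=n_{2^{-p}}(U)\ge k$ is replaced by a Bowen ball centered at an ideal point. Pick any $y\in U$; then $U\subset\overline{B}_n(y,T,2^{-p})$. By density of ideal points in $d_n$, choose $x_i$ with $d_n(x_i,y)<2^{-p}$, so $U\subset B_n(x_i,T,2\cdot 2^{-p}+\eta)\subset B_n(x_i,T,2^{-(p-2)})$. We record the triple $(i,n,p-2)$.

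The radius/index mismatch is handled by a reindexing: put $E=\{(i,n,q)\}$ with the triples coming from $\mathcal G_{q+2,k}$ over all $k$. Condition~2 then holds for every pair $(k,q)$, since the balls obtained from $\mathcal G_{q+2,k'}$ for $k'\ge k$ cover $Z$. The sum in condition~1 is bounded by $\sum_{q,k}2^{-q-k-2}<\infty$, because each $U$ contributes exactly $2^{-sn}$. So condition~1 already holds with exponent $s$, and $s'$ is not even needed.

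\emph{The main obstacle.} The delicate point is bookkeeping rather than analysis. The $\varepsilon$-size is defined through non-strict diameter bounds, while Bowen balls are open and must be centered at ideal points; this costs the factor $4$ in the radius. Also, condition~2 requires a cover for \emph{every} pair $(k,p)$ at the exact stated radius index, which is exactly why the layers must be reindexed by $q=p-2$ as above.

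The two directions together give the stated infimum characterization. Because of the $\varepsilon\to0$ limit, the infimum equals $\underline{h}(Z,T)$ even though individual scales shift by a constant factor.
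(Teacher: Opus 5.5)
The paper gives no proof of this lemma; it quotes it from \cite[Theorem~2.4]{galatolo-2009}. Your argument uses the same scheme as the paper's proof of the pressure analogue, Lemma~\ref{lem:null_cover}. There, tails of a null cover give admissible covers of vanishing mass. Conversely, near-optimal covers of summable mass are recentred at ideal points with a radius enlargement and collected over all $p,k$. You add the translation between $\varepsilon$-size and Bowen balls that this formulation needs, and you do it correctly. A Bowen ball of radius $\varepsilon/2$ and order $n$ has $\varepsilon$-size at most $2^{-n}$. A set $U$ with finite $n=n_\varepsilon(U)$ lies in $\overline{B}_n(y,T,\varepsilon)$ for any $y\in U$. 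Two small remarks: since $d_n(z,y)\le 2^{-p}$ and $d_n(y,x_i)<2^{-p}$, radius $2^{-(p-1)}$ already suffices, so the factor $4$ is unnecessary; and the $s'$ in the heading of Step~2 plays no role.

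The one step that fails as written is the case $n_{2^{-p}}(U)=+\infty$. The definition allows this, since $n_\varepsilon$ is a supremum. It occurs whenever $\operatorname{diam}(T^jU)\le 2^{-p}$ for all $j$, for example every small set when $T$ is an isometry. Such a $U$ has $2^{-p}$-size $0$ and contributes $0^s=0$ to the cost of $\mathcal{G}_{p,k}$. So a near-optimal cover may contain infinitely many of them at no cost. For these $U$ your triple $(i,n,p-2)$ is not in $\mathbb{N}^3$. Your claim that each $U$ contributes exactly $2^{-sn}$ also breaks down, because any finite-order replacement has positive weight.

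The repair is short. Since $s>\underline{h}(Z,T)\ge 0$, enumerate these sets in $\mathcal{G}_{p,k}$ as $U_1,U_2,\dots$ and assign $U_j$ the finite order $n_j=p+k+j$. Then $U_j\subset\overline{B}_{n_j}(y,T,2^{-p})$ for any $y\in U_j$, so the covering and recentring go through unchanged. The added mass is $\sum_{p,k,j}2^{-s(p+k+j)}<\infty$. With this patch your proof is complete.
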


They also gave an effective version of this characterization, which we
relativize to an arbitrary oracle $A$.

\begin{definition}[$A$-effective null $s$-cover]
An \emph{$A$-effective null $s$-cover} is a null $s$-cover
$E\subset\mathbb{N}^{3}$ that is $A$-computably enumerable.
\end{definition}
%-------------------------------------------------
%  Effective topological entropy
%-------------------------------------------------
\begin{definition}[$A$-effective topological entropy]
\label{def:eff-top-entropy}
For a continuous map $T$ and $Z\subset X$, the \emph{$A$-effective
topological entropy} is
\[
  \underline{h}^A_{\mathrm{eff}}(Z,T)
     \;=\;
     \inf\bigl\{\,s : Z
        \text{ admits an $A$-effective null $s$-cover}\bigr\}.
\]
The unrelativized notion ($A = \emptyset$) is the effective topological
entropy of \cite{galatolo-2009}.
\end{definition}

Because fewer $s$-covers are allowed in the $A$-effective setting,
$\underline{h}(Z,T)\le \underline{h}^A_{\mathrm{eff}}(Z,T)$ for every oracle $A$; and if
$Z\subset Z'$ then
$\underline{h}^A_{\mathrm{eff}}(Z,T)\le \underline{h}^A_{\mathrm{eff}}(Z',T)$.\\
We now give the first step of the entropy point-to-set principle: a
relativization lemma saying the Bowen entropy of any set is matched by
a sufficiently strong oracle.

\begin{lemma}\label{exists-oracle}
Given a set $Z \subset X$ there is an oracle $A$ with
\[\underline{h}(Z,T)=\underline{h}_{\mathrm{eff}}^A(Z,T).\]
\end{lemma}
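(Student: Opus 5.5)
The plan is to encode near-optimal null covers into a single oracle, using Lemma~\ref{lem:h2-null-s-cover} to supply them. Write $s_0=\underline{h}(Z,T)$. One inequality is already in hand: every $A$-effective null $s$-cover is in particular a null $s$-cover. So Lemma~\ref{lem:h2-null-s-cover} gives $s_0\le \underline{h}^A_{\mathrm{eff}}(Z,T)$ for every oracle $A$. It remains to construct a single oracle $A$ with $\underline{h}^A_{\mathrm{eff}}(Z,T)\le s_0$.

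The first case is $s_0=\infty$. Here the inequality $\underline{h}^A_{\mathrm{eff}}(Z,T)\le s_0$ is trivial, and any oracle works, for instance $A=\emptyset$.

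Now assume $s_0<\infty$. For each $j\in\mathbb{N}$, set $s_j=s_0+2^{-j}$. By Lemma~\ref{lem:h2-null-s-cover}, the infimum defining $s_0$ is approached, so there is some $s<s_j$ for which $Z$ admits a null $s$-cover. Any null $s$-cover with $s\le s_j$ is also a null $s_j$-cover, because $2^{-s_j n}\le 2^{-sn}$ for $n\ge 0$. The covering condition is unchanged. Hence we may fix, for each $j$, a null $s_j$-cover $E_j\subset\mathbb{N}^3$ of $Z$.

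Each $E_j$ is a subset of $\mathbb{N}^3$. Through a fixed computable pairing, it is therefore an oracle $A_j$, and $E_j$ is trivially $A_j$-computable. Put $A=\bigoplus_j A_j$. By the projection in Definition~\ref{def:cones-joins}, each $E_j$ is $A$-computable, hence $A$-c.e. So every $E_j$ is an $A$-effective null $s_j$-cover. This gives $\underline{h}^A_{\mathrm{eff}}(Z,T)\le s_j$ for all $j$, and letting $j\to\infty$ yields $\underline{h}^A_{\mathrm{eff}}(Z,T)\le s_0$.

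There is essentially no obstacle in this argument. The only point to check is that the definition of null $s$-cover is monotone in $s$, as verified above, so that the countably many covers can be indexed by the rationals $s_j$ and joined. If one also wants $A$ admissible for later use, one joins an admissible oracle onto $A$. Enlarging the oracle keeps each $E_j$ c.e., so the inequality $\underline{h}^A_{\mathrm{eff}}(Z,T)\le s_0$ persists.
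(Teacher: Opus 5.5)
Your proof is correct and is the same as the paper's: you pick null $s_j$-covers $E_j$ with $s_j \downarrow \underline{h}(Z,T)$ via Lemma~\ref{lem:h2-null-s-cover}, set $A=\bigoplus_j E_j$, and get the reverse inequality for every oracle because effective null covers are null covers. Your explicit treatment of the case $\underline{h}(Z,T)=\infty$ and your check that null $s$-covers are monotone in $s$ supply details the paper leaves implicit.
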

\begin{proof}
By Lemma~\ref{lem:h2-null-s-cover}, choose null $s_j$-covers
$E_j \subset \mathbb{N}^3$ for $Z$ with
$s_j \downarrow \underline{h}(Z,T)$, and let
$A = \bigoplus_j E_j$. Each $E_j$ is $A$-c.e., hence an $A$-effective
null $s_j$-cover, so $\underline{h}^A_{\mathrm{eff}}(Z,T) \le s_j$ for every $j$;
the reverse inequality holds for every oracle.
\end{proof}

The principle then follows from the pointwise characterization of
effective entropy.

\begin{theorem}[Effective entropy is the sup of lower orbit
complexity]\label{thm:eff-entropy-loc}
Let $A$ be admissible. For every $Z\subset X$,
\[
  \underline{h}^A_{\mathrm{eff}}(Z,T)
     \;=\;
     \sup_{x\in Z}\,\underline{K}^A(x,T).
\]
\end{theorem}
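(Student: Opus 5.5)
We prove the two inequalities separately, following the Galatolo--Hoyrup--Rojas argument relativized to the admissible oracle $A$: the Coding Theorem gives $\sup_{x\in Z}\underline{K}^A(x,T)\le \underline{h}^A_{\mathrm{eff}}(Z,T)$, and c.e.\ threshold sets give the reverse.

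\emph{Upper bound on pointwise complexity.} Let $E$ be an $A$-effective null $s$-cover of $Z$, and fix $x\in Z$ and $p$. Since $\sum_{(i,n,p')\in E}2^{-sn}<\infty$ and $E$ is $A$-c.e., the Extended Coding Theorem (Lemma~\ref{lem:extended-coding-theorem}) applied with $f(i,n,p')=sn$ gives $K^A(i,n,p')\le sn+O(1)$ for all $(i,n,p')\in E$. By Remark~\ref{rem:composition-use}, this yields $K^A(i)\le sn+O(1)$. By condition 2 of Definition~\ref{def:null-s-cover}, for every $k$ there is $(i,n,p)\in E$ with $n\ge k$ and $x\in B_n(x_i,T,2^{-p})$, which is equivalent to $x_i\in B_n(x,T,2^{-p})$. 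Hence $K^A_n(x,T,2^{-p})\le sn+O(1)$ for infinitely many $n$, so $\underline{K}^A(x,T,2^{-p})\le s$. Letting $p\to\infty$ gives $\underline{K}^A(x,T)\le s$, and taking the infimum over $s$ gives $\sup_{x\in Z}\underline{K}^A(x,T)\le\underline{h}^A_{\mathrm{eff}}(Z,T)$.

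\emph{Constructing a cover from pointwise bounds.} Let $s>\sup_{x\in Z}\underline{K}^A(x,T)$ and choose $s'$ with $\sup_x\underline{K}^A(x,T)<s'<s$. Define
\[
E=\{(i,n,p) : K^A(i)<s'n\}.
\]
This set is $A$-c.e.\ by Remark~\ref{rem:ce-thresholds}. For summability, each $n$ contributes at most $2^{s'n}$ indices $i$, and these are shared across all $p$. To make the sum over $p$ converge, we tighten the threshold to $K^A(i)+2\log p<s'n$, again a c.e.\ condition. Then
\[
\sum_{(i,n,p)\in E}2^{-sn}\le\sum_p p^{-2}\sum_n 2^{(s'-s)n}<\infty.
\]
For the covering condition, fix $x\in Z$ and $p$. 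Because $\underline{K}^A(x,T,\cdot)$ is nondecreasing as the scale shrinks, $\underline{K}^A(x,T,2^{-p-1})\le\underline{K}^A(x,T)<s'$. So there are infinitely many $n$ and $x_i\in B_n(x,T,2^{-p-1})$ with $K^A(i)<s''n$ for some $s''<s'$. For such $n$ large, the $2\log p$ term is absorbed, so $(i,n,p)\in E$ (the radius $2^{-p-1}<2^{-p}$ suffices). Since these $n$ are unbounded, condition 2 holds for every $k$. Thus $E$ is an $A$-effective null $s$-cover and $\underline{h}^A_{\mathrm{eff}}(Z,T)\le s$.

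The main obstacle is this second direction: the index $p$ appears in $E$ without a weight in the sum, so the $2\log p$ penalty has to be inserted, and we must check that the strict margin $s''<s'$ absorbs it uniformly for each fixed $p$. The limit in $\varepsilon$ also needs handling; monotonicity in the scale is what makes the bound uniform over all $p$ at once.
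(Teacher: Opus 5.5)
Your proof is correct and is essentially the paper's argument, namely the $\varphi=0$ case of Theorem~\ref{thm:sup_eq_pressure_lower}: the (extended) Coding Theorem applied to an $A$-effective null $s$-cover gives one inequality, and an $A$-c.e.\ threshold set gives the other. The only difference is the threshold: the paper uses $K^A(i,n,p)<sn$, so Kraft's inequality over distinct triples gives summability directly and the cost of encoding $n,p$ is paid in the covering step, whereas your threshold $K^A(i)+2\log p<s'n$, with a counting bound and the margin $s'<s$, works equally well (take $s,s'$ rational so the thresholds are computable).
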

\begin{proof}
Relativize the proof of \cite[Theorem~6.2.2]{galatolo-2009} to $A$:
every computable or effective object, quantity, and theorem in that
proof is replaced by its $A$-relativized version.\\
Alternatively, this is the case $\varphi = 0$ of
Theorem~\ref{thm:sup_eq_pressure_lower}, proved in full in
Section~\ref{subsec:lower-pressure}.
\end{proof}

\begin{theorem}[Point-to-set principle for entropy]
\label{thm:pts-entropy}
Let $T:X\to X$ be a continuous map on a complete separable metric
space and let $Z\subset X$. Then
\[\underline{h}(Z,T)=\min\limits_{A\subset\mathbb{N}}\,\sup_{x\in Z}
\underline{K}^A(x,T),\]
with the minimum attained on an upper set of Turing degrees.
\end{theorem}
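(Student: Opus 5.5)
The plan is to instantiate the achieving-oracle schema (Lemma~\ref{lem:achieving-schema}) with $Q=\underline{h}(\cdot,T)$ and $\mathrm{d}^A=\underline{K}^A(\cdot,T)$. The schema needs two inputs: a base oracle $C$ above which the lower bound $Q(Z)\le\sup_{x\in Z}\underline{K}^A(x,T)$ holds for every $A\geq_T C$, and a sequence of almost-achieving oracles $A_j$.

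For the base oracle, Remark~\ref{rem:admissible-cone} gives an admissible oracle $C$ for $(X,d,T)$, and every $A\geq_T C$ is again admissible with the same structure. For such $A$, Theorem~\ref{thm:eff-entropy-loc} yields $\underline{h}^A_{\mathrm{eff}}(Z,T)=\sup_{x\in Z}\underline{K}^A(x,T)$. We also have the general inequality $\underline{h}(Z,T)\le\underline{h}^A_{\mathrm{eff}}(Z,T)$, which holds because effective null covers are in particular null covers. Combining the two gives the hypothesis of the schema for all $A\geq_T C$.

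For the almost-achieving oracles, fix $j$ and use Lemma~\ref{lem:h2-null-s-cover} to choose a null $s_j$-cover $E_j$ of $Z$ with $s_j\le\underline{h}(Z,T)+2^{-j}$. Set $A_j=C\oplus E_j$. Then $A_j$ is admissible and $E_j$ is $A_j$-c.e., so $\underline{h}^{A_j}_{\mathrm{eff}}(Z,T)\le s_j$. By Theorem~\ref{thm:eff-entropy-loc} this gives $\sup_{x\in Z}\underline{K}^{A_j}(x,T)\le\underline{h}(Z,T)+2^{-j}$. This is essentially the argument of Lemma~\ref{exists-oracle}, with the join against $C$ added so that the pointwise characterization applies. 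If $\underline{h}(Z,T)=\infty$ the statement is trivial, since every sup is then $\ge\infty$ by the lower bound.

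Lemma~\ref{lem:achieving-schema} then produces $B=C\oplus\bigoplus_j A_j$, and every oracle above $B$ attains the minimum, which gives the upper-set claim. Monotonicity (Lemma~\ref{lem:monotone}) for $\underline{K}^A$ is immediate because there is no normalization issue. The only real content is Theorem~\ref{thm:eff-entropy-loc}, which we are allowed to assume. The one step that needs care is making sure the theorem is applied only to admissible oracles, which is exactly why every oracle is joined with $C$. Beyond that, the argument is routine bookkeeping.
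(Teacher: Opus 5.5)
Your proposal is correct and is essentially the paper's argument. The paper takes the single oracle from Lemma~\ref{exists-oracle} (the join of near-optimal null covers), joins it with an admissible oracle, and then applies Theorem~\ref{thm:eff-entropy-loc} and Lemma~\ref{lem:monotone}; you package the same joins through Lemma~\ref{lem:achieving-schema} with $A_j=C\oplus E_j$, which is the mechanism the paper's four-step outline itself names, and your version is slightly more explicit about why non-admissible oracles cannot go below the minimum.
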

\begin{proof}
For every oracle $A$ we have
$\underline{h}(Z,T)\le \underline{h}^A_{\mathrm{eff}}(Z,T)$, and for admissible $A$,
$\underline{h}^A_{\mathrm{eff}}(Z,T)=\sup_{x\in Z}\underline{K}^A(x,T)$ by
Theorem~\ref{thm:eff-entropy-loc}. By Lemma~\ref{exists-oracle} there
is an oracle $B$ with $\underline{h}(Z,T)=h^B_{\mathrm{eff}}(Z,T)$. Joining
with an admissible oracle (Remark~\ref{rem:admissible-cone}) and
using monotonicity (Lemma~\ref{lem:monotone}), we may take $B$
admissible, and the same holds for every oracle above $B$. The
minimum is therefore attained at $B$, on the upper cone above it.
\end{proof}
We now give two short applications of this theorem.
\begin{example}[Topological entropy of isometries]
\label{ex:isometry}
Let $f:X\to X$ be an isometry of a complete separable metric space.
Then $\underline{h}(X,f)=0$. Indeed, fix an admissible oracle $A$, $x \in
X$, and $\varepsilon>0$, and pick an ideal point $x_i$ with
$d(x,x_i)<\varepsilon$. Since $f$ is an isometry,
$d(f^m(x_i),f^m(x))<\varepsilon$ for all $m$, so
$K^A_n(x,f,\varepsilon)\le K^A(i)=O(1)$ and
$\underline{K}^A(x,f,\varepsilon)=0$. By
Theorem~\ref{thm:pts-entropy}, computing the minimum within the
admissible cone (Lemma~\ref{lem:cone-min}),
$\underline{h}(X,f)=\min\limits_{A\subset\mathbb{N}}\sup\limits_{x\in X}\underline{K}^A(x,f)=0$.
\end{example}
\begin{example}[Dimension and entropy for symbolic dynamics with
standard metric]\label{ex:symbolic}
Let $\mathcal{A}^\mathbb{N}$ be the full shift $\sigma$ on a finite or
countable alphabet $\mathcal{A}$, with the standard metric
$d(x,y)=\beta^{n(x,y)}$ for fixed $\beta\in(0,1)$, where $n(x,y)$ is
the length of the longest common prefix of $x$ and $y$. Then for every
$Z\subset\mathcal{A}^\mathbb{N}$,
\[\dim_H(Z)=\frac{\underline{h}(Z,\sigma)}{-\log\beta}.\]
To see this, fix an admissible oracle $A$ and $x \in
\mathcal{A}^\mathbb{N}$. At scale $\varepsilon=\beta^k$ a Bowen ball
of order $n$ is a metric ball of radius $\beta^{n+k}$, so
$K^A_n(x,\sigma,\beta^k)=K^A_{\beta^{n+k}}(x)$, and
\[\frac{K^A_n(x,\sigma,\beta^k)}{n}
=\frac{K^A_{\beta^{n+k}}(x)}{-(n+k)\log\beta}
\cdot(-\log\beta)\cdot\frac{n+k}{n}.\]
The scales $\delta_n=\beta^{n+k}$ satisfy the hypothesis of
Lemma~\ref{lem:scale-seq}, so
\begin{align*}
\underline{K}^A(x,\sigma,\beta^k)
&=\liminf\limits_{n\to\infty}\frac{K^A_n(x,\sigma,\beta^k)}{n}
=(-\log\beta)\,\liminf\limits_{n\to\infty}
 \frac{K^A_{\beta^{n+k}}(x)}{-(n+k)\log\beta}
=(-\log\beta)\,\dim^A(x).
\end{align*}
The right-hand side is independent of $k$, so
\[\underline{K}^A(x,\sigma)
=\lim\limits_{k\to\infty}\underline{K}^A(x,\sigma,\beta^k)
=(-\log\beta)\,\dim^A(x).\]
Taking the join of oracles achieving the minima in
Theorem~\ref{thm:pts-entropy} and the Hausdorff dimension
point-to-set principle (Lemma~\ref{lem:join-cone}), both minima are
computed at a single oracle, and
\[\dim_H(Z)=\frac{\underline{h}(Z,\sigma)}{-\log\beta}.\]
The same argument with $\limsup$ throughout, using the $\limsup$ half
of Lemma~\ref{lem:scale-seq}, gives
$\overline{K}^A(x,\sigma)=(-\log\beta)\,\Dim^A(x)$, and the
point-to-set principle for upper entropy
(Theorem~\ref{thm:pts_upper_pressure} at $\varphi=0$, proved later in
this section) then yields
\[\dim_P(Z)=\frac{\overline{h}(Z,\sigma)}{-\log\beta}.\]
\end{example}
This example should be contrasted with the Bowen pressure equation
results of Section~\ref{sec:bowen-equation} in the case
$\underline{u}(x)=\overline{u}(x)=\alpha$
(Corollary~\ref{cor:constant-exponent}). Here the shift is conformal
with constant derivative $a(x)=\beta^{-1}$, so $u=\log a=-\log\beta$,
and the example is the simplest instance of
Theorem~\ref{thm:conformal-dim}.

\subsection{The point-to-set principle for lower and upper topological
pressure}\label{subsec:lower-pressure}

\subsubsection{Pointwise pressure}
Throughout Section~\ref{subsec:lower-pressure}, $T:X\to X$ is a
continuous map on a complete separable metric space $(X,d)$ and
$\varphi:X\to\mathbb{R}$ is uniformly continuous. For the pointwise
quantities we assume only that $(X,d)$ carries an $A$-computable
metric space structure $(x_i)_{i\in\mathbb{N}}$
(Definition~\ref{def:computable-metric}); $A$-computability of $\varphi$ is not needed until the
effectivization arguments of
Section~\ref{subsubsec:lower-pressure-pts}.

\begin{definition}[Lower/upper $A$-pointwise pressure]
\label{def:pointwise-pressure}
For $x \in X$ and $\varepsilon > 0$,
\[
\underline{P}^A(x,T,\varphi,\varepsilon)
= \liminf\limits_{n\to\infty}\;
\min\limits_{x_i \in B_n(x,T,\varepsilon)}
\frac{K^A(i) + S_n\varphi(x_i)}{n},
\qquad
\underline{P}^A(x,T,\varphi)
= \lim\limits_{\varepsilon\to 0}
\underline{P}^A(x,T,\varphi,\varepsilon),
\]
\[
\overline{P}^A(x,T,\varphi,\varepsilon)
= \limsup\limits_{n\to\infty}\;
\min\limits_{x_i \in B_n(x,T,\varepsilon)}
\frac{K^A(i) + S_n\varphi(x_i)}{n},
\qquad
\overline{P}^A(x,T,\varphi)
= \lim\limits_{\varepsilon\to 0}
\overline{P}^A(x,T,\varphi,\varepsilon).
\]
\end{definition}

\subsubsection{Lower pressure and its discretization}
In this subsection we recall the Pesin--Pitskel pressure, which we
call the lower pressure. As throughout this paper, we use base $2$ in
place of the traditional base $e$.

For $x\in X$, $n\in\mathbb{N}$, and $\varepsilon>0$ we write
\[
\varphi(x,n,\varepsilon)=\sup_{y\in B_n(x,T,\varepsilon)}S_n\varphi(y)
\]
for the sup-weight of the Bowen ball $B_n(x,T,\varepsilon)$. The
analogous sup-weight $u(x,n,\varepsilon)$ appears for the BS
quantities in Section~3.3. We also write
$\Var(\varphi,\varepsilon)=\sup\{|\varphi(x)-\varphi(y)| : d(x,y)\leq\varepsilon\}$
for the modulus of $\varphi$, so that $y\in B_n(x,T,\varepsilon)$
implies $|S_n\varphi(y)-S_n\varphi(x)|\leq n\Var(\varphi,\varepsilon)$
and hence
$S_n\varphi(x)\leq\varphi(x,n,\varepsilon)\leq
S_n\varphi(x)+n\Var(\varphi,\varepsilon)$;
$\Var(\varphi,\varepsilon)\to 0$ as $\varepsilon\to 0$ by uniform
continuity.

\begin{definition}[Lower pressure]\label{def:lower-pressure}
Given $Z \subset X$, $\varepsilon > 0$, and $N \in \mathbb{N}$, let
$\mathcal{P}(Z, N, \varepsilon)$ be the collection of countable
families $\{(y_j, n_j)\} \subset X \times \{N, N+1, \ldots\}$ such
that $Z \subset \bigcup_j B_{n_j}(y_j, T, \varepsilon)$. For each
$s \in \mathbb{R}$, set
\begin{equation}\label{eq:lower-pressure-measure}
m_P(Z, s, \varphi, N, \varepsilon)
= \inf_{\mathcal{P}(Z,N,\varepsilon)}
\sum_{(y_j, n_j)} 2^{-n_j s + \varphi(y_j,n_j,\varepsilon)},
\qquad
m_P(Z, s, \varphi, \varepsilon)
= \lim\limits_{N \to \infty} m_P(Z, s, \varphi, N, \varepsilon).
\end{equation}
The function $s \mapsto m_P(Z, s, \varphi, \varepsilon)$ takes the
value $\infty$ for all $s$ below a critical value and $0$ for all $s$
above it; we denote the critical value by
\[
\underline{P}(Z,T,\varphi,\varepsilon)
= \inf\{ s \in \mathbb{R} \mid m_P(Z, s, \varphi, \varepsilon) = 0 \}
= \sup\{ s \in \mathbb{R} \mid m_P(Z, s, \varphi, \varepsilon)
= \infty \}.
\]
The \emph{lower pressure} of $\varphi$ on $Z$ is
\[\underline{P}(Z,T,\varphi)
= \lim\limits_{\varepsilon \to 0}
\underline{P}(Z,T,\varphi,\varepsilon);\]
the limit exists since the quantity is nondecreasing in $\varepsilon$. 
\end{definition}

This is the classical Pesin--Pitskel pressure (in base $2$). We now discretize this definition.
\begin{definition}[Null $s$-cover for topological pressure]\label{def:null_cover}
    Let $Z\subset X$. A \emph{null $s$-cover} of $Z$ is a subset $E$ of $\mathbb{N}^3$ so that
    \begin{enumerate}
        \item \label{ncov:mass}
        \[\sum\limits_{(i,n,p)\in E}2^{-sn+S_n\varphi(x_i)}<\infty\]
        \item \label{ncov:cover}for every $k,p \in \mathbb{N}$, the family
        \[
        \bigl\{\, B_{n}(x_{i},T,\,2^{-p}) \;:\; (i,n,p)\in E,\; n \ge k \bigr\}
        \]
        is a cover of $Z$.
    \end{enumerate}
\end{definition}
We now show this discretization characterizes the lower pressure.

\begin{lemma}\label{lem:null_cover}
$\underline{P}(Z,T,\varphi)=\inf\{s\mid Z \text{ has a null
$s$-cover}\}$.
\end{lemma}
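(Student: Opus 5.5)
Write $s_\ast=\inf\{s\mid Z\text{ has a null }s\text{-cover}\}$. I will prove the two inequalities $\underline{P}(Z,T,\varphi)\le s_\ast$ and $s_\ast\le\underline{P}(Z,T,\varphi)$ separately. In both directions the error from recentering and from comparing $S_n\varphi$ at a center with the sup-weight is at most $n\Var(\varphi,\cdot)$. This error is absorbed by perturbing $s$ by an amount tending to $0$ with the scale, exactly as in the entropy case (Lemma~\ref{lem:h2-null-s-cover}).

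\emph{Upper bound.} Let $E$ be a null $s$-cover and fix $\varepsilon>0$. Choose $p$ with $2^{-p}<\varepsilon$. For each $N$, the family $\{(x_i,n):(i,n,p)\in E,\ n\ge N\}$ lies in $\mathcal{P}(Z,N,2^{-p})\subset\mathcal{P}(Z,N,\varepsilon)$, since Bowen balls grow with the radius. Using $\varphi(x_i,n,\varepsilon)\le S_n\varphi(x_i)+n\Var(\varphi,\varepsilon)$, take $s'=s+\Var(\varphi,\varepsilon)+\eta$ with $\eta>0$. Then
\[
m_P(Z,s',\varphi,N,\varepsilon)\le\sum_{(i,n,p)\in E,\,n\ge N}2^{-sn+S_n\varphi(x_i)}2^{-\eta n}.
\]
This is a tail of the convergent series in condition~\ref{ncov:mass}, so it tends to $0$ as $N\to\infty$. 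Hence $\underline{P}(Z,T,\varphi,\varepsilon)\le s+\Var(\varphi,\varepsilon)$. Letting $\varepsilon\to0$ and then taking the infimum over $s$ gives $\underline{P}(Z,T,\varphi)\le s_\ast$.

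\emph{Lower bound.} Let $s>\underline{P}(Z,T,\varphi)$. For each $p$, monotonicity in $\varepsilon$ gives $\underline{P}(Z,T,\varphi,2^{-p-1})<s$, so $m_P(Z,s-\eta,\varphi,2^{-p-1})=0$ for a small margin $\eta>0$ to be fixed later. For each pair $(k,p)$, choose a cover $\{(y_j,n_j)\}\in\mathcal{P}(Z,k,2^{-p-1})$ with $\sum_j 2^{-n_j(s-\eta)+\varphi(y_j,n_j,2^{-p-1})}<2^{-k-p}$. Then recenter: by density of the ideal points in $d_{n_j}$, pick $x_{i_j}\in B_{n_j}(y_j,T,2^{-p-1})$. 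The triangle inequality gives $B_{n_j}(y_j,T,2^{-p-1})\subset B_{n_j}(x_{i_j},T,2^{-p})$. Also $x_{i_j}$ lies in the ball about $y_j$, so $S_{n_j}\varphi(x_{i_j})\le\varphi(y_j,n_j,2^{-p-1})$. Let $E$ be the union over all $(k,p)$ of the triples $(i_j,n_j,p)$. Condition~\ref{ncov:cover} holds by construction. For condition~\ref{ncov:mass}, each triple contributes at most $2^{-n_j s+S_{n_j}\varphi(x_{i_j})}\le 2^{-\eta n_j}\cdot2^{-n_j(s-\eta)+\varphi(y_j,n_j,2^{-p-1})}$, and since $2^{-\eta n_j}\le1$ the total is at most $\sum_{k,p}2^{-k-p}<\infty$. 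Hence $E$ is a null $s$-cover, so $s_\ast\le s$, and therefore $s_\ast\le\underline{P}(Z,T,\varphi)$.

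\emph{Main obstacle.} In this direction the recentered potential is bounded by the sup-weight for free, so no $\Var$ error arises, and the scales are handled because $E$ records $p$. The delicate point is the upper bound. There the sup-weight exceeds the center value, so the $\Var(\varphi,\varepsilon)$ correction is unavoidable; it disappears only in the limit $\varepsilon\to0$. That limit is legitimate because condition~\ref{ncov:cover} supplies covers at every scale $p$. One bookkeeping detail remains: the same index triple may arise from several pairs $(k,p)$. Since $E$ is a set, repeated triples are counted once, which only lowers the sum in condition~\ref{ncov:mass}, so the bound above still holds.
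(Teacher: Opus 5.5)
Your argument follows the paper's proof essentially step for step. In the direction $\underline{P}(Z,T,\varphi)\le s_\ast$ you read covers off condition~2 and absorb the sup-weight with $n\Var(\varphi,\varepsilon)$; this is fine as written and needs no monotonicity. In the other direction you recenter $2^{-(p+1)}$-covers at ideal points inside the balls, so that $S_n\varphi(x_{i_j})\le\varphi(y_j,n_j,2^{-(p+1)})$. Two small points there: the margin $\eta$ is never used, and you should say that $m_P(Z,\cdot,\varphi,N,\cdot)$ is nondecreasing in $N$, so a vanishing limit gives vanishing at every $N=k$.

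There is, however, one step that fails as stated: \emph{for each $p$, monotonicity in $\varepsilon$ gives $\underline{P}(Z,T,\varphi,2^{-p-1})<s$}. With sup-weights, shrinking $\varepsilon$ shrinks the class of covers but also lowers the weights $\varphi(y,n,\varepsilon)$, so there is no exact monotonicity. For example, take $X=[0,1]$, $T=\mathrm{id}$, $\varphi(x)=x$ and $Z=\{0\}$. A pair $(y,n)$ covering $0$ has $y\in[0,\varepsilon)$ and weight $n\min(1,y+\varepsilon)\ge n\varepsilon$, so $\underline{P}(Z,T,\varphi,\varepsilon)=\varepsilon$ for $\varepsilon\le 1$, while $\underline{P}(Z,T,\varphi)=0$. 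Hence for $0<s<2^{-(p+1)}$ we have $m_P(Z,s,\varphi,2^{-(p+1)})=\infty$, and the low-mass covers you choose at level $p$ do not exist. The paper's proof asserts the same $\uparrow$ and has the same defect.

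The repair is short. For $\varepsilon_1<\varepsilon_2$, compare the same family at both radii. You have $\varphi(y,n,\varepsilon_2)\le S_n\varphi(y)+n\Var(\varphi,\varepsilon_2)\le\varphi(y,n,\varepsilon_1)+n\Var(\varphi,\varepsilon_2)$, which gives $\underline{P}(Z,T,\varphi,\varepsilon_2)\le\underline{P}(Z,T,\varphi,\varepsilon_1)+\Var(\varphi,\varepsilon_2)$. Letting $\varepsilon_1\to 0$ yields $\underline{P}(Z,T,\varphi,\varepsilon)\le\underline{P}(Z,T,\varphi)+\Var(\varphi,\varepsilon)$. So your construction works for all $p\ge p_0$, where $\Var(\varphi,2^{-(p_0+1)})<s-\underline{P}(Z,T,\varphi)$. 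For the finitely many $p<p_0$, add to $E$ the triple $(i,n,p)$ for every $(i,n,p_0)\in E$. Condition~2 then holds at level $p$ because $B_n(x_i,T,2^{-p_0})\subset B_n(x_i,T,2^{-p})$. The total mass stays finite, at most multiplied by $p_0+1$, because the term $2^{-sn+S_n\varphi(x_i)}$ does not depend on $p$.
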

\begin{proof}
($\leq$) Suppose $Z$ has a null $s$-cover $E$. Fix $p$, let
$t>s+\Var(\varphi,2^{-p})$ be rational, and fix $N$. By
condition~\ref{ncov:cover} the family
$\{(x_i,n)\mid (i,n,p)\in E,\ n\geq N\}$ belongs to
$\mathcal{P}(Z,N,2^{-p})$, and
$\varphi(x_i,n,2^{-p})\leq S_n\varphi(x_i)+n\Var(\varphi,2^{-p})$, so
\[
m_P(Z,t,\varphi,N,2^{-p})
\leq\sum_{\substack{(i,n,p)\in E\\ n\geq N}}2^{-tn+\varphi(x_i,n,2^{-p})}
\leq 2^{-(t-\Var(\varphi,2^{-p})-s)N}\sum_{(i,n,p)\in E}2^{-sn+S_n\varphi(x_i)}
\xrightarrow{\,N\to\infty\,}0
\]
by condition~\ref{ncov:mass}. Hence
$\underline{P}(Z,T,\varphi,2^{-p})\leq s+\Var(\varphi,2^{-p})$;
letting $p\to\infty$ gives $\underline{P}(Z,T,\varphi)\leq s$.

($\geq$) Suppose $s>\underline{P}(Z,T,\varphi)$ (if
$\underline{P}(Z,T,\varphi)=\infty$ there is nothing to prove).
Since $\underline{P}(Z,T,\varphi,\varepsilon)\uparrow
\underline{P}(Z,T,\varphi)$ as $\varepsilon\to 0$ we have $\underline{P}(Z,T,\varphi,2^{-(p+1)})<s$ for all $p$. Therefore for all $p$,
$m_P(Z,s,\varphi,2^{-(p+1)})=0$; since
$m_P(Z,s,\varphi,N,\cdot)$ is nondecreasing in $N$, its vanishing
limit forces vanishing at every $N$, so for any $p$ there
is a sequence $N^p_k\uparrow\infty$ and countable families
$\Gamma_{p,k}\in\mathcal{P}(Z,N^p_k,2^{-(p+1)})$ with
\[
\sum_{(y,n)\in\Gamma_{p,k}} 2^{-sn+\varphi(y,n,2^{-(p+1)})}
\leq 2^{-(p+k+1)}.
\]
Since $(x_i)$ is dense in each Bowen metric $d_n$, for each
$(y,n)\in\Gamma_{p,k}$ we may choose an ideal point $x_{i(y,n)}$
with $d_n(y,x_{i(y,n)})<2^{-(p+1)}$; then
$B_n(y,T,2^{-(p+1)})\subset B_n(x_{i(y,n)},T,2^{-p})$ by the
triangle inequality, and since
$x_{i(y,n)}\in B_n(y,T,2^{-(p+1)})$ we have
$S_n\varphi(x_{i(y,n)})\leq\varphi(y,n,2^{-(p+1)})$, so
\[
\sum_{(y,n)\in\Gamma_{p,k}} 2^{-sn+S_{n}\varphi(x_{i(y,n)})}
\leq 2^{-(p+k+1)}.
\]
Set
\[E=\bigl\{(i(y,n),n,p)\mid p,k\in\mathbb{N},\
(y,n)\in\Gamma_{p,k}\bigr\}.\]
For condition~\ref{ncov:mass}: each element of $E$ arises from at
least one triple $(p,k,(y,n))$, so
\[
\sum_{(i,n,p)\in E}2^{-sn+S_n\varphi(x_i)}
\leq\sum_{p\in \mathbb{N}}\sum_{k=1}^\infty
\sum_{(y,n)\in\Gamma_{p,k}} 2^{-sn+S_{n}\varphi(x_{i(y,n)})}
\leq\sum_{p\in \mathbb{N}}2^{-p}\leq 1<\infty.
\]
For condition~\ref{ncov:cover}: fix
$k_0\in\mathbb{N}$ and choose $k$ with $N^p_k\geq k_0$. The family $\Gamma_{p,k}$ covers $Z$ by
balls $B_n(y,T,2^{-(p+1)})$ with orders $n\geq N^p_k\geq k_0$, and
each such ball is contained in $B_n(x_{i(y,n)},T,2^{-p})$ with
$(i(y,n),n,p)\in E$; hence
$\{B_n(x_i,T,2^{-p})\mid (i,n,p)\in E,\ n\geq k_0\}$ covers $Z$.
Therefore $E$ is a null $s$-cover, so
$\inf\{s\mid Z\text{ has a null $s$-cover}\}
\leq\underline{P}(Z,T,\varphi)$; as
$s>\underline{P}(Z,T,\varphi)$ was arbitrary, the two quantities are
equal.
\end{proof}
\subsubsection{The point-to-set principle for lower pressure}\label{subsubsec:lower-pressure-pts}

Having discretized the lower pressure, we now effectivize it. Throughout the following sections we assume the oracle $A$ is admissible with respect to $(X,d,T,\varphi)$. 
\begin{definition}[$A$-effective null $s$-cover; effective lower
pressure]\label{def:eff-lower-pressure}
An \emph{$A$-effective null $s$-cover} is a null $s$-cover
$E\subset\mathbb{N}^3$ that is $A$-computably enumerable. The
\emph{$A$-effective lower pressure} of $\varphi$ on $Z$ is
\[\underline{P}^A_{\mathrm{eff}}(Z,T,\varphi)
=\inf\{s\mid Z \text{ has an $A$-effective null $s$-cover}\}.\]
\end{definition}
The next theorem connects the effective pressure of a set to the pointwise pressures of its points.
\begin{theorem}\label{thm:sup_eq_pressure_lower}
    \[\underline{P}^A_{\mathrm{eff}}(Z,T,\varphi)=\sup\limits_{x\in Z}\underline{P}^{A}(x,T,\varphi)\]
\end{theorem}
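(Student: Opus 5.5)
We prove the two inequalities separately, following the pattern of step~3 of the schema in Section~\ref{sec:cones}. The Coding Theorem gives the upper bound on pointwise pressure from an effective cover. A c.e.\ threshold set gives the effective cover from a pointwise bound.

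\emph{($\geq$).} Let $E$ be an $A$-effective null $s$-cover of $Z$, and fix $x\in Z$ and $p$.
\begin{enumerate}
  \item Apply the Extended Coding Theorem (Lemma~\ref{lem:extended-coding-theorem}) to $F=E$ with $f(i,n,p)=sn-S_n\varphi(x_i)$. By Lemma~\ref{lem:birkhoff-uniform} this $f$ is $A$-computable uniformly, since $A$ is admissible. Condition~\ref{ncov:mass} gives $\sum_F 2^{-f}<\infty$, so $K^A(i,n,p)\le sn-S_n\varphi(x_i)+O(1)$.
  \item Hence $K^A(i)\le sn-S_n\varphi(x_i)+O(1)$ on $E$, using Remark~\ref{rem:composition-use}.
  \item By condition~\ref{ncov:cover}, for every $k$ there is $(i,n,p)\in E$ with $n\ge k$ and $x\in B_n(x_i,T,2^{-p})$. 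Then $x_i\in B_n(x,T,2^{-p})$ and $\bigl(K^A(i)+S_n\varphi(x_i)\bigr)/n\le s+O(1)/n$.
  \item This holds for infinitely many $n$, so $\underline{P}^A(x,T,\varphi,2^{-p})\le s$ for every $p$. Thus $\underline{P}^A(x,T,\varphi)\le s$.
\end{enumerate}
Taking the infimum over $s$ and the supremum over $x$ gives this direction. It is routine.

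\emph{($\leq$).} Let $s>t>\sup_{x\in Z}\underline{P}^A(x,T,\varphi)$ with $t$ rational. Define
\[
E=\{(i,n,p)\mid K^A(i)+S_n\varphi(x_i)<tn\}.
\]
By Remark~\ref{rem:ce-thresholds} and Lemma~\ref{lem:birkhoff-uniform}, $E$ is $A$-c.e.; the condition is independent of $p$ except through the indexing.

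\emph{Covering.} Fix $x\in Z$ and $p$. Note that $\underline{P}^A(x,T,\varphi,\varepsilon)$ increases as $\varepsilon\downarrow 0$, since the minimum runs over smaller balls. Its limit is $<t$, so $\underline{P}^A(x,T,\varphi,2^{-p})<t$ for every $p$. Hence infinitely many $n$ admit $x_i\in B_n(x,T,2^{-p})$ with $K^A(i)+S_n\varphi(x_i)<tn$. By symmetry $x\in B_n(x_i,T,2^{-p})$, which is condition~\ref{ncov:cover}.

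\emph{Mass.} We need
\[
\sum_{(i,n,p)\in E}2^{-sn+S_n\varphi(x_i)}<\infty.
\]
On $E$ we have $S_n\varphi(x_i)<tn-K^A(i)$, so each term is at most $2^{-(s-t)n}2^{-K^A(i)}$. For fixed $n$ the Kraft inequality gives $\sum_i 2^{-K^A(i)}\le 1$. Summing over $n$ is then geometric.

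\emph{Main obstacle: the sum over $p$.} It diverges, since each pair $(i,n)$ is repeated for every $p$. The fix is to charge $p$ into the complexity. Replace $E$ by
\[
E'=\{(i,n,p)\mid K^A(i)+2\log(p+1)+S_n\varphi(x_i)<tn\}.
\]
The extra factor $(p+1)^{-2}$ makes the sum over $p$ converge. For each fixed $p$ the covering argument still works, because $2\log(p+1)$ is constant in $n$ and is absorbed once $n$ is large, using the strict gap below $t$. $E'$ remains $A$-c.e.\ by the same threshold remark.

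So $Z$ has an $A$-effective null $s$-cover for every $s>\sup_{x\in Z}\underline{P}^A(x,T,\varphi)$. This gives the reverse inequality.
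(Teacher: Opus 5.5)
Your proof is correct and follows the paper's argument. The $(\geq)$ direction is exactly the Extended Coding Theorem argument of Lemma~\ref{lem:pointwise-leq-eff-pressure}, and the $(\leq)$ direction is the same c.e.\ threshold-set construction as Lemma~\ref{lem:eff-pressure-upper}, differing only in bookkeeping: the paper thresholds on $K^A(i,n,p)<sn-S_n\varphi(x_i)$, so Kraft over distinct triples bounds the mass by $1$ directly and the cost $2\log n+O_p(1)$ is absorbed in the covering step, whereas you threshold on $K^A(i)$ and pay for $n$ and $p$ explicitly through the gap $s>t$ and the penalty $2\log(p+1)$, which is equally valid.
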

This follows from the next two lemmas.
\begin{lemma}\label{lem:eff-pressure-upper}
For $A$ admissible for $(X,d,T,\varphi)$,
    \[\underline{P}^A_{\mathrm{eff}}(Z,T,\varphi)\leq
    \sup_{x\in Z}\underline{P}^A(x,T,\varphi).\]
\end{lemma}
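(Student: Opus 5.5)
Fix $s>\sup_{x\in Z}\underline{P}^A(x,T,\varphi)$; the plan is to build an $A$-effective null $s'$-cover of $Z$ for each rational $s'>s$. This gives $\underline{P}^A_{\mathrm{eff}}(Z,T,\varphi)\leq s'$, and letting $s'\downarrow s$ finishes the argument. (If the supremum is $+\infty$ there is nothing to prove, and we may take $s$ rational.) The cover will be an $A$-c.e. threshold set, as in Remark~\ref{rem:ce-thresholds}:
\[
E=\bigl\{(i,n,p)\ \bigm|\ K^A(i)+S_n\varphi(x_i)<sn\bigr\}.
\]
By Lemma~\ref{lem:birkhoff-uniform}, $S_n\varphi(x_i)$ is $A$-computable uniformly in $(i,n)$, so $sn-S_n\varphi(x_i)$ is $A$-computable. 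Remark~\ref{rem:ce-thresholds} then makes $E$ $A$-c.e.

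\textbf{Covering condition.} Fix $x\in Z$ and $p$. Since $\underline{P}^A(x,T,\varphi,\varepsilon)$ is monotone in $\varepsilon$ (nondecreasing as $\varepsilon\to0$, since shrinking the ball shrinks the set over which the minimum is taken), we have $\underline{P}^A(x,T,\varphi,2^{-p})\leq\underline{P}^A(x,T,\varphi)<s$. The $\liminf$ then yields infinitely many $n$ with some $x_i\in B_n(x,T,2^{-p})$ satisfying $K^A(i)+S_n\varphi(x_i)<sn$. Each such triple $(i,n,p)$ lies in $E$ and its ball contains $x$, so for every $k$ some triple with $n\geq k$ covers $x$. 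Thus condition~\ref{ncov:cover} holds for every $k,p$.

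\textbf{Mass condition.} This is the expected obstacle, because $E$ contains all $p$ and the $p$-coordinate is not controlled by the threshold. I would repair it by modifying the threshold. One option is to charge the scale explicitly: use $K^A(i,n,p)$ in place of $K^A(i)$, or require
\[
K^A(i)+S_n\varphi(x_i)<sn \quad\text{and}\quad n\geq p.
\]
The simplest fix is to work at the rate $s'>s$. For $(i,n,p)\in E$ we have $2^{-s'n+S_n\varphi(x_i)}<2^{-K^A(i)}2^{-(s'-s)n}$. For fixed $(i,n)$, restrict $p$ to $p\leq n$, i.e.\ define
\[
E'=\bigl\{(i,n,p)\in E\ \bigm|\ p\leq n\bigr\}.
\]
Then the total mass is at most
\[
\sum_{i,n} n\,2^{-K^A(i)}2^{-(s'-s)n}\leq \sum_n n\,2^{-(s'-s)n}<\infty
\]
by the Kraft inequality (Lemma~\ref{lem:kraft-chaitin}). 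The covering condition survives the restriction, because for each $x$ and $p$ we found arbitrarily large $n$, in particular $n\geq\max(k,p)$. The set $E'$ remains $A$-c.e.

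Hence $E'$ is an $A$-effective null $s'$-cover of $Z$. This gives $\underline{P}^A_{\mathrm{eff}}(Z,T,\varphi)\leq s'$ for every $s'>s$, and the lemma follows. The main care needed is the precise strictness in the c.e. enumeration: $S_n\varphi(x_i)$ is only approximable, so the strict inequality is what makes the threshold set enumerable. The summability across the scale index $p$ is handled by the truncation $p\leq n$.
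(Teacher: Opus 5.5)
Your proof is correct, but it spends the slack in a different place than the paper does.

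\textbf{The paper's route.} The paper thresholds on the joint complexity, $E=\{(i,n,p)\mid K^A(i,n,p)<sn-S_n\varphi(x_i)\}$. The mass condition then follows at once from Kraft's inequality over distinct triples. The multiplicity in $n$ and $p$ is paid for inside the description, no truncation is needed, and the result is a null $s$-cover at the rate $s$ itself. The cost appears in the covering step. There a realizer with $K^A(i)+S_n\varphi(x_i)<sn$ must be upgraded to $K^A(i,n,p)<sn-S_n\varphi(x_i)$ via $K^A(i,n,p)\leq K^A(i)+2\log n+O(1)$ for fixed $p$. To absorb the $2\log n$ one really needs realizers at an intermediate rate $s''$ with $\underline{P}^A(x,T,\varphi,2^{-p})<s''<s$. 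The paper leaves this margin implicit here; it is made explicit in the BS analogue, Lemma~\ref{lem:eff-bs-upper}.

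\textbf{Your route.} Thresholding on $K^A(i)$ alone makes the covering step immediate: every realizer witnessing the $\liminf$ already lies in $E$. The cost moves to the mass condition. Kraft over indices $i$ only bounds each fixed $(n,p)$ slice. So you need the truncation $p\leq n$ to make the $p$-multiplicity finite, and the gap $s'-s$ to sum $n\,2^{-(s'-s)n}$ over $n$.

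Both versions lose only an arbitrarily small amount of exponent and give the same conclusion. The first alternative you mention, using $K^A(i,n,p)$ in place of $K^A(i)$, is exactly the paper's route.
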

\begin{proof}
If the supremum is infinite there is nothing to prove. Fix a rational $s>\sup_{x\in Z}\underline{P}^A(x,T,\varphi)$ and
define $Y_{<s}=\{x\mid \underline{P}^A(x,T,\varphi)<s\}\supset Z$. We
construct an $A$-effective null $s$-cover for $Y_{<s}$. Consider
\[E=\bigl\{(i,n,p)\;\bigm|\; K^A(i,n,p)< sn-S_n\varphi(x_i)\bigr\},\]
which is $A$-c.e.\ by Remark~\ref{rem:ce-thresholds}, since
$S_n\varphi(x_i)$ is $A$-computable uniformly in $(i,n)$
(Lemma~\ref{lem:birkhoff-uniform}).\\
We verify that this is a null $s$-cover for $Y_{<s}$.
For condition \ref{ncov:mass}: membership in $E$ gives
$2^{-sn+S_n\varphi(x_i)} \leq 2^{-K^A(i,n,p)}$, so by Kraft's inequality over the
distinct triples,
\[
\sum_{(i,n,p)\in E} 2^{-sn+S_n\varphi(x_i)}
\;\leq\; \sum_{(i,n,p)\in E} 2^{-K^A(i,n,p)} \;\leq\; 1.
\]
For condition \ref{ncov:cover}, fix $p \in \mathbb{N}$ and $x \in Y_{<s}$; we show
$x \in B_n(x_i, T, 2^{-p})$ with $(i,n,p) \in E$ for infinitely many $n$.\\
Given $x\in Y_{<s}$ we have $\underline{P}^A(x,T,\varphi)<s$ and therefore $\underline{P}^A(x,T,\varphi,2^{-p})<s$ for each $p$. Take a sequence $n_k\uparrow\infty$ and realizers $x_{i_k}$ with $x\in B_{n_k}(x_{i_k},T,2^{-p})$ and $\frac{K^A(i_k)+S_{n_k}\varphi(x_{i_k})}{n_k}<s$. Since $K^A(i,n,p) \leq K^A(i) + 2\log n + O(1)$ for fixed $p$, it follows that
$K^A(i_k, n_k, p) < s n_k - S_{n_k}\varphi(x_{i_k})$ for all sufficiently large $k$.
\end{proof}

\begin{lemma}\label{lem:pointwise-leq-eff-pressure}
Let $Z \subset X$. For all $x \in Z$, $\underline{P}^A(x,T,\varphi) \leq \underline{P}^A_{\mathrm{eff}}(Z,T,\varphi)$.
\end{lemma}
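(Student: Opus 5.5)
We prove the inequality by fixing $x\in Z$ and an arbitrary rational $s$ for which $Z$ has an $A$-effective null $s$-cover $E$, and showing $\underline{P}^A(x,T,\varphi)\le s$; taking the infimum over such $s$ then gives the lemma. The idea is the converse direction of the Coding Theorem: the cover $E$ is $A$-c.e.\ and has finite weighted mass, so every triple in $E$ has a short description. Because $x$ is covered by Bowen balls indexed by $E$ of arbitrarily large order, this short description bounds the pointwise pressure at $x$ from above.

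The first step is to apply the Extended Coding Theorem (Lemma~\ref{lem:extended-coding-theorem}). Take $F=E$ and $f(i,n,p)=sn-S_n\varphi(x_i)$. This $f$ is $A$-computable uniformly on $E$ by Lemma~\ref{lem:birkhoff-uniform}, since $A$ is admissible and $s$ is rational. Condition~\ref{ncov:mass} of Definition~\ref{def:null_cover} says exactly that $\sum_{w\in E}2^{-f(w)}<\infty$. The lemma therefore yields
\[
K^A(i,n,p)\le sn-S_n\varphi(x_i)+O(1)\qquad\text{for all }(i,n,p)\in E,
\]
with the constant depending only on $E$. By Remark~\ref{rem:composition-use}, projecting to the first coordinate gives $K^A(i)\le K^A(i,n,p)+O(1)$, hence $K^A(i)+S_n\varphi(x_i)\le sn+O(1)$ on $E$.

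The second step uses the covering condition. Fix $p$. By condition~\ref{ncov:cover}, for every $k$ there is $(i_k,n_k,p)\in E$ with $n_k\ge k$ and $x\in B_{n_k}(x_{i_k},T,2^{-p})$. By symmetry of $d_n$, this means $x_{i_k}\in B_{n_k}(x,T,2^{-p})$, so $x_{i_k}$ is an admissible realizer in the minimum defining $\underline{P}^A(x,T,\varphi,2^{-p})$. We may pass to a subsequence with $n_k\uparrow\infty$ strictly, which is possible since $n_k\ge k$. Then
\[
\min_{x_i\in B_{n_k}(x,T,2^{-p})}\frac{K^A(i)+S_{n_k}\varphi(x_i)}{n_k}\le s+\frac{O(1)}{n_k},
\]
and taking the $\liminf$ over $n$ along this subsequence gives $\underline{P}^A(x,T,\varphi,2^{-p})\le s$. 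The constant is independent of $p$, but independence is not even needed. Finally let $p\to\infty$; the limit over $\varepsilon$ along $2^{-p}$ agrees with the limit defining $\underline{P}^A(x,T,\varphi)$ by monotonicity in $\varepsilon$.

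I do not expect a real obstacle here; this is the easy half of Theorem~\ref{thm:sup_eq_pressure_lower}. Three details need checking:
\begin{enumerate}
\item The Extended Coding Theorem needs $f$ only upper semicomputable, and here $f$ is genuinely $A$-computable.
\item Nothing prevents a rational $s$ from being negative, but the argument uses no positivity of $s$.
\item We must reduce from real $s$ to rational $s$, by the observation that a null $s$-cover is also a null $s'$-cover for every rational $s'>s$, because $2^{-s'n}\le 2^{-sn}$.
\end{enumerate}
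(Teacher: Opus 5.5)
Your proposal is correct and follows essentially the same route as the paper. You apply the coding theorem to the $A$-c.e.\ cover $E$ with $f(i,n,p)=sn-S_n\varphi(x_i)$, project to get a bound on $K^A(i)$, and use the covering condition to obtain realizers $x_{i_k}\in B_{n_k}(x,T,2^{-p})$ with $n_k\to\infty$, which bounds the $\liminf$ by $s$ at each scale. You also make explicit two small points the paper's proof leaves implicit. The mass condition gives only finiteness, not $\leq 1$, so Lemma~\ref{lem:extended-coding-theorem} is the right tool. And restricting to rational $s$ keeps $f$ $A$-computable.
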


\begin{proof}
Let $s > \underline{P}^A_{\mathrm{eff}}(Z,T,\varphi)$. Then $Z$ has an $A$-effective null $s$-cover $E$. As 
\[
\sum_{(i,n,p)\in E} 2^{-sn+S_n \varphi(x_i)} < \infty,
\]
by the coding theorem we have for $(i,n,p)\in E$
\[K^A(i)\leq K^A(i,n,p)+O(1)\leq sn-S_n \varphi( x_i)+O(1)\]
 Given $p$ we have a sequence $(i_k,n_k,2^{-p})\in E$ with $n_k\to\infty$ so that $x\in B_{n_k}(x_{i_k},T,2^{-p})$. Therefore $x_{i_k}\in B_{n_k}(x,T,2^{-p})$ and we get 
\[\min\limits_{x_i\in B_{n_k}(x,T,2^{-p})}\frac{K^A(i)+S_{n_k}\varphi(x_i)}{n_k}\leq \frac{K^A(i_k)+S_{n_k}\varphi(x_{i_k})}{n_k}\leq \frac{sn_k+O(1)}{n_k}\leq s+\frac{O(1)}{n_k}\]
Therefore on the subsequence $n_k$ we have an upper bound which decays to $s$, which gives an upper bound 
\[\underline{P}^A(x,T,\varphi,2^{-p})\leq s\]
Since this bound holds for every $p$ we get $\underline{P}^A(x,T,\varphi)\leq s$, and taking $s\downarrow \underline{P}^A_{\mathrm{eff}}(Z,T,\varphi)$ gives $\underline{P}^A(x,T,\varphi)\leq \underline{P}^A_{\mathrm{eff}}(Z,T,\varphi)$.

\end{proof}
It remains to produce an oracle achieving the value $\underline{P}(Z,T,\varphi)$. As before, we encode a sequence of near-optimal null $s$-covers into a single oracle by a countable join.
\begin{lemma}\label{lem:exists-oracle-pressure}
    There is an oracle $A$ so that 
    \[\underline{P}(Z,T,\varphi)=P^A_{\mathrm{eff}}(Z,T,\varphi)\]
\end{lemma}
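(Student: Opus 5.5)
The plan mirrors the proof of Lemma~\ref{exists-oracle}, with Lemma~\ref{lem:null_cover} in place of Lemma~\ref{lem:h2-null-s-cover}. If $\underline{P}(Z,T,\varphi)=\infty$, any oracle works, since $\underline{P}^A_{\mathrm{eff}}\geq\underline{P}$ always. So assume it is finite.

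\textbf{Step 1: choose near-optimal covers.} Pick reals $s_j\downarrow\underline{P}(Z,T,\varphi)$, say $s_j=\underline{P}(Z,T,\varphi)+2^{-j}$. By Lemma~\ref{lem:null_cover}, for each $j$ there is a null $s_j$-cover $E_j\subset\mathbb{N}^3$ of $Z$. Indeed, the infimum in that lemma is strictly below $s_j$, so some null $t$-cover exists with $t<s_j$. Any null $t$-cover is also a null $s_j$-cover: the mass condition only improves as $s$ increases, because $2^{-s_jn}\leq 2^{-tn}$ for $n\geq 0$.

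\textbf{Step 2: build the oracle.} Let $C$ be an admissible oracle for $(X,d,T,\varphi)$, which exists by Remark~\ref{rem:admissible-cone}, and set $A=C\oplus\bigoplus_j E_j$. Each $E_j$ is $A$-computable by projection, hence $A$-c.e. It is therefore an $A$-effective null $s_j$-cover, because conditions~\ref{ncov:mass} and~\ref{ncov:cover} of Definition~\ref{def:null_cover} do not mention the oracle. This gives $\underline{P}^A_{\mathrm{eff}}(Z,T,\varphi)\leq s_j$ for every $j$, so $\underline{P}^A_{\mathrm{eff}}(Z,T,\varphi)\leq\underline{P}(Z,T,\varphi)$. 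Including $C$ is not needed for this statement, but it makes $A$ admissible, so that Theorem~\ref{thm:sup_eq_pressure_lower} applies at $A$ afterwards.

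\textbf{Step 3: reverse inequality.} An $A$-effective null $s$-cover is in particular a null $s$-cover. Hence the infimum defining $\underline{P}^A_{\mathrm{eff}}$ ranges over a subfamily of the one in Lemma~\ref{lem:null_cover}, which gives $\underline{P}(Z,T,\varphi)\leq\underline{P}^A_{\mathrm{eff}}(Z,T,\varphi)$.

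\textbf{Expected obstacle.} The only delicate point is Step 1's use of Lemma~\ref{lem:null_cover}. One must make sure that a cover exists \emph{at} parameter $s_j$, not merely that the infimum equals $\underline{P}$. This is handled by the monotonicity of null $s$-covers in $s$ noted above.
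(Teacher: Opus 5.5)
Your proposal is correct and is essentially the paper's proof: you choose null $s_j$-covers via Lemma~\ref{lem:null_cover} with $s_j\downarrow\underline{P}(Z,T,\varphi)$, take the join $\bigoplus_j E_j$, and get the reverse inequality because effective covers are covers. Your extra join with an admissible $C$ is harmless, since the paper performs that join later in Theorem~\ref{thm:pts_lower_pressure}. One bookkeeping slip: after disposing of $\underline{P}=\infty$ you write ``assume it is finite,'' but $\underline{P}(Z,T,\varphi)=-\infty$ is possible for unbounded $\varphi$ and is not excluded; your Step~1 nonetheless handles that case verbatim with, e.g., $s_j=-j$.
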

\begin{proof}
As in Lemma~\ref{exists-oracle}, with
Lemma~\ref{lem:null_cover} in place of
Lemma~\ref{lem:h2-null-s-cover}: choose null $s_j$-covers
$E_j \subset \mathbb{N}^3$ for $Z$ with
$s_j \downarrow \underline{P}(Z,T,\varphi)$ and set
$A = \bigoplus_j E_j$. The reverse inequality holds for every oracle.
\end{proof}
Every oracle $B\geq_T A$ satisfies \[\underline{P}^B_{\mathrm{eff}}(Z,T,\varphi)\leq \underline{P}^A_{\mathrm{eff}}(Z,T,\varphi),\]
and every oracle $A$ satisfies
\[\underline{P}(Z,T,\varphi)\leq \underline{P}^A_{\mathrm{eff}}(Z,T,\varphi),\]
so the identity $\underline{P}(Z,T,\varphi)=\underline{P}^A_{\mathrm{eff}}(Z,T,\varphi)$ holds on an upper set of oracles. We thus obtain the point-to-set principle for Pesin--Pitskel topological pressure
\begin{theorem}[Point-to-set principle for lower topological pressure]
\label{thm:pts_lower_pressure}
Let $Z\subset X$. Then
\[\underline{P}(Z,T,\varphi)=\min \limits_{A\subset \mathbb{N}}\sup\limits_{x\in Z}\underline{P}^{A}(x,T,\varphi),\]
with the minimum attained on an upper set of Turing degrees.
\end{theorem}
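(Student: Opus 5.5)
The plan is to assemble the theorem from the three ingredients already in place, following the four-step pattern of Section~\ref{sec:cones} and closing with the achieving-oracle schema (Lemma~\ref{lem:achieving-schema}). We take $\mathrm{d}^A=\underline{P}^A(\cdot,T,\varphi)$ and $Q(Z)=\underline{P}(Z,T,\varphi)$.

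For the base oracle $C$ we choose any admissible oracle for $(X,d,T,\varphi)$, which exists by Remark~\ref{rem:admissible-cone}. Every $A\geq_T C$ is again admissible, with the same structure. For such $A$, Lemma~\ref{lem:null_cover} and the observation that $A$-effective null $s$-covers are in particular null $s$-covers give
\[
\underline{P}(Z,T,\varphi)\leq\underline{P}^A_{\mathrm{eff}}(Z,T,\varphi).
\]
Theorem~\ref{thm:sup_eq_pressure_lower} then identifies the right-hand side with $\sup_{x\in Z}\underline{P}^A(x,T,\varphi)$. This is the hypothesis $Q(Z)\leq\sup_x\mathrm{d}^A(x)$ for all $A\geq_T C$ required by the schema.

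Next we produce the almost-achieving oracles. By Lemma~\ref{lem:exists-oracle-pressure} there is an oracle $A_0$ with $\underline{P}^{A_0}_{\mathrm{eff}}(Z,T,\varphi)=\underline{P}(Z,T,\varphi)$. Set $A_j=A_0\oplus C$ for every $j$. This oracle is admissible, and it can still enumerate the covers encoded in $A_0$, so its effective pressure is at most that of $A_0$. By Theorem~\ref{thm:sup_eq_pressure_lower} we get
\[
\sup_{x\in Z}\underline{P}^{A_j}(x,T,\varphi)=\underline{P}^{A_j}_{\mathrm{eff}}(Z,T,\varphi)\leq\underline{P}(Z,T,\varphi).
\]
Lemma~\ref{lem:achieving-schema} then yields $B=C\oplus\bigoplus_j A_j$ with $\sup_{x\in Z}\underline{P}^B(x,T,\varphi)=\underline{P}(Z,T,\varphi)$. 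The same identity holds for every oracle above $B$. For an arbitrary oracle $A$, monotonicity (Lemma~\ref{lem:monotone}) gives
\[
\sup_{x\in Z}\underline{P}^A(x,T,\varphi)\geq\sup_{x\in Z}\underline{P}^{A\oplus B}(x,T,\varphi)=\underline{P}(Z,T,\varphi).
\]
So the minimum over all oracles is attained, and it is attained on the upper cone $\uparrow\! B$.

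The main point needing care is Lemma~\ref{lem:monotone} for the pressure quantities. It requires that passing to a stronger oracle changes $K^A(i)$ by at most an additive constant inside the minimum over realizers. Here the realizer set $\{x_i\in B_n(x,T,\varepsilon)\}$ does not depend on the oracle once the ideal-point structure is fixed, and the $O(1)$ is absorbed by the division by $n$. We must also make sure the structure used for $B$ is the one fixed by $C$, so that pointwise pressures for different oracles are computed with respect to the same $(x_i)$. If different structures were in play, Remark~\ref{rem:structure-independence} would reconcile them up to $O(1)$ in complexity and a factor $2$ in $\varepsilon$. That factor disappears in the limit $\varepsilon\to0$, using $\Var(\varphi,\varepsilon)\to0$ to control the change in $S_n\varphi$.
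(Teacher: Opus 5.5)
Your proposal is correct and follows essentially the paper's route. The paper likewise combines $\underline{P}(Z,T,\varphi)\leq\underline{P}^A_{\mathrm{eff}}(Z,T,\varphi)$, Theorem~\ref{thm:sup_eq_pressure_lower} for admissible oracles, the achieving oracle of Lemma~\ref{lem:exists-oracle-pressure}, and a join with an admissible oracle together with monotonicity; it just argues directly rather than through Lemma~\ref{lem:achieving-schema} with a constant sequence $A_j$, and it leaves implicit your remark about keeping the ideal-point structure fixed.
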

\begin{proof}
This is the same as the proof of Theorem \ref{thm:pts-entropy}. For every oracle $A$ we have
$\underline{P}(Z,T,\varphi)\le P^A_{\mathrm{eff}}(Z,T,\varphi)$, and for admissible $A$,
$P^A_{\mathrm{eff}}(Z,T,\varphi)=\sup_{x\in Z}\underline{P}^{A}(x,T,\varphi)$ by
Theorem~\ref{thm:sup_eq_pressure_lower}. By Lemma~\ref{lem:exists-oracle-pressure} there
is an oracle $B$ with $\underline{P}(Z,T,\varphi)=P^B_{\mathrm{eff}}(Z,T,\varphi)$. Joining
with an admissible oracle (Remark~\ref{rem:admissible-cone}) and
using monotonicity (Lemma~\ref{lem:monotone}), we may take $B$
admissible, and the same holds for every oracle above $B$. The
minimum is therefore attained at $B$, on the upper cone above it.
\end{proof}

\subsubsection{Upper pressure and its discretization}
\label{subsubsec:upper-pressure-disc}
We now treat the upper pressure, in analogy with the point-to-set
principle for packing dimension. Packing pressure admits an
equivalent characterization as a \emph{modified upper capacity
pressure}, in exact analogy with the characterization of packing
dimension as modified upper box dimension. Since this is the
formulation we discretize, we adopt it as our working definition.

\begin{definition}[Modified upper capacity pressure]\label{def:mUC1}
Let $Z\subset X$ and let $n\in\mathbb{N}$, $\varepsilon>0$. Set
\[
N(Z,T,\varphi,n,\varepsilon)
=\inf\Bigg\{\sum_{x\in E} 2^{\varphi(x,n,\varepsilon)}
\;\Bigg|\; E\subset X \text{ countable},\
Z\subset \bigcup_{x\in E}B_n(x,T,\varepsilon)\Bigg\}.
\]
The \emph{upper capacity pressure} of $Z$ at scale $\varepsilon$ is
\[
P_{\mathrm{UC}}(Z,T,\varphi,\varepsilon)
=\limsup\limits_{n\to \infty}\frac{\log N(Z,T,\varphi,n,\varepsilon)}{n},
\]
the \emph{modified upper capacity pressure} at scale $\varepsilon$ is
\[
P_{\mathrm{mUC}}(Z,T,\varphi,\varepsilon)
=\inf\Bigg\{\,\sup_{i\in\mathbb{N}}
P_{\mathrm{UC}}(Z_i,T,\varphi,\varepsilon)
\;\Bigg|\; Z\subset\bigcup_{i\in\mathbb{N}} Z_i\Bigg\},
\]
where the infimum is over countable covers of $Z$, and the
\emph{modified upper capacity pressure} of $Z$ is
\[
P_{\mathrm{mUC}}(Z,T,\varphi)
=\lim\limits_{\varepsilon\to 0}P_{\mathrm{mUC}}(Z,T,\varphi,\varepsilon).
\]
(The existence of the limit is part of
Corollary~\ref{cor:sep-cov-pressure} in the appendix, where
$P_{\mathrm{mUC}}$ is compared with the separated-set quantity
$P^{\mathrm{sep}}_{\mathrm{mUC}}$ and the classical packing
pressure.)
\end{definition}

The relationship between modified upper capacity pressure and packing
pressure is the same as that between modified upper box dimension and
packing dimension:
\[
\overline{P}(Z,T,\varphi)=P_{\mathrm{mUC}}(Z,T,\varphi),
\]
as proven in the compact case in \cite{zhao_2017_a}. We extend this to
our setting in Appendix~\ref{app:packing-pressure}. Henceforth we take
this as our definition of packing pressure and write $\overline{P}$ in
place of $P_{\mathrm{mUC}}$.
Our first step is to discretize Definition~\ref{def:mUC1}.

\begin{definition}[Uniform $s$-net]\label{def:s-net}
Fix a dense sequence $(x_i)_{i\in\mathbb{N}}$ in $X$. A \emph{uniform $s$-net
for $Z$} is a set $E\subset\mathbb{N}^4$ such that, writing
$E_{n,p,k}=\{i\mid (i,n,p,k)\in E\}$:
\begin{enumerate}
    \item \label{snet:pressure} for all $p,k\in \mathbb{N}$,
    \[
    \limsup\limits_{n\to\infty}\frac{1}{n}
    \log\Bigg(\sum_{i\in E_{n,p,k}} 2^{S_n\varphi(x_i)}\Bigg)\leq s;
    \]
    \item \label{snet:cover} for each $p\in\mathbb{N}$,
    \[
    Z\subset\bigcup_{k\in\mathbb{N}}\bigcap_{n\in\mathbb{N}}
    \bigcup_{i\in E_{n,p,k}} B_n(x_i,T,2^{-p}).
    \]
\end{enumerate}
\end{definition}
In this formulation, the countable decomposition of $Z$ at scale
$\varepsilon=2^{-p}$ is given by the sets
\[
Z^{(p)}_k=\bigcap_{n\in\mathbb{N}}\bigcup_{i\in E_{n,p,k}} B_n(x_i,T,2^{-p}),
\qquad k\in\mathbb{N},
\]
with condition~\ref{snet:pressure} witnessing
$P_{\mathrm{UC}}(Z^{(p)}_k,T,\varphi,2^{-p})\leq s$ for each $k$ and condition \ref{snet:cover} witnessing $Z\subset \bigcup_{k\in\mathbb{N}}Z^{(p)}_k$\\
We now establish that this definition captures the notion of upper pressure.

\begin{lemma}\label{lem:s-net}
\[
\overline{P}(Z,T,\varphi)=\inf\{s\mid Z \text{ has a uniform
$s$-net}\}.
\]
\end{lemma}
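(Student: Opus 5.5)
We prove the two inequalities separately, working throughout with $\overline{P}=P_{\mathrm{mUC}}$ from Definition~\ref{def:mUC1}. The argument mirrors Lemma~\ref{lem:null_cover}: a uniform $s$-net is converted into coverings witnessing small upper capacity pressure, and conversely near-optimal countable decompositions are discretized onto ideal points, paying a factor $2^{n\Var(\varphi,\cdot)}$ whenever centers are moved.

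($\leq$) Let $E$ be a uniform $s$-net for $Z$ and fix $p$. For each $k$ set $Z^{(p)}_k=\bigcap_n\bigcup_{i\in E_{n,p,k}}B_n(x_i,T,2^{-p})$. For every $n$, the family $\{x_i: i\in E_{n,p,k}\}$ is a countable cover of $Z^{(p)}_k$ by Bowen balls of order $n$. Since $\varphi(x_i,n,2^{-p})\le S_n\varphi(x_i)+n\Var(\varphi,2^{-p})$, we get
\[
N(Z^{(p)}_k,T,\varphi,n,2^{-p})\le 2^{n\Var(\varphi,2^{-p})}\sum_{i\in E_{n,p,k}}2^{S_n\varphi(x_i)} .
\]
Condition~\ref{snet:pressure} then gives $P_{\mathrm{UC}}(Z^{(p)}_k,T,\varphi,2^{-p})\le s+\Var(\varphi,2^{-p})$. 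By condition~\ref{snet:cover} the sets $Z^{(p)}_k$ cover $Z$, so $P_{\mathrm{mUC}}(Z,T,\varphi,2^{-p})\le s+\Var(\varphi,2^{-p})$. Letting $p\to\infty$, and using that the limit in $\varepsilon$ exists (Corollary~\ref{cor:sep-cov-pressure}), yields $\overline{P}(Z,T,\varphi)\le s$.

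($\geq$) Let $s>\overline{P}(Z,T,\varphi)$, which we may assume finite. By monotonicity in $\varepsilon$, $P_{\mathrm{mUC}}(Z,T,\varphi,2^{-(p+1)})<s$ for every $p$. (We should check the direction of monotonicity: if $P_{\mathrm{mUC}}(\cdot,\varepsilon)$ is nondecreasing as $\varepsilon\downarrow0$, as for the lower pressure, this is immediate. Otherwise we pass to $s'$ slightly larger than $s$ and use convergence for all small $\varepsilon$, which suffices since only sufficiently large $p$ matter.) For each $p$, choose a countable decomposition $Z\subset\bigcup_k Z_{p,k}$ with $P_{\mathrm{UC}}(Z_{p,k},T,\varphi,2^{-(p+1)})<s$. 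For each $p,k,n$, choose a countable cover $F_{n,p,k}\subset X$ of $Z_{p,k}$ by balls $B_n(y,T,2^{-(p+1)})$ with $\sum_{y}2^{\varphi(y,n,2^{-(p+1)})}\le 2N(Z_{p,k},T,\varphi,n,2^{-(p+1)})+1$. Replace each $y$ by an ideal point $x_{i(y)}$ with $d_n(y,x_{i(y)})<2^{-(p+1)}$. The triangle inequality gives $B_n(y,T,2^{-(p+1)})\subset B_n(x_{i(y)},T,2^{-p})$, and $S_n\varphi(x_{i(y)})\le\varphi(y,n,2^{-(p+1)})$. Define $E_{n,p,k}=\{i(y): y\in F_{n,p,k}\}$. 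Condition~\ref{snet:cover} holds because every $x\in Z_{p,k}$ lies, for every $n$, in some $B_n(x_i,T,2^{-p})$ with $i\in E_{n,p,k}$. Condition~\ref{snet:pressure} follows from $\limsup_n\frac1n\log(2N+1)\le s$; the additive $+1$ is harmless only when $N$ does not decay, so we should instead pick covers with sum at most $2N+2^{-n}$, giving $\limsup\le\max(s,\cdot)$ appropriately.

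The main obstacle is bookkeeping rather than ideas. The critical issue is matching scales between the definition of $P_{\mathrm{mUC}}(\cdot,\varepsilon)$ and the single index $p$ in a net: we use the decomposition at scale $2^{-(p+1)}$ for the net at index $p$. This needs the monotonicity or existence of the $\varepsilon$-limit from Corollary~\ref{cor:sep-cov-pressure}, which is where we would first look if anything fails. A secondary concern is ensuring that the near-optimal cover slack does not spoil the $\limsup$ bound when $N$ is exponentially small, which happens if $\varphi$ is very negative; choosing multiplicative slack $2N$ when $N>0$ handles this. Note that if $N=0$ the empty cover is impossible for nonempty $Z_{p,k}$, so $N>0$ and multiplicative slack is always available.
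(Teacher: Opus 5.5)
Your proposal is correct and follows the paper's proof step for step. The ingredients match: covers at scale $2^{-(p+1)}$ realizing $N$ within a factor of $2$, centers moved onto ideal points so that $B_n(y,T,2^{-(p+1)})\subset B_n(x_{i(y)},T,2^{-p})$ and $S_n\varphi(x_{i(y)})\leq\varphi(y,n,2^{-(p+1)})$, and the $2^{n\Var(\varphi,2^{-p})}$ loss in the converse direction. Your hedge on monotonicity is well founded: $P_{\mathrm{mUC}}(Z,T,\varphi,\varepsilon)$ is in general only monotone in $\varepsilon$ up to an error $\Var(\varphi,\varepsilon)$, so the paper's ``for all $p$'' should read ``for all $p\geq p_0$'' (by existence of the limit), with the finitely many $p<p_0$ handled by setting $E_{n,p,k}=E_{n,p_0,k}$; and of your two slack options only the multiplicative one survives, since additive $2^{-n}$ slack fails when $s<-1$.
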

\begin{proof}
($\geq$) Suppose $s>\overline{P}(Z,T,\varphi)$. Since
$P_{\mathrm{mUC}}(Z,T,\varphi,\varepsilon)\uparrow
\overline{P}(Z,T,\varphi)$ as $\varepsilon\to 0$, we have $P_{\mathrm{mUC}}(Z,T,\varphi,2^{-(p+1)})<s$ for all
$p$. For each $p$
there is a countable cover $Z\subset\bigcup_{k}Z^{(p)}_k$ with
$P_{\mathrm{UC}}(Z^{(p)}_k,T,\varphi,2^{-(p+1)})<s$ for all $k$. Thus
for each $n,k$ we may choose a countable set $F_{n,p,k}\subset X$
with
\[
Z^{(p)}_k\subset\bigcup_{y\in F_{n,p,k}}B_n\bigl(y,T,2^{-(p+1)}\bigr)
\qquad\text{and}\qquad
\limsup\limits_{n\to\infty}\frac{1}{n}
\log\sum_{y\in F_{n,p,k}} 2^{\varphi(y,n,2^{-(p+1)})}<s,
\]
taking $F_{n,p,k}$ to realize
$N(Z^{(p)}_k,T,\varphi,n,2^{-(p+1)})$ within a factor of $2$; the
factor contributes $\frac{1}{n}\log 2\to 0$ to the exponential rate.
Since the Bowen metric $d_n$ induces the topology of $X$, $(x_i)$ is
$d_n$-dense and we may choose for each $y\in F_{n,p,k}$ an index
$i(y)$ with $d_n\bigl(x_{i(y)},y\bigr)<2^{-(p+1)}$; then
$x_{i(y)}\in B_n(y,T,2^{-(p+1)})$, so
$S_n\varphi\bigl(x_{i(y)}\bigr)\leq \varphi(y,n,2^{-(p+1)})$.
Let $E=\{(i(y),n,p,k)\mid n,p,k\in\mathbb{N},\ y\in F_{n,p,k}\}$, so
that $E_{n,p,k}=\{i(y)\mid y\in F_{n,p,k}\}$. We claim $E$ is a
uniform $s$-net for $Z$.

By the triangle inequality,
$B_n(y,T,2^{-(p+1)})\subset B_n(x_{i(y)},T,2^{-p})$, so for every
$n$,
\[
Z^{(p)}_k
\subset\bigcup_{i\in E_{n,p,k}}B_n\bigl(x_i,T,2^{-p}\bigr).
\]
Intersecting over $n$ and taking the union over $k$ yields
condition~\ref{snet:cover}. For condition~\ref{snet:pressure}: each
$i\in E_{n,p,k}$ equals $i(y)$ for at least one $y\in F_{n,p,k}$, so
\[
\sum_{i\in E_{n,p,k}} 2^{S_n\varphi(x_i)}
\leq \sum_{y\in F_{n,p,k}} 2^{S_n\varphi(x_{i(y)})}
\leq \sum_{y\in F_{n,p,k}} 2^{\varphi(y,n,2^{-(p+1)})},
\]
and hence
\[
\limsup\limits_{n\to\infty}\frac{1}{n}
\log\sum_{i\in E_{n,p,k}} 2^{S_n\varphi(x_i)}
<s.
\]
So $E$ is a
uniform $s$-net, and
$\inf\{s\mid Z\text{ has a uniform
$s$-net}\}\leq\overline{P}(Z,T,\varphi)$.

($\leq$) Suppose $Z$ has a uniform $s$-net $E$, and fix
$p\in\mathbb{N}$. By condition~\ref{snet:cover}, the sets
$Z^{(p)}_k=\bigcap_n\bigcup_{i\in E_{n,p,k}}B_n(x_i,T,2^{-p})$
cover $Z$. For each $k$ and every $n$ we have
$Z^{(p)}_k\subset\bigcup_{i\in E_{n,p,k}}B_n(x_i,T,2^{-p})$ by
construction, so
\[
N(Z^{(p)}_k,T,\varphi,n,2^{-p})
\leq\sum_{i\in E_{n,p,k}} 2^{\varphi(x_i,n,2^{-p})}
\leq 2^{\,n\Var(\varphi,2^{-p})}\sum_{i\in E_{n,p,k}} 2^{S_n\varphi(x_i)},
\]
and condition~\ref{snet:pressure} gives
$P_{\mathrm{UC}}(Z^{(p)}_k,T,\varphi,2^{-p})\leq s+\Var(\varphi,2^{-p})$
for every $k$. Hence
$P_{\mathrm{mUC}}(Z,T,\varphi,2^{-p})\leq s+\Var(\varphi,2^{-p})$;
letting $p\to\infty$ gives $\overline{P}(Z,T,\varphi)\leq s$.
Taking the infimum over such $s$ gives
$\overline{P}(Z,T,\varphi)
\leq\inf\{s\mid Z\text{ has a uniform $s$-net}\}$.
\end{proof}
\subsubsection{The point-to-set principle for upper topological pressure}
We now effectivize packing pressure to an $A$-computable version
$\overline{P}^A_{\mathrm{eff}}$, with the goal of proving
\[
\overline{P}^A_{\mathrm{eff}}(Z,T,\varphi)
=\sup_{x\in Z}\overline{P}^A(x,T,\varphi).
\]
\begin{definition}[$A$-effective uniform $s$-net]
    An \emph{$A$-effective uniform $s$-net} is a uniform $s$-net $E\subset \mathbb{N}^4$ which is $A$-computably enumerable.
\end{definition}
We similarly define the $A$-effective upper pressure.
\begin{definition}[$A$-effective upper pressure]
    \[\overline{P}^A_{\mathrm{eff}}(Z,T,\varphi)=\inf\{s\mid Z \text{ has an $A$-effective uniform $s$-net}\}\]
\end{definition}
For any oracle $A$ we have $\overline{P}(Z,T,\varphi)\leq \overline{P}^A_{\mathrm{eff}}(Z,T,\varphi)$, the effective quantity being an infimum over fewer nets. In parallel with the Bowen entropy case, we prove the following theorem.
\begin{theorem}\label{thm:eff_upper_pointwise}
For $A$ admissible for $(X,d,T,\varphi)$ and any $Z\subset X$,
\[
\overline{P}^A_{\mathrm{eff}}(Z,T,\varphi)
=\sup_{x\in Z}\overline{P}^{A}(x,T,\varphi).
\]
\end{theorem}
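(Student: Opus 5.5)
We prove the two inequalities separately, mirroring Lemmas~\ref{lem:eff-pressure-upper} and~\ref{lem:pointwise-leq-eff-pressure}. The uniform $s$-net replaces the null $s$-cover. The index $k$ in $E\subset\mathbb{N}^4$ is used to stratify points by the time after which their complexity bound kicks in.

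\textbf{($\geq$) Pointwise bounded by effective.} Let $s>\overline{P}^A_{\mathrm{eff}}(Z,T,\varphi)$ and fix an $A$-c.e.\ uniform $s$-net $E$. Fix $t>s$ and $p,k$.
\begin{itemize}
\item By condition~\ref{snet:pressure}, $\sum_{i\in E_{n,p,k}}2^{S_n\varphi(x_i)}\leq 2^{tn}$ for all $n\geq n_0(p,k,t)$.
\item Hence the weight $f(i,n,p,k)=tn-S_n\varphi(x_i)+2\log n$ satisfies $\sum_{(i,n,p,k)\in E,\,n\geq n_0}2^{-f}\leq\sum_n n^{-2}<\infty$.
\item $f$ is $A$-computable by Lemma~\ref{lem:birkhoff-uniform}, and the restricted set is $A$-c.e.\ for the fixed $(p,k)$.
\item The extended Coding Theorem (Lemma~\ref{lem:extended-coding-theorem}) then gives $K^A(i)\leq tn-S_n\varphi(x_i)+2\log n+O_{p,k}(1)$ for $i\in E_{n,p,k}$.
\end{itemize}
Now take $x\in Z$. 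By condition~\ref{snet:cover}, $x\in Z^{(p)}_k$ for some $k$. So for every $n$ there is $i\in E_{n,p,k}$ with $x_i\in B_n(x,T,2^{-p})$, and the minimum in Definition~\ref{def:pointwise-pressure} is at most $t+o(1)$ for every large $n$. Hence $\overline{P}^A(x,T,\varphi,2^{-p})\leq t$. Letting $p\to\infty$ and then $t\downarrow s$ gives the claim. The uniformity in $n$ supplied by the $\bigcap_n$ is exactly what controls the $\limsup$.

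\textbf{($\leq$) Effective bounded by pointwise.} Fix rational $s>\sup_{x\in Z}\overline{P}^A(x,T,\varphi)$ and $t>s$; we build an $A$-c.e.\ uniform $t$-net. Set
\[E=\bigl\{(i,n,p,k)\bigm| K^A(i)<sn-S_n\varphi(x_i)+k\bigr\},\]
which is $A$-c.e.\ by Remark~\ref{rem:ce-thresholds}.

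\emph{Condition~\ref{snet:pressure}.} Each $i\in E_{n,p,k}$ has $2^{S_n\varphi(x_i)}<2^{sn+k-K^A(i)}$. By Kraft, $\sum_{i\in E_{n,p,k}}2^{S_n\varphi(x_i)}\leq 2^{sn+k}$, so the $\limsup$ is at most $s\leq t$.

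\emph{Condition~\ref{snet:cover}.} Let $x\in Z$ and $p$ be given. Since $\overline{P}^A(x,T,\varphi,2^{-p})\leq\overline{P}^A(x,T,\varphi)<s$ (monotone in $\varepsilon$), there is $N$ such that for all $n\geq N$ some $x_i\in B_n(x,T,2^{-p})$ has $K^A(i)<sn-S_n\varphi(x_i)$.
\begin{itemize}
\item For the finitely many $n<N$, pick any ideal point $x_{i_n}\in B_n(x,T,2^{-p})$ and choose $k$ exceeding $\max_{n<N}\bigl(K^A(i_n)+S_n\varphi(x_{i_n})-sn\bigr)$.
\item Then for every $n$ some $i\in E_{n,p,k}$ has $x\in B_n(x_i,T,2^{-p})$, so $x\in Z^{(p)}_k$.
\end{itemize}
Thus $E$ is an $A$-effective uniform $s$-net (in fact $t$ is not needed), giving $\overline{P}^A_{\mathrm{eff}}\leq s$.

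\textbf{Main obstacle.} The delicate point is the first direction. The Coding Theorem must be applied with a constant depending on $(p,k)$, and the bound must be uniform over all large $n$, not just a subsequence. If the per-$(p,k)$ constant causes trouble, the fallback is to apply the theorem once to all of $E$ with weight $tn-S_n\varphi(x_i)+2\log n+2\log(p+1)+2\log(k+1)$. The finitely many small $n$ for each $(p,k)$, where the mass bound fails, are handled by an $A$-computable cutoff rather than a single global constant. Because the error is $o(n)$ for fixed $(p,k)$, this suffices.
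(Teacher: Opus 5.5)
Your proposal is correct and is essentially the paper's proof (Lemmas~\ref{lem:eff_upper} and~\ref{lem:eff_upper_pressure_geq}): one direction uses the same $A$-c.e.\ net $\{(i,n,p,k)\mid K^A(i)<sn-S_n\varphi(x_i)+k\}$, with Kraft giving condition~\ref{snet:pressure} and a finite-$k$ patch for the small $n$; the other applies the extended coding theorem on a fixed slice $(p,k)$, with a constant depending on $(p,k)$. The only difference is cosmetic: you get summability over $n$ by adding $2\log n$ to the weight, where the paper uses the intermediate rate $\tfrac{s+t}{2}$ to get a geometric tail. The fallback in your last paragraph is also unnecessary, since $(p,k)$ is fixed once $x$ and $p$ are.
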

We prove this in two lemmas.
\begin{lemma}\label{lem:eff_upper}
For $A$ admissible for $(X,d,T,\varphi)$,
    \[\overline{P}^A_{\mathrm{eff}}(Z,T,\varphi)\leq
    \sup_{x\in Z}\overline{P}^A(x,T,\varphi).\]
\end{lemma}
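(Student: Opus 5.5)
Fix a rational $s>\sup_{x\in Z}\overline{P}^A(x,T,\varphi)$; if the supremum is infinite there is nothing to prove. The plan is to build an $A$-c.e.\ uniform $s'$-net for $Z$ for every rational $s'>s$, by analogy with Lemma~\ref{lem:eff-pressure-upper}. The new feature is the index $k$, which must carry a uniform-in-time complexity threshold. For $x\in Z$ and each $p$ we have $\overline{P}^A(x,T,\varphi,2^{-p})<s$. So there is $n_0(x,p)$ such that for all $n\geq n_0$ some ideal point $x_i\in B_n(x,T,2^{-p})$ satisfies $K^A(i)+S_n\varphi(x_i)<sn$. The decomposition will be indexed by $k=\langle k_1,k_2\rangle$, where $k_1$ stands in for $n_0$ and $k_2$ is an additive constant absorbing the cost of the indices. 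I would define
\[
E=\bigl\{(i,n,p,k)\;\bigm|\; n\geq k_1,\ K^A(i,n,p)<sn-S_n\varphi(x_i)+k_2+2\log n\bigr\}\cup\bigl\{(i,n,p,k)\mid n<k_1,\ i\in I_{n,p,k}\bigr\},
\]
where $I_{n,p,k}$ handles the finitely many small times. Away from the small-time clause, $E$ is $A$-c.e.\ by Remark~\ref{rem:ce-thresholds} together with Lemma~\ref{lem:birkhoff-uniform}.

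The small times $n<k_1$ are the main obstacle. Condition~\ref{snet:cover} requires $Z^{(p)}_k\subset\bigcup_{i\in E_{n,p,k}}B_n(x_i,T,2^{-p})$ for \emph{every} $n$, including $n<k_1$. Taking all $i$ would cover, but it breaks nothing in condition~\ref{snet:pressure} only if we check it: condition~\ref{snet:pressure} is a $\limsup$ in $n$, so finitely many $n$ are irrelevant. Hence I would simply put $E_{n,p,k}=\mathbb{N}$ for $n<k_1$, which is trivially $A$-c.e. If one insists on finite sums, then $E_{n,p,k}$ for $n<k_1$ may instead be taken to be the set of all $i$ with $K^A(i,n,p)<sn-S_n\varphi(x_i)+k_2+2\log k_1$. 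This is itself $A$-c.e., and it covers $x$ at time $n$ for the following reason. A realizer $x_{i'}$ at time $k_1$, that is, with $x_{i'}\in B_{k_1}(x,T,2^{-p})$, also lies in $B_n(x,T,2^{-p})$ for $n<k_1$, and its weight changes only by a bounded amount over the bounded window, since $S_{k_1}\varphi-S_n\varphi$ is bounded by $k_1\sup|\varphi|$ on bounded ranges. This can be absorbed by letting $k_2$ depend also on that bound. I would take the $\mathbb{N}$ option as simplest, provided the sums in condition~\ref{snet:pressure} are allowed to be infinite at finitely many $n$.

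\textbf{Covering.} Given $x\in Z$ and $p$, choose $k_1=n_0(x,p)$. For $n\geq k_1$ take the realizer $i_n$. Then $K^A(i_n,n,p)\leq K^A(i_n)+2\log n+c_p$ by Remark~\ref{rem:composition-use}, where $c_p$ is a constant; the $p$-dependence can be absorbed into $k_2$ since $p$ is fixed here. Choosing $k_2=c_p+1$ gives $(i_n,n,p,k)\in E$, and $x\in B_n(x_{i_n},T,2^{-p})$ by symmetry of the Bowen metric. So $x\in Z^{(p)}_k$.

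\textbf{Pressure bound.} For fixed $p,k$ and $n\geq k_1$, membership in $E$ gives
\[
2^{S_n\varphi(x_i)}\leq 2^{sn+k_2+2\log n-K^A(i,n,p)}.
\]
Summing over $i$ and applying Kraft's inequality (Lemma~\ref{lem:kraft-chaitin}) to the distinct triples $(i,n,p)$ yields
\[
\sum_{i\in E_{n,p,k}}2^{S_n\varphi(x_i)}\leq 2^{k_2}n^2\,2^{sn}.
\]
Hence the $\limsup$ in condition~\ref{snet:pressure} is at most $s$. So $E$ is an $A$-effective uniform $s$-net, giving $\overline{P}^A_{\mathrm{eff}}(Z,T,\varphi)\leq s$. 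Letting $s$ decrease to the supremum completes the proof.
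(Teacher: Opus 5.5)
Your proof is correct and follows essentially the paper's approach: an $A$-c.e.\ threshold set with an additive slack indexed by $k$, Kraft's inequality applied slice by slice for condition~1, and eventual realizers for condition~2. The paper's version is leaner. It thresholds $K^A(i)$ directly, since Kraft over the distinct $i$ in a fixed slice already suffices, so no $K^A(i,n,p)$ or $2\log n$ correction is needed. It also lets the single slack $k$ absorb the finitely many small times (any ideal point in $B_n(x,T,2^{-p})$ has some finite excess $k_n$). This avoids both the pairing $\langle k_1,k_2\rangle$ and the infinite slices created by your $E_{n,p,k}=\mathbb{N}$ option, although those slices are harmless, as you note, because condition~1 is a $\limsup$.
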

\begin{proof}
If the supremum is infinite there is nothing to prove. Fix a rational $s>\sup_{x\in Z}\overline{P}^A(x,T,\varphi)$ and
define $Y_{<s}=\{x\mid \overline{P}^A(x,T,\varphi)<s\}\supset Z$. We
construct an $A$-effective uniform $s$-net for $Y_{<s}$. Consider
\[E=\bigl\{(i,n,p,k)\;\bigm|\; K^A(i)< sn-S_n\varphi(x_i)+k\bigr\},\]
which is $A$-c.e.\ by Remark~\ref{rem:ce-thresholds}, since
$S_n\varphi(x_i)$ is $A$-computable uniformly in $(i,n)$
(Lemma~\ref{lem:birkhoff-uniform}). The condition is independent
of $p$, and the slices are increasing in $k$:
$E_{n,p,k}\subset E_{n,p,k+1}$.

We first verify condition 1. Fix $p,k\in\mathbb{N}$. By membership in
$E$ and Kraft's inequality over the distinct indices $i$,
\[\sum_{i\in E_{n,p,k}} 2^{S_n\varphi(x_i)}
\leq \sum_{i\in E_{n,p,k}} 2^{sn-K^A(i)+k}
=2^{sn+k}\sum_{i\in E_{n,p,k}} 2^{-K^A(i)}
\leq 2^{sn+k},\]
so
\[\limsup\limits_{n\to\infty}\frac{1}{n}
\log\Bigg(\sum_{i\in E_{n,p,k}} 2^{S_n\varphi(x_i)}\Bigg)\leq s.\]

We now verify condition 2. Fix $p\in\mathbb{N}$ and
$x\in Y_{<s}$. Since $\overline{P}^A(x,T,\varphi,2^{-p})<s$, there is
$N$ such that for every $n\geq N$ there is a realizer
$x_{i_n}\in B_n(x,T,2^{-p})$ with $K^A(i_n)< sn-S_n\varphi(x_{i_n})$,
so $(i_n,n,p,k)\in E$ for every $k\geq 0$. For each of the finitely
many $n<N$, choose by density any ideal point
$x_{i_n}\in B_n(x,T,2^{-p})$; then
$K^A(i_n)< sn-S_n\varphi(x_{i_n})+k_n$ for some finite $k_n$. Taking
$k=\max_{n<N}k_n$, monotonicity of the slices in $k$ gives
$x\in\bigcup_{i\in E_{n,p,k}}B_n(x_i,T,2^{-p})$ for every $n$,
hence $x\in Z^{(p)}_k$. Therefore
$Y_{<s}\subset\bigcup_k Z^{(p)}_k$ for every $p$.

Hence $E$ is an $A$-effective uniform $s$-net for
$Y_{<s}\supseteq Z$, so by monotonicity of
$\overline{P}^A_{\mathrm{eff}}$ under inclusion,
$\overline{P}^A_{\mathrm{eff}}(Z,T,\varphi)\leq s$. As $s$ was an
arbitrary rational above the supremum, the lemma follows.
\end{proof}
We now prove the lower bound.
\begin{lemma}\label{lem:eff_upper_pressure_geq}
For $A$ admissible for $(X,d,T,\varphi)$ and any $Z\subset X$,
\[\overline{P}^A_{\mathrm{eff}}(Z,T,\varphi)\geq
\sup_{x\in Z}\overline{P}^A(x,T,\varphi).\]
\end{lemma}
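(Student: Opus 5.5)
Fix $s>\overline{P}^A_{\mathrm{eff}}(Z,T,\varphi)$ and take an $A$-effective uniform $s$-net $E$ for $Z$; by definition of the infimum we may take $s$ arbitrarily close to $\overline{P}^A_{\mathrm{eff}}(Z,T,\varphi)$. We mimic Lemma~\ref{lem:pointwise-leq-eff-pressure}, but replace the summability over the whole cover by summability slice by slice. The task is to show that every $x\in Z$ satisfies $\overline{P}^A(x,T,\varphi,2^{-p})\leq s+\eta$ for every $p$ and every $\eta>0$. Letting $\eta\to 0$, $p\to\infty$ and $s\downarrow\overline{P}^A_{\mathrm{eff}}$ then gives the lemma.

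Fix $p$, a rational $\eta>0$ and $x\in Z$. By condition~\ref{snet:cover} there is $k$ with $x\in Z^{(p)}_k$, so for every $n$ there is $i\in E_{n,p,k}$ with $x\in B_n(x_i,T,2^{-p})$, i.e.\ $x_i\in B_n(x,T,2^{-p})$. By condition~\ref{snet:pressure} there is $N_0$ (depending on $p,k,\eta$) with
\[
\sum_{i\in E_{n,p,k}}2^{S_n\varphi(x_i)}\leq 2^{(s+\eta/2)n}\quad\text{for } n\geq N_0 .
\]
Now apply the Extended Coding Theorem (Lemma~\ref{lem:extended-coding-theorem}) to the $A$-c.e.\ set
\[
F=\{(i,n)\mid (i,n,p,k)\in E,\ n\geq N_0\}
\]
and the $A$-computable function $f(i,n)=(s+\eta)n-S_n\varphi(x_i)$; its computability comes from Lemma~\ref{lem:birkhoff-uniform}, and $F$ is c.e.\ as a projection of $E$. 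The total mass satisfies
\[
\sum_{(i,n)\in F}2^{-f(i,n)}\leq\sum_{n\geq N_0}2^{-(s+\eta)n}2^{(s+\eta/2)n}=\sum_{n\geq N_0} 2^{-\eta n/2}<\infty .
\]
Hence $K^A(i)\leq K^A(i,n)+O(1)\leq (s+\eta)n-S_n\varphi(x_i)+O(1)$ on $F$, with the constant depending on $p,k,\eta$ but not on $n$. This is the point where the $\eta$-slack is needed: the net only controls the growth rate $\limsup\frac1n\log$, not the sum, so we pay $\eta$ to make the sum over $n$ converge.

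For every $n\geq N_0$ we therefore have a realizer $x_i\in B_n(x,T,2^{-p})$ with
\[
\frac{K^A(i)+S_n\varphi(x_i)}{n}\leq s+\eta+\frac{O(1)}{n}.
\]
Since this holds for \emph{all} large $n$, not just a subsequence (this is exactly what the intersection over $n$ in condition~\ref{snet:cover} buys us), taking $\limsup_n$ gives $\overline{P}^A(x,T,\varphi,2^{-p})\leq s+\eta$, and the limits described in the first paragraph finish the proof. The main obstacle is the constant's dependence on $(p,k,\eta)$. It is harmless because these are fixed before $n\to\infty$, but one must apply the coding theorem separately for each triple rather than once globally. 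A single global application would require summing over $k$ and $p$, which need not converge, because the slices $E_{n,p,k}$ are only bounded individually.
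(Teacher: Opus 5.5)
Your proposal is correct and is essentially the paper's proof. With $t=s+\eta$, your intermediate rate $s+\eta/2$ is the paper's $\frac{s+t}{2}$. Like you, the paper fixes $(p,k)$ with $x\in Z^{(p)}_k$ and applies the extended coding theorem (Lemma~\ref{lem:extended-coding-theorem}) to $(i,n)\mapsto tn-S_n\varphi(x_i)$ on $n\geq N_0$, $i\in E_{n,p,k}$. It then uses the witness available at every $n$ to bound the $\limsup$.

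One small detail to add: take $s$ (and hence $s+\eta$) rational, as the paper does, so that $f$ is genuinely $A$-computable. This costs nothing, since a uniform $s$-net is also a uniform $s'$-net for every $s'>s$.
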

\begin{proof}
Let $t>s>\overline{P}^A_{\mathrm{eff}}(Z,T,\varphi)$ be rational and
fix an $A$-effective uniform $s$-net $E$ for $Z$. Fix $x\in Z$, and $p\in\mathbb{N}$. By
condition~\ref{snet:cover} there is $k\in\mathbb{N}$ with
$x\in\bigcap_{n}\bigcup_{i\in E_{n,p,k}}B_n(x_i,T,2^{-p})$; fix such
a $k$.

Consider the slices $E_{n,p,k}$ as $n$ varies. By
condition~\ref{snet:pressure} (since $2^{\frac{s+t}{2}n}$ is a strictly larger growth rate) there is $N_0$ such that
\[
\sum_{i\in E_{n,p,k}} 2^{S_n\varphi(x_i)}\leq 2^{\frac{s+t}{2}n}
\qquad\text{for all } n\geq N_0,
\]
and hence
\[
\sum_{n\geq N_0}\sum_{i\in E_{n,p,k}} 2^{S_n\varphi(x_i)-tn}
\leq \sum_{n\geq N_0} 2^{-\frac{t-s}{2}n}
<\infty,
\]
the inner sum being dominated by a convergent geometric series.
Consequently the function
\[
(i,n)\mapsto tn-S_n\varphi(x_i),
\qquad\text{defined for } n\geq N_0,\ i\in E_{n,p,k},
\]
satisfies the conditions of the extended coding theorem: it is A-upper semicomputable defined on an A-c.e. domain and it has bounded sum. Therefore by the extended coding theorem
(Lemma~\ref{lem:extended-coding-theorem}),
\[
K^A(i,n)\leq tn-S_n\varphi(x_i)+O(1)
\qquad\text{for all } n\geq N_0,\ i\in E_{n,p,k},
\]
where the constant depends only on $E$, $p$, $k$, and $t$ (and is independent of $n$ and $i$). Since
$K^A(i)\leq K^A(i,n)+O(1)$, we conclude
\[
K^A(i)+S_n\varphi(x_i)\leq tn+O(1)
\qquad\text{for all } n\geq N_0,\ i\in E_{n,p,k}.
\]
Now, by the choice of $k$, for each $n$ there is a witness
$i_n\in E_{n,p,k}$ with $x_{i_n}\in B_n(x,T,2^{-p})$, and the
displayed bound applies to it once $n\geq N_0$. Therefore
\[
\overline{P}^A(x,T,\varphi,2^{-p})
=\limsup\limits_{n\to\infty}\;\min\limits_{x_i\in B_n(x,T,2^{-p})}
\frac{K^A(i)+S_n\varphi(x_i)}{n}
\leq\limsup\limits_{n\to\infty}\frac{K^A(i_n)+S_n\varphi(x_{i_n})}{n}
\leq t.
\]
Letting $p\to\infty$ gives $\overline{P}^A(x,T,\varphi)\leq t$;
letting $t\downarrow s$ and then
$s\downarrow\overline{P}^A_{\mathrm{eff}}(Z,T,\varphi)$, and taking
the supremum over $x\in Z$ proves the lemma.
\end{proof}

\begin{lemma}\label{lem:oracle_exists_upper_pressure}
    Given $Z\subset X$ there is an oracle $A$ such that
    \[\overline{P}(Z,T,\varphi)=\overline{P}^A_{\mathrm{eff}}(Z,T,\varphi).\]
\end{lemma}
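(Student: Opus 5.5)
The plan follows Lemma~\ref{exists-oracle} and Lemma~\ref{lem:exists-oracle-pressure}, with Lemma~\ref{lem:s-net} in place of the null-cover characterization.

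First, dispose of the degenerate case. If $\overline{P}(Z,T,\varphi)=\infty$, then by Lemma~\ref{lem:s-net} no uniform $s$-net exists for any real $s$, so no $A$-effective one exists either. Hence $\overline{P}^A_{\mathrm{eff}}(Z,T,\varphi)=\infty$ for every oracle, and any oracle works. The case $\overline{P}(Z,T,\varphi)=-\infty$ is handled the same way as the finite case below, with $s_j\downarrow-\infty$.

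Otherwise, choose a sequence $s_j\downarrow\overline{P}(Z,T,\varphi)$ with $s_j>\overline{P}(Z,T,\varphi)$. By the $(\geq)$ direction of Lemma~\ref{lem:s-net}, for each $j$ there is a uniform $s_j$-net $E_j\subset\mathbb{N}^4$ for $Z$. Set $A=\bigoplus_j E_j$. Each $E_j$ is computable from $A$ by projection, so each $E_j$ is in particular $A$-c.e. The defining conditions~\ref{snet:pressure} and~\ref{snet:cover} of a uniform $s$-net are properties of the set $E_j$ and the fixed dense sequence $(x_i)$ alone; they do not refer to any oracle. Hence $E_j$ is an $A$-effective uniform $s_j$-net, and $\overline{P}^A_{\mathrm{eff}}(Z,T,\varphi)\leq s_j$ for all $j$. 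Letting $j\to\infty$ gives $\overline{P}^A_{\mathrm{eff}}(Z,T,\varphi)\leq\overline{P}(Z,T,\varphi)$.

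The reverse inequality $\overline{P}(Z,T,\varphi)\leq\overline{P}^A_{\mathrm{eff}}(Z,T,\varphi)$ holds for every oracle, as remarked before Theorem~\ref{thm:eff_upper_pointwise}: every $A$-effective uniform $s$-net is a uniform $s$-net, and Lemma~\ref{lem:s-net} applies.

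The only point needing care is the dense sequence. The uniform $s$-nets of Lemma~\ref{lem:s-net} are built relative to a fixed $(x_i)$. The effective quantity, however, is later used with the structure of an admissible oracle. I would fix the dense sequence once and for all to be the ideal points of the structure that will be used. When $A$ is later joined with an admissible oracle $C$, as in Theorem~\ref{thm:pts_lower_pressure}, the nets $E_j$ are still $(A\oplus C)$-c.e. Alternatively, Remark~\ref{rem:structure-independence} transports the nets between structures. This bookkeeping is the only mild obstacle; the lemma itself is immediate from the join construction.
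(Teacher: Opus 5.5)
Your proposal is correct and is essentially the paper's proof: pick uniform $s_j$-nets $E_j$ with $s_j\downarrow\overline{P}(Z,T,\varphi)$ via Lemma~\ref{lem:s-net}, set $A=\bigoplus_j E_j$, and note that effective nets are nets, which gives the reverse inequality for every oracle. Your remarks on the infinite cases and on fixing the dense sequence (cf.\ Remark~\ref{rem:structure-independence}) are sound bookkeeping that the paper leaves implicit.
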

\begin{proof}
As in Lemmas~\ref{exists-oracle}
and~\ref{lem:exists-oracle-pressure}: by Lemma~\ref{lem:s-net},
choose uniform $s_j$-nets $E_j\subset\mathbb{N}^4$ for $Z$ with
$s_j\downarrow\overline{P}(Z,T,\varphi)$, and let
$A=\bigoplus_j E_j$. Each $E_j$ is $A$-c.e., hence an $A$-effective
uniform $s_j$-net, so
$\overline{P}^A_{\mathrm{eff}}(Z,T,\varphi)\leq s_j$ for every $j$;
the reverse inequality holds for every oracle.
\end{proof}
\begin{makethm}{theorem}{thm:pts_upper_pressure}
    Given $Z\subset X$ we have
    \[\overline{P}(Z,T,\varphi)
    =\min\limits_{A\subset \mathbb{N}}\,\sup_{x\in Z}\overline{P}^A(x,T,\varphi).\]
\end{makethm}
\begin{proof}
Identical to Theorems~\ref{thm:pts-entropy}
and~\ref{thm:pts_lower_pressure}: for every oracle $A$ we have
$\overline{P}(Z,T,\varphi)\leq\overline{P}^A_{\mathrm{eff}}(Z,T,\varphi)$
(Lemma~\ref{lem:s-net}), and for admissible $A$,
$\overline{P}^A_{\mathrm{eff}}(Z,T,\varphi)
=\sup_{x\in Z}\overline{P}^A(x,T,\varphi)$ by
Theorem~\ref{thm:eff_upper_pointwise}. By
Lemma~\ref{lem:oracle_exists_upper_pressure} there is an oracle $B$
with
$\overline{P}(Z,T,\varphi)=\overline{P}^B_{\mathrm{eff}}(Z,T,\varphi)$,
and every oracle $\geq_T B$ satisfies the same identity. Joining
with an admissible oracle (Remark~\ref{rem:admissible-cone}),
we may take $B$ admissible, and the same holds for every oracle
above $B$. The minimum is therefore attained at $B$, on the upper
cone above it.
\end{proof}

\subsection{Point-to-set principles for lower/upper BS dimension}
The BS (Barreira--Schmeling \cite{Barreira2000Sets}) dimension, also called the $u$-dimension, is the dimension quantity that solves the Bowen pressure equation, and is for that reason central to Section~\ref{sec:bowen}.\\
In this section we prove point-to-set principles for a generalization of the BS dimension: rather than requiring $u$ to be positive on a compact metric space, we allow $u$ to be any real-valued uniformly continuous function on a complete separable metric space and impose positivity only asymptotically on average, at the points of the set under study. This generality is what later permits the treatment of nonuniformly expanding systems, at the cost of some extra care in the arguments. 

\subsubsection{Pointwise BS dimension}\label{subsec:bs-lower}
Throughout Section~\ref{subsec:bs-lower}, $T:X\to X$ is a continuous
map on a complete separable metric space $(X,d)$ and
$u:X\to\mathbb{R}$ is uniformly continuous. We write
\[\underline{u}(x)=\liminf\limits_{n\to\infty}\frac{S_nu(x)}{n},
\qquad
\overline{u}(x)=\limsup\limits_{n\to\infty}\frac{S_nu(x)}{n}.\]
As in Section~\ref{subsec:lower-pressure}, for the pointwise
quantities we assume only that $(X,d)$ carries an $A$-computable
metric space structure $(x_i)_{i\in\mathbb{N}}$
(Definition~\ref{def:computable-metric}). Full admissibility of $A$ with respect to $(X,d,T,u)$ is not needed until
the effectivization arguments in sections \ref{subsubsec:lower-bs-pts} and \ref{subsec_pts_BS_upper}.

\begin{definition}[Lower/upper $A$-pointwise BS dimension]
\label{def:pointwise-bs}
For $x \in X$ with $\underline{u}(x)>0$ and $\varepsilon>0$,
\[\dim^A_{BS}(x,T,u,\varepsilon)
=\liminf\limits_{n\to\infty}\;\min\limits_{x_i\in B_n(x,T,\varepsilon)}
\frac{K^A(i)}{S_nu(x_i)},
\qquad
\dim^A_{BS}(x,T,u)
=\lim\limits_{\varepsilon\to 0}\dim^A_{BS}(x,T,u,\varepsilon),\]
\[\Dim^A_{BS}(x,T,u,\varepsilon)
=\limsup\limits_{n\to\infty}\;\min\limits_{x_i\in B_n(x,T,\varepsilon)}
\frac{K^A(i)}{S_nu(x_i)},
\qquad
\Dim^A_{BS}(x,T,u)
=\lim\limits_{\varepsilon\to 0}\Dim^A_{BS}(x,T,u,\varepsilon).\]
\end{definition}
\subsubsection{Dimensional definitions of lower BS dimension}
Let $T$ be a continuous transformation on a complete separable metric space $X$ and 
$u : X \to \mathbb{R}$ a uniformly continuous function. 
\begin{definition}[Lower BS dimension]\label{def:lower-bs}
Given $Z \subset X$, $\varepsilon > 0$, and $N \in \mathbb{N}$, let
$\mathcal{P}(Z, N, \varepsilon)$ be the collection of countable
families $\{(y_j, n_j)\} \subset X \times \{N, N+1, \ldots\}$ such
that $Z \subset \bigcup_j B_{n_j}(y_j, T, \varepsilon)$. For each
$s \in [0,\infty)$, set 
\[u(x,n,\varepsilon)=\sup\limits_{y\in B_n(x,T,\varepsilon)}S_nu(y)\]
and let
\begin{equation}\label{eq:lower-bs-measure}
m_{BS}(Z, s, u, N, \varepsilon)
= \inf_{\mathcal{P}(Z,N,\varepsilon)}
\sum_{(y_j, n_j)} 2^{-s\, u(y_j,n_j,\varepsilon)},
\qquad
m_{BS}(Z, s, u, \varepsilon)
= \lim\limits_{N \to \infty} m_{BS}(Z, s, u, N, \varepsilon).
\end{equation}
Suppose $\underline{u}(x) > 0$ for all $x \in Z$. Then for
$\varepsilon$ sufficiently small, the function
$s \mapsto m_{BS}(Z, s, u, \varepsilon)$ takes the value $\infty$ for
all $s$ below a critical value and $0$ for all $s$ above it
(Lemma~\ref{lem:bs-dichotomy} below); we denote the critical value by
\[
\dim_{BS}(Z,T,u,\varepsilon)
= \inf\{ s \geq 0 \mid m_{BS}(Z, s, u, \varepsilon) = 0 \}
= \sup\{ s \geq 0 \mid m_{BS}(Z, s, u, \varepsilon)
= \infty \}.
\]
The \emph{lower BS dimension} of $Z$ with respect to $u$ is
\[\dim_{BS}(Z,T,u)
= \lim\limits_{\varepsilon \to 0}
\dim_{BS}(Z,T,u,\varepsilon);\]
the limit exists because for $\varepsilon_1 < \varepsilon_2$ we have
$\mathcal{P}(Z, N, \varepsilon_1) \subset
\mathcal{P}(Z, N, \varepsilon_2)$ and
$u(y,n,\varepsilon_1) \leq u(y,n,\varepsilon_2)$, so (as $s\geq 0$)
$m_{BS}(Z, s, u, \varepsilon_1) \geq
m_{BS}(Z, s, u, \varepsilon_2)$ and
$\dim_{BS}(Z,T,u,\varepsilon_1) \geq
\dim_{BS}(Z,T,u,\varepsilon_2)$.
\end{definition}
\begin{remark}\label{rem:center-weight-bs}
As with the pressure quantities, the set-level definition is
sup-weighted, matching the classical convention, while the
discretizations below weight each ball by the value of $S_nu$ at an
ideal-point center. The comparison costs $n\Var(u,\varepsilon)$ per
ball. Under the standing hypothesis $\underline{u}>0$ the sup-weights
grow linearly along the relevant pieces of $Z$, so the loss is
absorbed by an arbitrarily small increase of the exponent
(Lemma~\ref{lem:bs_null_cover} and Lemma~\ref{lem:bs-s-net}, via the
exhaustion of Remark~\ref{rem:mBSC-dichotomy}).
\end{remark}
We now discretize.
\begin{definition}[Null $s$-cover for BS dimension]
    Let $Z \subset X$.  
A \emph{null $s$-cover} of $Z$ is a set $E \subset \mathbb{N}^{3}$ satisfying  
\begin{enumerate}
  \item \label{bscov:mass}
  \[
        \sum_{(i,n,p)\in E} 2^{-s S_nu (x_i)} \;<\; \infty;
        \]
  \item \label{bscov:cover} for every $k,p \in \mathbb{N}$, the family
        \[
        \bigl\{\, B_{n}(x_{i},T,2^{-p}) \;:\; (i,n,p)\in E,\; n \ge k \bigr\}
        \]
        is a cover of $Z$.
\end{enumerate}
\end{definition}
The discretization characterizes the lower BS dimension, by an argument parallel to Lemma~\ref{lem:null_cover}.
\begin{lemma}\label{lem:bs_null_cover}
Suppose $\underline{u}(x)>0$ for all $x\in Z$. Then
\[\dim_{BS}(Z,T,u)=\inf\{s\geq 0\mid Z \text{ has a null
$s$-cover}\}.\]
\end{lemma}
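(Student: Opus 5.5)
The plan follows the proof of Lemma~\ref{lem:null_cover}. The new difficulty is that the weights now sit in the exponent multiplied by $s$, so the comparison between sup-weights $u(y,n,\varepsilon)$ and center weights $S_nu(x_i)$ costs a factor $2^{\pm s n\Var(u,\varepsilon)}$. Under $\underline{u}>0$ this loss can only be absorbed where $S_nu$ is known to grow linearly, and that growth is not uniform on $Z$. I would therefore first exhaust $Z$ by the countably many pieces
\[
Z_{a,M}=\{x\in Z \mid S_mu(x)\geq am \text{ for all } m\geq M\},\qquad a\in\mathbb{Q}_{>0},\ M\in\mathbb{N}.
\]
Their union is $Z$ because $\underline{u}(x)>0$ on $Z$. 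If $y\in X$ and the Bowen ball $B_n(y,T,\varepsilon)$ meets $Z_{a,M}$ with $n\geq M$, then every point $w$ of that ball satisfies $S_nu(w)\geq n(a-\Var(u,\varepsilon))$. The same holds at an ideal point within $d_n$-distance $\varepsilon$ of such a ball, with $a-2\Var(u,\varepsilon)$ in place of $a-\Var(u,\varepsilon)$. In every cover below I discard the balls that miss the piece under consideration; this costs nothing, since coverage is only required of $Z$.

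($\geq$) Let $E$ be a null $s$-cover and fix $t>s$. Fix a piece $Z_{a,M}$ and take $p$ large enough that $\Var(u,2^{-p})<a/2$. Keep the triples $(i,n,p)\in E$ with $n\geq \max(N,M)$ whose ball meets $Z_{a,M}$. The sup-weight satisfies $u(x_i,n,2^{-p})\geq S_nu(x_i)$, so each term satisfies
\[
2^{-t\,u(x_i,n,2^{-p})}\leq 2^{-t S_nu(x_i)}\leq 2^{-sS_nu(x_i)}\,2^{-(t-s)n(a-\Var(u,2^{-p}))}.
\]
The last factor decays geometrically in $N$. Summing and using the finite mass condition gives $m_{BS}(Z_{a,M},t,u,2^{-p})=0$.

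Countable subadditivity of $m_{BS}(\cdot,t,u,N,\varepsilon)$ is immediate: concatenate near-optimal covers. It then gives $m_{BS}(Z,t,u,2^{-p})=0$ for $p$ large. Here I must check that the threshold on $p$ can be taken uniformly in the piece. If it cannot, I would instead show $\dim_{BS}(Z_{a,M},T,u)\leq t$ for every piece, and deduce the bound for $Z$ from the decreasing-in-$\varepsilon$ limit together with countable stability of $\dim_{BS}$. Proving countable stability is the same Carath\'eodory argument: $m_{BS}(\cdot,t,u,\varepsilon)$ is an outer measure. Letting $t\downarrow s$ yields $\dim_{BS}(Z,T,u)\leq s$.

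($\leq$) Take $s>s'>\dim_{BS}(Z,T,u)$. By monotonicity in the set, each piece has $m_{BS}(Z_{a,M},s',u,2^{-(p+1)})=0$ for every $p$. As in Lemma~\ref{lem:null_cover}, for each $k$ choose covers $\Gamma_{a,M,p,k}\in\mathcal{P}(Z_{a,M},N_k,2^{-(p+1)})$ with $N_k\geq M$, mass at most $2^{-(j(a,M)+p+k)}$, and balls meeting $Z_{a,M}$; here $j$ enumerates the pieces. Move each center $y$ to an ideal point $x_i$ with $d_n(y,x_i)<2^{-(p+1)}$. Every $w\in B_n(y,T,2^{-(p+1)})$ then lies within $d_n$-distance $2^{-p}$ of $x_i$, so
\[
S_nu(x_i)\geq u(y,n,2^{-(p+1)})-n\Var(u,2^{-p}),
\]
and therefore
\[
2^{-sS_nu(x_i)}\leq 2^{-s'u(y,n,2^{-(p+1)})}\cdot 2^{-(s-s')u(y,n,2^{-(p+1)})+sn\Var(u,2^{-p})}.
\]
Because the ball meets $Z_{a,M}$, we have $u(y,n,2^{-(p+1)})\geq n(a-\Var(u,2^{-(p+1)}))$. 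Hence the second factor is at most $1$ once $p\geq p_0(a)$, where $p_0(a)$ is chosen so that $(s-s')(a-\Var(u,2^{-(p+1)}))\geq s\Var(u,2^{-p})$.

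This dependence of the threshold on $a$ is the main obstacle. I would handle it by re-labeling. For $p<p_0(a)$, use the finer cover built at level $p_0(a)$ but record the triples with label $p$. This is legitimate because $B_n(x_i,T,2^{-p_0})\subset B_n(x_i,T,2^{-p})$ and the center weight $S_nu(x_i)$ does not depend on $p$. The re-labeled triples are fresh entries of the cover, so the mass condition must be checked again. Their contribution is bounded by repeating the mass bound of the level-$p_0(a)$ cover for each of the finitely many labels $p<p_0(a)$. To keep this summable I give the level-$p_0$ covers for piece $j$ mass $2^{-(j+p_0(a)+k)}$; the total over these labels is then at most $p_0(a)\,2^{-(j+p_0(a))}$, which is summable in $j$.

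Let $E$ be the set of all resulting triples. Its total mass is at most $\sum_{j,p,k}2^{-(j+p+k)}<\infty$, plus the summable re-labeled contributions. For each $p$ and each $k_0$, every $x\in Z$ lies in some $Z_{a,M}$, and the covers with $N_k\geq k_0$ contain $x$. Hence $E$ is a null $s$-cover of $Z$, and letting $s\downarrow\dim_{BS}(Z,T,u)$ completes the argument.
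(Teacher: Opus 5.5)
Your argument is correct, but it is organized differently from the paper's, mainly in the direction where you construct the null $s$-cover.

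\emph{Cover implies bound.} For ``null $s$-cover $\Rightarrow \dim_{BS}(Z,T,u)\le s$'' the paper only notes that the cover gives $m_{BS}(Z,s,u,2^{-p})\le\sum_{E}2^{-sS_nu(x_i)}<\infty$ at every scale, and then invokes the dichotomy Lemma~\ref{lem:bs-dichotomy}. The proof of that lemma uses exactly your pieces ($Z_{a,M}=C(Z,M,a)$), so here you are inlining it. The $a$-dependent threshold in $p$ that worries you comes only from bounding through the center value $S_nu(x_i)\ge n(a-\Var(u,2^{-p}))$. Split $2^{-t\,u(x_i,n,2^{-p})}=2^{-(t-s)u(x_i,n,2^{-p})}\,2^{-s\,u(x_i,n,2^{-p})}$. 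Use $u(x_i,n,2^{-p})\ge an$ in the first factor (the ball contains a point of $Z_{a,M}$) and $u(x_i,n,2^{-p})\ge S_nu(x_i)$ in the second. This gives $m_{BS}(Z_{a,M},t,u,2^{-p})=0$ at every $p$, and subadditivity finishes. Your countable-stability fallback is nonetheless valid.

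\emph{Constructing the cover.} Here the paper needs no exhaustion, no slack $s'<s$, and no relabeling. Since $S_nu$ is continuous and Bowen balls are open, it picks the ideal point $x_{i(y,n)}\in B_n(y,T,2^{-(p+1)})$ to nearly realize the sup-weight: $S_nu(x_{i(y,n)})\ge u(y,n,2^{-(p+1)})-1$. Passing from sup-weights to center weights then costs only the uniform factor $2^{s}$, and no $n\Var(u,2^{-p})$ term ever appears. Your route instead moves to an arbitrary nearby ideal point, pays $n\Var(u,2^{-p})$, and absorbs it by linear growth on pieces. This is the mechanism sketched in Remark~\ref{rem:center-weight-bs}, and it is sound. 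Relabeling the level-$p_0(a)$ covers is a correct way to handle the coarse labels, but the argument is noticeably longer, and that bookkeeping exists only because your absorption threshold depends on $a$.

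One minor benefit of your version is that centers are only ever moved at fine scales. The near-sup choice, by contrast, tacitly needs $u(y,n,2^{-(p+1)})<\infty$ at every level $p$, including coarse ones.
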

\begin{proof}
($\leq$) Suppose $Z$ has a null $s$-cover $E$, and let $t>s$. Fix
$p$ and $N$. By condition~\ref{bscov:cover} the family
$\{(x_i,n)\mid (i,n,p)\in E,\ n\geq N\}$ belongs to
$\mathcal{P}(Z,N,2^{-p})$, so
\[
m_{BS}(Z,t,u,N,2^{-p})
\leq\sum_{\substack{(i,n,p)\in E\\ n\geq N}}2^{-t\,u(x_i,n,2^{-p})}\leq \sum_{\substack{(i,n,p)\in E\\ n\geq N}}2^{-t\,S_nu(x_i)}.
\]
We now invoke the dichotomy (Lemma~\ref{lem:bs-dichotomy}), which
applies unconditionally under the standing hypothesis
$\underline{u}>0$ on $Z$. The first display shows
$m_{BS}(Z,s',u,2^{-p})\leq
\sum_{(i,n,p)\in E}2^{-s'S_nu(x_i)}<\infty$ at $s'=s$, so
$m_{BS}(Z,t,u,2^{-p})=0$ for every $t>s$ and $\varepsilon=2^{-p}$
small enough. Hence $\dim_{BS}(Z,T,u,2^{-p})\leq t$ for all small
$2^{-p}$, so $\dim_{BS}(Z,T,u)\leq t$, and letting $t\downarrow s$
gives $\dim_{BS}(Z,T,u)\leq s$.

($\geq$) We show that for all rationals $s>\dim_{BS}(Z,T,u)$
the set $Z$ has a null $s$-cover. As $s$ was arbitrary this gives
$\inf\{s\geq 0\mid Z\text{ has a null $s$-cover}\}
\leq\dim_{BS}(Z,T,u)$, and by the preceding inequality the two quantities are equal (if
$\dim_{BS}(Z,T,u)=\infty$ there is nothing to prove).

Since $\underline{u}>0$ on $Z$, we satisfy the measure dichotomy (Remark~\ref{rem:mBSC-dichotomy}). 
Since the scale quantities are nondecreasing as the scale shrinks
and $m_{BS}$ is monotone under inclusion,
$\dim_{BS}(Z,T,u,\varepsilon)
\leq\dim_{BS}(Z,T,u)<s$, so $m_{BS}(Z,s,u,2^{-(p+1)})=0$ for every
$p$. Since $m_{BS}(Z,s,u,N,\cdot)$ is nondecreasing in $N$, its
vanishing limit forces vanishing at every $N$, so for each $p$ and $N$ there is a countable family
$\Gamma_{p,N}\in\mathcal{P}(Z,N,2^{-(p+1)})$ with
\[
\sum_{(y,n)\in\Gamma_{p,N}} 2^{-s\,u(y,n,2^{-(p+1)})}
\leq 2^{-(p+N+1)}.
\]
Since $(x_i)$ is dense in each Bowen metric $d_n$, choose for each
$(y,n)\in\Gamma_{p,N}$ an ideal point $x_{i(y,n)}\in B_n(y,T,2^{-(p+1)})$ with $|u(y,n,2^{-(p+1)})-S_nu(x_{i(y,n)})|< 1$. Such a point exists because ideal points approximate the supremum arbitrarily closely. Then
$B_n(y,T,2^{-(p+1)})\subset B_n(x_{i(y,n)},T,2^{-p})$ by the
triangle inequality. Set
\[E=\bigl\{(i(y,n),n,p)\mid \exists N\ 
(y,n)\in\Gamma_{p,N}\bigr\}.\]
For condition 1: each $(i,n,p)\in E$ arises from at least one pair $(N,(y,n))$ with $(y,n)\in\Gamma_{p,N}$, and the choice of $x_{i(y,n)}$ gives $S_nu(x_{i(y,n)})\geq u(y,n,2^{-(p+1)})-1$, so
\[\sum\limits_{(i,n,p)\in E}2^{-sS_nu(x_i)}\leq \sum\limits_{p\in \mathbb{N}}\sum\limits_{N\in \mathbb{N}} \sum\limits_{(y,n)\in \Gamma_{p,N}}2^{s}\,2^{-s\,u(y,n,2^{-(p+1)})}\leq 2^{s}\sum\limits_{p\in \mathbb{N}}\sum\limits_{N\in \mathbb{N}}2^{-(p+N+1)}\leq 2^{s}<\infty\]
For condition 2: given $x\in Z$ and any $\Gamma_{p,N}$, there is a pair $(y,n)$ with $n\geq N$ and $x\in B_n(y,T,2^{-(p+1)})\subset B_n(x_{i(y,n)},T,2^{-p})$, which witnesses the covering property.
\end{proof}

\subsubsection{The point-to-set principle for lower BS dimension}
\label{subsubsec:lower-bs-pts}
We now effectivize the discretized lower BS dimension. The
results of this subsubsection require $A$ to be admissible for
$(X,d,T,u)$ (Definition~\ref{def:admissible}). Each statement
carries this hypothesis.

\begin{definition}[$A$-effective null $s$-cover; effective lower BS
dimension]\label{def:eff-lower-bs}
An \emph{$A$-effective null $s$-cover} is a null $s$-cover
$E\subset\mathbb{N}^3$ that is $A$-computably enumerable. The
\emph{$A$-effective lower BS dimension} of $Z$ with respect to $u$ is
\[\dim^A_{BS,\mathrm{eff}}(Z,T,u)
=\inf\{s\geq 0\mid Z \text{ has an $A$-effective null
$s$-cover}\}.\]
\end{definition}

We now prove the pointwise characterization of the effective lower
BS dimension.

\begin{lemma}\label{lem:sup_eq_bs}
For $A$ admissible for $(X,d,T,u)$ and any $Z\subset X$ with
$\underline{u}(x)>0$ for all $x\in Z$,
\[\dim^A_{BS,\mathrm{eff}}(Z,T,u)
=\sup_{x\in Z} \dim_{BS}^A(x,T,u).\]
\end{lemma}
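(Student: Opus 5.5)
The plan is to prove the two inequalities separately, mirroring Lemmas~\ref{lem:eff-pressure-upper} and~\ref{lem:pointwise-leq-eff-pressure}, with the weight $2^{-sn+S_n\varphi(x_i)}$ replaced by $2^{-sS_nu(x_i)}$. The new feature is that $S_nu(x_i)$ may be negative or small for ideal points. The standing hypothesis $\underline{u}(x)>0$ for $x\in Z$ will have to control this, transferred from $x$ to nearby ideal points via $|S_nu(x_i)-S_nu(x)|\leq n\Var(u,2^{-p})$.

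For ($\leq$), fix a rational $s>\sup_{x\in Z}\dim^A_{BS}(x,T,u)$ (nothing to prove if infinite). I will set
\[E=\bigl\{(i,n,p)\;\bigm|\;S_nu(x_i)>0,\ K^A(i,n,p)<sS_nu(x_i)\bigr\}.\]
This set is $A$-c.e.\ by Remark~\ref{rem:ce-thresholds} and Lemma~\ref{lem:birkhoff-uniform}; the strict positivity is semidecidable. The mass condition follows from Kraft, since $2^{-sS_nu(x_i)}\leq 2^{-K^A(i,n,p)}$.

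For the cover condition, fix $x\in Z$ and $p$. Choose $s'$ with $\dim^A_{BS}(x,T,u,2^{-p})<s'<s$; this is possible since $\dim^A_{BS}(x,T,u,\varepsilon)$ is nondecreasing as $\varepsilon\downarrow 0$. This gives infinitely many $n$ and realizers $x_{i_n}\in B_n(x,T,2^{-p})$ with $K^A(i_n)<s'S_nu(x_{i_n})$.

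Here is the delicate point. We need $S_nu(x_{i_n})\geq cn$ so that the $2\log n+O(1)$ overhead in $K^A(i,n,p)$ is absorbed by $(s-s')S_nu(x_{i_n})$. I will first restrict to $p$ large enough that $\Var(u,2^{-p})<\underline{u}(x)/2$. Then for large $n$, $S_nu(x_{i_n})\geq n(\underline{u}(x)-\Var(u,2^{-p}))\geq n\underline{u}(x)/2$.

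For small $p$ (coarse scales), the cover condition for $x$ can be inherited. Replace $E$ by
\[E'=\{(i,n,p)\mid (i,n,p')\in E \text{ for some } p'\geq p\}.\]
This works because $B_n(x_i,T,2^{-p'})\subset B_n(x_i,T,2^{-p})$. To keep the mass finite, I will instead index triples as $(i,n,p')$ and use the weight $K^A(i,n,p')$. The union over $p'$ still sums to at most $1$, since each element of $E'$ is charged to some distinct triple of $E$ via an injective choice. This should be doable with a computable reindexing. I expect this bookkeeping of $\underline{u}(x)$-dependent scales, which is not uniform over $Z$, to be the main obstacle. The fallback is to note that the cover condition for the single $p$ at each scale only needs $x$ covered, and to take $E'$ as above.

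For ($\geq$), let $s>\dim^A_{BS,\mathrm{eff}}(Z,T,u)$ and fix an $A$-effective null $s$-cover $E$. Since $\sum 2^{-sS_nu(x_i)}<\infty$, the coding theorem in the form of Lemma~\ref{lem:extended-coding-theorem}, applied with $f(i,n,p)=sS_nu(x_i)$, gives $K^A(i)\leq sS_nu(x_i)+O(1)$ on $E$. For $x\in Z$ and each $p$, the cover condition gives $(i_k,n_k,p)\in E$ with $n_k\to\infty$ and $x_{i_k}\in B_{n_k}(x,T,2^{-p})$. For $p$ large as above, $S_{n_k}u(x_{i_k})\geq n_k\underline{u}(x)/2\to\infty$. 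Hence
\[\frac{K^A(i_k)}{S_{n_k}u(x_{i_k})}\leq s+\frac{O(1)}{S_{n_k}u(x_{i_k})}\to s,\]
so $\dim^A_{BS}(x,T,u,2^{-p})\leq s$ for all large $p$. Letting $p\to\infty$ and then $s\downarrow\dim^A_{BS,\mathrm{eff}}(Z,T,u)$ gives the inequality, and taking the supremum over $x\in Z$ concludes.
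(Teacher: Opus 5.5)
\textbf{Gap in the $(\leq)$ direction at coarse scales.} Your $(\geq)$ direction is the paper's argument (Lemma~\ref{lem:pointwise-leq-eff-bs}). Your $(\leq)$ direction is also correct for scales $p$ with $\Var(u,2^{-p})<\underline{u}(x)/2$. The problem is the coarse scales. The closure $E'=\{(i,n,p)\mid \exists p'\geq p,\ (i,n,p')\in E\}$ need not satisfy the mass condition, for three reasons.
\begin{itemize}
\item The weight $2^{-sS_nu(x_i)}$ of a triple does not depend on its third coordinate. Each $(i,n,p')\in E$ therefore produces the $p'$ triples $(i,n,1),\dots,(i,n,p')$ of $E'$, all of the same weight. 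The charging map $E'\to E$ is $p'$-to-one, not injective.
\item Kraft only bounds $\sum_{E}2^{-K^A(i,n,p')}$, not $\sum_{E}p'\,2^{-K^A(i,n,p')}$. Since $K^A(i,n,p')$ can be far below $\log p'$ for special $p'$ (e.g.\ $p'=2^{2^k}$), nothing controls the latter sum.
\item Re-indexing the triples as $(i,n,p')$ does not help. Condition~2 at scale $2^{-p}$ only counts triples whose third coordinate is exactly $p$.
\end{itemize}

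\textbf{What is missing: no closure is needed.} The original $E$ already contains the required coarse-scale triples. Given $p$ and $x$, do the following.
\begin{itemize}
\item Choose $p^*\geq p$ with $\Var(u,2^{-p^*})<\underline{u}(x)/2$.
\item Take a rational $s'$ with $\dim^A_{BS}(x,T,u)<s'<s$. Then use realizers $x_i\in B_n(x,T,2^{-p^*})$ at the fine scale with $K^A(i)<s'S_nu(x_i)$, which exist for infinitely many $n$.
\item Each such realizer has $d_n(x,x_i)<2^{-p^*}\leq 2^{-p}$, so $x\in B_n(x_i,T,2^{-p})$.
\item For large $n$, $S_nu(x_i)\geq n\underline{u}(x)/4$.
\item Since $p$ is fixed, $K^A(i,n,p)\leq K^A(i)+2\log n+2\log p+O(1)<sS_nu(x_i)$ for large $n$.
\end{itemize}
Hence $(i,n,p)\in E$ for infinitely many $n$, with the coarse $p$ recorded in the triple. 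This is exactly how Lemma~\ref{lem:eff-bs-upper} proceeds. The third coordinate enters membership only through an $O(\log p)$ overhead, so the scale at which you find the realizer can be decoupled from the scale you record.
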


It is proved via the following two lemmas.

\begin{lemma}\label{lem:eff-bs-upper}
Let $A$ be admissible for $(X,d,T,u)$ and let $Z \subset X$ with
$\underline{u}(x) > 0$ for all $x \in Z$. Then
\[
\dim^A_{BS,\mathrm{eff}}(Z,T,u) \;\leq\; \sup_{x \in Z}\, \dim^A_{BS}(x,T,u).
\]
\end{lemma}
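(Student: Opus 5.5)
I will mirror the proof of Lemma~\ref{lem:eff-pressure-upper}, replacing the additive weight $sn - S_n\varphi(x_i)$ by the multiplicative weight $s\,S_nu(x_i)$.

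If the supremum is infinite there is nothing to prove. Otherwise I fix a rational $s > \sup_{x\in Z}\dim^A_{BS}(x,T,u)$ and set
\[
Y_{<s}=\{x \mid \underline{u}(x)>0,\ \dim^A_{BS}(x,T,u)<s\}\supseteq Z .
\]
I then consider the threshold set
\[
E=\bigl\{(i,n,p)\;\bigm|\; K^A(i,n,p)< s\,S_nu(x_i)\bigr\}.
\]
This set is $A$-c.e.\ by Remark~\ref{rem:ce-thresholds}, because $S_nu(x_i)$ is $A$-computable uniformly in $(i,n)$ by Lemma~\ref{lem:birkhoff-uniform}, using admissibility for $(X,d,T,u)$.

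Condition~\ref{bscov:mass} is immediate. For every member of $E$ we have $2^{-sS_nu(x_i)} < 2^{-K^A(i,n,p)}$. Kraft's inequality over the distinct triples then bounds the total mass by $1$. Membership in $E$ forces $S_nu(x_i)>0$, but no such positivity is needed for this step.

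Condition~\ref{bscov:cover} is the substantive step. Fix $p$ and $x\in Y_{<s}$.

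1. Since $\dim^A_{BS}(x,T,u,2^{-p})$ is nondecreasing as the scale shrinks, it is at most $\dim^A_{BS}(x,T,u)<s$. Choose a rational $s'$ with $\dim^A_{BS}(x,T,u)<s'<s$.
2. This gives $n_k\uparrow\infty$ and realizers $x_{i_k}\in B_{n_k}(x,T,2^{-p})$ with $K^A(i_k)<s'\,S_{n_k}u(x_{i_k})$.
3. By Remark~\ref{rem:composition-use}, $K^A(i_k,n_k,p)\le K^A(i_k)+2\log n_k+O_p(1)$.
4. So it suffices that $(s-s')\,S_{n_k}u(x_{i_k})$ eventually exceeds $2\log n_k+O(1)$. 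This is where $\underline{u}(x)>0$ enters. Since $x_{i_k}\in B_{n_k}(x,T,2^{-p})$, we get $S_{n_k}u(x_{i_k})\ge S_{n_k}u(x)-n_k\Var(u,2^{-p})$.
5. For $p$ large enough that $\Var(u,2^{-p})<\underline{u}(x)/2$, this is at least $n_k\,\underline{u}(x)/3$ for large $k$. The linear growth absorbs the logarithmic overhead.
6. Then $(i_k,n_k,p)\in E$ for infinitely many $k$. Since $x_{i_k}\in B_{n_k}(x,T,2^{-p})$ is symmetric, $x\in B_{n_k}(x_{i_k},T,2^{-p})$, which gives the covering with $n\ge$ any prescribed bound.

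The main obstacle is that step~5 needs $p$ large depending on $x$, while condition~\ref{bscov:cover} demands covers for every $p$. For small $p$ I would handle this by a separate $A$-c.e.\ enumeration that does not cost mass.

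For a coarse scale $p$ with $p\le p_0$, and for any $x$ and $p'\ge p$, we have $B_n(x_i,T,2^{-p'})\subseteq B_n(x_i,T,2^{-p})$. So I redefine the cover as
\[
E'=\{(i,n,p)\mid \exists\, p'\ge p\ \ (i,n,p')\in E\}.
\]
This is still $A$-c.e., and it covers $Y_{<s}$ at every scale $p$ once it covers at all large scales. The mass of $E'$ may count a triple of $E$ once for each $p\le p'$. I would therefore instead put the bound on $K^A(i,n,p')$ with weight $2^{-p'}$ absorbed: enumerate $(i,n,p)$ only when $K^A(i,n,p')+p' < s\,S_nu(x_i)$ for some $p'\ge p$. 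Summing over $p\le p'$ then contributes a factor $p'2^{-p'}$, which is summable. The extra $p'$ is a constant for fixed $x$, so it is again absorbed by the linear growth in step~5.

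With both conditions verified, $E'$ is an $A$-effective null $s$-cover of $Y_{<s}\supseteq Z$. Hence $\dim^A_{BS,\mathrm{eff}}(Z,T,u)\le s$, and letting $s$ decrease to the supremum gives the lemma.
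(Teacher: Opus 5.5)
Your proof is correct, and its skeleton is the paper's: the threshold set $E=\{(i,n,p)\mid K^A(i,n,p)<s\,S_nu(x_i)\}$, Kraft for condition~1, and linear growth of $S_nu$ absorbing the $2\log n+O_p(1)$ overhead for condition~2. You diverge only at coarse scales, and there the obstacle is not real. Membership of $(i,n,p)$ in $E$ depends only on $(i,n,p)$, not on the scale at which the realizer was found. Given $p$ and $x$, the paper proceeds as follows:
\begin{itemize}
\item it chooses $p^*\ge p$ with $\Var(u,2^{-p^*})<\underline{u}(x)/2$;
\item it takes realizers $x_i\in B_n(x,T,2^{-p^*})\subset B_n(x,T,2^{-p})$, which exist because $\dim^A_{BS}(x,T,u,2^{-p^*})\le\dim^A_{BS}(x,T,u)<s'$;
\item it gets $S_nu(x_i)\ge n\underline{u}(x)/4$ from the finer scale;
\item it records each realizer under the coarse index as $(i,n,p)$.
\end{itemize}
The bound $K^A(i,n,p)\le K^A(i)+2\log n+O_p(1)$ then puts these triples in the original $E$, and $x\in B_n(x_i,T,2^{-p})$.

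Your monotonized set $E'$ with the penalty $p'$ also works. Charging each $(i,n,p)\in E'$ to a witness $(i,n,p')$ costs a factor $(p'+1)2^{-p'}\le 1$, so the mass stays below the Kraft sum. What is used is boundedness of that factor, not its summability. But $E'$ buys nothing over the paper's one-line fix and adds bookkeeping.

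One ordering point. Your step~2 turns $\min_{x_i\in B_n(x,T,\varepsilon)}K^A(i)/S_nu(x_i)<s'$ into a realizer with $K^A(i)<s'S_nu(x_i)$. That requires $S_nu>0$ on the whole ball: a minimizer with $S_nu(x_i)<0$ makes the ratio negative while the product inequality fails. So at a coarse scale step~2 can fail outright. You establish positivity only later, in steps~4--5. In the end you run steps~1--6 only at a fine scale, where $S_nu(x_i)\ge S_nu(x)-n\Var(u,2^{-p'})$ holds for every point of the ball. So the fix is simply to prove that lower bound first, as the paper does.
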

\begin{proof}
If the supremum is infinite there is nothing to prove. Fix a rational
$s > \sup_{x \in Z} \dim^A_{BS}(x,T,u)$ and set
$Y_{<s} = \{x : \underline{u}(x)>0,\ \dim^A_{BS}(x,T,u) < s\} \supseteq Z$.
We construct an $A$-effective null $s$-cover of $Y_{<s}$. Consider
\[
E = \{(i,n,p) : K^A(i,n,p) < s\, S_n u(x_i)\},
\]
which is $A$-c.e.\ by Remark~\ref{rem:ce-thresholds}, since $s$ is rational and
$S_n u(x_i)$ is $A$-computable uniformly in $(i,n)$
(Lemma~\ref{lem:birkhoff-uniform}).

For condition 1: membership in $E$ gives
$2^{-s S_n u(x_i)} \leq 2^{-K^A(i,n,p)}$, so by Kraft's inequality over the
distinct triples,
\[
\sum_{(i,n,p)\in E} 2^{-s S_n u(x_i)}
\;\leq\; \sum_{(i,n,p)\in E} 2^{-K^A(i,n,p)} \;\leq\; 1.
\]

For condition 2, fix $p \in \mathbb{N}$ and $x \in Y_{<s}$; we show
$x \in B_n(x_i, T, 2^{-p})$ with $(i,n,p) \in E$ for infinitely many $n$.
Choose a rational $s'$ with $\dim^A_{BS}(x,T,u) < s' < s$, and choose
$p^* \geq p$ with $\Var(u, 2^{-p^*}) < \underline{u}(x)/2$. Since
$\liminf\limits_n S_n u(x)/n = \underline{u}(x)$, there is $N_0$ such that for all
$n \geq N_0$ and all $x_i \in B_n(x, T, 2^{-p^*})$,
\[
S_n u(x_i) \;\geq\; S_n u(x) - n \Var(u, 2^{-p^*})
\;\geq\; n\,\underline{u}(x)/4 \;>\; 0.
\]
Because the scale quantities are nondecreasing as the scale shrinks,
$\dim^A_{BS}(x,T,u,2^{-p^*}) \leq \dim^A_{BS}(x,T,u) < s'$, so for infinitely
many $n$ there is a realizer $x_i \in B_n(x, T, 2^{-p^*})$ with
$K^A(i) < s'\, S_n u(x_i)$. For such a realizer with $n \geq N_0$,
\[
K^A(i,n,p) \;\leq\; K^A(i) + 2\log n + 2\log p + O(1)
\;<\; s'\, S_n u(x_i) + 2\log n + 2\log p + O(1)
\;\leq\; s\, S_n u(x_i)
\]
by Remark~\ref{rem:composition-use}, once $n$ is large enough that
$(s-s')\, S_n u(x_i) \geq (s-s')\, n\,\underline{u}(x)/4$ absorbs
$2\log n + 2\log p + O(1)$; hence $(i,n,p) \in E$ for infinitely many $n$.
Since $d_n(x, x_i) < 2^{-p^*} \leq 2^{-p}$, each such ball satisfies
$x \in B_n(x_i, T, 2^{-p})$. Thus for every $k$,
\[
\{B_n(x_i, T, 2^{-p}) : (i,n,p) \in E,\ n \geq k\}
\quad \text{covers } Y_{<s},
\]
and $E$ is an $A$-effective null $s$-cover of $Y_{<s}$. A null $s$-cover of
$Y_{<s}$ is in particular one of $Z \subset Y_{<s}$, so
$\dim^A_{BS,\mathrm{eff}}(Z,T,u) \leq s$; as $s$ was an arbitrary rational
above the supremum, the lemma follows.
\end{proof}

\begin{lemma}\label{lem:pointwise-leq-eff-bs}
Let $Z \subset X$ with $\underline{u}(x) > 0$ for all $x \in Z$. For all $x \in Z$, $\dim^A_{BS}(x,T,u) \leq \dim^A_{BS,\mathrm{eff}}(Z,T,u)$.
\end{lemma}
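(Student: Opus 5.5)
The plan is to mirror the proof of Lemma~\ref{lem:pointwise-leq-eff-pressure}, replacing the weight $2^{-sn+S_n\varphi(x_i)}$ by $2^{-sS_nu(x_i)}$. The one new ingredient is control of the normalizing denominator $S_nu(x_i)$ at the realizers, which is exactly where the hypothesis $\underline{u}(x)>0$ enters.

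Fix $x\in Z$ and a rational $s>\dim^A_{BS,\mathrm{eff}}(Z,T,u)$, and take an $A$-effective null $s$-cover $E$ of $Z$. The function $(i,n,p)\mapsto sS_nu(x_i)$ is $A$-computable on the $A$-c.e.\ set $E$ by Lemma~\ref{lem:birkhoff-uniform}. Condition~\ref{bscov:mass} makes $\sum_{E}2^{-sS_nu(x_i)}$ finite, so the extended coding theorem (Lemma~\ref{lem:extended-coding-theorem}) gives
\[
K^A(i)\leq K^A(i,n,p)+O(1)\leq sS_nu(x_i)+O(1)\qquad\text{for all }(i,n,p)\in E.
\]
The constant depends only on $E$ and $s$.

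Next we locate realizers. Choose $p_0$ with $\Var(u,2^{-p_0})<\underline{u}(x)/2$, and fix any $p\geq p_0$. By condition~\ref{bscov:cover} applied with $k\to\infty$, there are triples $(i_k,n_k,p)\in E$ with $n_k\to\infty$ and $x\in B_{n_k}(x_{i_k},T,2^{-p})$, hence $x_{i_k}\in B_{n_k}(x,T,2^{-p})$. As in the proof of Lemma~\ref{lem:eff-bs-upper}, for large $n$ we have
\[
S_{n_k}u(x_{i_k})\geq S_{n_k}u(x)-n_k\Var(u,2^{-p})\geq n_k\,\underline{u}(x)/4>0.
\]
So for large $k$ the denominators are positive and tend to infinity. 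Dividing the coding bound by $S_{n_k}u(x_{i_k})$ then yields
\[
\min_{x_i\in B_{n_k}(x,T,2^{-p})}\frac{K^A(i)}{S_{n_k}u(x_i)}\leq s+\frac{O(1)}{n_k\underline{u}(x)/4}\to s .
\]
Some care is needed in the minimum: other candidates $x_i$ in the ball also have positive denominator for large $n$ by the same estimate, so the minimum is well behaved. Along the subsequence this gives $\dim^A_{BS}(x,T,u,2^{-p})\leq s$ for every $p\geq p_0$. Letting $p\to\infty$ gives $\dim^A_{BS}(x,T,u)\leq s$, and letting $s\downarrow\dim^A_{BS,\mathrm{eff}}(Z,T,u)$ finishes the proof.

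The main obstacle is the sign and growth of $S_nu(x_i)$. Without positivity, the ratio $K^A(i)/S_nu(x_i)$ could blow up or flip sign, and the additive $O(1)$ would not be absorbed. The proof must therefore restrict to scales $2^{-p}$ with $p\geq p_0$, so that the Bowen-ball variation estimate transfers the linear growth $\underline{u}(x)>0$ from $x$ to every center $x_i$. This restriction is harmless because only $\varepsilon\to0$ matters. A second, minor point is the degenerate case $s=0$. There the mass condition only says $E$ is finite, which contradicts condition~\ref{bscov:cover} for nonempty $Z$, so we may assume $s>0$ throughout.
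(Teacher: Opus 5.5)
Your proposal is correct and follows essentially the same route as the paper's proof. Both arguments apply the coding theorem to the $A$-effective null $s$-cover to get $K^A(i)\le sS_nu(x_i)+O(1)$. Both then take realizers from the covering condition at scales where $\Var(u,2^{-p})$ is small relative to $\underline{u}(x)$, so that the denominators $S_{n_k}u(x_{i_k})$ grow linearly and absorb the constant, and finish by taking the liminf along the subsequence and letting $p\to\infty$.

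Your version is slightly more careful than the paper's wording in two places. You invoke the extended coding theorem, which is the right tool since the mass is only finite rather than at most $1$. You also state the threshold as $\Var(u,2^{-p})<\underline{u}(x)/2$, where the paper writes $\Var(u,2^{-p})<u(x)$.
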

\begin{proof}
Let $s > \dim^A_{BS,\mathrm{eff}}(Z,T,u)$. Then $Z$ has an $A$-effective null $s$-cover $E$. As
\[
\sum_{(i,n,p)\in E} 2^{-sS_n u( x_i)} < \infty,
\]
by the coding theorem we have
\[K^A(i,n,p)\leq sS_n u( x_i)+c\]
for some constant $c$, which does not depend on $i,n,p$. Fix $x \in Z$, and let $p$ be large enough that $\Var(u,2^{-p}) < u(x)$. By condition 2, we have a sequence $(i_k,n_k)$ with $n_k\to\infty$ so that $x\in B_{n_k}(x_{i_k},T,2^{-p})$. Therefore $x_{i_k}\in B_{n_k}(x,T,2^{-p})$, and since $\Var(u,2^{-p}) < u(x)$, we have $S_{n_k}u(x_{i_k}) \to \infty$. We get
\[\min\limits_{x_i\in B_{n_k}(x,T,2^{-p})}\frac{K^A(i)}{S_{n_k}u(x_{i})}\leq \frac{K^A(i_k,n_k,p)}{S_{n_k} u( x_{i_k})}\leq  \frac{sS_{n_k}u( x_{i_k})+c}{S_{n_k} u( x_{i_k})}\leq s+\frac{c}{S_{n_k} u( x_{i_k})}\]
Therefore on the subsequence $n_k$ we have an upper bound which decays to $s$, which gives an upper bound
\[\dim^A_{BS}(x,T,u,2^{-p})\leq s\]
As this holds for all sufficiently large $p$,
\[\dim^A_{BS}(x,T,u)\leq s\]
As $s > \dim^A_{BS,\mathrm{eff}}(Z,T,u)$ was arbitrary, the lemma follows.
\end{proof}
\begin{lemma}\label{lem:oracle_exists_lower_bs}
Given $Z\subset X$ with $\underline{u}(x)>0$ for all $x\in Z$, there
is an oracle $A$ such that
\[\dim_{BS}(Z,T,u)=\dim^A_{BS,\mathrm{eff}}(Z,T,u);\]
moreover every oracle $\geq_T A$ satisfies the same identity.
\end{lemma}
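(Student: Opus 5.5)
The plan follows Lemmas~\ref{exists-oracle} and~\ref{lem:exists-oracle-pressure}, with Lemma~\ref{lem:bs_null_cover} replacing the earlier discretization lemmas.

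\emph{The infinite case.} If $\dim_{BS}(Z,T,u)=\infty$, then the inequality below gives $\dim^A_{BS,\mathrm{eff}}(Z,T,u)=\infty$ for every oracle, so any oracle works.

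\emph{Building the oracle.} Otherwise, choose rationals $s_j\downarrow\dim_{BS}(Z,T,u)$ with each $s_j$ strictly above the dimension. The ($\geq$) direction of Lemma~\ref{lem:bs_null_cover} uses the hypothesis $\underline{u}>0$ on $Z$ and supplies, for each $j$, a null $s_j$-cover $E_j\subset\mathbb{N}^3$ of $Z$. Set $A=\bigoplus_j E_j$. Each $E_j$ is computable from $A$ by projection, hence $A$-c.e., so it is an $A$-effective null $s_j$-cover. This gives $\dim^A_{BS,\mathrm{eff}}(Z,T,u)\leq s_j$ for all $j$, and therefore $\dim^A_{BS,\mathrm{eff}}(Z,T,u)\leq\dim_{BS}(Z,T,u)$.

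\emph{The reverse inequality, for every oracle.} Every $A$-effective null $s$-cover is in particular a null $s$-cover, and the condition defining a null $s$-cover does not refer to the oracle. So the infimum defining $\dim^A_{BS,\mathrm{eff}}$ is taken over a subfamily of the covers counted in Lemma~\ref{lem:bs_null_cover}. That lemma therefore gives $\dim_{BS}(Z,T,u)\leq\dim^B_{BS,\mathrm{eff}}(Z,T,u)$ for every oracle $B$.

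\emph{The upper set.} If $B\geq_T A$, then every $A$-c.e.\ set is $B$-c.e., because a $B$-machine first computes $A$ and then runs the $A$-enumeration. Hence every $A$-effective null $s$-cover is a $B$-effective null $s$-cover, and $\dim^B_{BS,\mathrm{eff}}\leq\dim^A_{BS,\mathrm{eff}}=\dim_{BS}(Z,T,u)$. Combined with the reverse inequality, this gives equality for every $B\geq_T A$.

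I do not expect a serious obstacle. The only point needing care is that Lemma~\ref{lem:bs_null_cover} is used with the positivity hypothesis on $Z$, which holds by assumption, so that $\dim_{BS}(Z,T,u)$ is well-defined via the dichotomy of Lemma~\ref{lem:bs-dichotomy}. A second point is that the infimum in both quantities runs over $s\geq 0$. This causes no trouble: the covers $E_j$ are taken at $s_j\geq 0$, since $s_j$ exceeds a nonnegative dimension.
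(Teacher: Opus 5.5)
Your proposal is correct and follows the paper's proof essentially verbatim. You join null $s_j$-covers supplied by Lemma~\ref{lem:bs_null_cover}, get the reverse inequality for every oracle from the inclusion of effective covers among all covers, and get the upper-set clause because any $B\geq_T A$ enumerates each $E_j$. Your explicit handling of the case $\dim_{BS}(Z,T,u)=\infty$ is a small addition that the paper leaves implicit.
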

\begin{proof}
As in Lemmas~\ref{exists-oracle}
and~\ref{lem:exists-oracle-pressure}: by
Lemma~\ref{lem:bs_null_cover}, choose null $s_j$-covers
$E_j\subset\mathbb{N}^3$ for $Z$ with rational
$s_j\downarrow\dim_{BS}(Z,T,u)$, and let $A=\bigoplus_j E_j$. Each
$E_j$ is $A$-c.e., hence an $A$-effective null $s_j$-cover, so
$\dim^A_{BS,\mathrm{eff}}(Z,T,u)\leq s_j$ for every $j$; the reverse
inequality holds for every oracle, by Lemma~\ref{lem:bs_null_cover}
and the inclusion of effective covers among covers. The moreover
clause follows since any $B\geq_T A$ enumerates each $E_j$.
\end{proof}
\begin{theorem}[Point-to-set principle for lower BS dimension]
\label{thm:pts_lower_bs}
Let $Z\subset X$ with $\underline{u}(x)>0$ for all $x\in Z$. Then
\[\dim_{BS}(Z,T,u)
=\min\limits_{A\subset \mathbb{N}}\,\sup_{x\in Z}\dim_{BS}^A(x,T,u),\]
with the minimum attained on an upper set of Turing degrees.
\end{theorem}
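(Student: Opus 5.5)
The proof follows the same template as Theorems~\ref{thm:pts-entropy} and~\ref{thm:pts_lower_pressure}, now using the BS-dimension ingredients assembled above.

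\emph{Step 1: the lower bound for admissible oracles.} For every oracle $A$, Lemma~\ref{lem:bs_null_cover} gives $\dim_{BS}(Z,T,u)\leq\dim^A_{BS,\mathrm{eff}}(Z,T,u)$. The reason is that an $A$-effective null $s$-cover is in particular a null $s$-cover, so the infimum defining the effective quantity is taken over a smaller family. For $A$ admissible for $(X,d,T,u)$, Lemma~\ref{lem:sup_eq_bs} identifies $\dim^A_{BS,\mathrm{eff}}(Z,T,u)$ with $\sup_{x\in Z}\dim^A_{BS}(x,T,u)$. This identification uses the standing hypothesis $\underline{u}>0$ on $Z$, and it rests on Lemmas~\ref{lem:eff-bs-upper} and~\ref{lem:pointwise-leq-eff-bs}. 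Combining the two facts, $\dim_{BS}(Z,T,u)\leq\sup_{x\in Z}\dim^A_{BS}(x,T,u)$ for every admissible $A$. Since admissible oracles form an upper set containing an upper cone above some $C$ (Remark~\ref{rem:admissible-cone}), this is exactly the hypothesis on $C$ required by Lemma~\ref{lem:achieving-schema}.

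\emph{Step 2: the achieving oracle.} Lemma~\ref{lem:oracle_exists_lower_bs} supplies an oracle $A_0$ with $\dim^{A_0}_{BS,\mathrm{eff}}(Z,T,u)=\dim_{BS}(Z,T,u)$, and the same identity holds above $A_0$. Set $B=A_0\oplus C$, so that $B$ is admissible. Each null $s_j$-cover $E_j$ encoded in $A_0$ is still $B$-c.e. Hence $\dim^B_{BS,\mathrm{eff}}(Z,T,u)\leq s_j$ for all $j$, and Step~1 yields
\[
\sup_{x\in Z}\dim^B_{BS}(x,T,u)=\dim^B_{BS,\mathrm{eff}}(Z,T,u)=\dim_{BS}(Z,T,u).
\]
The same argument applies verbatim to every $B'\geq_T B$.

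\emph{Step 3: minimality over arbitrary oracles.} For an arbitrary oracle $A$, apply Lemma~\ref{lem:monotone} to $A\leq_T A\oplus B$. This gives
\[
\sup_{x\in Z}\dim^A_{BS}(x,T,u)\geq\sup_{x\in Z}\dim^{A\oplus B}_{BS}(x,T,u)=\dim_{BS}(Z,T,u).
\]
So the minimum exists, equals $\dim_{BS}(Z,T,u)$, and is attained on the upper cone above $B$.

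\emph{Main obstacle.} The only delicate point is monotonicity for the BS pair, where the additive constant $O(1)$ must be absorbed by the normalizing denominators. Lemma~\ref{lem:monotone} asserts this at points with $\underline{u}(x)>0$, and that holds everywhere on $Z$ by hypothesis. To justify it, note that for $p$ large, with $\Var(u,2^{-p})<\underline{u}(x)/2$, every realizer $x_i\in B_n(x,T,2^{-p})$ satisfies $S_nu(x_i)\geq n\underline{u}(x)/4$ for large $n$. This is the same estimate used in the proof of Lemma~\ref{lem:eff-bs-upper}. Consequently $c/S_nu(x_i)\to0$ uniformly over realizers, and the constant disappears in the $\liminf$.
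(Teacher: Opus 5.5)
Your proposal is correct and follows essentially the same route as the paper. Both proofs combine three ingredients: $\dim_{BS}(Z,T,u)\leq\dim^A_{BS,\mathrm{eff}}(Z,T,u)$ for every oracle, the pointwise identification of Lemma~\ref{lem:sup_eq_bs} for admissible oracles, and the achieving oracle of Lemma~\ref{lem:oracle_exists_lower_bs} joined with an admissible oracle. Your Step~3 and your justification of the BS monotonicity only spell out what the paper's proof leaves implicit in Lemmas~\ref{lem:monotone} and~\ref{lem:achieving-schema}.
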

\begin{proof}
Identical to Theorems~\ref{thm:pts-entropy}
and~\ref{thm:pts_lower_pressure}: for every oracle $A$ we have
$\dim_{BS}(Z,T,u)\leq\dim^A_{BS,\mathrm{eff}}(Z,T,u)$
(Lemma~\ref{lem:bs_null_cover}), and for admissible $A$,
$\dim^A_{BS,\mathrm{eff}}(Z,T,u)=\sup_{x\in Z}\dim_{BS}^A(x,T,u)$ by
Lemma~\ref{lem:sup_eq_bs}. By
Lemma~\ref{lem:oracle_exists_lower_bs} there is an oracle $B$ with
$\dim_{BS}(Z,T,u)=\dim^B_{BS,\mathrm{eff}}(Z,T,u)$, and every oracle
$\geq_T B$ satisfies the same identity. Joining with an admissible
oracle (Remark~\ref{rem:admissible-cone}), we may take $B$
admissible, and the same holds for every oracle above $B$. The
minimum is therefore attained at $B$, on the upper cone above it.
\end{proof}
\subsubsection{Dimensional definitions of upper BS dimension}
As with packing pressure, packing BS dimension admits an equivalent characterization as a \emph{modified upper capacity BS dimension}, and this is again the formulation we discretize and effectivize. We adopt it as our working definition.
\begin{makethm}{definition}{def:mBSC}[Modified upper capacity BS dimension]
Let $Z\subset X$ with $\underline{u}(x)>0$ for all $x\in Z$, and let
$s\geq 0$, $n\in\mathbb{N}$, $\varepsilon>0$. Recall the sup-weight $u(x,n,\varepsilon)$ of
Definition~\ref{def:lower-bs}. Set
\[
M(Z,T,u,s,n,\varepsilon)
=\inf\Bigg\{\sum_{x\in \Gamma} 2^{-s\, u(x,n,\varepsilon)}
\;\Bigg|\; \Gamma\subset X \text{ countable},\
Z\subset \bigcup_{x\in \Gamma}B_n(x,T,\varepsilon) \Bigg\}.
\]
\[M(Z,T,u,s,\varepsilon)=\limsup\limits_{n\to\infty} M(Z,T,u,s,n,\varepsilon)\]
The \emph{upper capacity BS dimension} of $Z$ at scale $\varepsilon$ is
\[
\dim_{\mathrm{BSC}}(Z,T,u,\varepsilon)
=\inf\Big\{s \geq 0 : M(Z,T,u,s,\varepsilon) <\infty\Big\},
\]
with the convention $\inf\emptyset = \infty$. The \emph{modified upper
capacity BS dimension} at scale $\varepsilon$ is
\[
\Dim_{BS}(Z,T,u,\varepsilon)
=\inf\Bigg\{\,\sup_{i\in\mathbb{N}}
\dim_{\mathrm{BSC}}(Z_i,T,u,\varepsilon)
\;\Bigg|\; Z\subset\bigcup_{i\in\mathbb{N}} Z_i\Bigg\},
\]
where the infimum is over countable covers of $Z$, and the
\emph{upper BS dimension} of $Z$ with respect to $u$ is
\[
\Dim_{BS}(Z,T,u)
=\lim\limits_{\varepsilon\to 0}\Dim_{BS}(Z,T,u,\varepsilon);
\]
the limit exists because for $\varepsilon_1 < \varepsilon_2$ every cover at
scale $\varepsilon_1$ is a cover at scale $\varepsilon_2$ and
$u(x,n,\varepsilon_1) \leq u(x,n,\varepsilon_2)$, so (as $s \geq 0$)
$M(Z,T,u,s,n,\varepsilon_1) \geq M(Z,T,u,s,n,\varepsilon_2)$ and the scale-
$\varepsilon$ quantities are nondecreasing as $\varepsilon \to 0$.
\end{makethm}

The relationship between the modified upper capacity BS dimension and
the packing BS dimension is the same as that between modified upper
capacity pressure and packing pressure: the two coincide, as we show
in Appendix~\ref{app:packing-bs} (Theorem~\ref{thm:packing-bs-equiv}).
Henceforth we take this as our
definition of the upper BS dimension and write $\Dim_{BS}$ throughout.

\begin{remark}\label{rem:mBSC-dichotomy}
We define $\dim_{\mathrm{BSC}}$ by the finiteness threshold because the
dichotomy
\[
\inf\{s : \limsup\limits_{n\to\infty}M(Z,T,u,s,n,\varepsilon) = 0\}
= \sup\{s : \limsup\limits_{n\to\infty}M(Z,T,u,s,n,\varepsilon) = \infty\}
\]
need not hold for arbitrary $Z\subset X$ at a fixed scale: the cover
centers range over $X$, where $S_n u$ may be negative, and the shift
argument deriving decay at $s' > s$ from finiteness at $s$ fails. The
modified quantity is nonetheless insensitive to this choice. For
$N\in\mathbb{N}$ and rational $\delta>0$ let
\[
C(Z,N,\delta)=\{x\in Z : S_nu(x)\geq \delta n \text{ for all } n\geq N\};
\]
since $\underline{u}>0$ on $Z$, these countably many sets exhaust $Z$.
Given any countable cover $Z\subset\bigcup_i Z_i$, the refinement
$Z'_{i,N,\delta} = Z_i\cap C(Z,N,\delta)$ is again a countable cover, and
on each piece the dichotomy holds: every cover ball meeting
$Z'_{i,N,\delta}$ contains a point $\tilde{x}$ with
$S_nu(\tilde{x})\geq\delta n$, so its sup-weight satisfies
$u(x,n,\varepsilon)\geq \delta n$, and finiteness of the $M$-sums at $s$
forces their decay at every $s'>s$ by the factor
$2^{-(s'-s)\delta n}$ (the argument of Lemma~\ref{lem:bs-dichotomy},
which applies verbatim). Since refining a cover does not increase any
$\dim_{\mathrm{BSC}}(Z_i,T,u,\varepsilon)$, we conclude
\[
\Dim_{BS}(Z,T,u,\varepsilon)
=\inf\Bigg\{\,\sup_{i}\,
\inf\{s : \limsup\limits_n M(Z_i,T,u,s,n,\varepsilon) = 0\}
\;\Bigg|\; Z\subset\bigcup_{i} Z_i\Bigg\}:
\]
the finiteness, vanishing, and infinite thresholds define the same modified
quantity, and either may be used below. We choose to use the finiteness threshold since it is convenient in the discretization.\\
The packing BS dimension of
Appendix~\ref{app:packing-bs} requires no such refinement: its packing
centers lie in $Z$ itself, and the dichotomy holds outright
(Lemma~\ref{lem:packing-bs-dichotomy}).
\end{remark}
We now discretize this definition.

\begin{definition}[Uniform $s$-net for BS dimension]
\label{def:bs-s-net}
A \emph{uniform $s$-net for $Z$} is a set $E\subset\mathbb{N}^4$
such that, writing $E_{n,p,k}=\{i\mid (i,n,p,k)\in E\}$:
\begin{enumerate}
    \item \label{bsnet:mass} for all $p,k\in\mathbb{N}$,
    \[
    \limsup\limits_{n\to\infty}\sum_{i\in E_{n,p,k}} 2^{-s\, S_nu(x_i)}<\infty;
    \]
    \item \label{bsnet:cover} for each $p\in\mathbb{N}$,
    \[
    Z\subset\bigcup_{k\in\mathbb{N}}\bigcap_{n\in\mathbb{N}}
    \bigcup_{i\in E_{n,p,k}} B_n(x_i,T,2^{-p}).
    \]
\end{enumerate}
In this formulation, the countable decomposition of $Z$ at scale
$2^{-p}$ is given by the sets
$Z^{(p)}_k=\bigcap_{n}\bigcup_{i\in E_{n,p,k}}B_n(x_i,T,2^{-p})$,
$k\in\mathbb{N}$: condition~\ref{bsnet:mass} witnesses
$\limsup\limits_{n\to\infty}M(Z^{(p)}_k,T,u,s,n,2^{-p})<\infty$ for each $k$,
and condition~\ref{bsnet:cover} witnesses
$Z\subset\bigcup_k Z^{(p)}_k$.
\end{definition}
We check that this discretization computes the upper BS dimension.
\begin{lemma}\label{lem:bs-s-net}
Suppose $\underline{u}(x)>0$ for all $x\in Z$. Then
\[
\Dim_{BS}(Z,T,u)=\inf\{s\geq 0\mid Z \text{ has a uniform
$s$-net}\}.
\]
\end{lemma}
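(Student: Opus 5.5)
The plan is to mirror the proof of Lemma~\ref{lem:s-net}, with the sign problems of $u$ handled by the exhaustion sets $C(Z,N,\delta)$ of Remark~\ref{rem:mBSC-dichotomy}. The two inequalities are asymmetric. Passing from a net to the set-level quantity costs nothing. Passing from the set-level quantity to a net requires moving centers onto ideal points, which costs an additive constant in the weight; that constant is harmless only where the weights are positive and growing.

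\textbf{($\leq$).} Let $E$ be a uniform $s$-net, fix $p$, and let $Z^{(p)}_k$ be the induced pieces. These cover $Z$ by condition~\ref{bsnet:cover}. For every $n$, the balls $B_n(x_i,T,2^{-p})$ with $i\in E_{n,p,k}$ cover $Z^{(p)}_k$. Since $x_i\in B_n(x_i,T,2^{-p})$, we have $u(x_i,n,2^{-p})\geq S_nu(x_i)$, so for $s\geq 0$
\[
M(Z^{(p)}_k,T,u,s,n,2^{-p})\leq\sum_{i\in E_{n,p,k}}2^{-s\,u(x_i,n,2^{-p})}\leq\sum_{i\in E_{n,p,k}}2^{-s\,S_nu(x_i)}.
\]
Condition~\ref{bsnet:mass} then gives $M(Z^{(p)}_k,T,u,s,2^{-p})<\infty$. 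Hence $\dim_{\mathrm{BSC}}(Z^{(p)}_k,T,u,2^{-p})\leq s$ for all $k$, so $\Dim_{BS}(Z,T,u,2^{-p})\leq s$ for every $p$. Letting $p\to\infty$ gives $\Dim_{BS}(Z,T,u)\leq s$. Unlike the pressure case, no $\Var$ correction appears, because the sup-weight dominates the center weight and $s\geq 0$.

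\textbf{($\geq$).} Let $s>\Dim_{BS}(Z,T,u)$. By monotonicity in the scale, $\Dim_{BS}(Z,T,u,2^{-(p+1)})<s$ for each $p$. Choose a countable cover $Z\subset\bigcup_j Z_j$ with $\dim_{\mathrm{BSC}}(Z_j,T,u,2^{-(p+1)})<s$, and refine it to the pieces $Z_{j,N,\delta}=Z_j\cap C(Z,N,\delta)$ with rational $\delta>0$; these still cover $Z$ since $\underline{u}>0$ on $Z$. Re-index the countably many triples $(j,N,\delta)$ by a single $k$.

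For each $k$ there is some $s_k<s$ with $M(Z_j,T,u,s_k,2^{-(p+1)})<\infty$, and the same bound holds on the subset $Z_{j,N,\delta}$. So for $n\geq n_k$ we can pick countable $\Gamma_{n,p,k}$ covering the piece by $B_n(y,T,2^{-(p+1)})$, with $\sum_y 2^{-s_k u(y,n,2^{-(p+1)})}\leq C_k$. We discard balls not meeting the piece, so every retained $y$ has $u(y,n,2^{-(p+1)})\geq\delta n$ once $n\geq N$. For $n$ below these thresholds we take any countable cover by ideal-point balls, which is allowed because condition~\ref{bsnet:mass} is only a $\limsup$.

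Next replace each $y$ by an ideal point $x_{i(y)}\in B_n(y,T,2^{-(p+1)})$ with $S_nu(x_{i(y)})\geq u(y,n,2^{-(p+1)})-1$, which exists by density and continuity. The triangle inequality gives the containment $B_n(y,T,2^{-(p+1)})\subset B_n(x_{i(y)},T,2^{-p})$, so condition~\ref{bsnet:cover} holds for each $n$ and hence for the intersection over $n$. For condition~\ref{bsnet:mass}, once $\delta n-1\geq 0$ the exponents $S_nu(x_{i(y)})$ are nonnegative. Therefore
\[
\sum_{i\in E_{n,p,k}}2^{-s S_nu(x_i)}\leq\sum_{i\in E_{n,p,k}}2^{-s_k S_nu(x_i)}\leq 2^{s_k}\sum_{y\in\Gamma_{n,p,k}}2^{-s_k u(y,n,2^{-(p+1)})}\leq 2^{s_k}C_k,
\]
where the first inequality uses $s>s_k$ together with the positivity of the exponents. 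Setting $E=\{(i(y),n,p,k)\}$ produces a uniform $s$-net.

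The main obstacle is this last step. The cover witnessing $\dim_{\mathrm{BSC}}<s$ only gives finiteness at some $s_k<s$, and the lost constant from recentering is harmless only if finiteness at $s_k$ transfers to finiteness at $s$. That transfer fails when $S_nu$ can be negative on the cover. Refining by $C(Z,N,\delta)$ and discarding irrelevant balls is what makes all relevant weights at least $\delta n-1$, which is positive for large $n$. I expect the care here to be in checking that the discarding and refinement commute correctly with the $\limsup$ in $n$ and with the re-indexing over $k$.
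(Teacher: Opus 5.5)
Your proposal is correct and follows the paper's proof. The ($\leq$) direction is the same, and for ($\geq$) you refine by the sets $C(Z,N,\delta)$, discard balls missing each piece so that $u(y,n,2^{-(p+1)})\geq\delta n$, and recenter onto ideal points at a cost of $1$ in the exponent, as the paper does. The only difference is in how the gap between $s_k$ and $s$ is used. The paper uses the lower bound on the weights to get decay of the $M$-sums at $s$ itself (the dichotomy argument) and then pays a factor $2^{s}$. You stay at $s_k<s$, pay $2^{s_k}$, and use nonnegativity of the recentered weights to pass from $s_k$ to $s$ by monotonicity; both versions work.
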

\begin{proof}
($\leq$) Suppose $Z$ has a uniform $s$-net $E$, and fix
$p\in\mathbb{N}$. By condition~\ref{bsnet:cover}, the sets
$Z^{(p)}_k=\bigcap_n\bigcup_{i\in E_{n,p,k}}B_n(x_i,T,2^{-p})$
cover $Z$. For each $k$ and every $n$ we have
$Z^{(p)}_k\subset\bigcup_{i\in E_{n,p,k}}B_n(x_i,T,2^{-p})$ by
construction, so
\[
M(Z^{(p)}_k,T,u,s,n,2^{-p})
\leq\sum_{i\in E_{n,p,k}} 2^{-s\, S_nu(x_i)}<\infty,
\]
by condition~\ref{bsnet:mass}, hence
$\dim_{\mathrm{BSC}}(Z^{(p)}_k,T,u,2^{-p})\leq s$ for every $k$.
Therefore $\Dim_{BS}(Z,T,u,2^{-p})\leq s$; as $p$ was arbitrary,
$\Dim_{BS}(Z,T,u)\leq s$. Taking the infimum over such $s$ gives
$\Dim_{BS}(Z,T,u)
\leq\inf\{s\mid Z\text{ has a uniform $s$-net}\}$.

($\geq$) We show that for all rationals $s>\Dim_{BS}(Z,T,u)$ the
set $Z$ has a uniform $s$-net. This gives
$\inf\{s\geq 0\mid Z\text{ has a uniform $s$-net}\}\leq\Dim_{BS}(Z,T,u)$.

As in Lemma~\ref{lem:bs_null_cover}, write $Z=\bigcup_j C_j$ with
$C_j=C(Z,N_j,\rho_j)$ (Remark~\ref{rem:mBSC-dichotomy}). Uniform
$s$-nets merge across a countable union: if $E^{(j)}$ is a uniform
$s$-net for $C_j$, then
$E=\{(i,n,p,\langle j,k\rangle)\mid (i,n,p,k)\in E^{(j)}\}$, with
$\langle\cdot,\cdot\rangle$ a pairing of $\mathbb{N}^2$ with
$\mathbb{N}$, satisfies both conditions of
Definition~\ref{def:bs-s-net} for $Z$, since each condition is
imposed slice by slice. It therefore suffices to fix $j$, write
$C=C_j$, $N_0=N_j$, $\rho=\rho_j$, and construct a uniform
$s$-net for $C$.

Fix $p\in\mathbb{N}$. Since
$\Dim_{BS}(C,T,u,2^{-(p+1)})\leq\Dim_{BS}(Z,T,u)<s$, there is a
countable decomposition $C=\bigcup_k C^{(p)}_k$ with
$\dim_{\mathrm{BSC}}(C^{(p)}_k,T,u,2^{-(p+1)})<s$ for every $k$. After discarding cover balls that
miss $C^{(p)}_k$, every remaining ball of order $N\geq N_0$ has
sup-weight $u(y,N,2^{-(p+1)})\geq\rho N$, so we get $M(C^{(p)}_k,T,u,s,2^{-p})=0$. Therefore for all $N$ large
enough there is a countable family $\Gamma_{k,p,N}\subset X$ with
\[
\sum_{y\in \Gamma_{k,p,N}} 2^{-s\, u(y,N,2^{-(p+1)})}<1,
\qquad
C^{(p)}_k\subset\bigcup_{y\in \Gamma_{k,p,N}}B_N(y,T,2^{-(p+1)}).
\]
For the finitely many remaining orders $N$, let $\Gamma_{k,p,N}$ be
any countable family with the covering property, which exists by
separability; these orders do not affect the $\limsup$ below.

For each $y\in\Gamma_{k,p,N}$ pick an index $i(y,N)$ with
$x_{i(y,N)}\in B_N(y,T,2^{-(p+1)})$ and, for $N$ in the large range,
$|u(y,N,2^{-(p+1)})-S_Nu(x_{i(y,N)})|<1$; such indices exist since
the ideal points are $d_N$-dense and approximate the supremum
arbitrarily closely. Since $x_{i(y,N)}$ lies in the ball over which
the supremum is taken, $u(y,N,2^{-(p+1)})\geq S_Nu(x_{i(y,N)})$, so
\[
\sum_{y\in \Gamma_{k,p,N}}2^{-s\, S_Nu(x_{i(y,N)})}
\leq \sum_{y\in \Gamma_{k,p,N}}
2^{s\,(u(y,N,2^{-(p+1)})-S_Nu(x_{i(y,N)}))}\,
2^{-s\, u(y,N,2^{-(p+1)})}
\leq 2^{s}\sum_{y\in \Gamma_{k,p,N}} 2^{-s\, u(y,N,2^{-(p+1)})}
\leq 2^{s}.
\]
Let $E=\{(i(y,N),N,p,k)\mid p,k,N\in\mathbb{N},\
y\in \Gamma_{k,p,N}\}$, so that
$E_{N,p,k}=\{i(y,N)\mid y\in\Gamma_{k,p,N}\}$. Each $i\in E_{N,p,k}$
arises from at least one $y$, so
\[
\limsup\limits_{N\to\infty}\sum_{i\in E_{N,p,k}} 2^{-s\, S_Nu(x_i)}
\leq 2^{s}<\infty,
\]
which is condition~\ref{bsnet:mass}. For
condition~\ref{bsnet:cover}: by the triangle inequality
$B_N(y,T,2^{-(p+1)})\subset B_N(x_{i(y,N)},T,2^{-p})$, so for
every $N$,
$C^{(p)}_k\subset\bigcup_{i\in E_{N,p,k}}B_N(x_i,T,2^{-p})$;
intersecting over $N$ and taking the union over $k$ shows that $E$
is a uniform $s$-net for $C$.
\end{proof}
\subsubsection{The point-to-set principle for upper BS dimension}\label{subsec_pts_BS_upper}
As before, we effectivize the discretization. The
results of this section require $A$ to be admissible for
$(X,d,T,u)$ (Definition~\ref{def:admissible}). Each statement
carries this hypothesis.

\begin{definition}[$A$-effective uniform $s$-net; effective upper BS
dimension]\label{def:eff-upper-bs}
An \emph{$A$-effective uniform $s$-net} is a uniform $s$-net
$E\subset\mathbb{N}^4$ that is $A$-computably enumerable. The
\emph{$A$-effective upper BS dimension} of $Z$ with respect to $u$ is
\[\Dim_{BS,\mathrm{eff}}^A(Z,T,u)
=\inf\{s\geq 0\mid Z \text{ has an $A$-effective uniform
$s$-net}\}.\]
If $Z\subset Z'$ then
$\Dim^A_{BS,\mathrm{eff}}(Z,T,u)\leq
\Dim^A_{BS,\mathrm{eff}}(Z',T,u)$, since a uniform $s$-net for $Z'$
is one for $Z$.
\end{definition}

We now prove the pointwise characterization of the effective upper
BS dimension.

\begin{lemma}\label{lem:sup_eq_upper_bs}
For $A$ admissible for $(X,d,T,u)$ and any $Z\subset X$ with
$\underline{u}(x)>0$ for all $x\in Z$,
\[\Dim_{BS,\mathrm{eff}}^A(Z,T,u)
=\sup_{x\in Z}\Dim_{BS}^A(x,T,u).\]
\end{lemma}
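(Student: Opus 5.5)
The plan is to prove the two inequalities separately, following Lemmas~\ref{lem:eff_upper} and~\ref{lem:eff_upper_pressure_geq}. The BS normalization replaces $n$ by $S_nu(x_i)$, and the hypothesis $\underline{u}>0$ lets us control this denominator, as in Lemmas~\ref{lem:eff-bs-upper} and~\ref{lem:pointwise-leq-eff-bs}.

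\emph{Upper bound} ($\Dim^A_{BS,\mathrm{eff}}\leq\sup$). Fix a rational $s>\sup_{x\in Z}\Dim^A_{BS}(x,T,u)$ and a rational $s'$ strictly between. Set
\[
E=\{(i,n,p,k)\mid K^A(i)<s\,S_nu(x_i)+k\}.
\]
This set is $A$-c.e.\ by Remark~\ref{rem:ce-thresholds} and Lemma~\ref{lem:birkhoff-uniform}, and its slices increase in $k$.

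For condition~\ref{bsnet:mass}, Kraft's inequality gives
\[
\sum_{i\in E_{n,p,k}}2^{-sS_nu(x_i)}\leq 2^k\sum_i 2^{-K^A(i)}\leq 2^k
\]
for every $n$, so the $\limsup$ is finite. This is cleaner than in the pressure case.

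For condition~\ref{bsnet:cover}, fix $p$ and $x\in Z$. Choose $p^*\geq p$ with $\Var(u,2^{-p^*})<\underline{u}(x)/2$. Then for $n\geq N_0$ every $x_i\in B_n(x,T,2^{-p^*})$ satisfies $S_nu(x_i)\geq n\underline{u}(x)/4>0$. Since $\Dim^A_{BS}(x,T,u,2^{-p^*})<s'$, for all large $n$ there is a realizer with $K^A(i)<s'S_nu(x_i)\leq sS_nu(x_i)$, which lies in every slice. For the finitely many smaller $n$, pick any ideal point in $B_n(x,T,2^{-p})$ and take $k$ to be the maximum of the finitely many required offsets. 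Monotonicity of the slices then gives $x\in Z^{(p)}_k$.

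\emph{Lower bound.} Fix $t>s>\Dim^A_{BS,\mathrm{eff}}(Z,T,u)$ with an $A$-effective uniform $s$-net $E$. Fix $x\in Z$ and $p$ large enough that $\Var(u,2^{-p})<\underline{u}(x)/2$, and fix $k$ with $x\in Z^{(p)}_k$. Condition~\ref{bsnet:mass} gives $N_0$ and $M$ with
\[
\sum_{i\in E_{n,p,k}}2^{-sS_nu(x_i)}\leq M \qquad\text{for } n\geq N_0.
\]
I would apply the extended coding theorem to $(i,n)\mapsto tS_nu(x_i)$, so I need $\sum_{n\geq N_0}\sum_{i\in E_{n,p,k}}2^{-tS_nu(x_i)}<\infty$.

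\textbf{Main obstacle.} The factor $2^{-(t-s)S_nu(x_i)}$ only gives geometric decay when $S_nu(x_i)\geq\rho n$, and centers in the slice need not satisfy this. To handle it, restrict the domain to the $A$-c.e.\ set
\[
F=\{(i,n)\mid n\geq N_0,\ i\in E_{n,p,k},\ S_nu(x_i)>\rho n\}
\]
with rational $\rho<\underline{u}(x)/4$. Membership is c.e.\ because the strict inequality is semidecidable. On $F$ the double sum is at most $\sum_n M2^{-(t-s)\rho n}<\infty$, hence $K^A(i)\leq tS_nu(x_i)+O(1)$ on $F$.

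The witnesses $i_n\in E_{n,p,k}$ with $x_{i_n}\in B_n(x,T,2^{-p})$ satisfy $S_nu(x_{i_n})\geq n\underline{u}(x)/4>\rho n$ for large $n$, so they lie in $F$. Therefore
\[
\frac{K^A(i_n)}{S_nu(x_{i_n})}\leq t+\frac{O(1)}{\rho n},
\]
which gives $\Dim^A_{BS}(x,T,u,2^{-p})\leq t$ for all large $p$. Letting $t\downarrow s\downarrow\Dim^A_{BS,\mathrm{eff}}$ and taking the supremum over $x\in Z$ finishes the proof. Note that the constant may depend on $x$ through $\rho$, $p$ and $k$; this is harmless because we only need a pointwise bound.
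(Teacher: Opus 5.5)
Your proposal is correct and follows the paper's proof essentially step for step. For the upper bound you use the threshold set $E=\{(i,n,p,k)\mid K^A(i)<s\,S_nu(x_i)+k\}$ with Kraft's inequality and a finite offset $k$ for small $n$; for the lower bound you truncate the slices to centers with $S_nu(x_i)>\rho n$ (the paper's $E'_{n,p,k}$ with threshold $\delta n/2$) before applying the coding theorem. The only thing to make explicit is that $t$ should be taken rational, as the paper does, so that $t\,S_nu(x_i)$ is $A$-computable.
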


Again the proof splits into two lemmas.

\begin{lemma}\label{lem:eff_upper_bs}
    \[\Dim^A_{BS,\mathrm{eff}}(Z,T,u)\leq
    \sup\limits_{x\in Z}\Dim_{BS}^A(x,T,u)\]
\end{lemma}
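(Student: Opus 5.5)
The plan is to mirror Lemma~\ref{lem:eff_upper}, with the additive pressure threshold replaced by the multiplicative BS threshold. As there, $A$ is admissible for $(X,d,T,u)$, and we may assume the supremum is finite. Fix a rational $s>\sup_{x\in Z}\Dim^A_{BS}(x,T,u)$ and set
\[
Y_{<s}=\{x : \underline{u}(x)>0,\ \Dim^A_{BS}(x,T,u)<s\}\supseteq Z.
\]
We build an $A$-effective uniform $s$-net for $Y_{<s}$ and then use monotonicity under inclusion (Definition~\ref{def:eff-upper-bs}). The candidate net is
\[
E=\{(i,n,p,k) : K^A(i)< s\,S_nu(x_i)+k\}.
\]
This set is $A$-c.e.\ by Remark~\ref{rem:ce-thresholds} and Lemma~\ref{lem:birkhoff-uniform}. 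It is independent of $p$, and its slices increase in $k$.

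Condition~\ref{bsnet:mass} follows directly from Kraft's inequality. For $i\in E_{n,p,k}$ we have $2^{-sS_nu(x_i)}\le 2^{k}2^{-K^A(i)}$, so
\[
\sum_{i\in E_{n,p,k}}2^{-sS_nu(x_i)}\le 2^k
\]
for every $n$, and the $\limsup$ is finite. Unlike the lower case, no term $2\log n$ appears, because the complexity of $i$ alone is used.

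Condition~\ref{bsnet:cover} is the main step. Fix $p$ and $x\in Y_{<s}$.
\begin{enumerate}
\item Choose $p^*\ge p$ with $\Var(u,2^{-p^*})<\underline{u}(x)/2$. Then for $n\ge N_0$, every $x_i\in B_n(x,T,2^{-p^*})$ satisfies $S_nu(x_i)\ge n\underline{u}(x)/4>0$, exactly as in Lemma~\ref{lem:eff-bs-upper}.
\item Since the scale quantities are monotone, $\Dim^A_{BS}(x,T,u,2^{-p^*})<s$. Hence there is $N_1\ge N_0$ such that for \emph{every} $n\ge N_1$ there is a realizer $x_{i_n}\in B_n(x,T,2^{-p^*})\subset B_n(x,T,2^{-p})$ with $K^A(i_n)<s\,S_nu(x_{i_n})$. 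Positivity of the denominator is what makes the ratio inequality equivalent to this form. So $(i_n,n,p,k)\in E$ for all $k\ge0$.
\item For the finitely many $n<N_1$, pick any ideal point $x_{i_n}\in B_n(x,T,2^{-p})$ by density, and choose a finite $k_n$ with $K^A(i_n)<s\,S_nu(x_{i_n})+k_n$. This is possible since $S_nu(x_{i_n})$ is a finite real, possibly negative.
\item Take $k=\max_{n<N_1}k_n$. Monotonicity of the slices gives $x\in B_n(x_{i_n},T,2^{-p})$ with $i_n\in E_{n,p,k}$ for every $n$, so $x\in Z^{(p)}_k$.
\end{enumerate}

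The expected obstacle is the points where $S_nu$ can be negative or small. The limsup definition of $\Dim^A_{BS}$ controls only large $n$, and the inequality $K^A(i)/S_nu(x_i)<s$ translates to $K^A(i)<sS_nu(x_i)$ only when $S_nu(x_i)>0$. This is why the radius must be shrunk to $2^{-p^*}$ using $\underline{u}(x)>0$ and uniform continuity of $u$; the finitely many early times are then absorbed into the slack $k$, just as in the pressure case.
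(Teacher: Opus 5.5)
Your proof is correct and follows the paper's argument essentially step for step. You use the same $A$-c.e.\ threshold set $E=\{(i,n,p,k): K^A(i)<s\,S_nu(x_i)+k\}$ and Kraft's inequality for the mass condition. For the covering condition you likewise shrink to $2^{-p^*}$ with $\Var(u,2^{-p^*})<\underline{u}(x)/2$ so that the realizers' Birkhoff sums are positive, and absorb the finitely many early times into the slack $k$. The only difference is cosmetic: you choose the early-time ideal points in $B_n(x,T,2^{-p})$ rather than $B_n(x,T,2^{-p^*})$.
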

\begin{proof}
Fix a rational $s>\sup\limits_{x\in Z}\Dim_{BS}^A(x,T,u)$ and
define $Y_{<s}=\{x\mid \Dim_{BS}^A(x,T,u)<s, \underline{u}(x)>0\}\supset Z$. We
construct an $A$-effective uniform $s$-net for $Y_{<s}$. Consider
\[E=\bigl\{(i,n,p,k)\;\bigm|\; K^A(i)< sS_nu(x_i)+k\bigr\},\]
which is $A$-c.e.\ by Remark~\ref{rem:ce-thresholds}, since
$S_nu(x_i)$ is $A$-computable uniformly in $(i,n)$
(Lemma~\ref{lem:birkhoff-uniform}). The condition is independent
of $p$, and the slices are increasing in $k$:
$E_{n,p,k}\subset E_{n,p,k+1}$.

We first verify condition 2. Fix $p\in\mathbb{N}$ and $x\in Y_{<s}$, and choose
$p^* \geq p$ with $\Var(u, 2^{-p^*}) < \underline{u}(x)/2$.Then there is $N_0$ such that for all
$n \geq N_0$ and all $x_i \in B_n(x, T, 2^{-p^*})$,
\[
S_n u(x_i) \;\geq\; S_n u(x) - n \Var(u, 2^{-p^*})
\;\geq\; n\,\underline{u}(x)/4 \;>\; 0.
\]
Because the scale quantities are nondecreasing as the scale shrinks,
$\Dim^A_{BS}(x,T,u,2^{-p^*}) \leq \Dim^A_{BS}(x,T,u) < s$, so there is
$N\geq N_0$ such that for every $n\geq N$ there is a realizer
$x_{i_n}\in B_n(x,T,2^{-p^*})$ with $K^A(i_n)< sS_nu(x_{i_n})$,
so $(i_n,n,p,k)\in E$ for every $k\geq 0$. For each of the finitely
many $n<N$, choose by density any ideal point
$x_{i_n}\in B_n(x,T,2^{-p^*})$; then
$K^A(i_n)< sS_nu(x_{i_n})+k_n$ for some finite $k_n$. Taking
$k=\max_{n<N}k_n$, monotonicity of the slices in $k$ gives
$i_n\in E_{n,p,k}$ for every $n$. Since
$d_n(x,x_{i_n})<2^{-p^*}\leq 2^{-p}$, we have
$x\in B_n(x_{i_n},T,2^{-p})\subset\bigcup_{i\in E_{n,p,k}}B_n(x_i,T,2^{-p})$
for every $n$, hence $x\in Z^{(p)}_k$. Therefore
$Y_{<s}\subset\bigcup_k Z^{(p)}_k$ for every $p$.

We now verify condition 1. Fix $p,k\in\mathbb{N}$. By membership in
$E$ and Kraft's inequality over the distinct indices $i$,
\[\sum_{i\in E_{n,p,k}} 2^{-sS_nu(x_i)}
\leq \sum_{i\in E_{n,p,k}} 2^{-K^A(i)+k}
=2^{k}\sum_{i\in E_{n,p,k}} 2^{-K^A(i)}
\leq 2^{k}<\infty\]
Hence $E$ is an $A$-effective uniform $s$-net for
$Y_{<s}\supseteq Z$, so by monotonicity of
$\Dim^A_{BS,\mathrm{eff}}$ under inclusion,
$\Dim^A_{BS,\mathrm{eff}}(Z,T,u)\leq s$; as $s$ was an
arbitrary rational above the supremum, the lemma follows.
\end{proof}

We now prove the lower bound
\begin{lemma}\label{lem:eff_upper_bs_geq}
For $A$ admissible for $(X,d,T,u)$ and $Z\subset X$ with
$\underline{u}(x)>0$ for all $x\in Z$,
\[\Dim_{BS,\mathrm{eff}}^A(Z,T,u)\geq
\sup_{x\in Z}\Dim_{BS}^A(x,T,u).\]
\end{lemma}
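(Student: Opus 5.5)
The proof follows the pattern of Lemma~\ref{lem:eff_upper_pressure_geq}, adapted to the BS normalization as in Lemma~\ref{lem:pointwise-leq-eff-bs}. If $\Dim^A_{BS,\mathrm{eff}}(Z,T,u)=\infty$ there is nothing to prove. Otherwise fix rationals $t>s>\Dim^A_{BS,\mathrm{eff}}(Z,T,u)$ and an $A$-effective uniform $s$-net $E$ for $Z$. Fix $x\in Z$ and $p$ large enough that $\Var(u,2^{-p})<\underline{u}(x)/2$. By condition~\ref{bsnet:cover} there is $k$ with
\[
x\in\bigcap_n\bigcup_{i\in E_{n,p,k}}B_n(x_i,T,2^{-p}),
\]
so for every $n$ there is a witness $i_n\in E_{n,p,k}$ with $x_{i_n}\in B_n(x,T,2^{-p})$. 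For $n\geq N_0$ every such witness satisfies
\[
S_nu(x_{i_n})\geq S_nu(x)-n\Var(u,2^{-p})\geq n\underline{u}(x)/4.
\]

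The main obstacle is producing a summable weight for the coding theorem. Condition~\ref{bsnet:mass} only bounds each slice sum $\sum_{i\in E_{n,p,k}}2^{-sS_nu(x_i)}$ by a constant $C$ for $n\geq N_1$; it is not summable over $n$. Raising $s$ to $t$ gains a factor $2^{-(t-s)S_nu(x_i)}$, but this is small only when $S_nu(x_i)$ is large, and indices in the slice may have $S_nu$ small or negative. So I restrict to an $A$-c.e.\ subdomain. Choose a rational $\rho$ with $0<\rho<\underline{u}(x)/4$ and let
\[
F=\{(i,n)\mid n\geq N_1,\ i\in E_{n,p,k},\ S_nu(x_i)>\rho n\}.
\]
This set is $A$-c.e.\ because $E$ is $A$-c.e.\ and the strict inequality is semidecidable by Lemma~\ref{lem:birkhoff-uniform}. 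On $F$,
\[
2^{-tS_nu(x_i)}\leq 2^{-(t-s)\rho n}\,2^{-sS_nu(x_i)},
\]
so
\[
\sum_{(i,n)\in F}2^{-tS_nu(x_i)}\leq C\sum_n 2^{-(t-s)\rho n}<\infty.
\]
The weight $f(i,n)=tS_nu(x_i)$ is $A$-computable, so Lemma~\ref{lem:extended-coding-theorem} gives $K^A(i,n)\leq tS_nu(x_i)+O(1)$ on $F$. Then $K^A(i)\leq K^A(i,n)+O(1)$ gives $K^A(i)\leq tS_nu(x_i)+c$ on $F$, where $c$ depends on $E,p,k,t,\rho$ but not on $n$ or $i$. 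The constant is allowed to depend on $\rho$, and hence on $x$, since it is only used at this point.

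For $n\geq\max(N_0,N_1)$ the witness $i_n$ satisfies $S_nu(x_{i_n})\geq n\underline{u}(x)/4>\rho n$, so $(i_n,n)\in F$. Hence
\[
\min_{x_i\in B_n(x,T,2^{-p})}\frac{K^A(i)}{S_nu(x_i)}\leq\frac{K^A(i_n)}{S_nu(x_{i_n})}\leq t+\frac{c}{n\underline{u}(x)/4},
\]
and taking $\limsup_n$ gives $\Dim^A_{BS}(x,T,u,2^{-p})\leq t$.

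This holds for all sufficiently large $p$, and the scale quantities converge as $p\to\infty$, so $\Dim^A_{BS}(x,T,u)\leq t$. Letting $t\downarrow s$, then $s\downarrow\Dim^A_{BS,\mathrm{eff}}(Z,T,u)$, and taking the supremum over $x\in Z$ proves the lemma.
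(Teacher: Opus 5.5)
Your proposal is correct and follows essentially the same argument as the paper. Both truncate each slice to the $A$-c.e.\ subdomain where $S_nu(x_i)$ exceeds a linear threshold: the paper uses $\delta n/2$ with rational $\delta<\underline{u}(x)$, you use $\rho n$. Both then gain the geometric factor $2^{-(t-s)\rho n}$ by passing from $s$ to $t$, and apply the coding theorem to the witnesses supplied by condition~2. The only cosmetic difference is that you invoke the extended coding theorem directly, whereas the paper takes the starting index $N$ large enough that the total mass is at most $1$ and then uses the basic coding theorem.
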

\begin{proof}
Fix a rational $s>\Dim_{BS,\mathrm{eff}}^A(Z,T,u)$ and let $E\subset\mathbb{N}^4$ be an $A$-effective uniform $s$-net for $Z$. Fix $x\in Z$, choose a rational $\delta$ with $0<\delta<\underline{u}(x)$, fix a rational $t>s$, and fix $p\in\mathbb{N}$ large enough that $\Var(u,2^{-p})<\delta/2$. By condition~\ref{bsnet:cover} there is $k\in\mathbb{N}$ with
$x\in Z_k^{(p)}=\bigcap_{n\in\mathbb{N}}
\bigcup_{i\in E_{n,p,k}} B_n(x_i,T,2^{-p})$; fix such a $k$. Define the truncated slice family
\[E_{n,p,k}'=\{i\in E_{n,p,k}\mid S_nu(x_i)> \delta n/2\},\]
which is $A$-c.e.\ uniformly since $E$ is $A$-c.e.\ and the strict inequality is witnessed by a sufficiently precise $A$-computation of $S_nu(x_i)$ (Lemma~\ref{lem:birkhoff-uniform}, Remark~\ref{rem:ce-thresholds}).

We first observe that the truncated slices still capture $x$ for large $n$. Since $\liminf\limits_n S_nu(x)/n=\underline{u}(x)>\delta$, there is $N_1$ with $S_nu(x)>\delta n$ for all $n\geq N_1$. For $n\geq N_1$, condition~\ref{bsnet:cover} provides $i\in E_{n,p,k}$ with $x\in B_n(x_i,T,2^{-p})$; then
\[S_nu(x_i)\geq S_nu(x)-n\,\Var(u,2^{-p})>\delta n-\delta n/2=\delta n/2,\]
so $i\in E'_{n,p,k}$. Hence for every $n\geq N_1$ there is a witness $i_n\in E'_{n,p,k}$ with $x\in B_n(x_{i_n},T,2^{-p})$.

By condition~\ref{bsnet:mass} of Definition~\ref{def:bs-s-net}, $\limsup\limits_n\sum_{i\in E_{n,p,k}} 2^{-s S_nu(x_i)}<\infty$, so there are $M<\infty$ and $N_0\in\mathbb{N}$ with
\[
\sum_{i\in E_{n,p,k}} 2^{-s S_nu(x_i)}\leq M\qquad\text{for all } n\geq N_0;
\]
since $E'_{n,p,k}\subset E_{n,p,k}$ and the terms are positive, the same bound holds over $E'_{n,p,k}$. For $n\geq N_0$, the threshold defining $E'$ gives
\[
\sum_{i\in E'_{n,p,k}} 2^{-t S_nu(x_i)}
\leq \sum_{i\in E'_{n,p,k}} 2^{-(t-s)\delta n/2}\, 2^{-s S_nu(x_i)}
= 2^{-(t-s)\delta n/2}\sum_{i\in E'_{n,p,k}} 2^{-s S_nu(x_i)}
\leq M\,2^{-(t-s)\delta n/2}.
\]
Therefore
\[
\sum_{n\geq N}\sum_{i\in E'_{n,p,k}} 2^{-tS_nu(x_i)}
\leq M\sum_{n\geq N} 2^{-(t-s)\delta n/2}
\xrightarrow{\;N\to\infty\;} 0,
\]
the tail of a convergent geometric series, and we may fix $N\geq\max(N_0,N_1)$ large enough that
\[
\sum_{n\geq N}\sum_{i\in E'_{n,p,k}} 2^{-tS_nu(x_i)}\leq 1.
\]
Consequently the function
\[
(i,n)\mapsto tS_nu(x_i),
\qquad\text{defined for } n\geq N,\ i\in E'_{n,p,k},
\]
satisfies the hypotheses of the coding theorem: it satisfies the Kraft inequality by the choice of $N$, and it is upper semicomputable in $A$ since $E'_{n,p,k}$ is $A$-c.e.\ and $S_nu(x_i)$ is $A$-computable uniformly in $(i,n)$ (Lemma~\ref{lem:birkhoff-uniform}). By the coding theorem (Lemma~\ref{lem:coding-theorem}),
\[
K^A(i,n)\leq tS_nu(x_i)+O(1)
\qquad\text{for all } n\geq N,\ i\in E'_{n,p,k},
\]
where the constant depends only on $E$, $p$, $k$, $t$, and $\delta$. Since
$K^A(i)\leq K^A(i,n)+O(1)$ (Remark~\ref{rem:composition-use}),
\[
K^A(i)\leq tS_nu(x_i)+O(1)
\qquad\text{for all } n\geq N,\ i\in E'_{n,p,k}.
\]
The witnesses $i_n$ satisfy this bound once $n\geq N$, and $S_nu(x_{i_n})>\delta n/2\to\infty$, so
\[
\Dim_{BS}^A(x,T,u,2^{-p})
=\limsup\limits_{n\to\infty}\;\min\limits_{x_i\in B_n(x,T,2^{-p})}
\frac{K^A(i)}{S_nu(x_i)}
\leq\limsup\limits_{n\to\infty}\frac{tS_nu(x_{i_n})+O(1)}{S_nu(x_{i_n})}
\leq t.
\]
The scale quantities are nondecreasing as the scale shrinks, and the bound holds for every sufficiently large $p$ (namely those with $\Var(u,2^{-p})<\delta/2$), so letting $p\to\infty$ gives $\Dim_{BS}^A(x,T,u)\leq t$. As $t>s$ was an arbitrary rational, $\Dim_{BS}^A(x,T,u)\leq s$; as $x\in Z$ was arbitrary, $\sup_{x\in Z}\Dim_{BS}^A(x,T,u)\leq s$; and as $s>\Dim_{BS,\mathrm{eff}}^A(Z,T,u)$ was an arbitrary rational, the lemma follows.
\end{proof}

We now construct an achieving oracle.

\begin{lemma}\label{lem:oracle_exists_upper_bs}
Given $Z\subset X$ with $\underline{u}(x)>0$ for all $x\in Z$, there
is an oracle $A$ such that
\[\Dim_{BS}(Z,T,u)=\Dim^A_{BS,\mathrm{eff}}(Z,T,u);\]
moreover every oracle $\geq_T A$ satisfies the same identity.
\end{lemma}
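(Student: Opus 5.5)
We follow the template of Lemmas~\ref{exists-oracle}, \ref{lem:exists-oracle-pressure}, \ref{lem:oracle_exists_upper_pressure} and~\ref{lem:oracle_exists_lower_bs}, with Lemma~\ref{lem:bs-s-net} supplying the discretization. If $\Dim_{BS}(Z,T,u)=\infty$, then every oracle (in particular $A=\emptyset$) satisfies the identity: every effective uniform $s$-net is a uniform $s$-net, so $\Dim^A_{BS,\mathrm{eff}}(Z,T,u)\geq\Dim_{BS}(Z,T,u)=\infty$ for all $A$. We may therefore assume the value is finite.

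In the finite case, choose rationals $s_j\downarrow\Dim_{BS}(Z,T,u)$ with $s_j>\Dim_{BS}(Z,T,u)$. By the ($\geq$) direction of Lemma~\ref{lem:bs-s-net}, which uses the standing hypothesis $\underline{u}>0$ on $Z$ through the exhaustion $Z=\bigcup C(Z,N,\delta)$ of Remark~\ref{rem:mBSC-dichotomy}, there is a uniform $s_j$-net $E_j\subset\mathbb{N}^4$ for $Z$ for each $j$. Set $A=\bigoplus_j E_j$. Each $E_j$ is computable from $A$ by projection, hence $A$-c.e., so it is an $A$-effective uniform $s_j$-net. Therefore $\Dim^A_{BS,\mathrm{eff}}(Z,T,u)\leq s_j$ for every $j$, and letting $j\to\infty$ gives $\Dim^A_{BS,\mathrm{eff}}(Z,T,u)\leq\Dim_{BS}(Z,T,u)$.

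The reverse inequality holds for every oracle. An $A$-effective uniform $s$-net is in particular a uniform $s$-net, so the infimum defining $\Dim^A_{BS,\mathrm{eff}}$ ranges over a subset of the nets admitted in Lemma~\ref{lem:bs-s-net}. The ($\leq$) direction of that lemma then gives $\Dim_{BS}(Z,T,u)\leq\Dim^A_{BS,\mathrm{eff}}(Z,T,u)$. For the moreover clause, let $B\geq_T A$. Then $B$ enumerates each $E_j$ (first compute $A$, then project), so the same argument gives $\Dim^B_{BS,\mathrm{eff}}(Z,T,u)\leq s_j$ for all $j$, and the universal lower bound gives equality.

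There is no substantial obstacle here; the work was done in Lemma~\ref{lem:bs-s-net}. The one point needing care is that the net-existence direction of that lemma really produces nets for every rational $s$ above $\Dim_{BS}(Z,T,u)$. That direction relies on the refinement into the pieces $C(Z,N,\delta)$ and on merging nets across countable unions via the pairing $\langle j,k\rangle$. Both steps are set-theoretic choices made outside any oracle, so no effectivity is required of them: the oracle $A$ simply records the resulting sets $E_j$.
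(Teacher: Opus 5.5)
Your proposal is correct and follows the paper's proof: take the countable join of uniform $s_j$-nets supplied by Lemma~\ref{lem:bs-s-net}, and get the reverse inequality for every oracle because effective nets are nets. Your separate handling of the case $\Dim_{BS}(Z,T,u)=\infty$, where no rationals $s_j$ can decrease to the value, is a small addition that the paper leaves implicit.
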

\begin{proof}
As in Lemmas~\ref{exists-oracle},
\ref{lem:exists-oracle-pressure},
and~\ref{lem:oracle_exists_upper_pressure}: by
Lemma~\ref{lem:bs-s-net}, choose uniform $s_j$-nets
$E_j\subset\mathbb{N}^4$ for $Z$ with rational
$s_j\downarrow\Dim_{BS}(Z,T,u)$, and let $A=\bigoplus_j E_j$. Each
$E_j$ is $A$-c.e., hence an $A$-effective uniform $s_j$-net, so
$\Dim^A_{BS,\mathrm{eff}}(Z,T,u)\leq s_j$ for every $j$; the reverse
inequality holds for every oracle, by Lemma~\ref{lem:bs-s-net} and
the inclusion of effective nets among nets. The moreover clause
follows since any $B\geq_T A$ enumerates each $E_j$, and the reverse
inequality again holds for $B$.
\end{proof}
\begin{makethm}{theorem}{thm:pts_upper}[Point-to-set principle for upper BS dimension]
Let $Z\subset X$ with $\underline{u}(x)>0$ for all $x\in Z$. Then
\[\Dim_{BS}(Z,T,u)
=\min\limits_{A\subset \mathbb{N}}\,\sup_{x\in Z}\Dim_{BS}^A(x,T,u),\]
with the minimum attained on an upper set of Turing degrees.
\end{makethm}
\begin{proof}
Identical to Theorems~\ref{thm:pts-entropy},
\ref{thm:pts_lower_pressure}, and~\ref{thm:pts_upper_pressure}: for
every oracle $A$ we have
$\Dim_{BS}(Z,T,u)\leq\Dim^A_{BS,\mathrm{eff}}(Z,T,u)$
(Lemma~\ref{lem:bs-s-net}), and for admissible $A$,
$\Dim^A_{BS,\mathrm{eff}}(Z,T,u)=\sup_{x\in Z}\Dim_{BS}^A(x,T,u)$ by
Lemma~\ref{lem:sup_eq_upper_bs}. By
Lemma~\ref{lem:oracle_exists_upper_bs} there is an oracle $B$ with
$\Dim_{BS}(Z,T,u)=\Dim^B_{BS,\mathrm{eff}}(Z,T,u)$, and every oracle
$\geq_T B$ satisfies the same identity. Joining with an admissible
oracle (Remark~\ref{rem:admissible-cone}), we may take $B$
admissible, and the same holds for every oracle above $B$. The
minimum is therefore attained at $B$, on the upper cone above it.
\end{proof}
\section{The Bowen pressure equation for BS dimension and dimension theory for nonuniformly expanding systems}\label{sec:bowen}
\subsection{The pointwise Bowen pressure equation}\label{sec:bowen-equation}
The Bowen pressure equation, originating in Bowen's work on the dimension of quasicircles\cite{bowen1979hausdorff},
is one of the founding results of the dimension theory of dynamical systems. Barreira and Schmeling \cite{Barreira2000Sets} introduced the BS dimension, recasting the solution of the equation as a dimension quantity in its own right; Climenhaga \cite{CLIMENHAGA2014The} later extended the equation to a generalized BS dimension under a global covering condition.\\
In this section we establish the Bowen pressure equation for the generalized BS dimension defined above, removing the global condition of \cite{CLIMENHAGA2014The}. Rather than arguing through covers as is standard, we prove the equation pointwise and treat the passage to the set level as a demonstration of how point-to-set principles convert between dimension quantities.  \\
The section proceeds as follows: we define admissible scales, prove monotonicity of the pointwise pressure in $s$, establish the pointwise upper and lower bounds, deduce the pointwise pressure equation, and derive the set-level equation from the pointwise one. \\
\begin{definition}[Admissible scale]\label{def:admissible-scale}
Let $x\in X$ with $0<\underline{u}(x)\leq\overline{u}(x)<\infty$. A
scale $\varepsilon>0$ is \emph{admissible for $x$} if
$\Var(u,\varepsilon)<\underline{u}(x)/2$ and there is $N$
such that for all $n\geq N$ and all
$x_i\in B_n(x,T,\varepsilon)$,
\[
\underline{u}(x)-2\Var(u,\varepsilon)
\;\leq\;\frac{S_nu(x_i)}{n}
\;\leq\;\overline{u}(x)+2\Var(u,\varepsilon).
\]
Every $\varepsilon'<\varepsilon$ is then admissible, and every such
$x$ has admissible scales by uniform continuity of $u$. Note that
admissibility forces $\underline{u}(x)-2\Var(u,\varepsilon)>0$.

Furthermore, for admissible $\varepsilon>0$ and $s\geq 0$, all four
quantities
\[
\underline{P}^A(x,T,-su,\varepsilon),\quad
\underline{P}^A(x,T,-su),\quad
\overline{P}^A(x,T,-su,\varepsilon),\quad
\overline{P}^A(x,T,-su)
\]
are bounded below by
$-s\bigl(\overline{u}(x)+2\Var(u,\varepsilon)\bigr)>-\infty$.
\end{definition}
We first demonstrate monotonicity of the pressure at the pointwise level.
\begin{lemma}[Pointwise monotonicity of the pressure family]
\label{lem:monotone_decreasing}
Suppose $0<\underline{u}(x)\leq\overline{u}(x)<\infty$. Then for
every admissible $\varepsilon > 0$ and $s < s'$, we have respectively
\begin{align}
  \underline{P}^A(x,T,-s'u,\varepsilon)
  &\leq \underline{P}^A(x,T,-su,\varepsilon)
  - (s'-s)\bigl(\underline{u}(x)-2\Var(u,\varepsilon)\bigr),
  \label{eq:mono-lower}\\
  \overline{P}^A(x,T,-s'u,\varepsilon)
  &\leq \overline{P}^A(x,T,-su,\varepsilon)
  - (s'-s)\bigl(\underline{u}(x)-2\Var(u,\varepsilon)\bigr),
  \label{eq:mono-upper}
\end{align}
and at scale $0$,
\begin{align}
  \underline{P}^A(x,T,-s'u)
  &\leq \underline{P}^A(x,T,-su) - (s'-s)\,\underline{u}(x),
  \label{eq:mono-lower-scale0}\\
  \overline{P}^A(x,T,-s'u)
  &\leq \overline{P}^A(x,T,-su) - (s'-s)\,\underline{u}(x).
  \label{eq:mono-upper-scale0}
\end{align}
In particular, since admissibility gives
$\Var(u,\varepsilon)<\underline{u}(x)/2$, whenever the
right-hand quantities are finite the maps
$s\mapsto\underline{P}^A(x,T,-su,\varepsilon)$ and
$s\mapsto\overline{P}^A(x,T,-su,\varepsilon)$ are strictly
decreasing, with slope at most
$-\underline{u}(x)/2$; likewise at scale $0$ with slope at most
$-\underline{u}(x)$.
\end{lemma}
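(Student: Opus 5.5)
The plan is to compare the two pressures realizer by realizer, at fixed $n$, and only then pass to the $\liminf$/$\limsup$ and to the limit $\varepsilon\to 0$.

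\textbf{Step 1: fixed $n$.} Fix an admissible $\varepsilon$ and let $N$ be as in Definition~\ref{def:admissible-scale}. For every $n\geq N$ and every ideal point $x_i\in B_n(x,T,\varepsilon)$ we have $S_nu(x_i)\geq n(\underline{u}(x)-2\Var(u,\varepsilon))$. Writing $c_\varepsilon=\underline{u}(x)-2\Var(u,\varepsilon)$, this gives
\[
\frac{K^A(i)-s'S_nu(x_i)}{n}
=\frac{K^A(i)-sS_nu(x_i)}{n}-(s'-s)\frac{S_nu(x_i)}{n}
\leq\frac{K^A(i)-sS_nu(x_i)}{n}-(s'-s)c_\varepsilon .
\]

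\textbf{Step 2: minima and limits.} The bound in Step 1 is uniform over the realizers in the ball. Taking the minimum over $x_i\in B_n(x,T,\varepsilon)$ on both sides therefore preserves it, since the additive constant $-(s'-s)c_\varepsilon$ does not depend on $i$. The resulting inequality holds for all $n\geq N$, so it survives $\liminf_n$ and $\limsup_n$, because adding a constant commutes with both. This yields \eqref{eq:mono-lower} and \eqref{eq:mono-upper}. If the right-hand side is $+\infty$ the statement is vacuous. The left-hand side is bounded below by the remark after Definition~\ref{def:admissible-scale}, so no $\infty-\infty$ issue arises.

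\textbf{Step 3: scale $0$.} Every $\varepsilon'<\varepsilon$ is admissible, so I apply Step 2 along $\varepsilon\to 0$. Then $c_\varepsilon\to\underline{u}(x)$ because $\Var(u,\varepsilon)\to 0$ by uniform continuity. The limits defining $\underline{P}^A(x,T,-su)$ and its relatives exist, being monotone in $\varepsilon$ since Bowen balls shrink. Passing to the limit in the inequality gives \eqref{eq:mono-lower-scale0} and \eqref{eq:mono-upper-scale0}.

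\textbf{Step 4: strict decrease.} Admissibility gives $\Var(u,\varepsilon)<\underline{u}(x)/2$, which makes $c_\varepsilon>0$. Hence the maps in $s$ are strictly decreasing whenever the values are finite. I expect the only delicate point to be the bookkeeping of infinite values and the existence of the $\varepsilon$-limits. The slope constant should be $c_\varepsilon$; I would double-check the stated $-\underline{u}(x)/2$ against it, since admissibility gives exactly $c_\varepsilon>0$.
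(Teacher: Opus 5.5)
Your proposal is correct and is essentially the paper's proof: the same fixed-$n$ factorization using the admissible lower bound $S_nu(x_i)/n\geq\underline{u}(x)-2\Var(u,\varepsilon)$, then the minimum over realizers, $\liminf$/$\limsup$, and $\varepsilon\to 0$ through admissible scales. Your doubt about the slope is justified, and the paper's proof has the same gap: $\Var(u,\varepsilon)<\underline{u}(x)/2$ only makes $\underline{u}(x)-2\Var(u,\varepsilon)$ positive; the stated $-\underline{u}(x)/2$ does follow if you use $S_nu(x_i)\geq S_nu(x)-n\Var(u,\varepsilon)\geq n(\underline{u}(x)-\Var(u,\varepsilon)-\eta)$ for large $n$ and every $\eta>0$, which gives the constant $\underline{u}(x)-\Var(u,\varepsilon)>\underline{u}(x)/2$.
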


\begin{proof}
Let $\varepsilon$ be admissible for $x$ and let $N$ be as in
Definition~\ref{def:admissible-scale}, so that
$S_nu(x_i)/n\geq\underline{u}(x)-2\Var(u,\varepsilon)$ for
all $n\geq N$ and all $x_i\in B_n(x,T,\varepsilon)$. Since
$s'-s>0$, the factorization
\[
  \frac{K^A(i) - s'\,S_n u(x_i)}{n}
  = \frac{K^A(i) - s\,S_n u(x_i)}{n} - (s'-s)\,\frac{S_n u(x_i)}{n}
\]
gives, for every such $i$ and $n\geq N$,
\[
  \frac{K^A(i) - s'\,S_n u(x_i)}{n}
  \;\leq\;
  \frac{K^A(i) - s\,S_n u(x_i)}{n}
  \;-\;(s'-s)\bigl(\underline{u}(x)-2\Var(u,\varepsilon)\bigr).
\]
Taking the minimum over $x_i\in B_n(x,T,\varepsilon)$ (the bound
applies to every realizer, in particular to a minimizer of the
right-hand side) and then $\liminf$ (resp.\ $\limsup$) as
$n\to\infty$ yields \eqref{eq:mono-lower}
(resp.\ \eqref{eq:mono-upper}).

Letting $\varepsilon\to 0$ through admissible scales,
$\Var(u,\varepsilon)\to 0$ by uniform continuity of $u$, and
both sides converge to their scale-$0$ counterparts, giving
\eqref{eq:mono-lower-scale0} and \eqref{eq:mono-upper-scale0}. The
strict-decrease clause follows from
\eqref{eq:mono-lower}--\eqref{eq:mono-upper-scale0} together with
$\Var(u,\varepsilon)<\underline{u}(x)/2$ (admissibility) and
$\underline{u}(x)>0$.
\end{proof}
\begin{lemma}[Negative pressure bounds the dimension]
\label{lem:neg-pressure-upper}
Suppose $0<\underline{u}(x)\leq\overline{u}(x)<\infty$ and let
$\varepsilon>0$ be admissible for $x$. Then
\begin{equation}\label{eq:upper-Bowen}
  \underline{P}^A(x,T,-su,\varepsilon) < 0
  \;\implies\;
  \dim^A_{BS}(x,T,u,\varepsilon)
  \;\leq\; s +
  \frac{\underline{P}^A(x,T,-su,\varepsilon)}
  {\overline{u}(x)+2\Var(u,\varepsilon)},
\end{equation}
\begin{equation}\label{eq:upper-packing}
  \overline{P}^A(x,T,-su,\varepsilon) < 0
  \;\implies\;
  \Dim^A_{BS}(x,T,u,\varepsilon)
  \;\leq\; s +
  \frac{\overline{P}^A(x,T,-su,\varepsilon)}
  {\overline{u}(x)+2\Var(u,\varepsilon)},
\end{equation}
and the same implications hold at scale $0$, with denominator
$\overline{u}(x)$. In particular,
\[
  \underline{P}^A\bigl(x,T,
  -\dim^A_{BS}(x,T,u,\varepsilon)\,u,\;\varepsilon\bigr)\geq 0
  \qquad\text{and}\qquad
  \overline{P}^A\bigl(x,T,
  -\Dim^A_{BS}(x,T,u,\varepsilon)\,u,\;\varepsilon\bigr)\geq 0,
\]
and likewise at scale $0$.
\end{lemma}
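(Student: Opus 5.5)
The plan is to argue directly from the defining $\liminf$/$\limsup$ expressions, using only the admissibility bounds on $S_nu(x_i)/n$ from Definition~\ref{def:admissible-scale}. Fix an admissible $\varepsilon$ and let $N$ be as there. Set $\beta_\varepsilon=\overline{u}(x)+2\Var(u,\varepsilon)$ and $\alpha_\varepsilon=\underline{u}(x)-2\Var(u,\varepsilon)>0$. Then every realizer $x_i\in B_n(x,T,\varepsilon)$ with $n\geq N$ satisfies $0<\alpha_\varepsilon n\leq S_nu(x_i)\leq \beta_\varepsilon n$.

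\textbf{Lower implication \eqref{eq:upper-Bowen}.} Suppose $P:=\underline{P}^A(x,T,-su,\varepsilon)<0$ and fix $\eta>0$ with $P+\eta<0$. By the definition of $\liminf$ there are infinitely many $n\geq N$ with realizers $x_{i_n}\in B_n(x,T,\varepsilon)$ satisfying
\[
K^A(i_n)-s\,S_nu(x_{i_n})\leq (P+\eta)n .
\]
Divide by $S_nu(x_{i_n})>0$ to get
\[
\frac{K^A(i_n)}{S_nu(x_{i_n})}\leq s+\frac{(P+\eta)n}{S_nu(x_{i_n})}.
\]
Since $P+\eta<0$ and $S_nu(x_{i_n})\leq\beta_\varepsilon n$, we have $n/S_nu(x_{i_n})\geq 1/\beta_\varepsilon$. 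The sign flip then gives $(P+\eta)n/S_nu(x_{i_n})\leq (P+\eta)/\beta_\varepsilon$. The minimum in the definition of $\dim^A_{BS}(x,T,u,\varepsilon)$ is at most the value at this realizer, so along these $n$ the BS ratio is at most $s+(P+\eta)/\beta_\varepsilon$. Taking $\liminf$ and then $\eta\to0$ yields \eqref{eq:upper-Bowen}.

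\textbf{Upper implication \eqref{eq:upper-packing}.} The argument is identical. The only change is that $\limsup<0$ supplies such realizers for \emph{all} sufficiently large $n$, so the bound passes to the $\limsup$ defining $\Dim^A_{BS}$.

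\textbf{Scale $0$.} If $\underline{P}^A(x,T,-su)<0$, then since the scale quantities converge as $\varepsilon\to0$, we have $\underline{P}^A(x,T,-su,\varepsilon)<0$ for all small admissible $\varepsilon$. Apply the scale-$\varepsilon$ inequality and let $\varepsilon\to0$. Both sides converge: $\beta_\varepsilon\to\overline{u}(x)$, the pressures converge, and $\dim^A_{BS}(x,T,u,\varepsilon)\to\dim^A_{BS}(x,T,u)$. This gives the stated bound with denominator $\overline{u}(x)$. The one point to check is that these limits exist in the appropriate monotone sense, which holds by the definitions. The $\limsup$ version is the same.

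\textbf{The ``in particular'' clause.} Let $d=\dim^A_{BS}(x,T,u,\varepsilon)$, and suppose $d$ is finite and $\underline{P}^A(x,T,-du,\varepsilon)<0$. Then \eqref{eq:upper-Bowen} at $s=d$ gives $d\leq d+(\text{negative})/\beta_\varepsilon<d$, a contradiction. The case $d=\infty$ needs separate handling or the convention that the statement is vacuous there. Here we note that $\underline{P}^A(x,T,0\cdot u,\varepsilon)\geq0$ trivially, since $K^A\geq0$. The other cases are analogous.

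I expect the main obstacle to be this boundary bookkeeping rather than the core estimate. Specifically: checking that the pressures are finite so that subtracting $\eta$ makes sense (here $P\geq -s\beta_\varepsilon$ by the remark after Definition~\ref{def:admissible-scale}), and justifying the passage to scale $0$ when the pressure is only negative in the limit.
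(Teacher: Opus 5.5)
Your proposal is correct and follows essentially the same route as the paper. You pick near-minimizing realizers for the pressure and divide by $S_nu(x_i)\in[\alpha_\varepsilon n,\beta_\varepsilon n]$, so the negative sign turns the upper bound $\beta_\varepsilon$ into the stated estimate. You then pass to scale $0$ by letting $\varepsilon\to0$, and get the ``in particular'' clause by substituting $s=\dim^A_{BS}(x,T,u,\varepsilon)$. Your $\eta$-slack replaces the paper's exact subsequence limit. Your observation that $\overline{P}^A(x,T,-su,\varepsilon)<0$ forces negative minima at all large $n$ is exactly what the $\limsup$ case needs; the paper phrases that case as an argument ``along a subsequence'', which on its own would only control a $\liminf$.
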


\begin{proof}
Let $\varepsilon>0$ be admissible for $x$, so that for all
$x_i \in B_n(x,T,\varepsilon)$ and $n$ sufficiently large,
\[
  0<\underline{u}(x)-2\Var(u,\varepsilon)
  \leq\frac{S_n u(x_i)}{n}
  \leq \overline{u}(x)+2\Var(u,\varepsilon)
\]
(Definition~\ref{def:admissible-scale}). Suppose
$\underline{P}^A(x,T,-su,\varepsilon)< 0$. Then there is a
subsequence $n_k\to\infty$ so that
\[\underline{P}^A(x,T,-su,\varepsilon)
=\lim\limits_{k\to\infty}\min\limits_{x_i \in B_{n_k}(x,T,\varepsilon)}
\frac{K^A(i) - s\,S_{n_k} u(x_i)}{n_k}\]
and for all $k$
\[\min\limits_{x_i \in B_{n_k}(x,T,\varepsilon)}
\frac{K^A(i) - s\,S_{n_k} u(x_i)}{n_k}<0.\]
Using the factorization
\[
  \frac{K^A(i) - s\,S_{n_k} u(x_i)}{n_k}
  = \left(\frac{K^A(i)}{S_{n_k} u(x_i)} - s\right)
  \frac{S_{n_k} u(x_i)}{n_k}
\]
and the two-sided bound above (the minimum on the left is negative, so its minimizer $x_i$ has negative bracket, and dividing by
$S_{n_k}u(x_i)/n_k\leq\overline{u}(x)+2\Var(u,\varepsilon)$
weakens a negative quantity), we get
\[\min\limits_{x_i \in B_{n_k}(x,T,\varepsilon)}
\left(\frac{K^A(i)}{S_{n_k} u(x_i)} - s\right)
<\frac{1}{\overline{u}(x)+2\Var(u,\varepsilon)}
\min\limits_{x_i \in B_{n_k}(x,T,\varepsilon)}
\frac{K^A(i) - s\,S_{n_k} u(x_i)}{n_k}<0.\]
Since the $\liminf$ over all $n$ is at most the limit along the
subsequence $n_k$,
\[\dim_{BS}^A(x,T,u,\varepsilon)
=\liminf\limits_{n\to\infty}\min\limits_{x_i \in B_n(x,T,\varepsilon)}
\frac{K^A(i)}{S_n u(x_i)}
\leq s+\frac{\underline{P}^A(x,T,-su,\varepsilon)}
{\overline{u}(x)+2\Var(u,\varepsilon)}<s,\]
which is the first implication. For the ``in particular'' clause,
suppose
$\underline{P}^A(x,T,-su,\varepsilon)<0$ at
$s=\dim_{BS}^A(x,T,u,\varepsilon)$. Then
\[\dim_{BS}^A(x,T,u,\varepsilon)
\leq \dim_{BS}^A(x,T,u,\varepsilon)
+\frac{\underline{P}^A(x,T,-su,\varepsilon)}
{\overline{u}(x)+2\Var(u,\varepsilon)}
<\dim_{BS}^A(x,T,u,\varepsilon),\]
a contradiction. The $\limsup$ statements follow by the same
argument along a subsequence realizing
$\overline{P}^A(x,T,-su,\varepsilon)$, and the scale-$0$ statements
by letting $\varepsilon\to 0$ through admissible scales, with
$\Var(u,\varepsilon)\to 0$.
\end{proof}

\begin{lemma}[Positive pressure bounds the dimension from below]
\label{lem:pos-pressure-lower}
Suppose $0<\underline{u}(x)\leq\overline{u}(x)<\infty$ and let
$\varepsilon>0$ be admissible for $x$. Then
\[
  \underline{P}^A(x,T,-su,\varepsilon) > 0
  \;\implies\;
  \dim^A_{BS}(x,T,u,\varepsilon)
  \;\geq\; s+
  \frac{\underline{P}^A(x,T,-su,\varepsilon)}
  {\overline{u}(x)+2\Var(u,\varepsilon)},
\]
\[
  \overline{P}^A(x,T,-su,\varepsilon) > 0
  \;\implies\;
  \Dim^A_{BS}(x,T,u,\varepsilon)
  \;\geq\; s+
  \frac{\overline{P}^A(x,T,-su,\varepsilon)}
  {\overline{u}(x)+2\Var(u,\varepsilon)},
\]
and the same implications hold at scale $0$, with denominator
$\overline{u}(x)$. In particular,
\[
  \underline{P}^A\bigl(x,T,
  -\dim^A_{BS}(x,T,u,\varepsilon)\,u,\;\varepsilon\bigr)\leq 0
  \qquad\text{and}\qquad
  \overline{P}^A\bigl(x,T,
  -\Dim^A_{BS}(x,T,u,\varepsilon)\,u,\;\varepsilon\bigr)\leq 0,
\]
and likewise at scale $0$.
\end{lemma}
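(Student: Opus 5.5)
The plan is to run the argument of Lemma~\ref{lem:neg-pressure-upper} in reverse, using the same factorization and the same two-sided bound coming from admissibility. The new point is that a \emph{positive} quantity divided by the \emph{upper} bound $\overline{u}(x)+2\Var(u,\varepsilon)$ for $S_nu(x_i)/n$ can only become smaller. That is exactly the direction needed for a lower bound on the BS ratio.

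Fix $\varepsilon$ admissible for $x$ and $N$ as in Definition~\ref{def:admissible-scale}. Then for $n\geq N$ and every realizer $x_i\in B_n(x,T,\varepsilon)$ we have
\[
0<\underline{u}(x)-2\Var(u,\varepsilon)\leq S_nu(x_i)/n\leq \overline{u}(x)+2\Var(u,\varepsilon).
\]
Write $P=\underline{P}^A(x,T,-su,\varepsilon)>0$ and $D=\overline{u}(x)+2\Var(u,\varepsilon)$.

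\textbf{Lower quantity.} Take any $0<\eta<P$; if $P=\infty$, replace $P-\eta$ by an arbitrary $M>0$. Since $P$ is a $\liminf$, there is $N'\geq N$ such that for \emph{all} $n\geq N'$,
\[
\min_{x_i\in B_n(x,T,\varepsilon)}\frac{K^A(i)-sS_nu(x_i)}{n}>P-\eta>0 .
\]
The minimum exceeds $P-\eta$, so every realizer $x_i$ satisfies $(K^A(i)-sS_nu(x_i))/n>P-\eta>0$. Using the factorization
\[
\frac{K^A(i)}{S_nu(x_i)}-s=\frac{(K^A(i)-sS_nu(x_i))/n}{S_nu(x_i)/n}
\]
with a positive numerator and a denominator in $(0,D]$, we get $\frac{K^A(i)}{S_nu(x_i)}\geq s+\frac{P-\eta}{D}$ for every realizer. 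Taking the minimum over realizers and then $\liminf_n$ gives
\[
\dim^A_{BS}(x,T,u,\varepsilon)\geq s+\frac{P-\eta}{D}.
\]
Letting $\eta\to 0$ (or $M\to\infty$) gives the first implication.

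\textbf{Upper quantity.} Here $\overline{P}^A$ is a $\limsup$, so the bound only holds along a subsequence. Choose $n_k\to\infty$ along which
\[
\min_{x_i\in B_{n_k}(x,T,\varepsilon)}\frac{K^A(i)-sS_{n_k}u(x_i)}{n_k}>\overline{P}^A(x,T,-su,\varepsilon)-\eta>0 .
\]
The same per-realizer estimate bounds the BS ratio from below along $n_k$. Since the $\limsup$ over all $n$ dominates the values along $n_k$, this yields the bound for $\Dim^A_{BS}(x,T,u,\varepsilon)$.

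\textbf{Scale $0$.} The pointwise quantities are nondecreasing as $\varepsilon\downarrow 0$ and converge to their scale-$0$ values. So if $\underline{P}^A(x,T,-su)>0$, then $\underline{P}^A(x,T,-su,\varepsilon)>0$ for all sufficiently small admissible $\varepsilon$. Apply the scale-$\varepsilon$ inequality at those scales and pass to the limit, using $\Var(u,\varepsilon)\to 0$; this gives the statement with denominator $\overline{u}(x)$. The upper case is identical.

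\textbf{The ``in particular'' clause.} Argue by contradiction exactly as in Lemma~\ref{lem:neg-pressure-upper}. Set $s=\dim^A_{BS}(x,T,u,\varepsilon)$ and suppose the pressure at $-su$ is positive. The implication then gives $\dim^A_{BS}(x,T,u,\varepsilon)\geq s+(\text{positive})>\dim^A_{BS}(x,T,u,\varepsilon)$, which is absurd. This uses finiteness of $s$; when the dimension is infinite the clause involves the undefined potential $-\infty\cdot u$, and I would note explicitly that only finite values are intended.

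The only delicate step is the \emph{quantifier structure}. For the lower quantity, positivity of a $\liminf$ controls \emph{all} large $n$, which is what a lower bound on another $\liminf$ requires. For the upper quantity, a subsequence suffices. The sign bookkeeping must also be checked: the numerator must be positive for every realizer, not just for the minimizer. It is, because it is bounded below by the minimum.
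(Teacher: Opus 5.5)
Your proposal is correct and takes essentially the same route as the paper: the factorization $\frac{K^A(i)-sS_nu(x_i)}{n}=\bigl(\frac{K^A(i)}{S_nu(x_i)}-s\bigr)\frac{S_nu(x_i)}{n}$ together with the admissible-scale bound $S_nu(x_i)/n\leq\overline{u}(x)+2\Var(u,\varepsilon)$, all large $n$ for the $\liminf$ and a subsequence for the $\limsup$, a limit through admissible scales for scale $0$, and self-substitution for the ``in particular'' clause. The differences are cosmetic. You argue per realizer with an $\eta$-slack and treat infinite pressure and infinite dimension explicitly, whereas the paper passes to the $\liminf$/$\limsup$ directly in the per-$n$ inequality $\min\frac{K^A(i)-sS_nu(x_i)}{n}\leq\bigl(\overline{u}(x)+2\Var(u,\varepsilon)\bigr)\min\bigl(\frac{K^A(i)}{S_nu(x_i)}-s\bigr)$.
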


\begin{proof}
We do the $\limsup$ inequality first, as it requires choosing a
subsequence. Suppose $\overline{P}^A(x,T,-su,\varepsilon)>0$ and take
a subsequence $n_k\uparrow\infty$ so that
$\min\limits_{x_i \in B_{n_k}(x,T,\varepsilon)}
\frac{K^A(i) - s\,S_{n_k} u(x_i)}{n_k}>0$ for all $k$ and
\[\lim\limits_{k\to\infty}\min\limits_{x_i \in B_{n_k}(x,T,\varepsilon)}
\frac{K^A(i) - s\,S_{n_k} u(x_i)}{n_k}
=\overline{P}^A(x,T,-su,\varepsilon).\]
Fix $k$ large enough that the admissible-scale bounds
(Definition~\ref{def:admissible-scale}) hold at $n_k$. Since each
term of the minimum is the product
$\bigl(\frac{K^A(i)}{S_{n_k}u(x_i)}-s\bigr)\frac{S_{n_k}u(x_i)}{n_k}$
with positive second factor, and the minimum is positive, every
bracket is positive; evaluating at a bracket-minimizing $i$ and
using
$\frac{S_{n_k}u(x_i)}{n_k}\leq\overline{u}(x)
+2\Var(u,\varepsilon)$,
\[
  \min\limits_{x_i \in B_{n_k}(x,T,\varepsilon)}
  \frac{K^A(i) - s\,S_{n_k} u(x_i)}{n_k}
  \;\leq\;
  \bigl(\overline{u}(x)+2\Var(u,\varepsilon)\bigr)
  \min\limits_{x_i \in B_{n_k}(x,T,\varepsilon)}
  \left(\frac{K^A(i)}{S_{n_k} u(x_i)} - s\right).
\]
Taking $k\to\infty$, with $\limsup$ on the right,
\[\overline{P}^A(x,T,-su,\varepsilon)
\leq \bigl(\overline{u}(x)+2\Var(u,\varepsilon)\bigr)
\left(\limsup\limits_{n\to\infty}
\min\limits_{x_i \in B_n(x,T,\varepsilon)}
\frac{K^A(i)}{S_{n} u(x_i)}-s\right)
=\bigl(\overline{u}(x)+2\Var(u,\varepsilon)\bigr)
\bigl(\Dim^A_{BS}(x,T,u,\varepsilon)-s\bigr).\]
The $\liminf$ case is easier: if
$\underline{P}^A(x,T,-su,\varepsilon)>0$ then
$\min\limits_{x_i \in B_n(x,T,\varepsilon)}
\frac{K^A(i) - s\,S_{n} u(x_i)}{n}>0$ for all $n$ large enough, so
the same per-$n$ inequality holds at every large $n$, and taking
$\liminf$ on both sides gives
\[\underline{P}^A(x,T,-su,\varepsilon)
\leq \bigl(\overline{u}(x)+2\Var(u,\varepsilon)\bigr)
\bigl(\dim^A_{BS}(x,T,u,\varepsilon)-s\bigr).\]
Rearranging both inequalities yields the two implications. The
``in particular'' clause follows by substituting
$s=\dim^A_{BS}(x,T,u,\varepsilon)$
(resp.\ $s=\Dim^A_{BS}(x,T,u,\varepsilon)$): a positive pressure
there would force the dimension to exceed itself. The scale-$0$
statements follow by letting $\varepsilon\to 0$ through admissible
scales, with $\Var(u,\varepsilon)\to 0$.
\end{proof}

\begin{theorem}[Pointwise Bowen pressure equation]\label{thm:pointwise-bowen}
    Let $x \in X$ with $0 < \underline{u}(x) \leq \overline{u}(x) < \infty$, let $A$ be any oracle, and suppose $\varepsilon > 0$ is admissible for $x$. Then
    \begin{itemize}
        \item If $\underline{K}^A(x,T,\varepsilon)<\infty$ then $\dim^A_{\mathrm{BS}}(x,T,u,\varepsilon)$ is the unique solution to $\underline{P}^A(x,T,-su,\varepsilon)=0$;
        \item If $\overline{K}^A(x,T,\varepsilon)<\infty$ then $\Dim^A_{\mathrm{BS}}(x,T,u,\varepsilon)$ is the unique solution to $\overline{P}^A(x,T,-su,\varepsilon)=0$;
        \item If $\underline{K}^A(x,T)<\infty$ then $\dim^A_{\mathrm{BS}}(x,T,u)$ is the unique solution to $\underline{P}^A(x,T,-su)=0$;
        \item If $\overline{K}^A(x,T)<\infty$ then $\Dim^A_{\mathrm{BS}}(x,T,u)$ is the unique solution to $\overline{P}^A(x,T,-su)=0$.
    \end{itemize}
\end{theorem}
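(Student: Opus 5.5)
The plan is to combine the three preceding lemmas. The only new ingredient is finiteness, which the hypothesis on orbit complexity supplies. Fix an admissible scale $\varepsilon$ and first treat the scale-$\varepsilon$ statements; the scale-$0$ statements follow the same pattern using the scale-$0$ versions of Lemmas~\ref{lem:monotone_decreasing}, \ref{lem:neg-pressure-upper} and \ref{lem:pos-pressure-lower}.

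\emph{Step 1: finiteness.} If $\underline{K}^A(x,T,\varepsilon)<\infty$, then for every $s\geq 0$ the pressure $\underline{P}^A(x,T,-su,\varepsilon)$ is finite. The lower bound $-s(\overline{u}(x)+2\Var(u,\varepsilon))$ is recorded in Definition~\ref{def:admissible-scale}. For the upper bound, let $i$ minimize $K^A(i)$ over $B_n(x,T,\varepsilon)$. Then
\[
\frac{K^A(i)-sS_nu(x_i)}{n}\leq \frac{K^A_n(x,T,\varepsilon)}{n}-s\bigl(\underline{u}(x)-2\Var(u,\varepsilon)\bigr)
\]
for large $n$, and taking $\liminf$ bounds the pressure by $\underline{K}^A(x,T,\varepsilon)<\infty$. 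The same argument with $\limsup$ handles $\overline{P}^A$ under $\overline{K}^A(x,T,\varepsilon)<\infty$. At $s=0$ the pressure equals $\underline{K}^A(x,T,\varepsilon)\ge 0$.

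\emph{Step 2: finiteness of the dimension.} We also need $d:=\dim^A_{BS}(x,T,u,\varepsilon)<\infty$. This holds because $K^A_n/S_nu(x_i)\leq K^A_n/(n(\underline{u}(x)-2\Var(u,\varepsilon)))$, so
\[
d\leq \underline{K}^A(x,T,\varepsilon)/(\underline{u}(x)-2\Var(u,\varepsilon)).
\]
With $d$ finite and nonnegative, substitute $s=d$. Lemma~\ref{lem:neg-pressure-upper} gives $\underline{P}^A(x,T,-du,\varepsilon)\ge 0$, and Lemma~\ref{lem:pos-pressure-lower} gives $\le 0$, so $d$ is a solution. Uniqueness follows from Lemma~\ref{lem:monotone_decreasing}: by Step 1 all pressures are finite, so $s\mapsto \underline{P}^A(x,T,-su,\varepsilon)$ is strictly decreasing and has at most one zero.

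\emph{Step 3: scale $0$.} This is the main obstacle: the passage $\varepsilon\to0$. Here $\underline{K}^A(x,T)<\infty$ gives finiteness of $\underline{K}^A(x,T,\varepsilon)$ for every $\varepsilon$ (the quantity is monotone in $\varepsilon$), so Steps 1--2 apply at every admissible scale. The scale-$0$ quantities are monotone limits, and the bounds in Step 1 are uniform as $\varepsilon\to0$. Hence $\underline{P}^A(x,T,-su)$ is finite for $s\ge0$, and $\dim^A_{BS}(x,T,u)\leq \underline{K}^A(x,T)/\underline{u}(x)<\infty$.

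I would then invoke the scale-$0$ clauses of Lemmas~\ref{lem:neg-pressure-upper} and~\ref{lem:pos-pressure-lower} at $s=\dim^A_{BS}(x,T,u)$, and the strict decrease \eqref{eq:mono-lower-scale0}, exactly as before. To avoid relying on interchange of limits, the more robust route is to argue directly by contradiction with those scale-$0$ implications. If $\underline{P}^A(x,T,-su)<0$ at $s=\dim^A_{BS}(x,T,u)$, the implication forces $\dim^A_{BS}(x,T,u)<s$, a contradiction; symmetrically for $>0$. The upper/$\limsup$ versions are identical with $\overline{K}^A$, $\overline{P}^A$ and $\Dim^A_{BS}$.
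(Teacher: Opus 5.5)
Your proposal is correct and follows the paper's proof essentially step for step. Finiteness of the pressures and of the BS dimension comes from the orbit-complexity hypothesis, existence from the ``in particular'' clauses of Lemmas~\ref{lem:neg-pressure-upper} and~\ref{lem:pos-pressure-lower} (at scale $\varepsilon$ and at scale $0$), and uniqueness from the strict decrease in Lemma~\ref{lem:monotone_decreasing}; the only cosmetic difference is that you use a one-sided bound and spell out $\dim^A_{BS}(x,T,u,\varepsilon)\leq\underline{K}^A(x,T,\varepsilon)/(\underline{u}(x)-2\Var(u,\varepsilon))$, where the paper records the two-sided sandwich \eqref{eq:pressure-complexity-sandwich} and only says the dimension bound is similar.
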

\begin{remark}\label{rem:infinite-complexity}
    The hypothesis $\underline{K}^A(x,T,\varepsilon) < \infty,\ \overline{K}^A(x,T,\varepsilon) < \infty,\ \underline{K}^A(x,T) < \infty,\ \overline{K}^A(x,T) < \infty$ excludes only a trivial case. If, say, $\underline{K}^A(x,T) = \infty$, then $\dim^A_{\mathrm{BS}}(x,T,u) =\infty$, while by \eqref{eq:pressure-complexity-sandwich} every pressure $\underline{P}^A(x,T,-su)$ is infinite, so no root exists. Bowen's equation survives only in its critical-value form $\dim^A_{\mathrm{BS}}(x,T,u) = \inf\{s : \underline{P}^A(x,T,-su) \leq 0\} = \inf \emptyset = \infty$. Note that in the compact setting we always have $\underline{K}^A(x,T,\varepsilon)<\infty,\ \overline{K}^A(x,T,\varepsilon) < \infty$ by bounding by $\log(K)$ where $K$ is the size of any $\varepsilon$-net, and as such the finite scale pointwise Bowen pressure equation is always valid in the compact setting. 
\end{remark}
\begin{proof}
    We first observe that assuming finite of the respective orbit complexity the associated pressures are finite for every $s \geq 0$. For example in the case of $\underline{K}^A(x,T,\varepsilon)<\infty$ factoring the potential sum as in Lemma \ref{lem:monotone_decreasing}, for any admissible $\varepsilon$ and any $s \geq 0$,
    \begin{equation}\label{eq:pressure-complexity-sandwich}
        \big|\,\underline{P}^A(x,T,-su,\varepsilon) - \underline{K}^A(x,T,\varepsilon)\,\big| \leq s\big(\overline{u}(x) + 2\Var(u,\varepsilon)\big),
    \end{equation}
    and likewise for the other quantities. A similar inequality shows the associated BS dimensions are finite. 
    
    \emph{Existence.} By the ``in particular'' clauses of Lemmas \ref{lem:pos-pressure-lower} and \ref{lem:neg-pressure-upper},
    \[
        \underline{P}^A\big(x,T,-\dim^A_{\mathrm{BS}}(x,T,u,\varepsilon)\,u,\varepsilon\big)=0,
        \qquad
        \overline{P}^A\big(x,T,-\Dim^A_{\mathrm{BS}}(x,T,u,\varepsilon)\,u,\varepsilon\big)=0,
    \]
    and at scale $0$,
    \[
        \underline{P}^A\big(x,T,-\dim^A_{\mathrm{BS}}(x,T,u)\,u\big)=0,
        \qquad
        \overline{P}^A\big(x,T,-\Dim^A_{\mathrm{BS}}(x,T,u)\,u\big)=0.
    \]

    \emph{Uniqueness.} By Lemma \ref{lem:monotone_decreasing} the pressures are strictly monotone decreasing in $s$, with slope at most $-\underline{u}(x) + 2\Var(u,\varepsilon) < 0$ for admissible $\varepsilon$, so each has at most one root.
\end{proof}
The general situation is the following.
\begin{corollary}[Critical-value characterizations]
\label{cor:crit-char}
Suppose $0<\underline{u}(x)\leq\overline{u}(x)<\infty$ and let
$\varepsilon>0$ be admissible for $x$. Then
\[
  \dim^A_{BS}(x,T,u,\varepsilon)
  = \sup\{s \mid \underline{P}^A(x,T,-su,\varepsilon) > 0\}
  = \inf\{s \mid \underline{P}^A(x,T,-su,\varepsilon) \leq 0\},
\]
\[
  \Dim^A_{BS}(x,T,u,\varepsilon)
  = \sup\{s \mid \overline{P}^A(x,T,-su,\varepsilon) > 0\}
  = \inf\{s \mid \overline{P}^A(x,T,-su,\varepsilon) \leq 0\},
\]
and the same identities hold at scale $0$.
\end{corollary}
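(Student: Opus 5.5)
The plan is to read both identities off Lemmas~\ref{lem:neg-pressure-upper} and~\ref{lem:pos-pressure-lower}, using only the sign of the pressure on either side of the dimension. No finiteness hypothesis is needed here; the point is that the implications in those lemmas were stated without one. I treat the lower case at a fixed admissible $\varepsilon$ and write $d=\dim^A_{BS}(x,T,u,\varepsilon)$ and $f(s)=\underline{P}^A(x,T,-su,\varepsilon)$. The upper case, with $\Dim^A_{BS}$ and $\overline{P}^A$, is word-for-word the same. The scale-$0$ case follows by repeating the argument with the scale-$0$ versions of the two lemmas, which are already stated there with denominator $\overline{u}(x)$.

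First I show $\{s\mid f(s)>0\}\subset(-\infty,d)$. If $f(s)>0$ and $f(s)<\infty$, Lemma~\ref{lem:pos-pressure-lower} gives $d\geq s+f(s)/(\overline{u}(x)+2\Var(u,\varepsilon))>s$. If $f(s)=+\infty$, I would argue directly that $d=\infty$. I plan to use the same per-$n$ factorization inequality as in that lemma, $\min_i(K^A(i)-sS_nu(x_i))/n\leq(\overline{u}(x)+2\Var(u,\varepsilon))\min_i(K^A(i)/S_nu(x_i)-s)$. It is valid once the left side is positive, and taking $\liminf$ forces the right side to be infinite, so $d=\infty>s$.

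Second I show $\{s\mid f(s)\leq 0\}\subset[d,\infty)$. Suppose $s<d$ and $f(s)\leq 0$. If $f(s)<0$, Lemma~\ref{lem:neg-pressure-upper} gives $d<s$, a contradiction. If $f(s)=0$, pick $s<s'<d$. Lemma~\ref{lem:monotone_decreasing} gives $f(s')\leq f(s)-(s'-s)(\underline{u}(x)-2\Var(u,\varepsilon))<0$, since $f(s)$ is finite. Then Lemma~\ref{lem:neg-pressure-upper} gives $d<s'$, again a contradiction. Hence $f>0$ on $(-\infty,d)$, i.e.\ every $s<d$ lies in the first set. Together with the first step this gives $\sup\{s\mid f(s)>0\}=d$ (the sup is $-\infty$ only in the degenerate case below).

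For the infimum I need $s\geq d$ to imply $f(s)\leq 0$ whenever $d<\infty$. For $s>d$, $f(s)>0$ would contradict the first step. At $s=d$ I use the ``in particular'' clause of Lemma~\ref{lem:pos-pressure-lower}. So $\{s\mid f(s)\leq 0\}=[d,\infty)$ and its infimum is $d$. If $d=\infty$ the set is empty and $\inf\emptyset=\infty$, consistent with Remark~\ref{rem:infinite-complexity}.

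The main obstacle is the bookkeeping of infinite values, since Lemma~\ref{lem:monotone_decreasing} is stated for finite pressures. I handle it as above: pass from $f(s)=+\infty$ directly to $d=\infty$ via the factorization, and invoke strict monotonicity only at points where $f$ is finite. The range $s<0$ also needs a check. There $-su\geq0$ for large $n$ in the Bowen ball, so $f(s)\geq0$ trivially, and strict decrease still applies. Since $d\geq0$, nothing changes.
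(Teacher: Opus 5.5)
Your proof is correct, but it is organized differently from the paper's. The paper splits on the orbit complexity. When $\underline{K}^A(x,T,\varepsilon)<\infty$, it invokes Theorem~\ref{thm:pointwise-bowen}: $d$ is the unique zero of the strictly decreasing map $f$, so $f>0$ below $d$ and $f<0$ above. When $\underline{K}^A(x,T,\varepsilon)=\infty$, it uses Remark~\ref{rem:infinite-complexity} to get $d=\infty$ and $f\equiv\infty$, so the first set is $\mathbb{R}$ and the second is empty. You avoid this global dichotomy and chase signs one value of $s$ at a time. The one-sided implications of Lemmas~\ref{lem:neg-pressure-upper} and~\ref{lem:pos-pressure-lower} do most of the work, and they indeed carry no finiteness hypothesis. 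You use strict monotonicity from Lemma~\ref{lem:monotone_decreasing} only at a point where $f(s)=0$, and you catch $f(s)=+\infty$ directly with the factorization. In effect you re-derive the existence half of Theorem~\ref{thm:pointwise-bowen} inline and never need the sandwich \eqref{eq:pressure-complexity-sandwich}. The paper's version is shorter because it reuses that theorem and the fact that $f$ is either finite everywhere or infinite everywhere.

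Three small points:
\begin{enumerate}
\item Your parenthetical saying the supremum is $-\infty$ in the degenerate case is wrong. Since $d\geq 0$, the set $(-\infty,d)$ is nonempty, and when $d=\infty$ the supremum is $+\infty$.
\item For $s<0$, what is positive is $-s\,S_nu(x_i)\geq |s|\,n\bigl(\underline{u}(x)-2\Var(u,\varepsilon)\bigr)$, not $-su$. This check is superfluous anyway, because none of your steps uses $s\geq 0$.
\item At scale $0$, the case $\underline{P}^A(x,T,-su)=+\infty$ should be handled by letting admissible $\varepsilon\to 0$. The scale-$0$ pressure is a limit of $\liminf$s, so the per-$n$ factorization does not apply to it directly. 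Once you pass through finite scales, $\dim^A_{BS}(x,T,u,\varepsilon)\to\infty$ follows from your finite-scale step.
\end{enumerate}
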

\begin{proof}
Suppose first that the relevant orbit complexity is finite
($\underline{K}^A(x,T,\varepsilon)<\infty$ for the first pair of
identities, and correspondingly in the other three cases). By
Theorem~\ref{thm:pointwise-bowen},
$\dim^A_{BS}(x,T,u,\varepsilon)$ is the unique zero of the map
$s\mapsto\underline{P}^A(x,T,-su,\varepsilon)$, which is strictly
decreasing by Lemma~\ref{lem:monotone_decreasing}. Hence the
pressure is positive for $s<\dim^A_{BS}(x,T,u,\varepsilon)$ and
negative for $s>\dim^A_{BS}(x,T,u,\varepsilon)$, and both
identities follow.

If instead the orbit complexity is infinite then, as in
Remark~\ref{rem:infinite-complexity}, the pointwise BS dimension is
infinite and $\underline{P}^A(x,T,-su,\varepsilon)=\infty$ for
every $s$. The first defining set is then all of $\mathbb{R}$ and
the second is empty, so with the convention
$\inf\emptyset=+\infty$,
\[
\dim^A_{BS}(x,T,u,\varepsilon)
  = \sup\{s \mid \underline{P}^A(x,T,-su,\varepsilon) > 0\}
  = \inf\{s \mid \underline{P}^A(x,T,-su,\varepsilon) \leq 0\}
  = \infty.
\]
The upper and scale-$0$ cases are identical.
\end{proof}
\subsection{Point-to-set principles for the Bowen pressure equation}
\label{subsec:bowen-pts}
Having established the pointwise Bowen pressure equation, we upgrade
it to the standard set-level version via the point-to-set principles
of Section~\ref{sec_point_to_set}. The passage requires monotonicity in $s$
of the set-level quantities, which we obtain from the pointwise
monotonicity of Lemma~\ref{lem:monotone_decreasing} by taking
suprema. Note that strictness survives this passage only under a
uniform lower bound $\underline{u}\geq\alpha>0$ on $Z$. Without it,
near-suprema may be attained at points with $\underline{u}$
arbitrarily small, where the pointwise decrease rate degenerates.

\begin{corollary}[Monotonicity of the supremum]
\label{cor:sup-monotone}
Suppose $0<\underline{u}(x)\leq\overline{u}(x)<\infty$ for all
$x\in Z$, and let $s<s'$. Then
\[
\sup_{x\in Z}\underline{P}^A(x,T,-s'u)
\leq \sup_{x\in Z}\underline{P}^A(x,T,-su),
\qquad
\sup_{x\in Z}\overline{P}^A(x,T,-s'u)
\leq \sup_{x\in Z}\overline{P}^A(x,T,-su).
\]
If moreover $\underline{u}(x)\geq \alpha>0$ for all $x\in Z$ then
\[
\sup_{x\in Z}\underline{P}^A(x,T,-s'u)
\leq \sup_{x\in Z}\underline{P}^A(x,T,-su)-(s'-s)\,\alpha,
\qquad
\sup_{x\in Z}\overline{P}^A(x,T,-s'u)
\leq \sup_{x\in Z}\overline{P}^A(x,T,-su)-(s'-s)\,\alpha.
\]
\end{corollary}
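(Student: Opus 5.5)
This follows by taking suprema over $Z$ in the scale-$0$ inequalities \eqref{eq:mono-lower-scale0} and \eqref{eq:mono-upper-scale0} of Lemma~\ref{lem:monotone_decreasing}. No new estimate is needed; the only care required concerns infinite values and the sign of the correction term.

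\emph{The non-strict inequalities.} Fix $x\in Z$. Since $0<\underline{u}(x)\leq\overline{u}(x)<\infty$, admissible scales exist for $x$ (Definition~\ref{def:admissible-scale}), so Lemma~\ref{lem:monotone_decreasing} applies at scale $0$. It gives
\[
\underline{P}^A(x,T,-s'u)\leq\underline{P}^A(x,T,-su)-(s'-s)\,\underline{u}(x)\leq\underline{P}^A(x,T,-su),
\]
using $s'-s>0$ and $\underline{u}(x)>0$. If $\underline{P}^A(x,T,-su)=+\infty$, the inequality $\underline{P}^A(x,T,-s'u)\leq\underline{P}^A(x,T,-su)$ holds trivially. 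Otherwise the displayed chain applies, reading the first inequality in the extended reals. Then bound the right-hand side by $\sup_{y\in Z}\underline{P}^A(y,T,-su)$ and take the supremum over $x\in Z$ on the left. The $\overline{P}^A$ statement is identical, using \eqref{eq:mono-upper-scale0}.

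\emph{The strict inequalities.} Assume further that $\underline{u}\geq\alpha$ on $Z$. For each $x\in Z$, the same pointwise inequality together with $-(s'-s)\,\underline{u}(x)\leq-(s'-s)\,\alpha$ yields
\[
\underline{P}^A(x,T,-s'u)\leq\sup_{y\in Z}\underline{P}^A(y,T,-su)-(s'-s)\,\alpha.
\]
The right-hand side does not depend on $x$, so taking the supremum over $x$ gives the claim. If the supremum on the right is $+\infty$, the claim is trivial.

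\emph{The main obstacle.} The only delicate point is the bookkeeping with extended real values. The finite-value caveat in Lemma~\ref{lem:monotone_decreasing} concerns only strictness, and the infinite cases are handled above. One should also note that the lower bound $-s(\overline{u}(x)+2\Var(u,\varepsilon))$ from Definition~\ref{def:admissible-scale} ensures the pointwise pressures are never $-\infty$ for $s\geq0$, so no $\infty-\infty$ arises.

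For $s<0$ the same factorization in the proof of Lemma~\ref{lem:monotone_decreasing} still works, since it uses only $s'-s>0$. Alternatively, one can restrict to $s\geq0$ as in the intended application.
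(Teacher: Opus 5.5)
Your proposal is correct and follows the paper's proof. Both take suprema over $Z$ of the scale-$0$ pointwise inequalities from Lemma~\ref{lem:monotone_decreasing}, and both replace $\underline{u}(x)$ by the uniform bound $\alpha$ to get the version with the gap. Your extra bookkeeping for $+\infty$ values and for ruling out $\infty-\infty$ makes explicit what the paper's short argument leaves implicit.
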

\begin{proof}
By \eqref{eq:mono-lower-scale0}--\eqref{eq:mono-upper-scale0}, for
every $x\in Z$,
$\underline{P}^A(x,T,-s'u)\leq
\underline{P}^A(x,T,-su)-(s'-s)\,\underline{u}(x)
\leq\underline{P}^A(x,T,-su)$, and taking suprema over $Z$ gives
the first pair. Under the uniform bound, the middle term is at most
$\underline{P}^A(x,T,-su)-(s'-s)\,\alpha$, and the supremum of a
uniform translate is the translated supremum. The upper case is
identical.
\end{proof}

We now transfer this to the classical pressure.

\begin{lemma}[Monotonicity of the set-level pressure]
\label{lem:set-monotone}
Suppose $0<\underline{u}(x)\leq\overline{u}(x)<\infty$ for all
$x\in Z$, and let $s<s'$. Then
\[
\underline{P}(Z,T,-s'u)\leq \underline{P}(Z,T,-su),
\qquad
\overline{P}(Z,T,-s'u)\leq \overline{P}(Z,T,-su).
\]
If moreover $\underline{h}(Z,T)<\infty$ (respectively $\overline{h}(Z,T)<\infty$) and $\underline{u}(x)\geq \alpha>0$ for all $x\in Z$, then
\[
\underline{P}(Z,T,-s'u)\leq \underline{P}(Z,T,-su)-(s'-s)\,\alpha,
\qquad
\overline{P}(Z,T,-s'u)\leq \overline{P}(Z,T,-su)-(s'-s)\,\alpha.
\]
\end{lemma}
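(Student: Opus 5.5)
The plan is to pass the pointwise monotonicity of Corollary~\ref{cor:sup-monotone} to the set level through the point-to-set principles, Theorem~\ref{thm:pts_lower_pressure} for $\underline{P}$ and Theorem~\ref{thm:pts_upper_pressure} for $\overline{P}$. The key point is that both pressures can be computed at one common oracle. Apply Theorem~\ref{thm:pts_lower_pressure} to the potentials $-su$ and $-s'u$. This gives oracles $B_s$ and $B_{s'}$, each attaining its minimum on the upper cone above it; by Remark~\ref{rem:admissible-cone} we may take each admissible for its potential. Set $B=B_s\oplus B_{s'}$. By Lemma~\ref{lem:join-cone} and the upper-set property, $B$ attains both minima at once:
\[
\underline{P}(Z,T,-su)=\sup_{x\in Z}\underline{P}^B(x,T,-su),
\qquad
\underline{P}(Z,T,-s'u)=\sup_{x\in Z}\underline{P}^B(x,T,-s'u).
\]
The upper case is the same, using Theorem~\ref{thm:pts_upper_pressure}.

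For the non-strict inequality, apply the first half of Corollary~\ref{cor:sup-monotone} at the oracle $B$. It gives $\sup_x\underline{P}^B(x,T,-s'u)\le\sup_x\underline{P}^B(x,T,-su)$, and the displayed identities convert this into the set-level statement. This step needs no finiteness: if the right-hand side is $+\infty$ the inequality holds trivially. If it is $-\infty$, the left-hand side is at most the right-hand side pointwise, so it is $-\infty$ as well.

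For the strict inequality, apply the second half of Corollary~\ref{cor:sup-monotone} at $B$ to get
\[
\sup_x\underline{P}^B(x,T,-s'u)\le\sup_x\underline{P}^B(x,T,-su)-(s'-s)\alpha.
\]
The subtraction of $(s'-s)\alpha$ is meaningful only if the right-hand supremum is finite, since otherwise the bound reads $\infty\le\infty$ and says nothing. This is where the hypothesis $\underline{h}(Z,T)<\infty$ enters, and it is the main obstacle. I would also join $B$ with an oracle attaining Theorem~\ref{thm:pts-entropy}, so that $\sup_x\underline{K}^B(x,T)=\underline{h}(Z,T)<\infty$. The pressure should then be bounded pointwise by entropy. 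Since $-sS_nu(x_i)\le -s\,n(\underline{u}(x)-2\Var(u,\varepsilon))$ for large $n$ on admissible scales and $s\ge0$, we get
\[
\underline{P}^B(x,T,-su)\le\underline{K}^B(x,T)-s\,\underline{u}(x)\le\underline{h}(Z,T)
\]
for $s\ge0$. Hence $\underline{P}(Z,T,-su)\le\underline{h}(Z,T)<\infty$.

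For negative $s$ the bound must use $\overline{u}$ instead. The inequality \eqref{eq:pressure-complexity-sandwich} gives $\underline{P}^B(x,T,-su)\le\underline{K}^B(x,T)+|s|\,\overline{u}(x)$. This is uniformly bounded only if $\overline{u}$ is bounded on $Z$, which the statement does not assume. I expect the lemma to be used only for $s\ge0$, as in the definitions of BS dimension, and I would restrict to that range, or check whether the paper's intended range is $s\geq 0$. If the supremum is $-\infty$, the strict inequality is again trivially true. The remaining case, a finite supremum, is exactly the finite translation argument of Corollary~\ref{cor:sup-monotone}. The upper case repeats this argument with $\overline{K}^B$, $\overline{h}(Z,T)$, and the upper point-to-set principle for entropy, Theorem~\ref{thm:pts_upper_pressure} at $\varphi=0$.
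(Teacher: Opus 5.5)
Your argument is correct, and it proves more than you claim. It differs from the paper's in two respects.

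First, the common oracle is unnecessary. The paper uses only that $\underline{P}(Z,T,-s'u)\leq\sup_{x\in Z}\underline{P}^A(x,T,-s'u)$ for \emph{every} oracle $A$. This is immediate from Theorem~\ref{thm:pts_lower_pressure}, since a minimum over oracles is below each value. The paper chains this with Corollary~\ref{cor:sup-monotone} at the same $A$ and then minimizes over $A$ on the right-hand side alone. Your join $B_s\oplus B_{s'}$ is harmless but buys nothing.

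Second, the paper never uses $\underline{h}(Z,T)<\infty$ to prove the displayed inequalities. The shift by $(s'-s)\alpha$ passes through the supremum over $Z$ and the minimum over oracles in the extended reals. When $\underline{P}(Z,T,-su)=+\infty$ the inequality then holds trivially. You are right that it ``says nothing'' in that case, but saying nothing is not failing. So there is no need to exclude the value $+\infty$, and no reason to restrict the strict inequality to $s\geq 0$. Your own case split ($+\infty$ trivial, $-\infty$ trivial, finite by translation) already proves the statement for all $s<s'$ with no entropy hypothesis.

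Your finiteness argument does have content for a stronger claim. The paper's proof goes on to assert that $s\mapsto\underline{P}(Z,T,-su)$ is injective, and that needs the pressure to be finite. Your bound $\underline{P}(Z,T,-su)\leq\underline{h}(Z,T)$ for $s\geq 0$ supplies this. Your caveat about $s<0$ is also justified there. Take $T=\mathrm{id}$ on $\mathbb{N}$ with the discrete metric, $u(k)=k+1$, and $Z=\mathbb{N}$. All hypotheses hold with $\alpha=1$ and $\underline{h}(Z,T)=0$, yet $\underline{P}(Z,T,-su)=+\infty$ for every $s<0$. This bears on that extra injectivity assertion, not on the displayed inequalities you were asked to prove.
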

\begin{proof}
By the point-to-set principles for lower/upper pressure
(Theorems~\ref{thm:pts_lower_pressure}
and~\ref{thm:pts_upper_pressure} respectively) and
Corollary~\ref{cor:sup-monotone}, for every oracle $A$,
\[
\underline{P}(Z,T,-s'u)
\leq\sup_{x\in Z}\underline{P}^A(x,T,-s'u)
\leq \sup_{x\in Z}\underline{P}^A(x,T,-su),
\]
and taking the minimum over $A$ on the right gives
$\underline{P}(Z,T,-s'u)\leq\underline{P}(Z,T,-su)$. Under the
uniform bound the middle inequality improves by $(s'-s)\alpha$, and
the same minimization gives the second inequality. The upper case is
identical.

For the ``moreover'' clause, strictness gives
$\underline{P}(Z,T,-s'u)<\underline{P}(Z,T,-su)$ whenever $s<s'$, so
$s\mapsto\underline{P}(Z,T,-su)$ is injective; in particular if
$\underline{P}(Z,T,-Du)>0$ at some $D$, then every $s\leq D$ has
$\underline{P}(Z,T,-su)>0$, and the critical parameter exceeds $D$.
\end{proof}
We next express the classical quantities in the Bowen pressure equation as oracle minimizations of their pointwise counterparts.
\begin{lemma}\label{lem:crit-param-min}
Suppose $0<\underline{u}(x)\leq\overline{u}(x)<\infty$ for every $x\in Z$. Then
\begin{equation}\label{eq:crit-sup-lower}
\sup\{s\mid\underline{P}(Z,T,-su)> 0\}=\min\limits_{A\subset \mathbb{N}}\sup\{s\mid \sup\limits_{x\in Z}\underline{P}^A(x,T,-su)> 0\}
\end{equation}
\begin{equation}\label{eq:crit-inf-lower}
\inf\{s\mid \underline{P}(Z,T,-su)\leq 0\}=\min\limits_{A\subset \mathbb{N}}\inf\{s\mid \sup\limits_{x\in Z}\underline{P}^A(x,T,-su)\leq 0\}
\end{equation}
\begin{equation}\label{eq:crit-sup-upper}
\sup\{s\mid\overline{P}(Z,T,-su)> 0\}=\min\limits_{A\subset \mathbb{N}}\sup\{s\mid \sup\limits_{x\in Z}\overline{P}^A(x,T,-su)> 0\}
\end{equation}
\begin{equation}\label{eq:crit-inf-upper}
\inf\{s\mid \overline{P}(Z,T,-su)\leq 0\}=\min\limits_{A\subset \mathbb{N}}\inf\{s\mid \sup\limits_{x\in Z}\overline{P}^A(x,T,-su)\leq 0\}
\end{equation}
\end{lemma}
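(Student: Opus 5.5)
The plan is to fix a single oracle that realizes the pressure point-to-set principles simultaneously for every rational parameter $s$, and then to use monotonicity in $s$ to recover the critical values from their rational values. Write $g(s)=\underline{P}(Z,T,-su)$ and, for each oracle $A$, $f_A(s)=\sup_{x\in Z}\underline{P}^A(x,T,-su)$. The upper-pressure identities \eqref{eq:crit-sup-upper}--\eqref{eq:crit-inf-upper} will follow by the same argument, using $\overline{P}$ and Theorem~\ref{thm:pts_upper_pressure}.

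\emph{Step 1: the inequality for every oracle.} By Theorem~\ref{thm:pts_lower_pressure}, applied to the uniformly continuous potential $-su$, we have $g(s)\le f_A(s)$ for every $A$ in the cone of oracles admissible for $(X,d,T,u)$. Hence $\{s\mid g(s)>0\}\subset\{s\mid f_A(s)>0\}$, and $\{s\mid f_A(s)\le 0\}\subset\{s\mid g(s)\le 0\}$. This gives
\[
\sup\{s\mid g(s)>0\}\le\sup\{s\mid f_A(s)>0\}
\quad\text{and}\quad
\inf\{s\mid g(s)\le 0\}\le\inf\{s\mid f_A(s)\le 0\}.
\]
To remove the admissibility restriction, take an arbitrary $A$ and a fixed admissible $C$. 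By Lemma~\ref{lem:monotone}, $f_{A\oplus C}\le f_A$ pointwise in $s$. Both critical parameters are monotone in the function: a smaller function has a smaller $\sup\{f>0\}$ and a smaller $\inf\{f\le 0\}$. So the right-hand quantity for $A$ dominates the one for $A\oplus C$, which in turn dominates the left-hand side.

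\emph{Step 2: an achieving oracle.} For each rational $q$, Theorem~\ref{thm:pts_lower_pressure} gives an oracle $B_q$ with $f_A(q)=g(q)$ for every $A\ge_T B_q$. Set $B=C\oplus\bigoplus_{q\in\mathbb{Q}}B_q$. Then $f_B(q)=g(q)$ for all rational $q$, and the same holds for every oracle above $B$.

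\emph{Step 3: from rationals to all $s$.} By Corollary~\ref{cor:sup-monotone} and Lemma~\ref{lem:set-monotone}, both $f_B$ and $g$ are nonincreasing in $s$; this is where the hypothesis $0<\underline{u}\le\overline{u}<\infty$ on $Z$ is used. For any nonincreasing $h$:
\begin{itemize}
\item $\sup\{s\mid h(s)>0\}=\sup\{q\in\mathbb{Q}\mid h(q)>0\}$, since $h(s)>0$ forces $h(q)>0$ for every rational $q<s$;
\item $\inf\{s\mid h(s)\le 0\}=\inf\{q\in\mathbb{Q}\mid h(q)\le0\}$, since $h(s)\le 0$ forces $h(q)\le 0$ for every rational $q>s$.
\end{itemize}
Since $f_B$ and $g$ agree on the rationals, their critical values agree. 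Combined with Step~1, the minimum over $A$ is therefore attained at $B$, and on the cone above it.

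The main obstacle is Step~3. The point-to-set principle only equates $f_A$ and $g$ one parameter at a time, with an oracle depending on $s$, while the critical parameter involves uncountably many $s$. Pointwise monotonicity in $s$ is exactly what reduces the problem to the countable rational family that a single join can handle. I also need care with the conventions when the sets are empty or infinite values occur, which occurs when orbit complexity is infinite. These degenerate cases should pass through unchanged, because the rational reduction above is valid for extended-real-valued nonincreasing functions.
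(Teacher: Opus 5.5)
Your proposal is correct and is essentially the paper's proof: the one-sided inequalities come from Theorem~\ref{thm:pts_lower_pressure} for every oracle, you join achieving oracles over a countable dense set of parameters, and you use monotonicity in $s$ to pass from that dense set to the critical values. Your direct reduction to rationals replaces the paper's argument by contradiction, and you rightly note that the $\inf$ identity also uses monotonicity of the set-level pressure (Lemma~\ref{lem:set-monotone}), not only of the pointwise supremum (Corollary~\ref{cor:sup-monotone}).
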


\begin{proof}
We prove the lower identities \eqref{eq:crit-sup-lower}
and~\eqref{eq:crit-inf-lower}; the upper ones follow symmetrically.

For every oracle $A$ we have
$\underline{P}(Z,T,-su)\leq\sup_{x\in Z}\underline{P}^A(x,T,-su)$
(Theorem~\ref{thm:pts_lower_pressure}), whence the inclusions
\[
    \{s\mid \underline{P}(Z,T,-su)>0\}
    \subset
    \{s\mid \sup_{x\in Z}\underline{P}^A(x,T,-su)>0\},
    \qquad
    \{s\mid \sup_{x\in Z}\underline{P}^A(x,T,-su)\leq 0\}
    \subset
    \{s\mid \underline{P}(Z,T,-su)\leq 0\},
\]
and therefore, for every $A$,
\begin{equation}\label{eq:one-sided}
    \sup\{s\mid \underline{P}(Z,T,-su)>0\}
    \leq \sup\{s\mid \sup_{x\in Z}\underline{P}^A(x,T,-su)>0\},
    \qquad
    \inf\{s\mid \underline{P}(Z,T,-su)\leq 0\}
    \leq \inf\{s\mid \sup_{x\in Z}\underline{P}^A(x,T,-su)\leq 0\}.
\end{equation}
This is one direction of both identities; it remains to construct a
single oracle achieving equality in both.

Let $\{s_i\}_{i\in\mathbb{N}}$ be a countable dense subset of
$\mathbb{R}$. By Theorem~\ref{thm:pts_lower_pressure}, for each $i$
there is an oracle $A_i$ with
$\underline{P}(Z,T,-s_i u)
=\sup_{x\in Z}\underline{P}^{A_i}(x,T,-s_i u)$. Let
$A=\bigoplus_i A_i$. For each $i$, monotonicity in the oracle
(Lemma~\ref{lem:monotone}) and the bound above give a two sided bound on the join:
\begin{equation}\label{eq:dense-agree}
    \underline{P}(Z,T,-s_i u)
    \leq\sup_{x\in Z}\underline{P}^{A}(x,T,-s_i u)
    \leq\sup_{x\in Z}\underline{P}^{A_i}(x,T,-s_i u)
    =\underline{P}(Z,T,-s_i u).
\end{equation}
Thus the set-level and $A$-relative quantities agree on the dense
set $\{s_i\}$.

Now suppose equality fails in the sup identity, so that by
\eqref{eq:one-sided} the inequality is strict; by density choose
$s_i$ strictly between the two values. Then
$\underline{P}(Z,T,-s_i u)\leq 0$ while
$\sup_{x\in Z}\underline{P}^A(x,T,-s_i u)>0$, contradicting
\eqref{eq:dense-agree}; here we use that a value $>0$ below the
right-hand supremum forces positivity at $s_i$ itself by monotonicity
in $s$ (Corollary~\ref{cor:sup-monotone}). Hence $A$ achieves equality in the sup
identity, and the same argument with the roles of $>0$ and $\leq 0$
exchanged gives the inf identity. Finally, any $B\geq_T A$
satisfies \eqref{eq:dense-agree} with $A$ replaced by $B$, by the
same upper and lower bound, so the argument applies verbatim to $B$: the minima
are attained on the upper cone above $A$.
\end{proof}
\begin{lemma}[The supremum of dimensions is the critical parameter]
\label{lem:sup-dim-crit}
Suppose $0<\underline{u}(x)\leq\overline{u}(x)<\infty$ for every
$x\in Z$. Then
\[
\sup_{x\in Z}\dim_{BS}^A(x,T,u)
=\sup\{s\mid \sup_{x\in Z}\underline{P}^A(x,T,-su)> 0\}
=\inf\{s\mid \sup_{x\in Z}\underline{P}^A(x,T,-su)\leq 0\},
\]
\[
\sup_{x\in Z}\Dim_{BS}^A(x,T,u)
=\sup\{s\mid \sup_{x\in Z}\overline{P}^A(x,T,-su)> 0\}
=\inf\{s\mid \sup_{x\in Z}\overline{P}^A(x,T,-su)\leq 0\}.
\]
Moreover $\sup_{x\in Z}\underline{P}^A(x,T,-\underline{s}_0u)\leq 0$ at
$\underline{s}_0=\sup_{x\in Z}\dim^A_{BS}(x,T,u)$, and likewise in the upper
case.
\end{lemma}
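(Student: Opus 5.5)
The plan is to fix the oracle $A$ and study the single function $F(s)=\sup_{x\in Z}\underline{P}^A(x,T,-su)$. Write $D=\sup_{x\in Z}\dim^A_{BS}(x,T,u)$. All the work happens pointwise, through Corollary~\ref{cor:crit-char} at scale $0$ and the monotonicity of Lemma~\ref{lem:monotone_decreasing}. No point-to-set principle or covering argument is needed, since $A$ is fixed. By Corollary~\ref{cor:sup-monotone}, $F$ is nonincreasing in $s$. Hence $\{s\mid F(s)>0\}$ is a down-interval and $\{s\mid F(s)\leq 0\}$ is an up-interval. It therefore suffices to show that $F(s)>0$ for $s<D$ and $F(s)\leq 0$ for $s>D$; both critical parameters then equal $D$.

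\emph{Step 1: $s<D$ gives $F(s)>0$.} Pick $x\in Z$ with $\dim^A_{BS}(x,T,u)>s$. By Corollary~\ref{cor:crit-char} at scale $0$, $\dim^A_{BS}(x,T,u)=\sup\{s''\mid \underline{P}^A(x,T,-s''u)>0\}$. So there is $s''>s$ with $\underline{P}^A(x,T,-s''u)>0$. Pointwise monotonicity (\eqref{eq:mono-lower-scale0}) then gives $\underline{P}^A(x,T,-su)\geq \underline{P}^A(x,T,-s''u)>0$, and hence $F(s)>0$.

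\emph{Step 2: $s>D$ gives $F(s)\leq 0$.} For every $x\in Z$ we have $\dim^A_{BS}(x,T,u)\leq D<s$. The other half of Corollary~\ref{cor:crit-char}, $\dim^A_{BS}(x,T,u)=\inf\{s''\mid\underline{P}^A(x,T,-s''u)\leq 0\}$, produces some $s''<s$ with $\underline{P}^A(x,T,-s''u)\leq 0$. Monotonicity then gives $\underline{P}^A(x,T,-su)\leq 0$. Taking the supremum over $x$ yields $F(s)\leq 0$.

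Combining Steps 1 and 2, $(-\infty,D)\subset\{F>0\}\subset(-\infty,D]$ and $(D,\infty)\subset\{F\leq 0\}\subset[D,\infty)$, which gives both identities. If $D=\infty$, only Step 1 is used: $F>0$ everywhere, so the supremum is $\infty$, and the infimum is $\inf\emptyset=\infty$.

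\emph{Moreover clause.} Assume $D<\infty$. For each $x\in Z$, the ``in particular'' part of Lemma~\ref{lem:pos-pressure-lower} at scale $0$ gives $\underline{P}^A(x,T,-\dim^A_{BS}(x,T,u)\,u)\leq 0$. Since $\dim^A_{BS}(x,T,u)\leq D$, monotonicity in $s$ gives $\underline{P}^A(x,T,-Du)\leq 0$, and taking the supremum gives $F(D)\leq 0$. The upper statements are proved identically, with $\overline{P}^A$, $\Dim^A_{BS}$ and the $\limsup$ halves of the cited results.

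The main obstacle is bookkeeping rather than a real difficulty. Corollary~\ref{cor:crit-char} must be applied at scale $0$ and also in the degenerate case of infinite orbit complexity, where the pointwise pressure is identically $+\infty$ and the pointwise dimension is $\infty$. I would check that the sup/inf characterizations and monotonicity remain valid there with the convention $\inf\emptyset=\infty$, which is how the corollary's proof already handles that case.
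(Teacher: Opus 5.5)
Your proof is correct and is essentially the paper's argument. The paper shows $F(s)>0\Rightarrow D\geq s$ and $F(s)\leq 0\Rightarrow D\leq s$ directly from the pointwise sign pattern of Theorem~\ref{thm:pointwise-bowen}, which are the contrapositives of your Steps 2 and 1; you route them through Corollary~\ref{cor:crit-char} and pointwise monotonicity instead, and your explicit proof of the ``moreover'' clause via Lemma~\ref{lem:pos-pressure-lower} fills in a clause the paper's proof leaves implicit.
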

\begin{proof}
Write $F(s)=\sup_{x\in Z}\underline{P}^A(x,T,-su)$ and
$D=\sup_{x\in Z}\dim^A_{BS}(x,T,u)$. By monotonicity of $F$
(Corollary~\ref{cor:sup-monotone}) the sets $\{s\mid F(s)>0\}$ and
$\{s\mid F(s)\leq 0\}$ are complementary rays, so
\[\sup\{s\mid F(s)> 0\}= \inf\{s\mid F(s)\leq 0\}.\]
If $F(s)>0$, some $x\in Z$ has $\underline{P}^A(x,T,-su)>0$, and the
sign pattern of the pointwise Bowen equation
(Theorem~\ref{thm:pointwise-bowen}) gives
$\dim^A_{BS}(x,T,u)> s$, so $D\geq s$; hence
$\sup\{s\mid F(s)>0\}\leq D$. Conversely, if $F(s)\leq 0$ then
$\underline{P}^A(x,T,-su)\leq 0$ for every $x\in Z$, and the sign
pattern gives $\dim^A_{BS}(x,T,u)\leq s$ for every $x$, so
$D\leq s$; hence $D\leq\inf\{s\mid F(s)\leq 0\}$. Combining the
three displays proves the lower identity; the upper one is
identical.
\end{proof}
\begin{makethm}{theorem}{thm:BPE}
Suppose $0<\underline{u}(x)\leq \overline{u}(x)<\infty$ for every
$x\in Z$. Then
\[
\dim_{BS}(Z,T,u)
=\sup\{s\mid\underline{P}(Z,T,-su)> 0\}
=\inf\{s\mid \underline{P}(Z,T,-su)\leq 0\},
\]
\[
\Dim_{BS}(Z,T,u)
=\sup\{s\mid\overline{P}(Z,T,-su)> 0\}
=\inf\{s\mid \overline{P}(Z,T,-su)\leq 0\}.
\]
\end{makethm}
\begin{proof}
By the point-to-set principle for lower BS dimension
(Theorem~\ref{thm:pts_lower_bs}) and
Lemma~\ref{lem:sup-dim-crit}, applied at each oracle,
\[
\dim_{BS}(Z,T,u)
=\min\limits_{A\subset \mathbb{N}}\sup_{x\in Z}\dim_{BS}^A(x,T,u)
=\min\limits_{A\subset \mathbb{N}}
\sup\{s\mid \sup_{x\in Z}\underline{P}^A(x,T,-su)> 0\}
=\min\limits_{A\subset \mathbb{N}}
\inf\{s\mid \sup_{x\in Z}\underline{P}^A(x,T,-su)\leq 0\},
\]
and by Lemma~\ref{lem:crit-param-min} the last two minima equal
$\sup\{s\mid\underline{P}(Z,T,-su)> 0\}$ and
$\inf\{s\mid \underline{P}(Z,T,-su)\leq 0\}$ respectively, giving
the lower identities. The upper identities follow identically,
using Theorem~\ref{thm:pts_upper}, the upper half of
Lemma~\ref{lem:sup-dim-crit}, and the upper half of
Lemma~\ref{lem:crit-param-min}.
\end{proof}

We now consider existence and uniqueness of solutions to the
set-level Bowen pressure equations
\[\underline{P}(Z,T,-su)=0,
\qquad \overline{P}(Z,T,-su)=0.\]
\label{subsec:existence-uniqueness}
\begin{makethm}{theorem}{thm:BPE_unique}
Assume $\underline{h}(Z,T)<\infty$ (respectively $\overline{h}(Z,T)<\infty$). Consider the lower and upper Bowen pressure equations
$\underline{P}(Z,T,-su)=0$ and $\overline{P}(Z,T,-su)=0$.

\hangindent=2em\hangafter=1\noindent
\emph{(Existence)}\enspace If
$0<\underline{u}(x)\leq\overline{u}(x)\leq\beta<\infty$ for all
$x\in Z$, then the equations have solutions, given by
$s_0=\dim_{BS}(Z,T,u)$ and $s_0=\Dim_{BS}(Z,T,u)$ respectively.

\hangindent=2em\hangafter=1\noindent
\emph{(Uniqueness)}\enspace If
$0<\alpha\leq\underline{u}(x)\leq\overline{u}(x)<\infty$ for all
$x\in Z$, then the maps $s\mapsto\underline{P}(Z,T,-su)$ and
$s\mapsto\overline{P}(Z,T,-su)$ are strictly decreasing; in particular each equation has at most one solution.

\noindent In particular, under
$0<\alpha\leq\underline{u}\leq\overline{u}\leq\beta<\infty$ on $Z$,
the equations have unique solutions, given by the lower and upper
BS dimensions respectively. 
\end{makethm}
\begin{remark}
    Note that finite BS dimension under the assumption of $0<\alpha\leq\underline{u}\leq\overline{u}\leq\beta<\infty$ on $Z$ is equivalent to finite entropy, as well as finiteness of pressures of $-su$.
\end{remark}
\begin{proof}
\emph{(i) Existence.} Suppose
$0<\underline{u}\leq\overline{u}\leq\beta$ on $Z$. Set
$s_0:=\dim_{BS}(Z,T,u)$, and let $A$ be an oracle simultaneously
achieving the point-to-set principles for the lower BS dimension
(Theorem~\ref{thm:pts_lower_bs}) and for the lower pressure at the
potential $-s_0u$ (Theorem~\ref{thm:pts_lower_pressure}); such $A$
exists by joining the two achieving oracles (Lemma~\ref{lem:join-cone}),
each principle being preserved on the upper cone. Then
$s_0=\sup_{x\in Z}\dim^A_{BS}(x,T,u)$ and
$\underline{P}(Z,T,-s_0u)=\sup_{x\in Z}\underline{P}^A(x,T,-s_0u)$.
Since $\dim^A_{BS}(x,T,u)\leq s_0$ for every $x\in Z$, the pointwise
sign pattern (Theorem~\ref{thm:pointwise-bowen}) gives
$\underline{P}^A(x,T,-s_0u)\leq 0$ pointwise, hence
$\sup_{x\in Z}\underline{P}^A(x,T,-s_0u)\leq 0$; the content is the
reverse inequality. Given $\eta>0$, choose $x_\eta\in Z$ with
$\dim_{BS}^A(x_\eta,T,u)>s_0-\eta$; if
$\underline{P}^A(x_\eta,T,-s_0u)<0$ then
Lemma~\ref{lem:neg-pressure-upper} (at scale $0$) gives
\[\underline{P}^A(x_\eta,T,-s_0u)
\geq \overline{u}(x_\eta)\bigl(\dim_{BS}^A(x_\eta,T,u)-s_0\bigr)
\geq -\eta\,\beta.\]
Letting $\eta\to 0$ yields
$\sup_{x\in Z}\underline{P}^A(x,T,-s_0u)=0$. We conclude
\[\underline{P}(Z,T,-s_0u)
=\sup_{x\in Z}\underline{P}^A(x,T,-s_0u)=0,\]
so $s_0$ solves the lower Bowen pressure equation.

\emph{(ii) Uniqueness.} Suppose $\underline{u}\geq\alpha>0$ on $Z$.
By Lemma~\ref{lem:set-monotone}, the map
$s\mapsto\underline{P}(Z,T,-su)$ is strictly decreasing with gap
$\alpha$, and a strictly decreasing map has at most one zero.
\end{proof}

\begin{corollary}[Constant exponent]\label{cor:constant-exponent}
If $\underline{u}(x)=\overline{u}(x)=\alpha>0$ for all $x\in Z$,
then
\[\dim_{BS}(Z,T,u)=\frac{\underline{h}(Z,T)}{\alpha},
\qquad
\Dim_{BS}(Z,T,u)=\frac{\overline{h}(Z,T)}{\alpha}.\]
\end{corollary}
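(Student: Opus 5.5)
The plan is to prove the identity pointwise, for every oracle and every point of $Z$, and then pass to sets using the point-to-set principles for BS dimension (Theorems~\ref{thm:pts_lower_bs} and~\ref{thm:pts_upper}) and for entropy (Theorem~\ref{thm:pts-entropy}, and the $\varphi=0$ case of Theorem~\ref{thm:pts_upper_pressure}).

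\emph{Pointwise comparison.} Fix $x\in Z$ and any oracle $A$. For $n\geq N$ and every $x_i\in B_n(x,T,\varepsilon)$ with $\varepsilon$ admissible, Definition~\ref{def:admissible-scale} gives
\[
n\bigl(\alpha-2\Var(u,\varepsilon)\bigr)\leq S_nu(x_i)\leq n\bigl(\alpha+2\Var(u,\varepsilon)\bigr),
\]
since $\underline{u}(x)=\overline{u}(x)=\alpha$. Hence, at every large $n$,
\[
\frac{1}{\alpha+2\Var(u,\varepsilon)}\,\frac{K^A(i)}{n}\leq\frac{K^A(i)}{S_nu(x_i)}\leq\frac{1}{\alpha-2\Var(u,\varepsilon)}\,\frac{K^A(i)}{n}.
\]
This bound holds uniformly over the realizers. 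Taking the minimum over $x_i\in B_n(x,T,\varepsilon)$ and then $\liminf_n$ (respectively $\limsup_n$), we sandwich $\dim^A_{BS}(x,T,u,\varepsilon)$ between $\underline{K}^A(x,T,\varepsilon)/(\alpha\pm2\Var(u,\varepsilon))$. The same sandwich holds for $\Dim^A_{BS}$ with $\overline{K}^A$. Letting $\varepsilon\to0$ makes $\Var(u,\varepsilon)\to0$, so
\[
\dim^A_{BS}(x,T,u)=\frac{\underline{K}^A(x,T)}{\alpha},\qquad \Dim^A_{BS}(x,T,u)=\frac{\overline{K}^A(x,T)}{\alpha}.
\]
These identities include the value $\infty$, since the constants are positive. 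One could alternatively derive this from Theorem~\ref{thm:pointwise-bowen} together with the sandwich \eqref{eq:pressure-complexity-sandwich}, but the direct computation avoids any finiteness hypotheses.

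\emph{Passage to sets.} Apply Lemma~\ref{lem:join-cone} to obtain a single oracle $B$ lying in the cones achieving Theorem~\ref{thm:pts_lower_bs} and Theorem~\ref{thm:pts-entropy}. Then
\[
\dim_{BS}(Z,T,u)=\sup_{x\in Z}\dim^B_{BS}(x,T,u)=\alpha^{-1}\sup_{x\in Z}\underline{K}^B(x,T)=\underline{h}(Z,T)/\alpha.
\]
The upper case is identical, using Theorem~\ref{thm:pts_upper} and the upper entropy principle. The upper entropy principle is Theorem~\ref{thm:pts_upper_pressure} at $\varphi=0$, for which $\overline{P}^A(x,T,0)=\overline{K}^A(x,T)$ by definition.

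\emph{Main obstacle.} The only delicate point is ensuring that the achieving oracles are admissible and that the minima are attained on upper cones, so that a common oracle exists. This is guaranteed by the ``attained on an upper set'' clauses together with Remark~\ref{rem:admissible-cone}. Since the pointwise identity holds for all oracles, no further admissibility is needed.
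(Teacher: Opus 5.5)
Your proof is correct. It reaches the pointwise identity by a more direct route than the paper.

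The paper never compares $\dim^A_{BS}$ with orbit complexity directly. It first proves the pressure identity $\underline{P}^A(x,T,-su)=\underline{K}^A(x,T)-s\alpha$ from the same admissible-scale bounds you use. Then, at an oracle achieving both the BS and entropy principles, it applies the sign pattern of the pointwise Bowen equation (Theorem~\ref{thm:pointwise-bowen}) twice. The bound $\underline{P}^A(x,T,-s_0u)\leq 0$ for every $x\in Z$ gives $\underline{h}(Z,T)\leq s_0\alpha$. A near-maximizer $x_\eta$ with $\underline{P}^A(x_\eta,T,-(s_0-\eta)u)\geq 0$ gives $\underline{h}(Z,T)\geq(s_0-\eta)\alpha$. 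Since Theorem~\ref{thm:pointwise-bowen} requires finite orbit complexity, the paper must first set aside the infinite case. It does so by asserting, without argument, that $\dim_{BS}(Z,T,u)=\infty$ if and only if $\underline{h}(Z,T)=\infty$.

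Your sandwich of $\dim^A_{BS}(x,T,u,\varepsilon)$ between $\underline{K}^A(x,T,\varepsilon)/(\alpha\pm 2\Var(u,\varepsilon))$ gives the exact identity $\dim^A_{BS}(x,T,u)=\underline{K}^A(x,T)/\alpha$, and its upper analogue. This holds for every oracle and every value in $[0,\infty]$. A single joined achieving oracle then finishes the proof, with no $\eta$-approximation and no case split. In particular, your argument actually proves the infinite-case equivalence the paper takes for granted. It is the same ratio-comparison technique used in Example~\ref{ex:symbolic} and Theorem~\ref{thm:pointwise-conformal}.

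The paper's route mainly buys exposition: the corollary is presented as the simplest instance of the Bowen-equation machinery. Its pointwise pressure identity, passed through the pressure point-to-set principle, would also yield $\underline{P}(Z,T,-su)=\underline{h}(Z,T)-s\alpha$, though the paper does not record this. Your explicit identification of the upper entropy principle as Theorem~\ref{thm:pts_upper_pressure} at $\varphi=0$, with $\overline{P}^A(x,T,0)=\overline{K}^A(x,T)$, fills in what the paper dismisses as ``identical''.
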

\begin{proof}
Since $\dim_{BS}(Z,T,u)=\infty\iff \underline{h}(Z,T)=\infty,\ \Dim_{BS}(Z,T,u)=\infty \iff \overline{h}(Z,T)=\infty$ under the assumption of uniform upper and lower bounds on the $\underline{u}/\overline{u}$ this is trivially true in either of those cases. We therefore work in the finite entropy setting.

We prove the lower identity; the upper one is identical. First, for
any oracle $A$ and any $x$ with
$\underline{u}(x)=\overline{u}(x)=\alpha$,
\begin{equation}\label{eq:const-exp-split}
\underline{P}^A(x,T,-su)=\underline{K}^A(x,T)-s\alpha:
\end{equation}
at any admissible scale $\varepsilon$, every realizer satisfies
$|S_nu(x_i)/n-\alpha|\leq\Var(u,\varepsilon)$ for large $n$
(Definition~\ref{def:admissible-scale}), so the minimized quantity
splits as
$\min\limits_i\frac{K^A(i)}{n}-s\alpha\pm|s|\,\Var(u,\varepsilon)$,
and letting $\varepsilon\to 0$ gives
\eqref{eq:const-exp-split}.

Now write $s_0=\dim_{BS}(Z,T,u)$ and let $A$ simultaneously achieve
the point-to-set principles for $\dim_{BS}(Z,T,u)$ and for
$\underline{h}(Z,T)$
(Theorems~\ref{thm:pts_lower_bs} and~\ref{thm:pts-entropy}, joined
via Lemma~\ref{lem:join-cone}). By the pointwise sign pattern
(Theorem~\ref{thm:pointwise-bowen}), for every $x\in Z$ we have
$\underline{P}^A(x,T,-s_0u)\leq 0$, which by
\eqref{eq:const-exp-split} reads
$\underline{K}^A(x,T)\leq s_0\alpha$; taking the supremum,
$\underline{h}(Z,T)\leq s_0\alpha$. Conversely, for $\eta>0$ choose
$x_\eta\in Z$ with $\dim^A_{BS}(x_\eta,T,u)>s_0-\eta$; then
$\underline{P}^A(x_\eta,T,-(s_0-\eta)u)\geq 0$ by the sign pattern,
so \eqref{eq:const-exp-split} gives
$\underline{K}^A(x_\eta,T)\geq(s_0-\eta)\alpha$ and hence
$\underline{h}(Z,T)\geq(s_0-\eta)\alpha$. Letting $\eta\to 0$
yields $\underline{h}(Z,T)=s_0\alpha$.
\end{proof}

\subsection{Hausdorff and packing dimension for conformal systems}
\label{subsec:conformal}
In this section we recover and extend the main results of
\cite{climenhaga-2010} on the Bowen pressure equation using the
point-to-set machinery. Our treatment departs from
\cite{climenhaga-2010} in three ways: we treat Hausdorff and
packing dimension simultaneously, obtaining the packing-side
statements, which do not appear in the literature, by the same
argument; we work with complete separable metric spaces, replacing
compactness by pointwise dynamical hypotheses on the set under study
(the sets $\mathcal{A}$, $\mathcal{B}$, $\mathcal{C}$ of Definition~\ref{def:conformal-sets}
below); and we phrase the results in terms of the lower and upper BS
dimensions, passing to the Bowen pressure equation through the
results of Section~\ref{subsec:existence-uniqueness}. The
relationship between BS dimension and the Bowen equation is
discussed in \cite{CLIMENHAGA2014The}.
\begin{definition}[Conformal system]\label{def:conformal}
A dynamical system $(X,d,T)$, with $X$ a complete separable metric
space and $T$ uniformly continuous, is \emph{conformal} if the
derivative
\[a(x)=\lim\limits_{y\to x}\frac{d(Tx,Ty)}{d(x,y)}\]
exists in $(0,\infty)$ for every $x\in X$ and the function
\[\zeta(x,y) = \begin{cases}
\log d(Tx, Ty) - \log d(x,y) & x \neq y, \\
\log a(x) & x = y,
\end{cases}\]
is uniformly continuous on $\{(x,y)\mid d(x,y)<r_0\}$ for some
$r_0>0$. We write $u=\log a$. Note that no uniform bounds on $a$ are
assumed; the sets of Definition~\ref{def:conformal-sets} play that
role.
\end{definition}

\begin{definition}[The sets $\mathcal{A}$, $\mathcal{B}$,
$\mathcal{C}$]\label{def:conformal-sets}
For $0\leq\alpha_1\leq\alpha_2\leq\infty$, the set of points with
exponents in $(\alpha_1,\alpha_2)$ is
\[\mathcal{A}(\alpha_1,\alpha_2)
=\{x\mid \alpha_1<\underline{u}(x)\leq
\overline{u}(x)<\alpha_2\}.\]
The set of points with \emph{tempered contraction} is
\[
\mathcal{B}=\Bigl\{x\;\Bigm|\;
\inf_{\substack{n \in \mathbb{N} \\ 0 \leq k \leq n}}
\bigl( S_{n-k} u(T^kx) + n \varepsilon \bigr) > -\infty
\text{ for every } \varepsilon > 0\Bigr\},
\]
and the set of points with \emph{sublinear growth} is
\[\mathcal{C}=\Bigl\{x\;\Bigm|\;
\lim\limits_{n\to\infty}\frac{u(T^nx)}{n}=0\Bigr\}.\]
In the compact setting $u$ is bounded, so $\mathcal{C}=X$.
\end{definition}

For $0<\delta<r_0$ set
\[\Var(\zeta,\delta)
=\sup\{\,|\zeta(x,y)-u(x)| : 0<d(x,y)\leq\delta\,\},\]
which tends to $0$ as $\delta\to 0$ by uniform conformality. This is
the modulus of the derivative, distinct from the modulus
$\Var(u,\cdot)$ of the potential used in
Section~\ref{sec:bowen-equation}. Throughout this subsection we
abbreviate $\varepsilon_\delta=\Var(\zeta,\delta)$.
\begin{lemma}[Ball comparison; {cf.\ \cite[Lemma 6.1]{climenhaga-2010}}]
\label{lem:ball-inclusion}
Let $(X,d,T)$ be conformal, $x\in\mathcal{B}$, and
$0<\delta<r_0$. Then with
$\eta_\delta(x)=\exp_2\left(\inf_{n,\,0\leq k\leq n}
(S_{n-k}u(T^kx)+n\varepsilon_\delta)\right)>0$, for every $n$,
 \[B\bigl(x,\eta_\delta(x)\,\delta\,
2^{-(S_nu(x)+n\varepsilon_\delta)}\bigr)
\subset B_n(x,T,\delta)\subset
B\bigl(x,\delta\,2^{-(S_nu(x)-n\varepsilon_\delta)}\bigr);\]
\end{lemma}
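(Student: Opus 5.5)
The plan is to compare the Bowen metric with the ordinary metric by iterating the conformal distortion estimate one step at a time along the orbit, exactly as in \cite[Lemma 6.1]{climenhaga-2010}.

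\emph{One-step estimate.} By the definition of $\varepsilon_\delta=\Var(\zeta,\delta)$, whenever $0<d(z,w)\leq\delta<r_0$ we have
\[
d(z,w)\,2^{u(z)-\varepsilon_\delta}\;\leq\; d(Tz,Tw)\;\leq\; d(z,w)\,2^{u(z)+\varepsilon_\delta}.
\]
Both inclusions then come from applying this along the pairs $(T^jx,T^jy)$, which is legitimate as long as these pairs stay within distance $\delta$.

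\emph{Outer inclusion.} Let $y\in B_n(x,T,\delta)$ with $y\neq x$. Then every pair $(T^jx,T^jy)$ with $j<n$ is within $\delta$, so the lower one-step bound can be chained. This gives
\[
d(T^{m}x,T^{m}y)\;\geq\; d(x,y)\,2^{S_{m}u(x)-m\varepsilon_\delta}
\]
for all $m$ up to the last admissible index. Since the left side is $<\delta$, this yields $d(x,y)<\delta\,2^{-(S_mu(x)-m\varepsilon_\delta)}$. The point to check is the index bookkeeping. The Bowen ball only controls times $0,\dots,n-1$, so chaining naturally produces $S_{n-1}$; obtaining $S_n$ needs one more step, i.e.\ control of $d(T^nx,T^ny)$. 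I will either use the closed-ball/$n+1$ convention, or absorb the last step using uniform continuity of $T$ to keep $d(T^nx,T^ny)<r_0$. Whichever convention is used, I will fix it here and use it consistently in Theorem~\ref{thm:pointwise-conformal}.

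\emph{Inner inclusion.} Let $d(x,y)<\eta_\delta(x)\,\delta\,2^{-(S_nu(x)+n\varepsilon_\delta)}$. I will show by induction on $k<n$ that $d(T^kx,T^ky)<\delta$. The case $k=0$ holds because $\eta_\delta(x)\le 1$: take the index pair $n=k=0$ in the infimum. Assuming the claim holds for all $j<k$, the upper one-step bound chains to give
\[
d(T^kx,T^ky)\;\leq\; d(x,y)\,2^{S_ku(x)+k\varepsilon_\delta}\;<\;\eta_\delta(x)\,\delta\,2^{-(S_{n-k}u(T^kx)+(n-k)\varepsilon_\delta)},
\]
using $S_nu(x)=S_ku(x)+S_{n-k}u(T^kx)$. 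It therefore suffices that
\[
\eta_\delta(x)\leq 2^{S_{n-k}u(T^kx)+(n-k)\varepsilon_\delta}.
\]
This is precisely what the tempered-contraction infimum defining $\eta_\delta(x)$ controls, and $\eta_\delta(x)>0$ because $x\in\mathcal{B}$.

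\emph{Main obstacle.} The exponent bookkeeping in this last step is the delicate point. The infimum in $\mathcal{B}$ carries the error $n\varepsilon_\delta$ rather than $(n-k)\varepsilon_\delta$. If the terms do not match, I will first rerun the chain with $2\varepsilon_\delta$ in the one-step bounds, since membership in $\mathcal{B}$ holds for every $\varepsilon>0$ and this slack is free. Failing that, I will reindex the infimum so that its error term is $(n-k)\varepsilon_\delta$. The other step needing care is the range of validity of the induction: each application of the one-step bound needs $d(T^jx,T^jy)\leq\delta<r_0$, and this is exactly the inductive hypothesis.
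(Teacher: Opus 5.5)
Your mechanism, chaining $d(z,w)\,2^{u(z)-\varepsilon_\delta}\le d(Tz,Tw)\le d(z,w)\,2^{u(z)+\varepsilon_\delta}$ along the orbit, is the argument behind \cite[Lemma 6.1]{climenhaga-2010}, and the paper offers nothing beyond that citation. The two mismatches you flag cannot be absorbed, however. With the conventions used here, $B_n$ constrains only times $0\le i<n$, and a single $\varepsilon_\delta$ appears both in $\eta_\delta(x)$ and in the inner radius. Under these conventions both inclusions are false, so none of your fallbacks can prove the lemma as written.

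\emph{Outer inclusion.} For $n=1$ we have $B_1(x,T,\delta)=B(x,\delta)$. The right side is $B(x,\delta\,2^{-(u(x)-\varepsilon_\delta)})$, which is strictly smaller once $u(x)>\varepsilon_\delta$. For example, $z\mapsto 2z$ on $\mathbb{R}/\mathbb{Z}$ has $u\equiv 1$, and $\varepsilon_\delta=0$ for $\delta<1/4$. Your uniform-continuity fallback only yields a radius of the form $r_0\,2^{-(S_nu(x)-n\varepsilon_\delta)}$, not the stated one. What your chain actually proves is $B_n(x,T,\delta)\subset B(x,\delta\,2^{-(S_{n-1}u(x)-(n-1)\varepsilon_\delta)})$, i.e.\ the stated bound for $B_{n+1}$.

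\emph{Inner inclusion.} Your reduction to the requirement $\eta_\delta(x)\le 2^{S_{n-k}u(T^kx)+(n-k)\varepsilon_\delta}$ is correct, and this requirement genuinely fails. Take an increasing $C^1$ map $T$ of $\mathbb{R}$ with $T'$ uniformly continuous, bounded, and bounded away from $0$. Then $\zeta$ is $\log_2$ of the average of $T'$, and $u=\log_2T'$. Choose $T$ with:
\begin{itemize}
\item $T(z)=z+10$ for $z\le 0$ and $T(z)=4z$ for $z\ge 20$;
\item $T(10)=20$ and $T'(10)=2^{-c}$;
\item $T'>1$ on a right neighbourhood of $0$.
\end{itemize}
Take $\delta$ small enough that $T'>1$ on $(0,\delta)$ and $2\varepsilon_\delta<c$. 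For $x=0$ the exponents along the orbit are $0,-c,2,2,\dots$. Hence $x\in\mathcal{A}(0,\infty)\cap\mathcal{B}\cap\mathcal{C}$ and $\eta_\delta(x)=2^{-c+2\varepsilon_\delta}$, so the $n=2$ inner radius is exactly $\delta$. Yet $|Ty-Tx|=\int_0^yT'>\delta$ for $y<\delta$ close to $\delta$.

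Your two inner fallbacks do not repair this:
\begin{itemize}
\item Reindexing the infimum to $(n-k)\varepsilon_\delta$ fails. The infimum then contains the one-step terms $u(T^jx)+\varepsilon_\delta$, which membership in $\mathcal{B}$ does not bound below (sparse dips $u(T^jx)\approx-\sqrt{j}$ are allowed). So $\eta$ may be $0$.
\item Replacing $\varepsilon_\delta$ by $2\varepsilon_\delta$ everywhere just reproduces the same mismatch.
\end{itemize}
The slack has to be asymmetric. Keep $\eta_\delta(x)$ and the true one-step error $\varepsilon_\delta$, but shrink the radius to $\eta_\delta(x)\,\delta\,2^{-(S_nu(x)+2n\varepsilon_\delta)}$. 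The requirement then becomes $\eta_\delta(x)\le 2^{S_{n-k}u(T^kx)+(2n-k)\varepsilon_\delta}$. This follows from the $(n,k)$ term of the infimum, since $2n-k\ge n$.

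This also repairs your base case. There, $\eta_\delta(x)\le 1$ is not enough when $S_nu(x)<-n\varepsilon_\delta$; use the $(n,0)$ term instead.

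Both corrected inclusions still serve Theorem~\ref{thm:pointwise-conformal}. They turn $2\varepsilon$ into $3\varepsilon$ in the comparison constants and add a term $u(T^{n-1}x)$, which is $o(n)$ by $\mathcal{C}$. The conclusions of that theorem are therefore unaffected.
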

\begin{proof}
This is \cite[Lemma 6.1]{climenhaga-2010}, whose proof
invokes only the uniform continuity of $\zeta$ near the diagonal
and the tempered contraction condition defining $\mathcal{B}$, and
so applies verbatim in the complete separable setting (with base
$2$ in place of $e$).
\end{proof}
We now prove that on
$\mathcal{A}(0,\infty)\cap\mathcal{B}\cap\mathcal{C}$, the pointwise
Hausdorff and packing dimensions are computed by the corresponding
BS dimensions for the potential $u=\log a$.
\begin{theorem}\label{thm:pointwise-conformal}
    Suppose $x \in \mathcal{A}(0,\infty) \cap \mathcal{B}\cap \mathcal{C}$. Then for any oracle $A$,
    \[
        \dim^A(x) = \dim^A_{BS}(x,T,u),
        \quad
        \Dim^A(x) = \Dim^A_{BS}(x,T,u).
    \]
\end{theorem}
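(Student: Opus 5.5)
The plan is to compare, for a fixed small scale $\delta$, the time-$n$ Bowen ball $B_n(x,T,\delta)$ with metric balls via Lemma~\ref{lem:ball-inclusion}, and to use Lemma~\ref{lem:scale-seq} to read $\dim^A(x)$ and $\Dim^A(x)$ along the scales that Lemma~\ref{lem:ball-inclusion} produces. Fix $x\in\mathcal{A}(0,\infty)\cap\mathcal{B}\cap\mathcal{C}$ and an oracle $A$. Fix $0<\delta<r_0$ small enough that $\delta$ is admissible for $x$ (Definition~\ref{def:admissible-scale}) and that $\varepsilon_\delta<\underline{u}(x)/4$. Define the inner and outer radii
\[
\rho_n^-=\eta_\delta(x)\,\delta\,2^{-(S_nu(x)+n\varepsilon_\delta)},\qquad
\rho_n^+=\delta\,2^{-(S_nu(x)-n\varepsilon_\delta)}.
\]
Then Lemma~\ref{lem:ball-inclusion} gives $B(x,\rho_n^-)\subset B_n(x,T,\delta)\subset B(x,\rho_n^+)$.

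\emph{Step 1 (complexity sandwich).} Every ideal point in $B(x,\rho_n^-)$ lies in $B_n(x,T,\delta)$. Every ideal point in $B_n(x,T,\delta)$ lies in $B(x,\rho_n^+)$. Hence
\[
K^A_{\rho_n^+}(x)\;\le\;K^A_n(x,T,\delta)\;\le\;K^A_{\rho_n^-}(x).
\]
For the BS quotients I will use that every $x_i\in B_n(x,T,\delta)$ satisfies $|S_nu(x_i)-S_nu(x)|\le n\Var(u,\delta)$. Since $S_nu(x)\ge n\underline{u}(x)/2$ for large $n$, this gives
\[
S_nu(x_i)=S_nu(x)\bigl(1+O(\Var(u,\delta)/\underline{u}(x))\bigr)
\]
uniformly in $i$. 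So the quotient $\min_{x_i\in B_n(x,T,\delta)}K^A(i)/S_nu(x_i)$ equals $K^A_n(x,T,\delta)/S_nu(x)$ up to a multiplicative factor $1+o_\delta(1)$. This sidesteps the issue that the minimizer of the BS quotient need not be the complexity minimizer.

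\emph{Step 2 (denominators).} We have $-\log\rho_n^\pm=S_nu(x)\pm n\varepsilon_\delta+O_\delta(1)$, where the $O_\delta(1)$ absorbs $\log\delta$ and $\log\eta_\delta(x)$, which is finite because $x\in\mathcal{B}$. Since $S_nu(x)\ge n\underline{u}(x)/2$ eventually, it follows that $-\log\rho_n^\pm=S_nu(x)\bigl(1+O(\varepsilon_\delta/\underline{u}(x))\bigr)$ for large $n$. Combining with Step~1,
\[
\frac{K^A_{\rho_n^+}(x)}{-\log\rho_n^+}(1-c_\delta)\;\le\;\min_{x_i\in B_n(x,T,\delta)}\frac{K^A(i)}{S_nu(x_i)}\;\le\;\frac{K^A_{\rho_n^-}(x)}{-\log\rho_n^-}(1+c_\delta),
\]
with $c_\delta\to0$ as $\delta\to0$.

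\emph{Step 3 (scale sequence).} I apply Lemma~\ref{lem:scale-seq} separately to the sequences $(\rho_n^-)$ and $(\rho_n^+)$. Both tend to $0$, since $S_nu(x)\pm n\varepsilon_\delta\ge n\underline{u}(x)/4$. The ratio condition reads
\[
\frac{\log\rho_{n+1}^\pm}{\log\rho_n^\pm}=1+\frac{u(T^nx)\pm\varepsilon_\delta}{S_nu(x)\pm n\varepsilon_\delta+O(1)}.
\]
This tends to $1$ exactly because $u(T^nx)=o(n)$, which is membership in $\mathcal{C}$, while the denominator grows linearly. I expect this to be the main point where the hypotheses bite, and also the place needing care. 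Lemma~\ref{lem:scale-seq} does not require monotone scales, only $\delta_m\to0$ and the ratio condition, so non-monotonicity of $\rho_n$ should be harmless, but I will double-check that its proof goes through as written.

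With Lemma~\ref{lem:scale-seq} in hand, taking $\liminf_n$ (respectively $\limsup_n$) in the Step~2 display gives
\[
(1-c_\delta)\dim^A(x)\le\dim^A_{BS}(x,T,u,\delta)\le(1+c_\delta)\dim^A(x),
\]
and the same with $\Dim$. Letting $\delta\to0$ yields both identities. The infinite case is consistent with these inequalities, since each side is then infinite simultaneously.
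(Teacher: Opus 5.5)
Your proposal is correct and follows the paper's proof essentially step for step. It uses the same inner and outer radii from Lemma~\ref{lem:ball-inclusion}, the same complexity sandwich, the same appeal to $\mathcal{C}$ to verify the ratio hypothesis of Lemma~\ref{lem:scale-seq}, and the same $(1\pm c_\delta)$ squeeze followed by $\delta\to 0$. Your version is slightly more careful than the paper's in two respects: you compare $S_nu(x_i)$ with $S_nu(x)$ via $\Var(u,\delta)$ rather than $\varepsilon_\delta$, and you correctly note that Lemma~\ref{lem:scale-seq} needs no monotonicity of the radii. One harmless sign slip: your outer radius satisfies $-\log\rho_n^+=S_nu(x)-n\varepsilon_\delta+O(1)$, so the $\pm$ in your Step~2 and Step~3 displays should read $\mp$.
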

\begin{proof}
Fix $0<\delta<r_0$ small enough that
$\varepsilon_\delta<\underline{u}(x)$, and write
$\varepsilon=\varepsilon_\delta$, $\eta=\eta_\delta(x)$. This is
possible since $\varepsilon_\delta\to 0$ as $\delta\to 0$ and
$\underline{u}(x)>0$. It guarantees $S_nu(x)-n\varepsilon\to\infty$,
so both scale families below are eventually decreasing to $0$. Set
\[\rho_n=\eta\,\delta\,2^{-(S_nu(x)+n\varepsilon)},
\qquad
\rho_n'=\delta\,2^{-(S_nu(x)-n\varepsilon)}.\]
Then we have 
\[\log\rho_{n+1}/\log\rho_n=\frac{S_{n+1}u(x)+n\varepsilon+O(1)}{S_{n}u(x)+n\varepsilon+O(1)}=1+\frac{u(T^{n}x)+O(1)}{S_{n}u(x)+n\varepsilon+O(1)}=1+o(1)\]
\[\log\rho'_{n+1}/\log\rho'_n=\frac{S_{n+1}u(x)-n\varepsilon+O(1)}{S_{n}u(x)-n\varepsilon+O(1)}=1+\frac{u(T^{n}x)+O(1)}{S_{n}u(x)-n\varepsilon+O(1)}=1+o(1)\]
since $x\in\mathcal{C}$ gives $u(T^{n}x)=o(n)$, while
$x\in\mathcal{A}(0,\infty)$ gives
$\liminf\limits_n S_nu(x)/n=\underline{u}(x)>0$, so that
$S_nu(x)\geq n(\underline{u}(x)-o(1))$ and both denominators grow at
least linearly. The choice $2\varepsilon<\underline{u}(x)$ keeps the
$\rho_n'$ denominator positive and linearly growing. By
Lemma~\ref{lem:scale-seq}, both scale families compute the
pointwise dimensions:
\begin{equation}\label{eq:reparam}
\dim^A(x)=\liminf\limits_{n\to\infty}\frac{K^A_{\rho_n}(x)}{-\log\rho_n}
=\liminf\limits_{n\to\infty}\frac{K^A_{\rho_n'}(x)}{-\log\rho_n'},
\end{equation}
and the same with $\limsup$ and $\Dim^A(x)$.\\
For any $x_i\in B_{n}(x,T,\delta)$ we have
\[
|S_nu(x_i)-S_nu(x)|<n\varepsilon,
\]
therefore for any $x_i\in B_{n}(x,T,\delta)$ we have
\[
|{-\log \rho_n}-S_nu(x_i)|\leq 2n\varepsilon+O(1),
\qquad
|{-\log \rho_n'}-S_nu(x_i)|\leq 2n\varepsilon+O(1).
\]
This gives (for $n$ large enough so $S_nu(x_i)>0$ for all $x_i\in B_n(x,T,\delta)$)
\[
1-\frac{2\varepsilon+O(1)}{\underline{u}(x)+\varepsilon+o(1)}
\leq\frac{S_nu(x_i)}{-\log \rho_n}
\leq 1+\frac{2\varepsilon+O(1)}{\underline{u}(x)+\varepsilon+o(1)}.
\]
We now apply this to the complexity
\begin{multline*}
\left(1-\frac{2\varepsilon+O(1)}{\underline{u}(x)+o(1)}\right)
\frac{\min\limits_{x_i\in B_n(x,T,\delta)}K^A(i)}{-\log \rho_n'}
\leq \min_{x_i\in B_n(x,T,\delta)}\frac{K^A(i)}{S_nu(x_i)}
\leq \left(1+\frac{2\varepsilon+O(1)}{\underline{u}(x)+o(1)}\right)
\frac{\min\limits_{x_i\in B_n(x,T,\delta)}K^A(i)}{-\log \rho_n}.
\end{multline*}
The two inclusions of Lemma~\ref{lem:ball-inclusion} reverse
under minima:
\begin{equation}\label{eq:min-sandwich}
K^A_{\rho_n'}(x)
\;\leq\;\min_{x_i\in B_n(x,T,\delta)}K^A(i)
\;\leq\;K^A_{\rho_n}(x),
\end{equation}
and combining this with the above inequality gives
\[
\left(1-\frac{2\varepsilon}{\underline{u}(x)+o(1)}\right)
\frac{K^A_{\rho_n'}(x)}{-\log \rho_n'}
\leq \min_{x_i\in B_n(x,T,\delta)}\frac{K^A(i)}{S_nu(x_i)}
\leq \left(1+\frac{2\varepsilon}{\underline{u}(x)+o(1)}\right)
\frac{K^A_{\rho_n}(x)}{-\log \rho_n}.
\]
Taking the liminf and limsup respectively gives
\[
\left(1-\frac{2\varepsilon}{\underline{u}(x)}\right)\dim^A(x)
\leq \dim^A_{BS}(x,T,u,\delta)
\leq \left(1+\frac{2\varepsilon}{\underline{u}(x)}\right)\dim^A(x),
\]
\[
\left(1-\frac{2\varepsilon}{\underline{u}(x)}\right)\Dim^A(x)
\leq \Dim^A_{BS}(x,T,u,\delta)
\leq \left(1+\frac{2\varepsilon}{\underline{u}(x)}\right)\Dim^A(x),
\]
and taking $\delta\to0$ gives
\[
\dim^A(x)\leq \dim^A_{BS}(x,T,u)\leq \dim^A(x),
\]
\[
\Dim^A(x)\leq \Dim^A_{BS}(x,T,u)\leq \Dim^A(x).\qedhere
\]
\end{proof}

\begin{theorem}[Dimensions of nonuniformly expanding subsets of conformal systems]
\label{thm:conformal-dim}
Let $(X,d,T)$ be conformal and let
$Z\subset\mathcal{A}(0,\infty)\cap\mathcal{B}\cap\mathcal{C}$.
Then
\[\dim_{H}(Z)=\dim_{BS}(Z,T,u),
\qquad
\dim_P(Z)=\Dim_{BS}(Z,T,u).\]
\end{theorem}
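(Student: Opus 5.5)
The plan is to chain oracle minimizations through the pointwise identity of Theorem~\ref{thm:pointwise-conformal}, with no set-level covering argument.

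First fix the oracles. By the point-to-set principle for Hausdorff dimension of \cite{lutz-2023}, there is an oracle $A_1$ with $\dim_H(Z)=\sup_{x\in Z}\dim^{A}(x)$ for every $A\geq_T A_1$. Since every $x\in Z$ lies in $\mathcal{A}(0,\infty)$, we have $\underline{u}>0$ on $Z$, so Theorem~\ref{thm:pts_lower_bs} applies. It gives an oracle $A_2$ with $\dim_{BS}(Z,T,u)=\sup_{x\in Z}\dim^A_{BS}(x,T,u)$ for every $A\geq_T A_2$. We may also need an oracle $C$ giving $X$ the computable metric structure used in defining $\dim^A$ and $\dim^A_{BS}$, so that both pointwise quantities refer to the same ideal points. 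By Remark~\ref{rem:structure-independence} these agree up to the choice of structure above a suitable oracle, so we include $C$ as well. Set $B=A_1\oplus A_2\oplus C$. By Lemma~\ref{lem:join-cone} and the upper-set clauses of both principles, both identities hold at $B$.

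Next apply the pointwise identity. Theorem~\ref{thm:pointwise-conformal} holds for every oracle, in particular for $B$, and gives $\dim^B(x)=\dim^B_{BS}(x,T,u)$ for each $x\in Z$. Taking suprema over $Z$,
\[
\dim_H(Z)=\sup_{x\in Z}\dim^B(x)=\sup_{x\in Z}\dim^B_{BS}(x,T,u)=\dim_{BS}(Z,T,u).
\]
An equivalent phrasing avoids choosing $B$ explicitly: the two minima $\min_A\sup_x\dim^A(x)$ and $\min_A\sup_x\dim^A_{BS}(x,T,u)$ are minima of the same function of $A$, computed within a common cone by Lemma~\ref{lem:cone-min}. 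The packing statement is identical, using the packing point-to-set principle of \cite{lutz-2023}, Theorem~\ref{thm:pts_upper}, and the $\Dim$ half of Theorem~\ref{thm:pointwise-conformal}.

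The main point needing care is the compatibility of the two pointwise frameworks. $\dim^A$ is defined relative to an $A$-computable metric structure, while the BS principle requires $A$ admissible for $(X,d,T,u)$. Theorem~\ref{thm:pointwise-conformal} compares $K^A$ of the same ideal points $x_i$ through ball inclusions, so it needs a single structure serving both definitions. I would handle this by working throughout in the cone above an admissible oracle, which by Remark~\ref{rem:admissible-cone} also carries a computable metric structure. I would then check, via Remark~\ref{rem:structure-independence}, that the Lutz--Lutz--Mayordomo principle is unaffected by this choice of structure above that oracle.

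For the final clause, add the assumption $\underline{u}\geq\alpha>0$ on $Z$. Then invoke Theorem~\ref{thm:BPE_unique}, which needs finite entropy; we may take this as part of the setting, or note that an infinite-entropy $Z$ trivially has no solution. That theorem identifies $\dim_{BS}$ and $\Dim_{BS}$ as the unique zeros of the lower and upper Bowen equations, which transfers the conclusion to $\dim_H(Z)$ and $\dim_P(Z)$.
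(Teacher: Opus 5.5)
Your argument is correct and is essentially the paper's proof. Theorem~\ref{thm:pointwise-conformal} gives $\dim^A(x)=\dim^A_{BS}(x,T,u)$ and $\Dim^A(x)=\Dim^A_{BS}(x,T,u)$ on $Z$ for every oracle, so the Lutz--Lutz--Mayordomo minima and the minima of Theorems~\ref{thm:pts_lower_bs} and~\ref{thm:pts_upper} are minima of the same function of $A$; your joined oracle $B$ just makes this explicit, and your attention to using one structure for both pointwise quantities is a reasonable point the paper leaves implicit.

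Your final paragraph goes beyond this statement, and as written it is slightly off. The existence half of Theorem~\ref{thm:BPE_unique} also requires the uniform upper bound $\overline{u}\leq\beta$ on $Z$, that is, $Z\subset\mathcal{A}(\alpha,\beta)$, as in Theorem~\ref{thm:conformal-bowen}.
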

\begin{proof}
By Theorem~\ref{thm:pointwise-conformal}, for every oracle $A$ and
every $x\in Z$ we have $\dim^A(x)=\dim^A_{BS}(x,T,u)$ and
$\Dim^A(x)=\Dim^A_{BS}(x,T,u)$. Hence, by the point-to-set
principles for Hausdorff dimension \cite{lutz-2023} and
for lower BS dimension (Theorem~\ref{thm:pts_lower_bs}),
\[\dim_H(Z)
=\min\limits_{A\subset \mathbb{N}}\sup_{x\in Z}\dim^A(x)
=\min\limits_{A\subset \mathbb{N}}\sup_{x\in Z}\dim^A_{BS}(x,T,u)
=\dim_{BS}(Z,T,u),\]
and by the principles for packing dimension
\cite{lutz-2023} and upper BS dimension
(Theorem~\ref{thm:pts_upper}),
\[\dim_P(Z)
=\min\limits_{A\subset \mathbb{N}}\sup_{x\in Z}\Dim^A(x)
=\min\limits_{A\subset \mathbb{N}}\sup_{x\in Z}\Dim^A_{BS}(x,T,u)
=\Dim_{BS}(Z,T,u).\]
\end{proof}
We now combine Theorem~\ref{thm:conformal-dim} with the results of
Section~\ref{subsec:existence-uniqueness}.
\repthm{thm:BPE}
\repthm{thm:BPE_unique}
\begin{theorem}[Bowen's equation for conformal systems]
\label{thm:conformal-bowen}
Let $(X,d,T)$ be conformal and let
$Z\subset\mathcal{A}(0,\infty)\cap\mathcal{B}\cap\mathcal{C}$.
Then
\[\dim_{H}(Z)
=\sup\{s\mid\underline{P}(Z,T,-su)> 0\}
=\inf\{s\mid \underline{P}(Z,T,-su)\leq 0\},\]
\[\dim_P(Z)
=\sup\{s\mid\overline{P}(Z,T,-su)> 0\}
=\inf\{s\mid \overline{P}(Z,T,-su)\leq 0\}.\]
If moreover
$Z\subset\mathcal{A}(\alpha,\beta)$ for some
$0<\alpha\leq\beta<\infty$ and $\underline{h}(Z,T)<\infty$ (respectively $\overline{h}(Z,T)<\infty$), then the lower and upper Bowen pressure
equations
\[\underline{P}(Z,T,-su)=0,
\qquad \overline{P}(Z,T,-su)=0\]
have unique solutions, given by $\dim_H(Z)$ and $\dim_P(Z)$
respectively.
\end{theorem}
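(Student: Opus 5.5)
The plan is to chain the two conversions already established: Theorem~\ref{thm:conformal-dim} identifies the geometric dimensions with the BS dimensions, and Theorems~\ref{thm:BPE} and~\ref{thm:BPE_unique} identify the BS dimensions with the critical parameters and roots of the Bowen pressure equations. No new pointwise work should be needed. First, since $Z\subset\mathcal{A}(0,\infty)\cap\mathcal{B}\cap\mathcal{C}$, Theorem~\ref{thm:conformal-dim} gives $\dim_H(Z)=\dim_{BS}(Z,T,u)$ and $\dim_P(Z)=\Dim_{BS}(Z,T,u)$. The same inclusion $Z\subset\mathcal{A}(0,\infty)$ means $0<\underline{u}(x)\leq\overline{u}(x)<\infty$ for every $x\in Z$. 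This is exactly the hypothesis of Theorem~\ref{thm:BPE}, so both the $\sup$ and $\inf$ critical-value characterizations hold for $\dim_{BS}$ and $\Dim_{BS}$. Substituting the first identities into these yields the first two displays.

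For the ``moreover'' clause, note that $Z\subset\mathcal{A}(\alpha,\beta)$ gives $\alpha<\underline{u}(x)\leq\overline{u}(x)<\beta$ on $Z$. Together with the finite entropy assumption, this places us in the combined existence-and-uniqueness case of Theorem~\ref{thm:BPE_unique}.
\begin{itemize}
\item \emph{Existence} gives $\underline{P}(Z,T,-s_0u)=0$ at $s_0=\dim_{BS}(Z,T,u)=\dim_H(Z)$. Under $\overline{h}(Z,T)<\infty$, the upper equation $\overline{P}(Z,T,-s_0u)=0$ holds at $s_0=\Dim_{BS}(Z,T,u)=\dim_P(Z)$.
\item \emph{Uniqueness} follows from the strict decrease in Lemma~\ref{lem:set-monotone}, which rules out a second root.
\end{itemize}

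Two bookkeeping points are the only real obstacle. The first concerns the finite-entropy hypotheses. The lower statement needs $\underline{h}(Z,T)<\infty$ and the upper needs $\overline{h}(Z,T)<\infty$, and I would state the conclusions separately under each, matching the ``respectively'' of Theorem~\ref{thm:BPE_unique}. The second concerns the strict versus non-strict bounds: the open bounds of $\mathcal{A}(\alpha,\beta)$ imply the closed bounds $\alpha\leq\underline{u}$ and $\overline{u}\leq\beta$ required in Theorem~\ref{thm:BPE_unique}. I would also check that conformality supplies the standing hypotheses used in Section~\ref{sec_point_to_set} for $u=\log a$. These are continuity of $T$ and uniform continuity of $u$, and I would record that uniform continuity of $u$ follows from uniform continuity of $\zeta$ restricted to the diagonal.
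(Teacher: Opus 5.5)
Your proposal is correct and is essentially the paper's proof. The paper likewise obtains the displayed characterizations by combining Theorem~\ref{thm:conformal-dim} with Theorem~\ref{thm:BPE}, whose hypotheses follow from $Z\subset\mathcal{A}(0,\infty)$. It obtains the final clause by combining Theorem~\ref{thm:conformal-dim} with Theorem~\ref{thm:BPE_unique}, with the needed bounds coming from $Z\subset\mathcal{A}(\alpha,\beta)$. Your extra bookkeeping is sound and only makes explicit what the paper leaves implicit: the open-versus-closed bounds, the separate finite-entropy hypotheses, and uniform continuity of $u=\log a$ via restricting $\zeta$ to the diagonal.
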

\begin{proof}
The displayed characterizations are Theorem~\ref{thm:BPE} combined
with Theorem~\ref{thm:conformal-dim}, the hypotheses of the former
holding since $Z\subset\mathcal{A}(0,\infty)$. The final
statement is Theorem~\ref{thm:BPE_unique} combined with
Theorem~\ref{thm:conformal-dim}, the hypotheses
$\alpha\leq\underline{u}\leq\overline{u}\leq\beta$ on $Z$ holding
since $Z\subset\mathcal{A}(\alpha,\beta)$.
\end{proof}

\appendix
\section{Equivalence with the classical packing-type definitions}
\subsection{Packing pressure and the equivalence with modified upper
capacity pressure}\label{app:packing-pressure}

In this appendix we prove the equivalence, promised in
Section~\ref{subsubsec:upper-pressure-disc}, between the modified
upper capacity pressure $P_{\mathrm{mUC}}$ adopted there as the
working definition of upper pressure and the classical packing
pressure. The route passes through the intermediate quantity $P_{\mathrm{mUC}}^{\mathrm{sep}}$ defined via separated sets (definition \ref{def:UC-separated}), and
the chain of comparisons is
\[
P^{\mathrm{pack}}
\;\overset{\text{Lem.~\ref{lem:pack-geq-sepmUC}+Lem.~\ref{lem:pack-leq-sepmUC}}}{=\joinrel=}\;
P_{\mathrm{mUC}}^{\mathrm{sep}}
\;\overset{\text{Cor.~\ref{cor:sep-cov-pressure}}}{=\joinrel=}\;
P_{\mathrm{mUC}},
\]
all three quantities being weighted by the sup-weights
$\varphi(x,n,\varepsilon)$. The comparisons cost at most a
scale-halving and a factor $2^{\,n\Var(\varphi,2\varepsilon)}$,
which vanish at scale $0$ (Lemma~\ref{lem:span-sep}).

\begin{remark}[Center weights versus sup weights]
\label{rem:weights-appendix}
The set-level definitions of the body and of this appendix weight a
Bowen ball $B_n(x,T,\varepsilon)$ by the supremum of the potential
over the ball, while the discretized covers and nets of the body
weight it at an ideal-point center. If $y\in B_n(x_i,T,\varepsilon)$
then $d(T^ky,T^kx_i)<\varepsilon$ for all $k<n$, so
$|S_n\varphi(y)-S_n\varphi(x_i)|\leq n\Var(\varphi,\varepsilon)$ and
hence
\[
0\;\leq\;\varphi(x_i,n,\varepsilon)-S_n\varphi(x_i)
\;\leq\; n\Var(\varphi,\varepsilon),
\]
and likewise
$0\leq u(x_i,n,\varepsilon)-S_nu(x_i)\leq n\Var(u,\varepsilon)$.
These are the only estimates needed to pass between the two
weightings: the pressure discretizations absorb them additively
(critical values shift by at most $\Var(\varphi,\varepsilon)$,
vanishing at scale $0$), and the BS discretizations absorb them into
an arbitrarily small increase of the exponent using
$\underline{u}>0$ (Lemmas~\ref{lem:null_cover}, \ref{lem:s-net},
\ref{lem:bs_null_cover}, and~\ref{lem:bs-s-net}).
\end{remark}

\begin{definition}[Packing pressure]\label{def:packing-pressure}
For $Z \subset X$, $s \in \mathbb{R}$, $\varepsilon>0$, and
$N \in \mathbb{N}$, let $\mathcal{P}^*(Z,N,\varepsilon)$ denote the
collection of finite or countable disjoint families
$\{B_{n_i}(x_i,T,\varepsilon)\}_i$ with $x_i \in Z$ and
$n_i \geq N$, and set
\[
M(Z, s, \varphi, N, \varepsilon)
= \sup_{\mathcal{P}^*(Z,N,\varepsilon)} \sum_{i}
2^{-s n_i + \varphi(x_i,n_i,\varepsilon)}.
\]
$M$ is nonincreasing in $N$; let
\[
m^*(Z, s, \varphi, \varepsilon)
= \lim\limits_{N \to \infty} M(Z, s, \varphi, N, \varepsilon),
\qquad
m^{**}(Z, s, \varphi, \varepsilon)
= \inf\Bigl\{ \sum_{i} m^*(Z_i, s, \varphi, \varepsilon)
: Z\subset\bigcup_{i} Z_i \Bigr\}.
\]
The function $s\mapsto m^{**}(Z,s,\varphi,\varepsilon)$ takes the
value $\infty$ below a critical value and $0$ above it, and we set
\[
\overline{P}{}^{\mathrm{pack}}(Z,T,\varphi,\varepsilon)
= \inf\{s : m^{**}(Z, s, \varphi, \varepsilon) = 0\}
= \sup\{s : m^{**}(Z, s, \varphi, \varepsilon) = \infty\},
\qquad
\overline{P}^{\mathrm{pack}}(Z,T,\varphi)
= \lim\limits_{\varepsilon \to 0}
\overline{P}{}^{\mathrm{pack}}(Z,T,\varphi,\varepsilon).
\]
The packing entropy is
$\overline{h}(Z,T)=\overline{P}{}^{\mathrm{pack}}(Z,T,0)$.
\end{definition}
\begin{lemma}[Scale monotonicity of packing pressure up to variation]
\label{lem:pack-scale-monotone}
For $0<\varepsilon'\leq\varepsilon$,
\[
\overline{P}{}^{\mathrm{pack}}(Z,T,\varphi,\varepsilon')
\;\geq\;
\overline{P}{}^{\mathrm{pack}}(Z,T,\varphi,\varepsilon)
-\Var(\varphi,\varepsilon).
\]
Consequently the limit defining
$\overline{P}{}^{\mathrm{pack}}(Z,T,\varphi)$ exists.
\end{lemma}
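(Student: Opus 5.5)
We compare the packing sums at the two scales family by family. The key observation is that a disjoint family at the smaller scale is still admissible at the larger scale, provided the weights are compared correctly. The step we expect to be the main obstacle is the direction of inclusion: shrinking the radius makes disjointness easier, not harder. So a packing at scale $\varepsilon$ is automatically a packing at scale $\varepsilon'\leq\varepsilon$, but not conversely. We therefore organize the argument as ``every $\varepsilon$-packing is an $\varepsilon'$-packing.''

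\emph{Step 1 (comparing packing sums).} Fix $Z$, $s$, and $N$, and let $\{B_{n_i}(x_i,T,\varepsilon)\}_i\in\mathcal{P}^*(Z,N,\varepsilon)$ be a disjoint family with centers in $Z$. Since $B_{n_i}(x_i,T,\varepsilon')\subset B_{n_i}(x_i,T,\varepsilon)$, the smaller balls are still pairwise disjoint with the same centers and orders. Hence the family belongs to $\mathcal{P}^*(Z,N,\varepsilon')$.

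For the weights, the estimate of Remark~\ref{rem:weights-appendix} gives
\[
\varphi(x_i,n_i,\varepsilon)\leq S_{n_i}\varphi(x_i)+n_i\Var(\varphi,\varepsilon)\leq \varphi(x_i,n_i,\varepsilon')+n_i\Var(\varphi,\varepsilon),
\]
where the second inequality uses $x_i\in B_{n_i}(x_i,T,\varepsilon')$. Taking suprema over families therefore yields
\[
M(Z,s,\varphi,N,\varepsilon)\leq M\bigl(Z,s-\Var(\varphi,\varepsilon),\varphi,N,\varepsilon'\bigr),
\]
because each term satisfies $2^{-sn_i+\varphi(x_i,n_i,\varepsilon)}\leq 2^{-(s-\Var(\varphi,\varepsilon))n_i+\varphi(x_i,n_i,\varepsilon')}$.

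\emph{Step 2 (passing to $m^*$ and $m^{**}$).} Letting $N\to\infty$ in the inequality of Step 1 gives $m^*(Y,s,\varphi,\varepsilon)\leq m^*(Y,s-\Var(\varphi,\varepsilon),\varphi,\varepsilon')$ for every $Y\subset X$, since the argument applies to any set. Next, take any countable cover $Z\subset\bigcup_i Z_i$, apply this inequality to each $Z_i$, sum over $i$, and take the infimum over covers. This gives
\[
m^{**}(Z,s,\varphi,\varepsilon)\leq m^{**}\bigl(Z,s-\Var(\varphi,\varepsilon),\varphi,\varepsilon'\bigr).
\]

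\emph{Step 3 (comparing critical values).} Let $s<\overline{P}{}^{\mathrm{pack}}(Z,T,\varphi,\varepsilon)$. By the stated dichotomy, $m^{**}(Z,s,\varphi,\varepsilon)=\infty$, and Step 2 then forces $m^{**}(Z,s-\Var(\varphi,\varepsilon),\varphi,\varepsilon')=\infty$. Consequently
\[
\overline{P}{}^{\mathrm{pack}}(Z,T,\varphi,\varepsilon')\geq s-\Var(\varphi,\varepsilon).
\]
Letting $s$ increase to $\overline{P}{}^{\mathrm{pack}}(Z,T,\varphi,\varepsilon)$ gives the claimed inequality. Infinite values need no separate treatment, since the argument is phrased through $\sup\{s: m^{**}=\infty\}$.

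\emph{Step 4 (existence of the limit).} Write $f(\varepsilon)=\overline{P}{}^{\mathrm{pack}}(Z,T,\varphi,\varepsilon)$ and $v(\varepsilon)=\Var(\varphi,\varepsilon)$. Step 3 says $f(\varepsilon')\geq f(\varepsilon)-v(\varepsilon)$ for $\varepsilon'\leq\varepsilon$, and $v(\varepsilon)\to0$ as $\varepsilon\to0$ by uniform continuity of $\varphi$. Taking $\liminf_{\varepsilon'\to0}$ gives
\[
\liminf_{\varepsilon'\to0}f(\varepsilon')\geq f(\varepsilon)-v(\varepsilon)\quad\text{for every }\varepsilon>0.
\]
Now take $\limsup_{\varepsilon\to0}$ of the right-hand side. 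Since $v(\varepsilon)\to0$, this yields $\liminf_{\varepsilon\to0}f\geq\limsup_{\varepsilon\to0}f$. Hence the limit exists in $(-\infty,\infty]$, and it is $+\infty$ if $f$ is unbounded.
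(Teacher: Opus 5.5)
Your proof is correct and is essentially the paper's argument. You shrink the radii of an $\varepsilon$-packing to get an $\varepsilon'$-packing, compare sup-weights through the center value to get $M(Z,s,\varphi,N,\varepsilon)\leq M(Z,s-\Var(\varphi,\varepsilon),\varphi,N,\varepsilon')$ (the paper's inequality with $s$ shifted), pass this through $m^*$ and $m^{**}$, and obtain the limit from $\liminf\geq\limsup$. There are two cosmetic slips: your opening sentence states the inclusion of packings in the wrong direction before you correct it, and the limit lies in $[-\infty,\infty]$, not $(-\infty,\infty]$ (e.g.\ $Z=\emptyset$ gives $-\infty$ at every scale).
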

\begin{proof}
A disjoint family $\{B_{n_i}(x_i,T,\varepsilon)\}_i$ admissible
for $M(Z,s,\varphi,N,\varepsilon)$ remains disjoint after
shrinking the radii to $\varepsilon'$, and since each center lies
in its own ball,
\[
\varphi(x_i,n_i,\varepsilon')\geq S_{n_i}\varphi(x_i)
\geq \varphi(x_i,n_i,\varepsilon)-n_i\Var(\varphi,\varepsilon).
\]
Hence
$M(Z,s,\varphi,N,\varepsilon')\geq
M(Z,s+\Var(\varphi,\varepsilon),\varphi,N,\varepsilon)$, and
this inequality passes through $m^*$ and $m^{**}$, shifting the
critical value by at most $\Var(\varphi,\varepsilon)$, which is
the displayed bound. For the limit: for every $\varepsilon>0$ the
bound gives
\[
\liminf\limits_{\varepsilon'\to 0}
\overline{P}{}^{\mathrm{pack}}(Z,T,\varphi,\varepsilon')
\geq \overline{P}{}^{\mathrm{pack}}(Z,T,\varphi,\varepsilon)
-\Var(\varphi,\varepsilon),
\]
and letting $\varepsilon\to 0$, so that
$\Var(\varphi,\varepsilon)\to 0$ by uniform continuity, gives
$\liminf_{\varepsilon\to 0}
\overline{P}{}^{\mathrm{pack}}(Z,T,\varphi,\varepsilon)
\geq\limsup_{\varepsilon\to 0}
\overline{P}{}^{\mathrm{pack}}(Z,T,\varphi,\varepsilon)$.
\end{proof}
\begin{remark}\label{rem:packing-def-correction}
In \cite{zhao_2017_a} the definition was given for compact metric
spaces. It parses verbatim on complete separable spaces with
$\varphi$ uniformly continuous, and the properties used below hold
with the same proofs: the $\infty/0$ dichotomy in $s$ (immediate, as
the weight $2^{-sn_i}$ with $n_i\geq N$ gives
$m^{**}(Z,s',\cdot)\leq 2^{-(s'-s)N}$-factored comparisons),
monotonicity of the critical value in $\varepsilon$, and countable
stability of $m^{**}$ (built into its definition). No arguments use compactness.
\end{remark}
\begin{definition}[Upper capacity pressure via separated sets]
\label{def:UC-separated}
For $Z \subset X$, $n\in\mathbb{N}$, and $\varepsilon>0$, recall
that $F\subset Z$ is \emph{$(n,\varepsilon)$-separated} if
distinct $x,y\in F$ satisfy $y\notin B_n(x,T,\varepsilon)$. Set
\[
Q_n(Z,T,\varphi,\varepsilon)
= \sup\Bigl\{ \sum_{x \in F} 2^{\varphi(x,n,\varepsilon)}
: F\subset Z \text{ is } (n, \varepsilon)\text{-separated}
\Bigr\},
\]
weighting each separated point by the same sup-weight used in
Definition~\ref{def:packing-pressure} and in the BS-side
Definition~\ref{def:mUCsep}.
The \emph{upper capacity pressure via separated sets} at scale $\varepsilon$ is
\[
P_{\mathrm{UC}}^{\mathrm{sep}}(Z,T,\varphi,\varepsilon)
=\limsup\limits_{n \to \infty}
\frac{1}{n} \log Q_n(Z,T,\varphi,\varepsilon);
\]
The \emph{modified upper capacity pressure via separated sets} at scale $\varepsilon$ is
\[P_{\mathrm{mUC}}^{\mathrm{sep}}(Z,T,\varphi,\varepsilon)=\inf_{\{\cup_iZ_i=Z\}} \sup_i P^{\mathrm{sep}}_{\mathrm{UC}}(Z_i,T,\varphi, \varepsilon)\]
The \emph{modified upper capacity pressure via separated sets} is
\[P_{\mathrm{mUC}}^{\mathrm{sep}}(Z,T,\varphi)=\lim\limits_{\varepsilon\to 0}P_{\mathrm{mUC}}^{\mathrm{sep}}(Z,T,\varphi,\varepsilon)\]
(the existence of this limit follows from the same argument as Lemma \ref{lem:pack-scale-monotone}).
\end{definition}
We first show that the scale-$0$ limit defining
$P^{\mathrm{sep}}_{\mathrm{mUC}}(Z,T,\varphi)$ equals
$P^{\mathrm{pack}}(Z,T,\varphi)$ via lemmas~\ref{lem:pack-geq-sepmUC}
and~\ref{lem:pack-leq-sepmUC}. 
\begin{lemma}\label{lem:pressure-sep-pack-eq}
     \[P^{\mathrm{sep}}_{\mathrm{mUC}}(Z,T,\varphi)=P^{\mathrm{pack}}(Z,T, \varphi)\]
\end{lemma}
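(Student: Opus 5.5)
The plan is to prove the two inequalities at each fixed scale up to a scale change and a variation error, and then let the scale go to $0$; this is the pressure analogue of the classical argument (Tricot, Feng--Huang, Zhao) that packing dimension equals modified upper box dimension. Concretely, I would show
\[
P^{\mathrm{pack}}(Z,T,\varphi,\varepsilon)\;\geq\;P^{\mathrm{sep}}_{\mathrm{mUC}}(Z,T,\varphi,2\varepsilon)-\Var(\varphi,2\varepsilon)
\]
(Lemma~\ref{lem:pack-geq-sepmUC}) and
\[
P^{\mathrm{pack}}(Z,T,\varphi,\varepsilon)\;\leq\;P^{\mathrm{sep}}_{\mathrm{mUC}}(Z,T,\varphi,\varepsilon).
\]
Both limits exist, by Lemma~\ref{lem:pack-scale-monotone} and its separated-set analogue. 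Letting $\varepsilon\to 0$ with $\Var(\varphi,2\varepsilon)\to0$ then gives the claimed equality.

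\emph{Packing dominates separated capacity.} Take $s<P^{\mathrm{sep}}_{\mathrm{mUC}}(Z,T,\varphi,2\varepsilon)-\Var(\varphi,2\varepsilon)$. I would show $m^{**}(Z,s,\varphi,\varepsilon)=\infty$ by proving that every piece $Z_i$ of a countable cover with $m^*(Z_i,s,\varphi,\varepsilon)<\infty$ is ``small''. For such a piece, fix $N$ and a finite $(n,2\varepsilon)$-separated set $F\subset Z_i$ with $n\geq N$. The balls $B_n(x,T,\varepsilon)$, $x\in F$, are pairwise disjoint. Hence
\[
M(Z_i,s,\varphi,N,\varepsilon)\;\geq\;2^{-sn}\sum_{x\in F}2^{\varphi(x,n,\varepsilon)}\;\geq\;2^{-sn-n\Var(\varphi,2\varepsilon)}\sum_{x\in F}2^{\varphi(x,n,2\varepsilon)},
\]
where the second step compares sup-weights as in Remark~\ref{rem:weights-appendix}. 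So $m^*(Z_i)<\infty$ forces $P^{\mathrm{sep}}_{\mathrm{UC}}(Z_i,T,\varphi,2\varepsilon)\leq s+\Var(\varphi,2\varepsilon)$. Since $P^{\mathrm{sep}}_{\mathrm{mUC}}(Z,T,\varphi,2\varepsilon)>s+\Var(\varphi,2\varepsilon)$, in every countable cover some piece has $m^*=\infty$, and therefore $m^{**}(Z,s,\varphi,\varepsilon)=\infty$. To handle $\sum_i m^*(Z_i)$ rather than a single piece, I would use the standard trick: if $m^{**}<\infty$, then after discarding pieces we get a cover with all $m^*(Z_i)<\infty$, which is a contradiction.

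\emph{Separated capacity dominates packing.} Take $s>P^{\mathrm{sep}}_{\mathrm{mUC}}(Z,T,\varphi,\varepsilon)$ and a cover $Z=\bigcup_i Z_i$ with $P^{\mathrm{sep}}_{\mathrm{UC}}(Z_i,T,\varphi,\varepsilon)<t<s$. Given a disjoint packing $\{B_{n_j}(x_j,T,\varepsilon)\}$ with $x_j\in Z_i$ and $n_j\geq N$, group the balls by order $n$. The centers with a common order $n$ form an $(n,\varepsilon)$-separated subset of $Z_i$: disjointness of the balls about them makes each center lie outside the other balls. Hence
\[
M(Z_i,s,\varphi,N,\varepsilon)\;\leq\;\sum_{n\geq N}2^{-sn}Q_n(Z_i,T,\varphi,\varepsilon)\;\leq\;\sum_{n\geq N}2^{-(s-t)n}
\]
for $N$ large, which tends to $0$. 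Thus $m^*(Z_i,s,\varphi,\varepsilon)=0$ for every $i$, so $m^{**}(Z,s,\varphi,\varepsilon)=0$ and $P^{\mathrm{pack}}(Z,T,\varphi,\varepsilon)\leq s$.

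\emph{Main obstacle.} The delicate step is the first inequality. Disjointness of the packing balls at radius $\varepsilon$ comes only from separation at radius $2\varepsilon$, which is why the scale must be halved, and the sup-weights at the two radii differ, which costs the factor $2^{n\Var(\varphi,2\varepsilon)}$. Both losses disappear in the limit, but it must be checked that the scale-$\varepsilon$ limits are taken consistently. I would rely on Lemma~\ref{lem:pack-scale-monotone} for this. I would also rely on the countable-stability argument, that a finite $m^{**}$ yields a cover all of whose pieces have finite $m^*$, which is exactly as in the classical compact case and uses no compactness (Remark~\ref{rem:packing-def-correction}).
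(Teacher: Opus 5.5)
Your proposal is correct and follows the paper's route. It uses the same two finite-scale comparisons: disjoint $\varepsilon$-balls about an $(n,2\varepsilon)$-separated set with the $\Var(\varphi,2\varepsilon)$ sup-weight correction, and same-order centers of a disjoint packing forming an $(n,\varepsilon)$-separated set, followed by $\varepsilon\to 0$. The differences are cosmetic: you argue the first inequality contrapositively, and for the second you bound $M(Z_i,s,\varphi,N,\varepsilon)\le\sum_{n\ge N}2^{-sn}Q_n(Z_i,T,\varphi,\varepsilon)$ directly instead of the paper's pigeonhole choice of a single order $k$, which is slightly cleaner. Also, no ``discarding'' of pieces is needed, since a cover with finite $\sum_i m^*(Z_i)$ already has every term finite.
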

\begin{remark}
    This lemma was proven in \cite{zhao_2017_a} with the same proof, but we reproduce it here for the following reasons. In the cited paper the definition of modified upper capacity pressure is stated as being defined with a countable decomposition at scale 0 instead of each scale separately. The proof was stated for this quantity while the theorem was proved for the standard modified upper capacity pressure instead. This was confirmed via private correspondence with the authors, who shared the corrected version of the statement. Simultaneously the lemma was only stated in the compact case while the proof only uses uniform continuity of the potential $\varphi$. To avoid confusion we decided to reproduce the lemma entirely.
\end{remark}
We then show
$P^{\mathrm{sep}}_{\mathrm{mUC}}(Z,T,\varphi)
=P_{\mathrm{mUC}}(Z,T,\varphi)$, which concludes our justification
of using $P_{\mathrm{mUC}}(Z,T,\varphi)$ as our definition.
\begin{lemma}\label{lem:pack-geq-sepmUC}
    \[P^{\mathrm{sep}}_{\mathrm{mUC}}(Z,T,\varphi)\leq P^{\mathrm{pack}}(Z,T, \varphi)\]
\end{lemma}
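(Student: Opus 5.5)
We need to prove $P^{\mathrm{sep}}_{\mathrm{mUC}}(Z,T,\varphi)\leq P^{\mathrm{pack}}(Z,T,\varphi)$. The plan is to work at a fixed scale $\varepsilon$ and show
\[
P^{\mathrm{sep}}_{\mathrm{mUC}}(Z,T,\varphi,\varepsilon)\leq \overline{P}{}^{\mathrm{pack}}(Z,T,\varphi,\varepsilon/2)+\Var(\varphi,\varepsilon).
\]
Then let $\varepsilon\to 0$, using Lemma~\ref{lem:pack-scale-monotone} and uniform continuity so that the error terms vanish.

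\emph{Step 1 (the countable decomposition).} Fix $s>\overline{P}{}^{\mathrm{pack}}(Z,T,\varphi,\varepsilon/2)$. Then $m^{**}(Z,s,\varphi,\varepsilon/2)=0$, so there is a cover $Z\subset\bigcup_i Z_i$ with $\sum_i m^*(Z_i,s,\varphi,\varepsilon/2)<1$. In particular $m^*(Z_i,s,\varphi,\varepsilon/2)<1$ for every $i$. Since $M$ is nonincreasing in $N$, for each $i$ there is $N_i$ with $M(Z_i,s,\varphi,N,\varepsilon/2)<1$ for all $N\geq N_i$. This decomposition $\{Z_i\}$ is the one fed into the infimum defining $P^{\mathrm{sep}}_{\mathrm{mUC}}(Z,T,\varphi,\varepsilon)$.

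\emph{Step 2 (separated sets to packings).} Fix $i$, take $n\geq N_i$, and let $F\subset Z_i$ be $(n,\varepsilon)$-separated. The balls $B_n(x,T,\varepsilon/2)$, $x\in F$, are pairwise disjoint by the triangle inequality in $d_n$, and their centers lie in $Z_i$. So they form a member of $\mathcal{P}^*(Z_i,n,\varepsilon/2)$ with all orders equal to $n$. Hence
\[
\sum_{x\in F}2^{-sn+\varphi(x,n,\varepsilon/2)}\leq M(Z_i,s,\varphi,n,\varepsilon/2)<1.
\]
Since $\varphi(x,n,\varepsilon)\leq S_n\varphi(x)+n\Var(\varphi,\varepsilon)\leq\varphi(x,n,\varepsilon/2)+n\Var(\varphi,\varepsilon)$, this gives
\[
\sum_{x\in F}2^{\varphi(x,n,\varepsilon)}\leq 2^{n(s+\Var(\varphi,\varepsilon))}.
\]
Taking the supremum over $F$ yields $Q_n(Z_i,T,\varphi,\varepsilon)\leq 2^{n(s+\Var(\varphi,\varepsilon))}$ for $n\geq N_i$. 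Therefore $P^{\mathrm{sep}}_{\mathrm{UC}}(Z_i,T,\varphi,\varepsilon)\leq s+\Var(\varphi,\varepsilon)$ for every $i$. Infimizing over $s$ gives the scale-$\varepsilon$ inequality.

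\emph{Step 3 (passing to scale $0$).} Let $\varepsilon\to 0$. The left side converges to $P^{\mathrm{sep}}_{\mathrm{mUC}}(Z,T,\varphi)$, and $\overline{P}{}^{\mathrm{pack}}(Z,T,\varphi,\varepsilon/2)$ converges to $P^{\mathrm{pack}}(Z,T,\varphi)$; both limits exist by Lemma~\ref{lem:pack-scale-monotone} and its separated analogue. Meanwhile $\Var(\varphi,\varepsilon)\to0$.

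The main point of care is Step 1. It needs each piece to have $m^*<1$, which follows from $m^{**}=0$, rather than merely $m^*<\infty$. It also needs the weight mismatch between radii $\varepsilon$ and $\varepsilon/2$ to be handled through the center value $S_n\varphi(x)$. If $m^{**}$ were only finite, I would instead use $s'>s$ together with $m^*<\infty$, which gives growth rate at most $s$ just as well; the dichotomy makes either route valid.
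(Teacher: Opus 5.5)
Your proof is correct and is essentially the paper's argument. You pick a decomposition witnessing $m^{**}=0$ at the finer scale, turn an $(n,\varepsilon)$-separated subset of a piece into a disjoint packing by balls of half the radius, and absorb the weight mismatch through the center value $S_n\varphi(x)$ at a cost of $\Var(\varphi,\varepsilon)$. The only differences are cosmetic: you use the scale pair $(\varepsilon/2,\varepsilon)$ and prove a scale-by-scale inequality before letting $\varepsilon\to 0$, whereas the paper uses $(\varepsilon,2\varepsilon)$ and fixes $a$ above the scale-$0$ packing pressure first.
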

\begin{proof}
Fix any $a > P^{\mathrm{pack}}(Z,T, \varphi) = \lim\limits_{\varepsilon \to 0} P^{\mathrm{pack}}(Z,T, \varphi, \varepsilon)$. For $\varepsilon > 0$ small enough we have $P^{\mathrm{pack}}(Z,T, \varphi, \varepsilon) < a$, so $m^{**}(Z, a, \varphi, \varepsilon) = 0$. We can choose $\{Z_i\}_{i=1}^{\infty}$ such that $\bigcup_{i=1}^{\infty} Z_i \supset Z$ and $\sum_{i=1}^{\infty} m^*(Z_i, a, \varphi, \varepsilon) < 1$. Since $M$ is nonincreasing in $N$, for each $i$ and all $N$ large enough we have $M(Z_i, a, \varphi, N, \varepsilon) < 1$. Next, we will prove for any $i$,
\[
P^{\mathrm{sep}}_{\mathrm{UC}}(Z_i,T, \varphi, 2\varepsilon) \leq a + \Var(\varphi,2\varepsilon).
\]
Let $E_i$ be any $(N, 2\varepsilon)$-separated set of $Z_i$ with $N$ as above. The family $\{B_N(x,T, \varepsilon)\}_{x \in E_i}$ is disjoint: if two of its balls met, their centers would satisfy $d_N(x,x') < 2\varepsilon$, contradicting separation. Its centers lie in $Z_i$ and its orders equal $N$, so it is admissible for $M(Z_i, a, \varphi, N, \varepsilon)$. Hence
\[
\sum_{x \in E_i} 2^{-Na + \varphi(x,N,\varepsilon)} \leq M(Z_i, a, \varphi, N, \varepsilon) < 1.
\]
Since each $x \in E_i$ lies in its own ball,
$\varphi(x,N,2\varepsilon) \leq S_N\varphi(x)
+ N\Var(\varphi,2\varepsilon)
\leq \varphi(x,N,\varepsilon) + N\Var(\varphi,2\varepsilon)$, so
\[
\sum_{x \in E_i} 2^{\varphi(x,N,2\varepsilon)} \leq 2^{N(a + \Var(\varphi,2\varepsilon))}.
\]
Taking the supremum over such $E_i$ gives $Q_N(Z_i,T,\varphi,2\varepsilon) \leq 2^{N(a+\Var(\varphi,2\varepsilon))}$ for all large $N$, hence $P^{\mathrm{sep}}_{\mathrm{UC}}(Z_i,T,\varphi,2\varepsilon) \leq a + \Var(\varphi,2\varepsilon)$. The decomposition $\{Z_i\}$ therefore witnesses $P^{\mathrm{sep}}_{\mathrm{mUC}}(Z,T,\varphi,2\varepsilon) \leq a + \Var(\varphi,2\varepsilon)$ for all sufficiently small $\varepsilon$. Letting $\varepsilon \to 0$, so that $\Var(\varphi,2\varepsilon)\to 0$ by uniform continuity, gives $\lim\limits_{\varepsilon\to 0}P^{\mathrm{sep}}_{\mathrm{mUC}}(Z,T,\varphi,\varepsilon)\leq a$, and $a \downarrow P^{\mathrm{pack}}(Z,T,\varphi)$ finishes the proof.
\end{proof}
\begin{lemma}\label{lem:pack-leq-sepmUC}
    \[P^{\mathrm{pack}}(Z,T, \varphi)\;\leq\; P^{\mathrm{sep}}_{\mathrm{mUC}}(Z,T,\varphi)\]
\end{lemma}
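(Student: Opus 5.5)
The plan is to run the standard "packings at a single order are separated sets" argument directly at a fixed scale $\varepsilon$. The point is that this comparison should cost nothing at all, neither a scale change nor a $\Var(\varphi,\cdot)$ term. If $P^{\mathrm{sep}}_{\mathrm{mUC}}(Z,T,\varphi)=\infty$ there is nothing to prove. Otherwise fix $a>P^{\mathrm{sep}}_{\mathrm{mUC}}(Z,T,\varphi)$. Since this quantity is the limit of $P^{\mathrm{sep}}_{\mathrm{mUC}}(Z,T,\varphi,\varepsilon)$ as $\varepsilon\to0$ (existence of the limit as in Definition~\ref{def:UC-separated}), there is $\varepsilon_0>0$ with $P^{\mathrm{sep}}_{\mathrm{mUC}}(Z,T,\varphi,\varepsilon)<a$ for all $\varepsilon<\varepsilon_0$. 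For such $\varepsilon$, choose $a'<a$ and a countable decomposition $Z=\bigcup_i Z_i$ with $P^{\mathrm{sep}}_{\mathrm{UC}}(Z_i,T,\varphi,\varepsilon)<a'$ for every $i$. Then for each $i$ there is $N_i$ with $Q_n(Z_i,T,\varphi,\varepsilon)\leq 2^{na'}$ for all $n\geq N_i$.

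The key step is to show $m^*(Z_i,a,\varphi,\varepsilon)=0$ for every $i$. Take any disjoint family $\{B_{n_j}(x_j,T,\varepsilon)\}_j\in\mathcal{P}^*(Z_i,N,\varepsilon)$ with $N\geq N_i$, and group its members by order. For a fixed order $n$, let $F_n$ be the set of centers $x_j$ with $n_j=n$. Each center lies in its own ball and the balls are disjoint, so no center of $F_n$ lies in another member's ball $B_n(x_{j'},T,\varepsilon)$. Hence $F_n\subset Z_i$ is $(n,\varepsilon)$-separated, which gives
\[
\sum_{x\in F_n}2^{\varphi(x,n,\varepsilon)}\leq Q_n(Z_i,T,\varphi,\varepsilon)\leq 2^{na'}.
\]
Summing over the orders,
\[
\sum_j 2^{-an_j+\varphi(x_j,n_j,\varepsilon)}\leq\sum_{n\geq N}2^{-an}\,2^{na'}=\sum_{n\geq N}2^{-(a-a')n}.
\]
The right-hand side tends to $0$ as $N\to\infty$ and is uniform over all admissible families. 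Therefore $M(Z_i,a,\varphi,N,\varepsilon)\to0$, that is, $m^*(Z_i,a,\varphi,\varepsilon)=0$.

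Countable stability, which is built into $m^{**}$, now gives
\[
m^{**}(Z,a,\varphi,\varepsilon)\leq\sum_i m^*(Z_i,a,\varphi,\varepsilon)=0.
\]
Hence $\overline{P}{}^{\mathrm{pack}}(Z,T,\varphi,\varepsilon)\leq a$ for every $\varepsilon<\varepsilon_0$. Letting $\varepsilon\to0$, with the limit existing by Lemma~\ref{lem:pack-scale-monotone}, yields $\overline{P}^{\mathrm{pack}}(Z,T,\varphi)\leq a$. Letting $a\downarrow P^{\mathrm{sep}}_{\mathrm{mUC}}(Z,T,\varphi)$ completes the proof.

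I expect no serious obstacle here. The one point to check carefully is the grouping step: disjointness of equal-order Bowen balls must give exactly $(n,\varepsilon)$-separation in the sense of Definition~\ref{def:UC-separated}, at the same radius and with the same sup-weights $\varphi(x,n,\varepsilon)$ on both sides. This is what allows the comparison to be made at a single scale $\varepsilon$, in contrast with the scale-doubling needed in Lemma~\ref{lem:pack-geq-sepmUC}. A second point to watch is that the per-piece thresholds $N_i$ depend on $i$. This is harmless, because $m^*$ is computed separately for each piece before the pieces are summed.
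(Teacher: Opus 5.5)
Your proof is correct and rests on the same key observation as the paper's. At a fixed scale $\varepsilon$, the equal-order members of a disjoint Bowen-ball packing of $Z_i$ have centers forming an $(n,\varepsilon)$-separated subset of $Z_i$, with exactly the same sup-weights, so no scale change or $\Var$ correction is incurred. The paper runs this in contrapositive form: from $m^{**}(Z,s,\varphi,\varepsilon)=\infty$ it finds a piece with $m^*>\alpha$ and pigeonholes an order $k$ with a large separated sum at exponent $t<s$. Your direct geometric-series bound is the dual of that pigeonhole step, and it has the minor advantage of using only the $\inf$ form of the definition of $\overline{P}{}^{\mathrm{pack}}(Z,T,\varphi,\varepsilon)$ rather than the $\infty/0$ dichotomy.
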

\begin{proof}
Fix rationals $0 < t < s < P^{\mathrm{pack}}(Z,T, \varphi)$. Since $P^{\mathrm{pack}}(Z,T,\varphi) = \lim\limits_{\varepsilon\to 0}P^{\mathrm{pack}}(Z,T,\varphi,\varepsilon)$, for all sufficiently small $\varepsilon > 0$ we have $P^{\mathrm{pack}}(Z,T,\varphi,\varepsilon) > s$, hence $m^{**}(Z, s, \varphi, \varepsilon) = \infty$. Fix such an $\varepsilon$ and any countable decomposition $\bigcup_{i=1}^{\infty} Z_i = Z$. Since $m^{**}$ is an infimum over such decompositions, there are $\alpha > 0$ and some $Z_i$ with $m^*(Z_i, s, \varphi, \varepsilon) > \alpha$.

As $m^*(Z_i,s,\varphi,\varepsilon) = \lim\limits_{N\to\infty} M(Z_i,s,\varphi,N,\varepsilon)$ with $M$ nonincreasing in $N$, for every $N$ there is a disjoint family $\{B_{n_j}(x_j, T,\varepsilon)\}_{j\geq 1}$ with $x_j \in Z_i$ and $n_j \geq N$ such that
\[
\sum_{j\geq 1} 2^{-s n_j + \varphi(x_j,n_j,\varepsilon)} \geq \frac{\alpha}{2}.
\]
Let $E_k = \{x_j : n_j = k\}$; regrouping,
\[
\sum_{k\geq N} \sum_{x \in E_k} 2^{-sk + \varphi(x,k,\varepsilon)} \geq \frac{\alpha}{2}.
\]
We claim there exists $k \geq N$ such that
\[
\sum_{x \in E_k} 2^{-tk + \varphi(x,k,\varepsilon)} \geq \frac{\bigl(1 - 2^{-(s-t)}\bigr)\alpha}{2}:
\]
otherwise, splitting $2^{-sk} = 2^{-(s-t)k}\,2^{-tk}$,
\[
\sum_{k\geq N} \sum_{x \in E_k} 2^{-sk + \varphi(x,k,\varepsilon)}
= \sum_{k\geq N} 2^{-(s-t)k}\sum_{x \in E_k} 2^{-tk + \varphi(x,k,\varepsilon)}
< \sum_{k\geq 1} 2^{-(s-t)k}\,\frac{\bigl(1 - 2^{-(s-t)}\bigr)\alpha}{2}
= 2^{-(s-t)}\,\frac{\alpha}{2}
< \frac{\alpha}{2},
\]
a contradiction.

Fix such a $k \geq N$. The disjointness of the balls $\{B_k(x,T,\varepsilon)\}_{x\in E_k}$ makes $E_k$ a $(k,\varepsilon)$-separated subset of $Z_i$: if $d_k(x,x') < \varepsilon$ for distinct $x,x' \in E_k$, then $x' \in B_k(x,T,\varepsilon)$, meeting both balls. Its weighted sum is exactly of the form appearing in $Q_k$, so
\[
Q_k(Z_i,T,\varphi, \varepsilon)
\;\geq\; \sum_{x \in E_k} 2^{\varphi(x,k,\varepsilon)}
\;\geq\; 2^{tk}\,\frac{\bigl(1 - 2^{-(s-t)}\bigr)\alpha}{2}.
\]
As such a $k$ exists above every $N$, taking $\limsup\limits_{k\to\infty}\frac{1}{k}\log$ gives
\[
P^{\mathrm{sep}}_{\mathrm{UC}}(Z_i,T,\varphi,\varepsilon) \;\geq\; t.
\]
Since the decomposition was arbitrary, $P^{\mathrm{sep}}_{\mathrm{mUC}}(Z,T,\varphi,\varepsilon) \geq t$ for every sufficiently small $\varepsilon$, so $\lim\limits_{\varepsilon\to 0}P^{\mathrm{sep}}_{\mathrm{mUC}}(Z,T,\varphi,\varepsilon)\geq t$. As $t < s < P^{\mathrm{pack}}(Z,T,\varphi)$ were arbitrary rationals, the lemma follows.
\end{proof}

We now show that the upper capacity pressure definitions via covers or separated sets are equivalent.
\begin{lemma}[Spanning-separated comparison]\label{lem:span-sep}
For every $Z\subset X$, $n\in\mathbb{N}$, and $\varepsilon>0$,
\[
N(Z,T,\varphi,n,2\varepsilon)
\;\leq\; Q_n(Z,T,\varphi,2\varepsilon)
\;\leq\; 2^{\,n\,\Var(\varphi,2\varepsilon)}\,
N(Z,T,\varphi,n,\varepsilon).
\]
\end{lemma}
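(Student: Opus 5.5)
The two inequalities are independent, and I would prove each by a direct comparison between spanning covers and separated sets at the level of a fixed $n$.

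\emph{Left inequality.} Let $F\subset Z$ be a maximal $(n,2\varepsilon)$-separated set. Zorn's lemma gives one, and $F$ is countable because $X$ is separable: the balls $B_n(x,T,\varepsilon)$, $x\in F$, are pairwise disjoint open sets in the separable metric $d_n$. By maximality every $z\in Z$ has some $x\in F$ with $d_n(x,z)<2\varepsilon$. Hence $F$ is an admissible countable center set for $N(Z,T,\varphi,n,2\varepsilon)$, and its sum $\sum_{x\in F}2^{\varphi(x,n,2\varepsilon)}$ is one of the sums over which $Q_n(Z,T,\varphi,2\varepsilon)$ takes its supremum. Therefore $N(Z,T,\varphi,n,2\varepsilon)\le Q_n(Z,T,\varphi,2\varepsilon)$. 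If the sum is infinite, the inequality holds trivially.

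\emph{Right inequality.} Fix any $(n,2\varepsilon)$-separated $F\subset Z$ and any countable $E\subset X$ with $Z\subset\bigcup_{y\in E}B_n(y,T,\varepsilon)$. Assign to each $x\in F$ some $y(x)\in E$ with $d_n(x,y(x))<\varepsilon$. This assignment is injective: if $y(x)=y(x')$, then $d_n(x,x')<2\varepsilon$, which contradicts separation unless $x=x'$. Next compare the weights. Every point of $B_n(x,T,2\varepsilon)$ is within $2\varepsilon$ of $x$ at each time $k<n$, and $x\in B_n(y(x),T,\varepsilon)$, so
\[
\varphi(x,n,2\varepsilon)\le S_n\varphi(x)+n\Var(\varphi,2\varepsilon)\le \varphi(y(x),n,\varepsilon)+n\Var(\varphi,2\varepsilon).
\]
Summing over $F$ and using injectivity gives $\sum_{x\in F}2^{\varphi(x,n,2\varepsilon)}\le 2^{n\Var(\varphi,2\varepsilon)}\sum_{y\in E}2^{\varphi(y,n,\varepsilon)}$. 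Taking the supremum over $F$ and the infimum over $E$ yields the claim.

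The main point to check is the weight comparison in the right inequality. The sup-weight at radius $2\varepsilon$ around $x$ must be controlled by the sup-weight at radius $\varepsilon$ around a different center $y(x)$. Routing the comparison through $S_n\varphi(x)$ achieves this, because $x$ lies in the ball $B_n(y(x),T,\varepsilon)$ and $\varphi(y(x),n,\varepsilon)$ is a supremum over that ball. The modulus term uses the convention $d\le 2\varepsilon$ in $\Var$, and strict $<2\varepsilon$ distances satisfy it. The rest is routine, apart from noting that countability in the left inequality uses separability.
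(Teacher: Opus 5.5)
Your proof is correct and follows the paper's argument. For the left inequality, a maximal $(n,2\varepsilon)$-separated set serves as a $2\varepsilon$-spanning family of centers; for the right, the injective assignment $x\mapsto y(x)$ combined with the weight comparison routed through $S_n\varphi(x)$ gives the bound. Your observation that a maximal separated set is countable (its $\varepsilon$-Bowen balls are disjoint open sets in a separable space) supplies a detail the paper leaves implicit when it uses $F$ as an admissible countable center set for $N$.
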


\begin{proof}
If $d_n(x,y)<\varepsilon$ then $d(T^ix,T^iy)<\varepsilon$ for $0\le i<n$, so
\begin{equation}
|S_n\varphi(x)-S_n\varphi(y)|\le n\operatorname{Var}(\varphi,\varepsilon).
\tag{$\ast$}
\end{equation}

For the left inequality, let $F\subset Z$ be
$(n,2\varepsilon)$-separated; enlarging $F$ to a maximal such set
only increases its weighted sum, so we may assume $F$ maximal. By
maximality every $y\in Z$ satisfies $d_n(y,x)<2\varepsilon$ for some
$x\in F$, so $Z\subset\bigcup_{x\in F}B_n(x,T,2\varepsilon)$, and
this cover's weight in $N$ is exactly the separated sum:
\[
N(Z,T,\varphi,n,2\varepsilon)
\le\sum_{x\in F}2^{\varphi(x,n,2\varepsilon)}
\le Q_n(Z,T,\varphi,2\varepsilon).
\]

For the right inequality, let $F\subset Z$ be
$(n,2\varepsilon)$-separated and let $E\subset X$ be countable with
$Z\subset\bigcup_{x\in E}B_n(x,T,\varepsilon)$. For each $y\in F$
pick $\pi(y)\in E$ with $y\in B_n(\pi(y),T,\varepsilon)$. If
$\pi(y)=\pi(y')$ then $d_n(y,y')<2\varepsilon$, so $\pi$ is
injective. Since $y\in B_n(\pi(y),T,\varepsilon)$ we have
$S_n\varphi(y)\le \varphi(\pi(y),n,\varepsilon)$, and $(\ast)$
applied at scale $2\varepsilon$ gives
$\varphi(y,n,2\varepsilon)\le
S_n\varphi(y)+n\Var(\varphi,2\varepsilon)$, so
\[
2^{\varphi(y,n,2\varepsilon)}
\le 2^{\,n\Var(\varphi,2\varepsilon)}\,
2^{\varphi(\pi(y),n,\varepsilon)}.
\]
Summing over $y\in F$ and using injectivity, then taking the
supremum over $F$ and the infimum over $E$, yields
$Q_n(Z,T,\varphi,2\varepsilon)\le
2^{\,n\Var(\varphi,2\varepsilon)}\,N(Z,T,\varphi,n,\varepsilon)$.
\end{proof}
\begin{corollary}[Separated and covering modified upper capacity pressures agree at scale 0]\label{cor:sep-cov-pressure}
For $Z\subset X$ and $\varepsilon>0$,
\[
P_{\mathrm{mUC}}(Z,T,\varphi,2\varepsilon)
\;\leq\;P_{\mathrm{mUC}}^{\mathrm{sep}}(Z,T,\varphi,2\varepsilon)
\;\leq\;P_{\mathrm{mUC}}(Z,T,\varphi,\varepsilon)+\Var(\varphi,2\varepsilon).
\]
Consequently the limit
$P_{\mathrm{mUC}}(Z,T,\varphi)=\lim\limits_{\varepsilon\to0}P_{\mathrm{mUC}}(Z,T,\varphi,\varepsilon)$
exists and
\[
P_{\mathrm{mUC}}(Z,T,\varphi)
\;=\;P_{\mathrm{mUC}}^{\mathrm{sep}}(Z,T,\varphi)
\;=\;P^{\mathrm{pack}}(Z,T,\varphi).
\]
\end{corollary}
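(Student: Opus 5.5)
The plan is to take the two-sided comparison of Lemma~\ref{lem:span-sep}, apply $\frac1n\log$ and $\limsup_n$ piece by piece, then pass through the countable-decomposition infima, and finally let the scale go to $0$ using Lemma~\ref{lem:pressure-sep-pack-eq}.

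\textbf{Step 1: scale-by-scale comparison for each piece.} Fix $Y\subset X$ (later each piece $Z_i$ of a decomposition). From $N(Y,T,\varphi,n,2\varepsilon)\leq Q_n(Y,T,\varphi,2\varepsilon)$, applying $\frac1n\log$ and $\limsup_n$ gives
\[P_{\mathrm{UC}}(Y,T,\varphi,2\varepsilon)\leq P^{\mathrm{sep}}_{\mathrm{UC}}(Y,T,\varphi,2\varepsilon).\]
From the right inequality of Lemma~\ref{lem:span-sep}, the factor $2^{n\Var(\varphi,2\varepsilon)}$ contributes exactly $\Var(\varphi,2\varepsilon)$ to the rate, so
\[P^{\mathrm{sep}}_{\mathrm{UC}}(Y,T,\varphi,2\varepsilon)\leq P_{\mathrm{UC}}(Y,T,\varphi,\varepsilon)+\Var(\varphi,2\varepsilon).\]

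\textbf{Step 2: passing through the decompositions.} Both modified quantities are infima over countable covers $Z\subset\bigcup_iZ_i$ of $\sup_i$ of the respective capacity quantity. Pointwise inequalities between the capacity quantities, uniform in the piece, therefore pass to $\sup_i$ and then to the infimum. This gives the displayed chain.

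There is a minor point here. $P^{\mathrm{sep}}_{\mathrm{mUC}}$ is written with $\bigcup_iZ_i=Z$, while $P_{\mathrm{mUC}}$ uses covers $Z\subset\bigcup_iZ_i$. I would intersect pieces with $Z$, using monotonicity of $N$ and $Q_n$ under inclusion, to see that the two conventions agree.

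\textbf{Step 3: existence of the limit.} Write $f(\varepsilon)=P_{\mathrm{mUC}}(Z,T,\varphi,\varepsilon)$ and $g(\varepsilon)=P^{\mathrm{sep}}_{\mathrm{mUC}}(Z,T,\varphi,\varepsilon)$. The limit $L=\lim_{\varepsilon\to0}g(\varepsilon)$ exists, as asserted in Definition~\ref{def:UC-separated}. Step 2 gives $f(2\varepsilon)\leq g(2\varepsilon)$, so
\[\limsup_{\varepsilon\to0}f(\varepsilon)\leq L.\]
Step 2 also gives $g(2\varepsilon)-\Var(\varphi,2\varepsilon)\leq f(\varepsilon)$, and $\Var(\varphi,2\varepsilon)\to0$ by uniform continuity, so
\[\liminf_{\varepsilon\to0}f(\varepsilon)\geq L.\]
Hence $\lim_{\varepsilon\to0}f$ exists and equals $L$. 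Combining with Lemma~\ref{lem:pressure-sep-pack-eq}, which identifies $L$ with $P^{\mathrm{pack}}(Z,T,\varphi)$, completes the corollary.

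\textbf{Main obstacle.} The only delicate point is that the limit for $f$ must be derived rather than assumed, since monotonicity of $f$ in $\varepsilon$ is not clear with sup-weights. The sandwich argument in Step 3 handles this. The infinite-value cases, where some quantity equals $+\infty$, need only a remark that the inequalities remain valid in $[-\infty,\infty]$.
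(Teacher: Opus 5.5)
Your proof is correct and follows the paper's argument essentially step for step. You apply $\frac{1}{n}\log$ and $\limsup_n$ to Lemma~\ref{lem:span-sep} on each piece and pass through $\sup_i$ and the infimum over countable covers; you then sandwich $\liminf$ and $\limsup$ of $P_{\mathrm{mUC}}(Z,T,\varphi,\varepsilon)$ against the separated quantity at scale $2\varepsilon$, whose limit Lemma~\ref{lem:pressure-sep-pack-eq} identifies with $P^{\mathrm{pack}}(Z,T,\varphi)$, using $\Var(\varphi,2\varepsilon)\to 0$. Your use of monotonicity of $N$ and $Q_n$ under inclusion to reconcile the conventions $\bigcup_i Z_i=Z$ and $Z\subset\bigcup_i Z_i$ is a small point the paper passes over when it speaks of ``the same class of countable covers.''
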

\begin{proof}
Fix $Z\subset X$ and $\varepsilon>0$. Taking $\frac{1}{n}\log$
and $\limsup\limits_{n\to\infty}$ in Lemma~\ref{lem:span-sep},
the correction factor contributes exactly
$\Var(\varphi,2\varepsilon)$:
\[
P_{\mathrm{UC}}(Z,T,\varphi,2\varepsilon)
\;\leq\;P^{\mathrm{sep}}_{\mathrm{UC}}(Z,T,\varphi,2\varepsilon)
\;\leq\;P_{\mathrm{UC}}(Z,T,\varphi,\varepsilon)+\Var(\varphi,2\varepsilon).
\]
Now let $Z\subset\bigcup_i Z_i$ be any countable cover. Applying the
display to each $Z=Z_i$, taking $\sup_i$ (the additive constant
$\Var(\varphi,2\varepsilon)$ passing through the supremum) and then
the infimum over the same class of countable covers on both sides
yields the first claim.

For the second claim, combine the first claim with
Lemma~\ref{lem:pressure-sep-pack-eq}. As $\varepsilon\to 0$,
\[
\limsup\limits_{\varepsilon\to 0}
P_{\mathrm{mUC}}(Z,T,\varphi,2\varepsilon)
\leq\limsup\limits_{\varepsilon\to 0}
P_{\mathrm{mUC}}^{\mathrm{sep}}(Z,T,\varphi,2\varepsilon)
= P^{\mathrm{pack}}(Z,T,\varphi),
\]
while
\[
P^{\mathrm{pack}}(Z,T,\varphi)
=\liminf\limits_{\varepsilon\to 0}
P_{\mathrm{mUC}}^{\mathrm{sep}}(Z,T,\varphi,2\varepsilon)
\leq\liminf\limits_{\varepsilon\to 0}
\bigl(P_{\mathrm{mUC}}(Z,T,\varphi,\varepsilon)
+\Var(\varphi,2\varepsilon)\bigr)
=\liminf\limits_{\varepsilon\to 0}
P_{\mathrm{mUC}}(Z,T,\varphi,\varepsilon),
\]
using $\Var(\varphi,2\varepsilon)\to 0$. Hence
$P_{\mathrm{mUC}}(Z,T,\varphi,\varepsilon)$ converges as
$\varepsilon\to 0$ to
$P^{\mathrm{pack}}(Z,T,\varphi)
=P_{\mathrm{mUC}}^{\mathrm{sep}}(Z,T,\varphi)$.
\end{proof}

\subsection{Dimensional definition of BS dimension}\label{app:packing-bs}
\subsubsection{Lower BS dimension}
In this section we establish basic facts about the lower BS dimension. These require proof because our definition generalizes the classical one.
\begin{lemma}[Countable subadditivity]\label{lem:bs-subadd}
For every $s\geq 0$, $N\in\mathbb{N}$, and $\varepsilon>0$,
\[
m_{BS}\Big(\bigcup_{i\in\mathbb{N}} Z_i,\, s, u, N, \varepsilon\Big)
\;\leq\; \sum_{i\in\mathbb{N}} m_{BS}(Z_i, s, u, N, \varepsilon),
\qquad
m_{BS}\Big(\bigcup_{i\in\mathbb{N}} Z_i,\, s, u, \varepsilon\Big)
\;\leq\; \sum_{i\in\mathbb{N}} m_{BS}(Z_i, s, u, \varepsilon).
\]
\end{lemma}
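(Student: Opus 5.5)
The plan is to argue directly from the definitions by a standard concatenation-of-covers argument. We prove the first (fixed $N$) inequality first and then pass to the limit in $N$.

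For the first inequality, fix $s\geq 0$, $N$, $\varepsilon$, and $\eta>0$. If some $m_{BS}(Z_i,s,u,N,\varepsilon)=\infty$, or the sum diverges, there is nothing to prove. Otherwise, for each $i$ choose a family $\Gamma_i\in\mathcal{P}(Z_i,N,\varepsilon)$ with
\[
\sum_{(y,n)\in\Gamma_i}2^{-s\,u(y,n,\varepsilon)}\leq m_{BS}(Z_i,s,u,N,\varepsilon)+\eta\,2^{-i-1}.
\]
The union $\Gamma=\bigcup_i\Gamma_i$, taken as an indexed family so that repetitions are allowed, has the following properties.
\begin{itemize}
\item It is countable.
\item All its orders are at least $N$.
\item Its Bowen balls cover $\bigcup_i Z_i$.
\end{itemize}
Hence $\Gamma\in\mathcal{P}(\bigcup_i Z_i,N,\varepsilon)$. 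Its weight is at most the sum of the weights of the $\Gamma_i$, since the terms are nonnegative. If one prefers a set rather than an indexed family, merging duplicates only removes terms and so only decreases the sum. This gives
\[
m_{BS}\Big(\bigcup_i Z_i,s,u,N,\varepsilon\Big)\leq\sum_i m_{BS}(Z_i,s,u,N,\varepsilon)+\eta,
\]
and we let $\eta\to 0$.

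For the second inequality, note that $m_{BS}(Z,s,u,N,\varepsilon)$ is nondecreasing in $N$, because $\mathcal{P}(Z,N+1,\varepsilon)\subset\mathcal{P}(Z,N,\varepsilon)$. Consequently $m_{BS}(Z_i,s,u,N,\varepsilon)\leq m_{BS}(Z_i,s,u,\varepsilon)$ for every $N$. The first inequality then gives, for every $N$,
\[
m_{BS}\Big(\bigcup_i Z_i,s,u,N,\varepsilon\Big)\leq\sum_i m_{BS}(Z_i,s,u,\varepsilon).
\]
Letting $N\to\infty$ on the left yields the claim. This is monotone convergence in the trivial direction, so no interchange of limit and sum is needed.

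The only point that requires care is that the weights $2^{-s\,u(y,n,\varepsilon)}$ may be larger than $1$, since $u$ can be negative. They are still nonnegative, however, and that is all the argument uses; positivity of $\underline{u}$ plays no role. So I expect no real obstacle, only the bookkeeping of choosing the $\eta 2^{-i-1}$ tolerances and handling infinite values.
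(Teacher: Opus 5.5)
Your proof is correct and follows essentially the same route as the paper: you concatenate near-optimal covers $\Gamma_i$, chosen with tolerances summing to a small $\eta$, and then pass to the limit in $N$. The only difference is in the second inequality, where you bound each $m_{BS}(Z_i,s,u,N,\varepsilon)$ by its limit $m_{BS}(Z_i,s,u,\varepsilon)$ before letting $N\to\infty$, whereas the paper invokes monotone convergence to interchange the limit and the sum; your version is slightly more elementary and equally valid.
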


\begin{proof}
Write $Z=\bigcup_i Z_i$. Fix $\delta>0$ and for each $i$ choose
$\Gamma_i\in\mathcal{P}(Z_i,N,\varepsilon)$ with
\[
\sum_{(y,n)\in\Gamma_i} 2^{-s\,u(y,n,\varepsilon)} \;\leq\; m_{BS}(Z_i,s,u,N,\varepsilon) + \delta 2^{-i}.
\]
Then $\Gamma=\bigcup_i\Gamma_i$ is a countable family of pairs $(y,n)$ with $n\geq N$ whose
Bowen balls cover $Z$, so $\Gamma\in\mathcal{P}(Z,N,\varepsilon)$ and
\[
m_{BS}(Z,s,u,N,\varepsilon)
\;\leq\; \sum_i \sum_{(y,n)\in\Gamma_i} 2^{-s\,u(y,n,\varepsilon)}
\;\leq\; \sum_i m_{BS}(Z_i,s,u,N,\varepsilon) + 2\delta.
\]
Letting $\delta\to 0$ gives the first inequality. The second follows by letting
$N\to\infty$: each summand $m_{BS}(Z_i,s,u,N,\varepsilon)$ is nondecreasing in $N$, so
by monotone convergence
$\sum_i m_{BS}(Z_i,s,u,N,\varepsilon)\to\sum_i m_{BS}(Z_i,s,u,\varepsilon)$.
\end{proof}
\begin{lemma}[Dichotomy for $m_{BS}$]\label{lem:bs-dichotomy}
Suppose $\underline{u}(x)>0$ for all $x\in Z$, and let $\varepsilon>0$. If
$m_{BS}(Z,s,u,\varepsilon)<\infty$, then $m_{BS}(Z,s',u,\varepsilon)=0$ for every
$s'>s$. Consequently
\[
\inf\{s\geq 0 \mid m_{BS}(Z,s,u,\varepsilon)=0\}
=\sup\{s\geq 0 \mid m_{BS}(Z,s,u,\varepsilon)=\infty\}.
\]
\end{lemma}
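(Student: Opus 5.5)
The plan is to reduce the dichotomy to a uniform lower bound on the sup-weights of the balls that actually matter. I will use the exhaustion of Remark~\ref{rem:mBSC-dichotomy}: for $N_0\in\mathbb{N}$ and rational $\delta>0$ let
\[
C(Z,N_0,\delta)=\{x\in Z : S_nu(x)\geq \delta n \text{ for all } n\geq N_0\}.
\]
Since $\underline{u}>0$ on $Z$, these countably many sets exhaust $Z$.

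On a single piece $C=C(Z,N_0,\delta)$, the shift argument works as follows. Fix $s'>s$ and $N\geq N_0$, and take any $\Gamma\in\mathcal{P}(C,N,\varepsilon)$. Discard every pair $(y,n)$ whose ball misses $C$; the remaining family still covers $C$ and has no larger sum. Each remaining ball contains some $\tilde x\in C$ with $n\geq N\geq N_0$, so
\[
u(y,n,\varepsilon)\geq S_nu(\tilde x)\geq \delta n\geq \delta N .
\]
Hence $2^{-s'u(y,n,\varepsilon)}\leq 2^{-(s'-s)\delta N}\,2^{-s u(y,n,\varepsilon)}$. This uses $s'-s>0$ and the positivity of $u(y,n,\varepsilon)$, so the factor is valid even when $s<0$ is excluded and $s\geq 0$ is assumed. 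Taking infima gives
\[
m_{BS}(C,s',u,N,\varepsilon)\leq 2^{-(s'-s)\delta N}\,m_{BS}(C,s,u,N,\varepsilon)\leq 2^{-(s'-s)\delta N}\,m_{BS}(C,s,u,\varepsilon).
\]
The last step uses that $m_{BS}(\cdot,N,\cdot)$ is nondecreasing in $N$.

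To pass from the pieces to $Z$, use monotonicity under inclusion: $m_{BS}(C,s,u,\varepsilon)\leq m_{BS}(Z,s,u,\varepsilon)<\infty$, so letting $N\to\infty$ gives $m_{BS}(C,s',u,\varepsilon)=0$ for every piece. Countable subadditivity (Lemma~\ref{lem:bs-subadd}) over the countable exhaustion then gives $m_{BS}(Z,s',u,\varepsilon)=0$.

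For the consequence, let $s_*=\sup\{s\geq0: m_{BS}=\infty\}$. If $s>s_*$, then $m_{BS}(Z,s,u,\varepsilon)<\infty$, so by the first part $m_{BS}(Z,s'',u,\varepsilon)=0$ for all $s''>s$; hence the infimum is at most $s_*$. Conversely, if $m_{BS}(Z,s,u,\varepsilon)=0$ then, by the first part, $m_{BS}(Z,s'',u,\varepsilon)=0$ for all $s''>s$, so no value $\geq s$ gives $\infty$. For $s''<s$ there is no claim needed, since $\{s: m=0\}$ is an upward ray and so lies above $\{s:m=\infty\}$. This gives infimum $\geq$ supremum.

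The main obstacle is that centers $y$ range over $X$, where $S_nu(y)$ may be negative, so $m_{BS}$ is not obviously monotone in $s$ on $Z$ itself. The restriction to balls meeting $C$ with sup-weights is what fixes this. The subtle point is only to justify passing to the pieces $C$ before the shift, which the inclusion monotonicity handles.
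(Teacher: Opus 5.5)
Your proof is correct and follows essentially the same route as the paper: the exhaustion of $Z$ by the sets $C(Z,N_0,\delta)$, the shift $2^{-s'u(y,n,\varepsilon)}\le 2^{-(s'-s)\delta N}\,2^{-s u(y,n,\varepsilon)}$ on balls meeting a piece, then monotonicity in $N$ and under inclusion, and finally countable subadditivity (Lemma~\ref{lem:bs-subadd}). You also state explicitly that balls missing $C$ are discarded, which the paper uses only implicitly when it asserts that each ball contains a point of $C$; your side remark about the sign of $s$ is superfluous, since the bound needs only $s'-s>0$ and $u(y,n,\varepsilon)\ge\delta N$.
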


\begin{proof}
For $N\in\mathbb{N}$ and rational $\delta>0$, set
\[
C(Z,N,\delta)=\{x\in Z : S_nu(x)\geq \delta n \text{ for all } n\geq N\}.
\]
Since $\underline{u}>0$ on $Z$, these countably many sets exhaust $Z$.

Fix one piece $C=C(Z,N,\delta)$, let $N'\geq N$, and let
$\Gamma\in\mathcal{P}(C,N',\varepsilon)$. Each ball $B_n(y,T,\varepsilon)$ with
$(y,n)\in\Gamma$ contains some $\tilde x\in C$, so its weight satisfies
\[
u(y,n,\varepsilon)\;\geq\;S_nu(\tilde x)
\;\geq\;\delta n\;\geq\;\delta N'.
\]
Hence, since $s'-s>0$,
\[
\sum_{(y,n)\in\Gamma}2^{-s'\,u(y,n,\varepsilon)}
=\sum_{(y,n)\in\Gamma}2^{-(s'-s)\,u(y,n,\varepsilon)}\,2^{-s\,u(y,n,\varepsilon)}
\;\leq\;2^{-(s'-s)\delta N'}\sum_{(y,n)\in\Gamma}2^{-s\,u(y,n,\varepsilon)}.
\]
Taking the infimum over $\Gamma$ and using $C\subset Z$ together with monotonicity
in $N'$,
\[
m_{BS}(C,s',u,N',\varepsilon)
\;\leq\;2^{-(s'-s)\delta N'}\,m_{BS}(Z,s,u,\varepsilon)
\;\xrightarrow[N'\to\infty]{}\;0,
\]
so $m_{BS}(C,s',u,\varepsilon)=0$, and by countable subadditivity
(Lemma~\ref{lem:bs-subadd}) $m_{BS}(Z,s',u,\varepsilon)=0$.

For the displayed identity: the first part shows the sets
$\{s\mid m_{BS}=\infty\}$ and $\{s\mid m_{BS}=0\}$ are complementary rays in
$[0,\infty)$ up to at most one boundary point, so their defining $\sup$ and $\inf$
coincide.
\end{proof}

\subsubsection{Upper BS dimension}
The modified upper capacity BS dimension was defined in Definition~\ref{def:mBSC}. We now define the packing BS dimension.
\begin{definition}
Suppose $Z \subset X$ satisfies $\underline{u}(x) > 0$ for all $x \in Z$. For
$\alpha \in \mathbb{R}$, $0 < \varepsilon \in \mathbb{R}$, $u:X\to \mathbb{R}$ uniformly continuous and
$N \in \mathbb{N}$, let
\[
M(Z, \alpha, u, \varepsilon, N)
= \sup_{\mathcal{P}^*(Z,N,\varepsilon)} \sum_{i=1}^{\infty}
\exp_2\!\left(-\alpha\, u(x_i,n_i,\varepsilon)\right),
\]
where $\mathcal{P}^*(Z, N, \varepsilon)$ denotes the finite or countable disjoint
set family $\{B_{n_i}(x_i,T,\varepsilon)\}$ with $x_i \in Z$,
$n_i \geq N$. Clearly, $M(Z, \alpha, u, \varepsilon, N)$ is a nonincreasing
function in $N$. Let
\[
m^*(Z, \alpha, u, \varepsilon) = \lim\limits_{N \to \infty} M(Z, \alpha, u, \varepsilon, N).
\]
Moreover, let
\[
m^{**}(Z, \alpha, u, \varepsilon)
= \inf\left\{ \sum_{i=1}^{\infty} m^*(Z_i, \alpha, u, \varepsilon)
: \bigcup_{i=1}^{\infty} Z_i \supseteq Z \right\}.
\]
Then
\[
\dim_{\mathrm{BSP}}(Z, \varepsilon)
= \inf\{\alpha : m^{**}(Z, \alpha, u, \varepsilon) = 0\}=\sup\{\alpha : m^{**}(Z, \alpha, u, \varepsilon) = +\infty\}
\]
Finally, we define the BS-packing (or BS-P) dimension
\[
\dim_{\mathrm{BSP}} Z := \lim\limits_{\varepsilon \to 0} \dim_{\mathrm{BSP}}(Z, \varepsilon).
\]
Note that $\dim_{\mathrm{BSP}}(Z, \varepsilon)$ is increasing as $\varepsilon \to 0$,
so the limit exists.
\end{definition}

We show that the two thresholds defining the finite-scale quantity coincide, so that $\dim_{\mathrm{BSP}}(Z,\varepsilon)$ is well defined.
\begin{lemma}\label{lem:packing-bs-dichotomy}
For every $\varepsilon > 0$,
\[
\inf\{\alpha : m^{**}(Z, \alpha, u, \varepsilon) = 0\}
= \sup\{\alpha : m^{**}(Z, \alpha, u, \varepsilon) = +\infty\}.
\]
\end{lemma}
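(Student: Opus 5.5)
Since $Z$ is fixed with $\underline{u}>0$ on $Z$, and packing centers $x_i$ lie in $Z$, I will run a direct shift argument. The approach is to show that finiteness of $m^{**}$ at $\alpha$ forces vanishing at every $\alpha'>\alpha$. Both thresholds then describe complementary rays in $\alpha$, up to one boundary point, so the $\inf$ and $\sup$ coincide. I will also record the trivial monotonicity: $m^{**}(Z,\alpha,u,\varepsilon)$ is nonincreasing in $\alpha$ on pieces where the sup-weights are nonnegative. Near the threshold this holds after the exhaustion below.

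First I exhaust $Z$ by the countably many sets $C(Z,N_0,\delta)=\{x\in Z: S_nu(x)\geq\delta n \ \forall n\geq N_0\}$ of Remark~\ref{rem:mBSC-dichotomy}. For any disjoint family in $\mathcal{P}^*(C,N,\varepsilon)$ with $N\geq N_0$, each center $x_i\in C$ lies in its own ball. Hence $u(x_i,n_i,\varepsilon)\geq S_{n_i}u(x_i)\geq\delta n_i\geq\delta N$, and
\[
\sum_i 2^{-\alpha' u(x_i,n_i,\varepsilon)}\leq 2^{-(\alpha'-\alpha)\delta N}\sum_i 2^{-\alpha u(x_i,n_i,\varepsilon)}.
\]
Taking suprema gives $M(C,\alpha',u,\varepsilon,N)\leq 2^{-(\alpha'-\alpha)\delta N}M(C,\alpha,u,\varepsilon,N)$. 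Then, for any $W\subset C$, we get $m^*(W,\alpha',u,\varepsilon)=0$ whenever $m^*(W,\alpha,u,\varepsilon)<\infty$. The same computation gives $m^*(W,\alpha')\leq m^*(W,\alpha)$ for $\alpha'\geq\alpha$, which is the monotonicity.

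Next I pass to $m^{**}$. Suppose $m^{**}(Z,\alpha,u,\varepsilon)<\infty$, and choose a cover $Z\subset\bigcup_i Z_i$ with $\sum_i m^*(Z_i,\alpha)<\infty$. I refine it to the countable cover $\{Z_i\cap C(Z,N_0,\delta)\}$. Since $m^*$ is monotone under inclusion (fewer packings), each refined piece has finite $m^*$ at $\alpha$, so by the previous step it has $m^*=0$ at $\alpha'$. Summing over the countable refinement gives $m^{**}(Z,\alpha',u,\varepsilon)=0$.

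For the identity itself, let $a=\inf\{\alpha: m^{**}=0\}$ and $b=\sup\{\alpha:m^{**}=\infty\}$. If $\alpha<a$, then $m^{**}(\alpha)\neq 0$, and by the contrapositive of the shift step applied at any $\alpha''<\alpha$ we get $m^{**}(\alpha'')=\infty$. Hence $b\geq a$. Conversely, if $m^{**}(\alpha)=\infty$ and $\alpha'<\alpha$, then $m^{**}(\alpha')\geq m^{**}(\alpha)$, using monotonicity on the refined pieces together with the refinement; therefore $m^{**}(\alpha')\neq 0$. So every $\alpha$ with $m^{**}=\infty$ satisfies $\alpha\leq a$, giving $b\leq a$.

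The main obstacle is the monotonicity of $m^{**}$ in $\alpha$ when sup-weights may be negative for small $n$. I would handle it exactly as above, by always refining covers to the pieces $C(Z,N_0,\delta)$ and using $N\geq N_0$; this costs nothing in $m^{**}$ because refinement only decreases $m^*$.
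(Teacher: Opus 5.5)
Your main step is the paper's argument. You take a cover with $\sum_i m^*(Z_i,\alpha,u,\varepsilon)<\infty$ and refine it by the sets $C(Z,N_0,\delta)$; the paper uses the diagonal family $C(Z_i,N,2^{-N})$. Because each packing center lies in its own ball, $u(x_i,n_i,\varepsilon)\geq\delta N$, which gives the factor $2^{-(\alpha'-\alpha)\delta N}$. So every refined piece has $m^*=0$ at $\alpha'$, and hence $m^{**}(Z,\alpha',u,\varepsilon)=0$. That part is correct, and so is your derivation of $b\geq a$ from it.

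One justification is wrong: the claim that refinement ``costs nothing in $m^{**}$ because refinement only decreases $m^*$,'' which you use to get $m^{**}(\alpha')\geq m^{**}(\alpha)$ for $\alpha'<\alpha$. Refining lowers each individual $m^*$, but not the sum over the countably many refined pieces. Your pieces $Z_i\cap C(Z,N_0,\delta)$ are not even disjoint. For example, if $u\equiv c>0$, then $C(Z,N_0,\delta)=Z$ for every $N_0$ and every $\delta\leq c$. So $\sum_{i,N_0,\delta}m^*(Z_i\cap C(Z,N_0,\delta),\alpha')$ is infinite as soon as some $m^*(Z_i,\alpha')>0$, even though $\sum_i m^*(Z_i,\alpha')<\infty$. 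The inequality $\sum_W m^*(W,\alpha')\leq\sum_i m^*(Z_i,\alpha')$ that your argument implicitly needs therefore fails.

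You do not need monotonicity at all. Suppose $m^{**}(\alpha)=\infty$ and some $\gamma<\alpha$ had $m^{**}(\gamma)=0<\infty$. Then your shift step would give $m^{**}(\alpha)=0$, a contradiction. So $b\leq a$ also follows from the shift step alone. This is why the paper can say the two thresholds are ``forced to coincide'' once that step is proved. Monotonicity of $m^{**}$ in $\alpha$ is in fact true, but for this same reason, not because of the refinement.
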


\begin{proof}
Fix $\varepsilon > 0$. It suffices to show that whenever
$m^{**}(Z, \alpha, u, \varepsilon) < \infty$, we have
$m^{**}(Z, \beta, u, \varepsilon) = 0$ for every $\beta > \alpha$; the two
displayed quantities are then forced to coincide.

Let $Z = \bigcup_i Z_i$ be a decomposition with
$\sum_{i=1}^{\infty} m^*(Z_i, \alpha, u, \varepsilon) < \infty$, and refine it by
the $\mathbb{N}^2$-indexed family
\[
C(Z_i, N, 2^{-N}) = \{x \in Z_i : S_n u(x) \geq 2^{-N} n \text{ for all } n \geq N\}.
\]
Since every $x \in Z$ satisfies $\liminf\limits_{n} \tfrac{1}{n} S_n u(x) > 0$, each $x$
lies in $C(Z_i, N, 2^{-N})$ for some $N$, so
$\bigcup_{i,N} C(Z_i, N, 2^{-N}) = Z$.

Fix $i, N$ and write $C = C(Z_i, N, 2^{-N})$. We show
$m^*(C, \beta, u, \varepsilon) = 0$. Let $n \geq N$. For any family
$\mathcal{P}^*(C, n, \varepsilon)$ each center $x_i \in C$ satisfies
$S_{n_i} u(x_i) \geq 2^{-N} n_i$, and since $x_i \in B_{n_i}(x_i,T,\varepsilon)$,
\[
u(x_i,n_i,\varepsilon)
\;\geq\; 2^{-N} n_i \;\geq\; 2^{-N} n.
\]
Splitting $\beta = (\beta - \alpha) + \alpha$,
\[
M(C, \beta, u, \varepsilon, n)
= \sup_{\mathcal{P}^*(C,n,\varepsilon)} \sum_{i=1}^{\infty}
\exp_2\!\left(-(\beta-\alpha)\, u(x_i,n_i,\varepsilon)
           -\alpha\, u(x_i,n_i,\varepsilon)\right)
\]
\[
\leq \exp_2\!\left(-(\beta-\alpha)\, 2^{-N} n\right)\,
M(C, \alpha, u, \varepsilon, n)
\]
Since $M(C, \alpha, u, \varepsilon, n)$ is uniformly bounded in $n$, we get $m^*(C, \beta, u, \varepsilon) = 0$.
\end{proof}
The packing BS dimension is most easily related to the modified upper capacity BS dimension defined via a separated set definition rather than the modified upper capacity BS dimension defined via coverings. We will later show these coincide at scale 0.
\begin{definition}[Upper capacity BS dimension, separated set definition]
\label{def:mUCsep}
Let $Z\subset X$ and let $u\colon X\to\mathbb{R}$ be uniformly continuous with
$\underline{u}(x)>0$ for all $x\in Z$. For $\alpha\in\mathbb{R}$, $n\in\mathbb{N}$,
$\varepsilon>0$ set
\[
Q_n(Z,T,u,\alpha,\varepsilon)
=\sup\Bigg\{\sum_{x\in F}
2^{-\alpha\, u(x,n,\varepsilon)}
\;\Bigg|\; F\subset Z \text{ is an } (n,\varepsilon)\text{-separated set}\Bigg\},
\]
\[
q(Z,T,u,\alpha,\varepsilon)=\limsup\limits_{n\to\infty}Q_n(Z,T,u,\alpha,\varepsilon).
\]
The \emph{upper capacity BS dimension} of $Z$ at scale $\varepsilon$ is the critical value
\[
\dim^{\mathrm{sep}}_{\mathrm{BSC}}(Z,T,u,\varepsilon)
=\inf\{\alpha : q(Z,T,u,\alpha,\varepsilon) = 0\}
\]
and the \emph{modified upper capacity BS dimension} of $Z$ at scale $\varepsilon$ is
obtained by decomposing into countably many pieces:
\[
\Dim^{\mathrm{sep}}_{BS}(Z,T,u,\varepsilon)
=\inf\Bigg\{\,\sup_{i\in\mathbb{N}}
\dim^{\mathrm{sep}}_{\mathrm{BSC}}(Z_i,T,u,\varepsilon)
\;\Bigg|\; Z\subset\bigcup_{i\in\mathbb{N}} Z_i\Bigg\}.
\]
Finally,
\[
\Dim^{\mathrm{sep}}_{BS}(Z,T,u)
=\lim\limits_{\varepsilon\to0}\Dim^{\mathrm{sep}}_{BS}(Z,T,u,\varepsilon).
\]
\end{definition}
We now show relations between the packing BS dimension and the separated set version of the modified upper capacity BS dimension.
\begin{lemma}\label{lem:mUCsep-leq-BSP}
Suppose $\underline{u}(x)>0$ for all $x\in Z$. Then
\[
\Dim^{\mathrm{sep}}_{BS}(Z,T,u)\;\leq\;\dim_{\mathrm{BSP}}(Z,T,u).
\]
\end{lemma}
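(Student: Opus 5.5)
We model the argument on Lemma~\ref{lem:pack-geq-sepmUC}, replacing the additive pressure weights by the BS weights $2^{-\alpha u(x,n,\varepsilon)}$.

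Fix $a>\dim_{\mathrm{BSP}}(Z,T,u)$ (if the right side is infinite there is nothing to prove). Since $\dim_{\mathrm{BSP}}(Z,\varepsilon)$ increases to $\dim_{\mathrm{BSP}}(Z,T,u)$ as $\varepsilon\to 0$, we have $\dim_{\mathrm{BSP}}(Z,\varepsilon)<a$ for every $\varepsilon>0$. By Lemma~\ref{lem:packing-bs-dichotomy} this gives $m^{**}(Z,a,u,\varepsilon)=0$. Choose a cover $Z\subset\bigcup_i Z_i$ with $\sum_i m^*(Z_i,a,u,\varepsilon)<1$. Since $M(Z_i,a,u,\varepsilon,N)$ is nonincreasing in $N$, for each $i$ there is $N_i$ with $M(Z_i,a,u,\varepsilon,N)<1$ for all $N\geq N_i$.

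The key step is to show $\dim^{\mathrm{sep}}_{\mathrm{BSC}}(Z_i,T,u,2\varepsilon)\leq a'$ for every $a'>a$, after a refinement.

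\emph{Comparison at a single order.} Let $F\subset Z_i$ be $(N,2\varepsilon)$-separated with $N\geq N_i$. The balls $\{B_N(x,T,\varepsilon)\}_{x\in F}$ are disjoint and centered in $Z_i$, so
\[\sum_{x\in F}2^{-a u(x,N,\varepsilon)}<1.\]
To pass to the weights at scale $2\varepsilon$, note that each center lies in its own ball, which gives
\[u(x,N,2\varepsilon)\geq S_Nu(x)\geq u(x,N,\varepsilon)-N\Var(u,\varepsilon).\]
Consequently $2^{-a u(x,N,2\varepsilon)}\leq 2^{aN\Var(u,\varepsilon)}\,2^{-a u(x,N,\varepsilon)}$.

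\emph{Absorbing the error factor.} The factor $2^{aN\Var(u,\varepsilon)}$ grows exponentially in $N$, so it has to be paid for by linear growth of the weights. This is the main obstacle, and it is exactly where the exhaustion of Remark~\ref{rem:mBSC-dichotomy} enters. Replace $Z_i$ by the pieces $Z_i\cap C(Z,N_0,\delta)$ with $\delta$ rational. On such a piece every separated center satisfies
\[u(x,N,2\varepsilon)\geq S_Nu(x)\geq\delta N\quad\text{for } N\geq N_0.\]
Hence, for $a'>a$,
\[2^{-a'u(x,N,2\varepsilon)}\leq 2^{-(a'-a)\delta N}\,2^{aN\Var(u,\varepsilon)}\,2^{-a u(x,N,\varepsilon)}.\]
Once $\varepsilon$ is so small that $a\Var(u,\varepsilon)<(a'-a)\delta$, the product of the first two factors tends to $0$. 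This gives
\[Q_N\bigl(Z_i\cap C(Z,N_0,\delta),T,u,a',2\varepsilon\bigr)\to 0,\]
so $q=0$ and $\dim^{\mathrm{sep}}_{\mathrm{BSC}}\leq a'$ on that piece.

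\emph{Handling the scale dependence.} The required smallness of $\varepsilon$ depends on $\delta$, which varies from piece to piece, so a single scale does not serve every piece at once. To deal with this, fix $\delta$ first: run the argument for the subset $Z^\delta=Z\cap\bigcup_{N_0}C(Z,N_0,\delta)$ and conclude $\Dim^{\mathrm{sep}}_{BS}(Z^\delta,T,u)\leq a'$. Then recover $Z=\bigcup_\delta Z^\delta$ using countable stability of $\Dim^{\mathrm{sep}}_{BS}$. At each fixed scale, stability holds by concatenating decompositions, and it passes to the limit $\varepsilon\to 0$ provided the scale-$\varepsilon$ quantities are monotone in $\varepsilon$.

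If monotonicity at fixed scale fails for the separated quantity, the fallback is an alternative route. In that case, run the comparison through the packing side directly, which is already countably stable through $m^{**}$. Alternatively, choose $\varepsilon$ depending on the piece and appeal to the scale monotonicity of $\dim_{\mathrm{BSP}}(\cdot,\varepsilon)$, since $\dim_{\mathrm{BSP}}(Z,\varepsilon)<a$ holds at every scale.

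Finally, let $a'\downarrow a$ and then $a\downarrow\dim_{\mathrm{BSP}}(Z,T,u)$ to conclude.
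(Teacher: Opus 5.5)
Your proof is correct and follows the paper's outline: the decomposition from $m^{**}(Z,a,u,\varepsilon)=0$, $(N,2\varepsilon)$-separated sets read as disjoint packings by $\varepsilon$-Bowen balls, refinement by $C(Z,N_0,\delta)$, and linear growth $u(x,N,2\varepsilon)\geq S_Nu(x)\geq\delta N$ forcing $q=0$ above $a$. Where you differ is the weight comparison, and there you take an unnecessary detour. You compare through the center, $u(x,N,2\varepsilon)\geq S_Nu(x)\geq u(x,N,\varepsilon)-N\Var(u,\varepsilon)$, which costs a factor $2^{aN\Var(u,\varepsilon)}$. The paper instead uses $B_N(x,T,\varepsilon)\subset B_N(x,T,2\varepsilon)$, so $u(x,N,2\varepsilon)\geq u(x,N,\varepsilon)$. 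With $a\geq 0$ this gives $\sum_{x\in F}2^{-a\,u(x,N,2\varepsilon)}\leq M(Z_i,a,u,\varepsilon,N)<1$ with no loss. Since nothing has to be absorbed, the paper never ties the scale to $\delta$: every refined piece is handled at one scale, giving $\Dim^{\mathrm{sep}}_{BS}(Z,T,u,2\varepsilon)\leq a$ directly, with no countable-stability step. Your detour does close, because the monotonicity you leave conditional holds. An $(n,\varepsilon)$-separated set remains $(n,\varepsilon')$-separated for $\varepsilon'<\varepsilon$, and $u(x,n,\varepsilon')\leq u(x,n,\varepsilon)$, so for $\alpha\geq 0$ the sums $Q_n(Y,T,u,\alpha,\cdot)$ grow as the scale shrinks. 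Negative $\alpha$ is irrelevant for nonempty $Y\subset Z$, since a single point already gives $q=\infty$. Hence each scale value of $\Dim^{\mathrm{sep}}_{BS}(Z^\delta,T,u,\cdot)$ is bounded by its limit, stability over $\delta$ passes to scale $0$, and your fallback paragraph can be dropped. You should also state $a\geq 0$ explicitly, since your bound $2^{-a\,u(x,N,2\varepsilon)}\leq 2^{aN\Var(u,\varepsilon)}\,2^{-a\,u(x,N,\varepsilon)}$ uses it; this is harmless because $\dim_{\mathrm{BSP}}\geq 0$ for nonempty $Z$. What your route buys is robustness: it would still work if the separated sums used center weights $2^{-aS_Nu(x)}$, where the lossless sup-weight comparison is unavailable.
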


\begin{proof}
Since $\underline u>0$ on $Z$, both quantities are nonnegative, so we may fix any
$a>\dim_{\mathrm{BSP}}(Z,T,u)=\lim\limits_{\delta\to 0}\dim_{\mathrm{BSP}}(Z,T,u,\delta)$
with $a\geq 0$. For $\delta>0$ small enough we have
$\dim_{\mathrm{BSP}}(Z,T,u,\delta)<a$, so $m^{**}(Z,a,u,\delta)=0$. We can choose
$\{Z_i\}_{i=1}^{\infty}$ such that $\bigcup_{i=1}^{\infty}Z_i\supset Z$ and
$\sum_{i=1}^{\infty}m^{*}(Z_i,a,u,\delta)<1$; in particular
for each $i$ and all $N$ large enough,
$M(Z_i,a,u,\delta,N)<1$, since $M$ is non-increasing in $N$ with limit
$m^{*}(Z_i,a,u,\delta)<1$.

We first bound the separated-set sums on each piece. Let $F\subset Z_i$ be an
$(N,2\delta)$-separated set. The balls $\{B_N(x,T,\delta)\}_{x\in F}$ are pairwise
disjoint (two centers $\delta$-close to a common point would be $d_N$-closer than
$2\delta$), have centers in $Z_i$, and orders $\geq N$, so this family belongs to
$\mathcal{P}^{*}(Z_i,N,\delta)$. Since $B_N(x,T,\delta)\subset B_N(x,T,2\delta)$ and
$a\geq 0$,
\[
\sum_{x\in F} 2^{-a\, u(x,N,2\delta)}
\;\leq\;
\sum_{x\in F} 2^{-a\, u(x,N,\delta)}
\;\leq\; M(Z_i,a,u,\delta,N)\;<\;1.
\tag{$*$}
\]
Taking the supremum over such $F$ gives $Q_N(Z_i,T,u,a,2\delta)\leq 1$ for every $N$.

We now upgrade this to a dimension bound by refining the decomposition. For
$N_0\in\mathbb{N}$ and rational $\rho>0$ set
\[
C(Z,N_0,\rho)=\{x\in Z : S_nu(x)\geq\rho n \text{ for all } n\geq N_0\};
\]
since $\underline u>0$ on $Z$, the countably many sets
$Z_{i,N_0,\rho}=Z_i\cap C(N_0,\rho)$ cover $Z$. Fix one such piece and let $a'>a$.
For $N\geq N_0$ and any $(N,2\delta)$-separated $F\subset Z_{i,N_0,\rho}$, each
$x\in F$ lies in its own ball, so
$u(x,N,2\delta)\geq S_Nu(x)\geq\rho N$, and therefore, using
$(*)$,
\[
\sum_{x\in F} 2^{-a'\, u(x,N,2\delta)}
\;\leq\; 2^{-(a'-a)\rho N}
\sum_{x\in F} 2^{-a\, u(x,N,2\delta)}
\;\leq\; 2^{-(a'-a)\rho N}.
\]
Hence $q(Z_{i,N_0,\rho},T,u,a',2\delta)=0$ for every $a'>a$, so
$\dim^{\mathrm{sep}}_{\mathrm{BSC}}(Z_{i,N_0,\rho},T,u,2\delta)\leq a$ uniformly
over all pieces. The decomposition $\{Z_{i,N_0,\rho}\}$ therefore witnesses
\[
\Dim^{\mathrm{sep}}_{BS}(Z,T,u,2\delta)\;\leq\;a.
\]
As this holds for all sufficiently small $\delta$, letting $\delta\to 0$ gives
$\Dim^{\mathrm{sep}}_{BS}(Z,T,u)\leq a$, and letting
$a\downarrow\dim_{\mathrm{BSP}}(Z,T,u)$ completes the proof.
\end{proof}
\begin{lemma}\label{lem:BSP-leq-mUCsep}
Suppose $\underline{u}(x)>0$ for all $x\in Z$. Then
\[
\dim_{\mathrm{BSP}}(Z,T,u)\;\leq\;\Dim^{\mathrm{sep}}_{BS}(Z,T,u).
\]
\end{lemma}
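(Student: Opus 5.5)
We mirror the proof of Lemma~\ref{lem:pack-leq-sepmUC}, replacing the exponential-in-$n$ weights $2^{-sn}$ by the BS weights $2^{-\alpha u(x,n,\varepsilon)}$. The one genuinely new point is that the decay factor in the pigeonhole step must come from linear growth of $u(x,n,\varepsilon)$. That growth is only available on the exhaustion pieces $C(Z,N_0,\rho)$ of Remark~\ref{rem:mBSC-dichotomy}.

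Fix rationals $0\le t<s<\dim_{\mathrm{BSP}}(Z,T,u)$. For all sufficiently small $\varepsilon$ we have $m^{**}(Z,s,u,\varepsilon)=\infty$, and we fix such an $\varepsilon$. Let $Z\subset\bigcup_i Z_i$ be an arbitrary countable cover. Refine it by the pieces $Z_{i,N_0,\rho}=Z_i\cap C(Z,N_0,\rho)$ with rational $\rho>0$. The refined family is still a countable cover of $Z$, since $\underline u>0$ on $Z$, so some piece $W=Z_{i,N_0,\rho}$ has $m^*(W,s,u,\varepsilon)>\alpha$ for some $\alpha>0$. We will show $\dim^{\mathrm{sep}}_{\mathrm{BSC}}(W,T,u,\varepsilon)\ge t$. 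Because $W\subset Z_i$ and $Q_n$ is monotone under inclusion, this gives $\dim^{\mathrm{sep}}_{\mathrm{BSC}}(Z_i,T,u,\varepsilon)\ge t$. The original cover was arbitrary, so $\Dim^{\mathrm{sep}}_{BS}(Z,T,u,\varepsilon)\ge t$ for all small $\varepsilon$. Letting $\varepsilon\to0$ and then $t\uparrow\dim_{\mathrm{BSP}}$ completes the proof.

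To bound $\dim^{\mathrm{sep}}_{\mathrm{BSC}}(W)$ from below, fix $N\ge N_0$. Take a disjoint family $\{B_{n_j}(x_j,T,\varepsilon)\}$ with $x_j\in W$, $n_j\ge N$ and $\sum_j 2^{-s\,u(x_j,n_j,\varepsilon)}\ge\alpha/2$, and group the centers by order as $E_k=\{x_j: n_j=k\}$. Each $E_k$ is $(k,\varepsilon)$-separated, exactly as in Lemma~\ref{lem:pack-leq-sepmUC}. Every center satisfies $u(x,k,\varepsilon)\ge S_ku(x)\ge\rho k$, and this gives the splitting
\[
2^{-s\,u(x,k,\varepsilon)}\le 2^{-(s-t)\rho k}\,2^{-t\,u(x,k,\varepsilon)}.
\]
The same contradiction argument then produces some $k\ge N$ with
\[
\sum_{x\in E_k}2^{-t\,u(x,k,\varepsilon)}\ge c:=\tfrac{(1-2^{-(s-t)\rho})\alpha}{2}>0.
\]
Hence $Q_k(W,T,u,t,\varepsilon)\ge c$ for infinitely many $k$, so $q(W,T,u,t,\varepsilon)\ge c>0$.

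The main obstacle is that $\dim^{\mathrm{sep}}_{\mathrm{BSC}}$ is defined through the vanishing threshold $\inf\{\alpha: q=0\}$. We therefore need $q(W,T,u,t',\varepsilon)>0$ for every $t'<t$, and not merely at $t'=t$. On $W$ this follows by monotonicity of $q$ in the exponent: for any $(n,\varepsilon)$-separated $F\subset W$ each center has $u(x,n,\varepsilon)\ge\rho n>0$ once $n\ge N_0$, so lowering the exponent only increases $2^{-\alpha u}$. In fact, since $t<s$ was arbitrary, showing $q(W,T,u,t,\varepsilon)>0$ already gives $\dim^{\mathrm{sep}}_{\mathrm{BSC}}(W,\varepsilon)\ge t$. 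That monotonicity is exactly where the refinement into the sets $C(Z,N_0,\rho)$ is indispensable. If it is unavailable, the fallback is to run the dichotomy argument of Lemma~\ref{lem:packing-bs-dichotomy} on each piece before comparing thresholds.
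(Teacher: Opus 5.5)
Your proposal is correct and follows the paper's proof essentially step for step. You refine an arbitrary cover by the pieces $Z_i\cap C(Z,N_0,\rho)$, extract a piece $W$ with $m^*(W,s,u,\varepsilon)>0$, and run the pigeonhole argument with decay factor $2^{-(s-t)\rho k}$ to obtain $q(W,T,u,t,\varepsilon)>0$. Your added observation that $\alpha\mapsto Q_n(W,T,u,\alpha,\varepsilon)$ is nonincreasing for $n\ge N_0$ (because $u(x,n,\varepsilon)\ge\rho n>0$ on $W$) usefully justifies the step from $q(W,T,u,t,\varepsilon)>0$ to $\dim^{\mathrm{sep}}_{\mathrm{BSC}}(W,T,u,\varepsilon)\ge t$, which the paper asserts without comment.
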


\begin{proof}
Fix any $0<t<s<\dim_{\mathrm{BSP}}(Z,T,u)$ and any countable decomposition
$\bigcup_{i}Z_i=Z$. It suffices to prove that for all $\delta>0$ small enough there
is some $i$ with $\dim^{\mathrm{sep}}_{\mathrm{BSC}}(Z_i,T,u,\delta)\geq t$, since then
$\Dim^{\mathrm{sep}}_{BS}(Z,T,u,\delta)\geq t$ for all small $\delta$, and letting
$\delta\to 0$, $t\uparrow s$, $s\uparrow\dim_{\mathrm{BSP}}(Z,T,u)$ gives the claim.

For $N_0\in\mathbb{N}$ and rational $\rho>0$ let
$C(Z,N_0,\rho)=\{x\in Z: S_nu(x)\geq\rho n \text{ for all } n\geq N_0\}$; since
$\underline u>0$ on $Z$, the refined countable family
$\{Z_i\cap C(N_0,\rho)\}_{i,N_0,\rho}$ also covers $Z$.

Since $s<\dim_{\mathrm{BSP}}(Z,T,u)$, choose $\delta>0$ with
$m^{**}(Z,s,u,\delta)>0$. As $m^{**}$ is an infimum over all countable
decompositions, there are $\alpha>0$ and a refined piece
$Z'=Z_i\cap C(Z,N_0,\rho)$ with $m^{*}(Z',s,u,\delta)>\alpha$. Then for every
$N\geq N_0$ there is a disjoint family $\{B_{n_j}(x_j,T,\delta)\}_{j\geq 1}$ with
$x_j\in Z'$, $n_j\geq N$, and, writing
$W_j=u(x_j,n_j,\delta)$,
\[
\sum_{j\geq 1} 2^{-s\,W_j}\;\geq\;\frac{\alpha}{2}.
\]
Since $x_j\in Z'\subset C(Z,N_0,\rho)$ lies in its own ball, $W_j\geq
S_{n_j}u(x_j)\geq\rho\,n_j$. Let $E_k=\{x_j : n_j=k\}$, so that
\[
\sum_{k\geq N}\;\sum_{x\in E_k} 2^{-s\, u(x,k,\delta)}
\;\geq\;\frac{\alpha}{2}.
\]
We claim there exists $k\geq N$ with
\[
\sum_{x\in E_k} 2^{-t\, u(x,k,\delta)}
\;\geq\;\frac{\big(1-2^{-(s-t)\rho}\big)\alpha}{2}:
\]
otherwise, using $u(x,k,\delta)\geq\rho k$ to split
$2^{-s(\cdot)}=2^{-(s-t)(\cdot)}\,2^{-t(\cdot)}\leq 2^{-(s-t)\rho k}\,2^{-t(\cdot)}$,
\[
\sum_{k\geq N}\sum_{x\in E_k} 2^{-s\, u(x,k,\delta)}
\;\leq\;\sum_{k\geq 1} 2^{-(s-t)\rho k}\cdot
\frac{\big(1-2^{-(s-t)\rho}\big)\alpha}{2}
\;=\;2^{-(s-t)\rho}\,\frac{\alpha}{2}\;<\;\frac{\alpha}{2}.
\]
The disjointness of the balls $\{B_k(x,T,\delta)\}_{x\in E_k}$ makes $E_k$ a
$(k,\delta)$-separated subset of $Z'$, and its weighted sum is exactly of the form
appearing in $Q_k$, so
\[
Q_k(Z',T,u,t,\delta)\;\geq\;\frac{\big(1-2^{-(s-t)\rho}\big)\alpha}{2}.
\]
As such a $k$ exists above every $N\geq N_0$, we get
$q(Z',T,u,t,\delta)\geq\big(1-2^{-(s-t)\rho}\big)\alpha/2>0$, hence
$\dim^{\mathrm{sep}}_{\mathrm{BSC}}(Z',T,u,\delta)\geq t$. By monotonicity under
inclusion, $\dim^{\mathrm{sep}}_{\mathrm{BSC}}(Z_i,T,u,\delta)\geq t$ as required.
\end{proof}
We now show the equivalence between the covering and separated versions of the modified upper capacity BS dimension at scale 0. We first note a classical relation at finite scales.
\begin{lemma}[Spanning--separated comparison, BS weights]\label{lem:span-sep-bs}
Let $u:X\to\mathbb{R}$ be uniformly continuous and $s\geq 0$. For every $Z\subset X$, $n\in\mathbb{N}$, and $\varepsilon>0$,
\[
M(Z,T,u,s,n,2\varepsilon)
\;\leq\; Q_n(Z,T,u,s,2\varepsilon)
\;\leq\; M(Z,T,u,s,n,\varepsilon).
\]
\end{lemma}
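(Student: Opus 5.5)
The plan is to follow the proof of Lemma~\ref{lem:span-sep}, replacing the pressure weights $2^{\varphi(x,n,\cdot)}$ with the BS weights $2^{-s\,u(x,n,\cdot)}$. The only structural fact needed is monotonicity of the sup-weight in the ball: if $B_n(x,T,\varepsilon_1)\subset B_n(y,T,\varepsilon_2)$, then $u(x,n,\varepsilon_1)\leq u(y,n,\varepsilon_2)$. Since $s\geq 0$, this gives $2^{-s\,u(y,n,\varepsilon_2)}\leq 2^{-s\,u(x,n,\varepsilon_1)}$. I expect this inclusion argument to remove the need for the correction factor $2^{n\Var}$ that appears in the pressure case.

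\emph{Left inequality.} Let $F\subset Z$ be $(n,2\varepsilon)$-separated.
\begin{itemize}
\item All terms $2^{-s\,u(x,n,2\varepsilon)}$ are positive, so enlarging $F$ to a maximal $(n,2\varepsilon)$-separated subset of $Z$ only increases its weighted sum. Such a maximal set exists by Zorn's lemma.
\item Any maximal $F$ is countable. The balls $B_n(x,T,\varepsilon)$, $x\in F$, are pairwise disjoint open sets in the separable space $(X,d_n)$.
\item By maximality, every $y\in Z$ has $d_n(y,x)<2\varepsilon$ for some $x\in F$. Hence $Z\subset\bigcup_{x\in F}B_n(x,T,2\varepsilon)$, so $F$ is admissible in the infimum defining $M(Z,T,u,s,n,2\varepsilon)$.
\item Therefore $M(Z,T,u,s,n,2\varepsilon)\leq\sum_{x\in F}2^{-s\,u(x,n,2\varepsilon)}\leq Q_n(Z,T,u,s,2\varepsilon)$.
\end{itemize}

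\emph{Right inequality.} Let $F\subset Z$ be $(n,2\varepsilon)$-separated, and let $\Gamma\subset X$ be countable with $Z\subset\bigcup_{x\in\Gamma}B_n(x,T,\varepsilon)$.
\begin{itemize}
\item For each $y\in F$ choose $\pi(y)\in\Gamma$ with $y\in B_n(\pi(y),T,\varepsilon)$.
\item The map $\pi$ is injective. If $\pi(y)=\pi(y')$, then $d_n(y,y')<2\varepsilon$, which separation rules out unless $y=y'$.
\item By the triangle inequality, $B_n(\pi(y),T,\varepsilon)\subset B_n(y,T,2\varepsilon)$. Hence $u(\pi(y),n,\varepsilon)\leq u(y,n,2\varepsilon)$, and since $s\geq0$, $2^{-s\,u(y,n,2\varepsilon)}\leq 2^{-s\,u(\pi(y),n,\varepsilon)}$.
\item Summing over $y\in F$ and using injectivity gives $\sum_{y\in F}2^{-s\,u(y,n,2\varepsilon)}\leq\sum_{x\in\Gamma}2^{-s\,u(x,n,\varepsilon)}$.
\item Taking the supremum over $F$ and then the infimum over $\Gamma$ yields $Q_n(Z,T,u,s,2\varepsilon)\leq M(Z,T,u,s,n,\varepsilon)$.
\end{itemize}

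The main point needing care is the direction of the weight comparison in the right inequality. The sign of $s$ matters, which is why the hypothesis $s\geq0$ appears in the statement. Once the ball inclusion $B_n(\pi(y),T,\varepsilon)\subset B_n(y,T,2\varepsilon)$ is noted, everything else is bookkeeping.
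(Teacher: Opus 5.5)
Your proposal is correct and is essentially the paper's proof. A maximal $(n,2\varepsilon)$-separated set serves as a $2\varepsilon$-cover for the left inequality. For the right inequality, the injective map $\pi$, the inclusion $B_n(\pi(y),T,\varepsilon)\subset B_n(y,T,2\varepsilon)$, and $s\geq 0$ give the termwise comparison. Your remarks on existence via Zorn's lemma and on countability (pairwise disjoint open balls in a separable space, so the set is admissible in the countable-cover infimum defining $M$) supply small details the paper leaves implicit.
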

\begin{proof}
For the left inequality, let $F\subset Z$ be $(n,2\varepsilon)$-separated; enlarging $F$ to a maximal such set only increases its weighted sum (the weights are positive), so we may assume $F$ maximal. By maximality every $y\in Z$ satisfies $d_n(y,x)<2\varepsilon$ for some $x\in F$, so $Z\subset\bigcup_{x\in F}B_n(x,T,2\varepsilon)$, and this cover's weight in $M$ is exactly the separated sum:
\[
M(Z,T,u,s,n,2\varepsilon)\;\leq\;\sum_{x\in F}2^{-s\,u(x,n,2\varepsilon)}\;\leq\;Q_n(Z,T,u,s,2\varepsilon).
\]

For the right inequality, let $F\subset Z$ be $(n,2\varepsilon)$-separated and let $E\subset X$ be countable with $Z\subset\bigcup_{x\in E}B_n(x,T,\varepsilon)$. For each $y\in F$ pick $\pi(y)\in E$ with $d_n(y,\pi(y))<\varepsilon$. If $\pi(y)=\pi(y')$ then $d_n(y,y')<2\varepsilon$, so $\pi$ is injective. Moreover $d_n(y,\pi(y))<\varepsilon$ gives, by the triangle inequality in $d_n$,
\[
B_n(\pi(y),T,\varepsilon)\subset B_n(y,T,2\varepsilon),
\qquad\text{hence}\qquad
u(\pi(y),n,\varepsilon)\leq u(y,n,2\varepsilon),
\]
and since $s\geq 0$,
\[
2^{-s\,u(y,n,2\varepsilon)}\;\leq\;2^{-s\,u(\pi(y),n,\varepsilon)}.
\]
Summing over $y\in F$ and using injectivity of $\pi$,
\[
\sum_{y\in F}2^{-s\,u(y,n,2\varepsilon)}
\;\leq\;\sum_{x\in E}2^{-s\,u(x,n,\varepsilon)}.
\]
Taking the supremum over $F$ and the infimum over $E$ yields
$Q_n(Z,T,u,s,2\varepsilon)\leq M(Z,T,u,s,n,\varepsilon)$.
\end{proof}
\begin{corollary}[Separated and covering upper BS dimensions agree at scale 0]\label{cor:sep-cov-bs}
Let $Z\subset X$ with $\underline{u}(x)>0$ for all $x\in Z$. Then for every $\varepsilon>0$,
\[
\Dim_{BS}(Z,T,u,2\varepsilon)\;\leq\;\Dim^{sep}_{BS}(Z,T,u,2\varepsilon)\;\leq\;\Dim_{BS}(Z,T,u,\varepsilon),
\]
and consequently
\[
\Dim_{BS}(Z,T,u)\;=\;\Dim^{sep}_{BS}(Z,T,u).
\]
\end{corollary}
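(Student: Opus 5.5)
The plan mirrors Corollary~\ref{cor:sep-cov-pressure}: first transfer Lemma~\ref{lem:span-sep-bs} from fixed $n$ to the critical values $\dim_{\mathrm{BSC}}$ and $\dim^{\mathrm{sep}}_{\mathrm{BSC}}$ on each piece of a decomposition, and then pass to the modified quantities and to scale $0$.

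\emph{Piece-level comparison.} Fix $W\subset Z$ and $s\geq 0$. Taking $\limsup_n$ in Lemma~\ref{lem:span-sep-bs} gives
\[
M(W,T,u,s,2\varepsilon)\leq q(W,T,u,s,2\varepsilon)\leq M(W,T,u,s,\varepsilon).
\]
The two sides use different thresholds. $\dim_{\mathrm{BSC}}$ is the finiteness threshold of $M$, while $\dim^{\mathrm{sep}}_{\mathrm{BSC}}$ is the vanishing threshold of $q$. Vanishing implies finiteness, so the left inequality gives $\dim_{\mathrm{BSC}}(W,T,u,2\varepsilon)\leq\dim^{\mathrm{sep}}_{\mathrm{BSC}}(W,T,u,2\varepsilon)$ directly.

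For the right inequality, finiteness of $M(W,s,\varepsilon)$ only yields finiteness of $q(W,s,2\varepsilon)$. I would restrict to pieces $W\subset C(Z,N_0,\rho)$ from Remark~\ref{rem:mBSC-dichotomy}. For such $W$, a separated set $F\subset W$ has $u(x,n,2\varepsilon)\geq S_nu(x)\geq\rho n$ for every $x\in F$ once $n\geq N_0$. Hence
\[
Q_n(W,s',2\varepsilon)\leq 2^{-(s'-s)\rho n}Q_n(W,s,2\varepsilon),
\]
so finiteness of $q$ at $s$ forces $q(W,s',2\varepsilon)=0$ for every $s'>s$. This gives $\dim^{\mathrm{sep}}_{\mathrm{BSC}}(W,2\varepsilon)\leq\dim_{\mathrm{BSC}}(W,\varepsilon)$ on refined pieces.

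\emph{Passage to modified quantities.} Start with an arbitrary countable cover $\{Z_i\}$ of $Z$ and refine it to $\{Z_i\cap C(Z,N_0,\rho)\}$. Refinement does not increase $\dim_{\mathrm{BSC}}(\cdot,\varepsilon)$, since it is monotone under inclusion. This is the refinement argument of Remark~\ref{rem:mBSC-dichotomy}, and it shows $\Dim_{BS}(Z,\varepsilon)$ may be computed over refined covers. Then:
\begin{itemize}
\item applying the right piece inequality on refined pieces, taking $\sup_i$ and then the infimum over covers, gives $\Dim^{sep}_{BS}(Z,2\varepsilon)\leq\Dim_{BS}(Z,\varepsilon)$;
\item the left piece inequality holds for arbitrary pieces, so taking $\sup$ and then $\inf$ gives $\Dim_{BS}(Z,2\varepsilon)\leq\Dim^{sep}_{BS}(Z,2\varepsilon)$.
\end{itemize}

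\emph{Scale $0$.} Both modified quantities are nondecreasing as $\varepsilon\to0$; for $\Dim_{BS}$ this is stated in Definition~\ref{def:mBSC}, and the separated one follows the same way. So both limits exist, and the sandwich forces them to agree.

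The main obstacle is the threshold mismatch between finiteness for $M$ and vanishing for $q$. It is handled by the $C(Z,N_0,\rho)$ refinement, which supplies the uniform linear lower bound on the sup-weights that the shift argument needs.
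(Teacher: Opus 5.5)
Your proposal is correct and follows the same route as the paper: take $\limsup_n$ in Lemma~\ref{lem:span-sep-bs}, compare the critical values piece by piece, pass through $\sup_i$ and the infimum over countable covers, and squeeze as $\varepsilon\to 0$. The one difference works in your favor. The paper simply takes ``the infimum over $s\geq 0$ at which each is finite,'' even though $\dim^{\mathrm{sep}}_{\mathrm{BSC}}$ is defined by the vanishing threshold of $q$, so your refinement into the pieces $Z_i\cap C(Z,N_0,\rho)$, which turns finiteness of $q$ at $s$ into $q(W,s',2\varepsilon)=0$ for $s'>s$, supplies the justification the paper's right-hand inequality leaves implicit.
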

\begin{proof}
Fix $Z\subset X$, $\varepsilon>0$, and $s\geq 0$. Taking $\limsup\limits_{n\to\infty}$ in the upper and lower bounds of Lemma~\ref{lem:span-sep-bs},
\[
\limsup\limits_{n\to\infty}M(Z,T,u,s,n,2\varepsilon)
\;\leq\;\limsup\limits_{n\to\infty}Q_n(Z,T,u,s,2\varepsilon)
\;\leq\;\limsup\limits_{n\to\infty}M(Z,T,u,s,n,\varepsilon),
\]
so finiteness of the right quantity implies finiteness of the middle, and finiteness of the middle implies finiteness of the left. Taking the infimum over $s\geq 0$ at which each is finite,
\[
\dim_{BSC}(Z,T,u,2\varepsilon)\;\leq\;\dim^{sep}_{BSC}(Z,T,u,2\varepsilon)\;\leq\;\dim_{BSC}(Z,T,u,\varepsilon).
\]
Now let $Z\subset\bigcup_i Z_i$ be any countable cover. Applying the display to each $Z=Z_i$ and taking $\sup_i$,
\[
\sup_i \dim_{BSC}(Z_i,T,u,2\varepsilon)\;\leq\;\sup_i \dim^{sep}_{BSC}(Z_i,T,u,2\varepsilon)\;\leq\;\sup_i \dim_{BSC}(Z_i,T,u,\varepsilon).
\]
The modified quantities on both sides are infima over the same class of countable covers of $Z$, so
\[
\Dim_{BS}(Z,T,u,2\varepsilon)\;\leq\;\Dim^{sep}_{BS}(Z,T,u,2\varepsilon)\;\leq\;\Dim_{BS}(Z,T,u,\varepsilon),
\]
which is the first claim. Both scale-$\varepsilon$ quantities are nondecreasing as $\varepsilon\to 0$, so letting $\varepsilon\to 0$ in the upper and lower bounds squeezes out the scale-halving and yields $\Dim_{BS}(Z,T,u)=\Dim^{sep}_{BS}(Z,T,u)$.
\end{proof}
Assembling the comparisons of this subsection yields the equivalence
promised in Section~\ref{subsec_pts_BS_upper}.

\begin{theorem}[Packing BS dimension equals modified upper capacity BS
dimension]
\label{thm:packing-bs-equiv}
Let $Z\subset X$ with $\underline{u}(x)>0$ for all $x\in Z$. Then
\[
\Dim_{BS}(Z,T,u)
=\Dim^{\mathrm{sep}}_{BS}(Z,T,u)
=\dim_{\mathrm{BSP}}(Z,T,u).
\]
In particular the modified upper capacity BS dimension of
Definition~\ref{def:mBSC}, adopted as the working definition of the
upper BS dimension throughout the body of the paper, coincides with
the packing BS dimension.
\end{theorem}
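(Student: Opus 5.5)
The statement follows by assembling results already proved in this appendix; no new estimates should be needed. I will chain two equalities. The first, $\Dim_{BS}(Z,T,u)=\Dim^{\mathrm{sep}}_{BS}(Z,T,u)$, is exactly the scale-$0$ conclusion of Corollary~\ref{cor:sep-cov-bs}. That corollary requires only $\underline{u}>0$ on $Z$, which is the standing hypothesis here. The second, $\Dim^{\mathrm{sep}}_{BS}(Z,T,u)=\dim_{\mathrm{BSP}}(Z,T,u)$, comes from pairing Lemma~\ref{lem:mUCsep-leq-BSP} (giving $\leq$) with Lemma~\ref{lem:BSP-leq-mUCsep} (giving $\geq$), again under $\underline{u}>0$ on $Z$. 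Transitivity then gives the displayed chain.

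Before concatenating, I will check that all three scale-$0$ limits exist and are the quantities compared in the lemmas. For $\Dim_{BS}$ the limit exists by the monotonicity argument in Definition~\ref{def:mBSC}. For $\dim_{\mathrm{BSP}}$ it exists by the monotonicity noted in its definition, with well-definedness at each finite scale coming from Lemma~\ref{lem:packing-bs-dichotomy}. For $\Dim^{\mathrm{sep}}_{BS}$, existence follows from the squeeze in Corollary~\ref{cor:sep-cov-bs}: its two-sided bound at scales $2\varepsilon$ and $\varepsilon$, together with convergence of $\Dim_{BS}(Z,T,u,\varepsilon)$, forces $\Dim^{\mathrm{sep}}_{BS}(Z,T,u,\varepsilon)$ to converge to the same limit.

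I also need to confirm that the two definitions of $\dim_{\mathrm{BSC}}$ are compatible. Definition~\ref{def:mBSC} uses the finiteness threshold of $\limsup_n M$. Definition~\ref{def:mUCsep} uses the vanishing threshold of $q$. By Remark~\ref{rem:mBSC-dichotomy}, after refining a cover by the pieces $C(Z,N,\delta)$, the finiteness and vanishing thresholds define the same modified quantity. The same refinement argument applies verbatim to the separated version, since centers lie in $Z$ and so their weights satisfy $u(x,n,\varepsilon)\geq\delta n$. Hence the proof of Corollary~\ref{cor:sep-cov-bs}, which compares finiteness thresholds, lands on the correct separated quantity.

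This threshold bookkeeping is the only real obstacle. If the corollary's comparison is read strictly as one between finiteness thresholds, I would add one line invoking the $C(Z,N,\delta)$ refinement to identify the finiteness-threshold separated quantity with $\Dim^{\mathrm{sep}}_{BS}$ as defined. The final sentence of the theorem is then a restatement of the chain of equalities, given that $\Dim_{BS}$ was adopted as the working definition.
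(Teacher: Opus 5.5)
Your proposal is correct and matches the paper's proof exactly. The paper likewise gets the first equality from Corollary~\ref{cor:sep-cov-bs} and the second from Lemmas~\ref{lem:mUCsep-leq-BSP} and~\ref{lem:BSP-leq-mUCsep}. Your extra threshold bookkeeping is worthwhile: the paper's proof of Corollary~\ref{cor:sep-cov-bs} compares finiteness thresholds, although $\dim^{\mathrm{sep}}_{\mathrm{BSC}}$ is defined by the vanishing threshold of $q$. Your $C(Z,N,\delta)$ refinement correctly reconciles the two at the level of the modified quantity.
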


\begin{proof}
The first equality is Corollary~\ref{cor:sep-cov-bs}. The second is
the conjunction of Lemma~\ref{lem:BSP-leq-mUCsep}
($\dim_{\mathrm{BSP}}\leq\Dim^{\mathrm{sep}}_{BS}$) and
Lemma~\ref{lem:mUCsep-leq-BSP}
($\Dim^{\mathrm{sep}}_{BS}\leq\dim_{\mathrm{BSP}}$).
\end{proof}
\section{Declarations}
The large language models Claude Opus 4 and Fable 5 were used extensively in the creation of this paper, in the drafting, outlining, proof reading, and editing phases of writing the paper.  
\printbibliography

\end{document}